\documentclass[10pt]{article}
\usepackage{amsfonts,amssymb,amsmath,amsthm,mathtools}
\usepackage{mathrsfs}
\usepackage{enumitem}
\usepackage[margin=2cm]{geometry}
\usepackage[english]{babel}
\usepackage[T1]{fontenc}
\usepackage{lmodern}
\usepackage{microtype}
\usepackage{hyperref}
\usepackage[nameinlink,capitalize]{cleveref}
\usepackage{url}
\hypersetup{
  colorlinks=true,
  linkcolor=blue,
  citecolor=blue,
  urlcolor=blue,
  pdftitle={Commutator Estimates Uniform in the Screening Parameter and Mean-Field Limits for Yukawa Interactions},
  pdfauthor={Ning Jiang, Zhengyang Qiao, Juntao Wu, Jiangwei Zhang},
  pdfsubject={Yukawa interactions in arbitrary dimension, potential truncation, modulated energy, commutator estimates, mean-field limits, propagation of chaos, and the Coulomb limit},
  pdfkeywords={Yukawa interaction, screened Coulomb potential, modified Helmholtz operator, potential truncation, commutator estimate, stress-energy tensor, modulated energy, mean-field limit, propagation of chaos, Coulomb limit},
  pdfdisplaydoctitle=true
}
\numberwithin{equation}{section}

\newcommand{\R}{\mathbb R}
\newcommand{\dd}{\,\mathrm d}
\newcommand{\Sd}{|\mathbb S^{d-1}|}

\newcommand{\cF}{\mathcal F}
\newcommand{\cH}{\mathcal H}
\newcommand{\cQ}{\mathcal Q}
\newcommand{\cD}{\mathcal D}
\newcommand{\cS}{\mathcal S}

\newtheoremstyle{plainnormalnote}%
  {3pt}% Space above
  {3pt}% Space below
  {\itshape}% Body font
  {0pt}% Indent amount
  {\bfseries}% Theorem head font
  {.}% Punctuation after theorem head
  {.5em}% Space after theorem head
  {\thmname{#1}\thmnumber{ #2}\thmnote{ \normalfont(#3)}}% Theorem head spec
\theoremstyle{plainnormalnote}
\newtheorem{theorem}{Theorem}[section]
\newtheorem{corollary}[theorem]{Corollary}
\newtheorem{proposition}[theorem]{Proposition}
\newtheorem{lemma}[theorem]{Lemma}
\theoremstyle{definition}
\newtheorem{definition}{Definition}[section]
\newtheoremstyle{boldremark}%
  {3pt}% Space above
  {3pt}% Space below
  {\normalfont}% Body font
  {0pt}% Indent amount
  {\bfseries}% Theorem head font
  {.}% Punctuation after theorem head
  {.5em}% Space after theorem head
  {\thmname{#1}\thmnumber{ #2}\thmnote{ \textbf{(#3)}}}% Theorem head spec
\theoremstyle{boldremark}
\newtheorem{remark}{Remark}[section]
\theoremstyle{plainnormalnote}

\crefname{theorem}{Theorem}{Theorems}
\Crefname{theorem}{Theorem}{Theorems}
\crefname{proposition}{Proposition}{Propositions}
\Crefname{proposition}{Proposition}{Propositions}
\crefname{lemma}{Lemma}{Lemmas}
\Crefname{lemma}{Lemma}{Lemmas}
\crefname{corollary}{Corollary}{Corollaries}
\Crefname{corollary}{Corollary}{Corollaries}
\crefname{definition}{Definition}{Definitions}
\Crefname{definition}{Definition}{Definitions}
\crefname{equation}{equation}{equations}
\Crefname{equation}{Equation}{Equations}

\begin{document}

\title{\texorpdfstring{\Large\bfseries Commutator Estimates Uniform in the Screening Parameter and Mean-Field Limits for Yukawa Interactions}{Commutator Estimates Uniform in the Screening Parameter and Mean-Field Limits for Yukawa Interactions}}
\author{Ning Jiang$^\mathrm{a}$,\quad Zhengyang Qiao$^\mathrm{b}$,\quad Juntao Wu$^\mathrm{a}$,\quad Jiangwei Zhang$^\mathrm{c}$\\
{\small\itshape $^\mathrm{a}$ School of Mathematics and Statistics, Wuhan University, Wuhan, Hubei 430072, P. R. China}\\
{\small\itshape $^\mathrm{b}$ College of Intelligence Science and Technology, National University of Defense Technology,}\\
{\small\itshape Changsha 410073, P. R. China}\\
{\small\itshape $^\mathrm{c}$ Institute of Applied Physics and Computational Mathematics, Beijing 100088, P. R. China}\\[1mm]
{\small Corresponding author: Juntao Wu, \texttt{00036371@whu.edu.cn}}\\[1mm]
}
\date{}

\maketitle

\begin{abstract}
We study quantitative mean-field limits for classical particles with Yukawa
(screened Coulomb) interactions in every fixed dimension $d\ge2$, uniformly as
the screening parameter $\kappa$ tends to zero.  Our main result is a
first-order commutator estimate in the natural Yukawa modulated energy, with
an additive error of order $N^{-2/d}$ for $d\ge3$ and $(1+\log N)/N$ for
$d=2$.  It requires only a bounded reference density and a Lipschitz transport
field, with no uniform lower bound on interparticle distances and no negative
power of $\kappa$.

The modified Helmholtz operator $-\Delta+\kappa^2$ creates the main new
difficulty.  Truncating the potential to a constant inside each truncation
ball produces a surface charge and a positive volume charge whose total mass
is strictly less than one.  We keep the reference density unchanged and
control this loss of mass through an exact Green function representation and
renormalized energy identities.  A stress-energy identity with interface
terms and averaging over the truncation radii then give the uniform
commutator estimate.

Combined with the modulated energy dissipation identity and a normalized
quadratic transport cost, this estimate yields weak--strong stability,
propagation of chaos, and time-integrated control of the mean-square
difference between empirical and mean-field forces.  We also prove a
quantitative Yukawa-to-Coulomb limit.  For smooth product data, $N\to\infty$
and $\kappa\downarrow0$ may be taken simultaneously with no relation between
their rates; in dimension three, a direct comparison at the particle level
also holds for general symmetric initial laws with finite initial error.
\end{abstract}

\medskip
\noindent\textbf{Keywords:} Yukawa interaction,  commutator estimate,  modulated energy, mean-field limit, propagation of chaos, Coulomb limit.

\medskip
\noindent\textbf{MSC 2020:} Primary 35Q70; Secondary 35B35, 35J05, 49Q22, 82C22.

\section{Introduction}

\subsection{Yukawa interaction and the mean-field problem}

The Yukawa interaction is a standard model of screened Coulomb forces in
Debye--H\"uckel theory, plasma physics, and liquid-state theory
\cite{Yukawa1935,DebyeHuckel1923,Ichimaru1992,HansenMcDonald2013}.  Its
screening length is $\kappa^{-1}$, and the regime $\kappa\downarrow0$
corresponds to the loss of screening and the recovery of the long-range
Coulomb interaction.  Our goal is to develop a quantitative mean-field theory for the particle
system that remains stable throughout this transition.  We therefore treat
the mean-field limit $N\to\infty$ and the Coulomb limit $\kappa\downarrow0$
simultaneously, with constants that do not deteriorate as the screening
length diverges.

Fix an integer $d\ge2$.  For $\kappa>0$, the Yukawa potential is
\begin{equation*}
 g_{\kappa,d}(x)=(2\pi)^{-d/2}
 \left(\frac{\kappa}{|x|}\right)^{\nu_d}K_{\nu_d}(\kappa|x|),
 \qquad \nu_d:=\frac d2-1,
 \qquad (-\Delta+\kappa^2)g_{\kappa,d}=\delta_0,
\end{equation*}
where $K_\nu$ denotes the modified Bessel function of the second kind.  We use the standard recurrence relations, derivative formulas, and small- and
large-argument asymptotics for modified Bessel functions; see, for example,
\cite[Chapter~10]{NIST2010}.  In three dimensions,
$g_{\kappa,3}(x)=e^{-\kappa|x|}/(4\pi|x|)$.  We use the Coulomb representative
\begin{equation}\label{eq:coulomb_kernel_general_intro}
 g_{0,d}(x)=
 \begin{cases}
 -\dfrac1{2\pi}\log|x|+\dfrac{\log2-\gamma_E}{2\pi},&d=2,\\[1mm]
 \dfrac1{(d-2)|\mathbb S^{d-1}|}|x|^{2-d},&d\ge3.
 \end{cases}
\end{equation}
In dimension two this fixes the additive normalization used in the Coulomb
limit; additive constants do not affect the force or the energy of a neutral
measure.  The radial Bessel identity
\[
 \frac{\dd}{\dd z}\bigl(z^{-\nu}K_\nu(z)\bigr)
 =-z^{-\nu}K_{\nu+1}(z)
\]
gives
\begin{equation}\label{eq:yukawa_force_general_intro}
 g_{\kappa,d}'(r)
 =-(2\pi)^{-d/2}\kappa^{d/2}r^{1-d/2}K_{d/2}(\kappa r).
\end{equation}
Thus the Yukawa force has the same Coulomb singularity at the origin for every
$\kappa\ge0$, whereas for $\kappa>0$ the interaction is exponentially screened
at large distances.

We consider the repulsive first-order gradient flow
\begin{equation}\label{eq:particle_system_intro}
 \dot x_i(t)=-\frac1N\sum_{j\ne i}\nabla g_{\kappa,d}(x_i(t)-x_j(t)),
 \qquad i=1,\ldots,N,
\end{equation}
with the self-interaction omitted.  We set
\begin{equation*}
 E_{N,\kappa}(X_N)
 :=\frac1{2N^2}\sum_{i\ne j}g_{\kappa,d}(x_i-x_j),
 \qquad
 m_N(X_N):=\frac1N\sum_{i=1}^N|x_i|^2,
\end{equation*}
and write
$\mu_{X_N}:=N^{-1}\sum_i\delta_{x_i}$ for the empirical measure.  The formal
mean-field equation is
\begin{equation}\label{eq:pde_intro}
 \partial_t\rho+\nabla\cdot(\rho u_\rho^\kappa)=0,
 \qquad u_\rho^\kappa=-\nabla g_{\kappa,d}*\rho.
\end{equation}
The interaction energy is
\[
 \frac12\iint_{\R^d\times\R^d}
 g_{\kappa,d}(x-y)\rho(x)\rho(y)\,\dd x\dd y.
\]
For fixed $\kappa>0$, exponential screening improves the decay at infinity.
However, estimates containing a factor such as $\kappa^{-p}$ degenerate as
the screening length diverges.  We therefore work in the natural Yukawa
energy and seek constants that remain bounded for
$0<\kappa\le\kappa_*$, without regularizing the Coulomb singularity at short
distances.  The dissipative structures of
\eqref{eq:particle_system_intro} and \eqref{eq:pde_intro} are combined below
with a quadratic transport cost in the Wasserstein framework of
\cite{AmbrosioGigliSavare2008}.

A central difficulty is that screening also changes the source generated by
the truncation.  Let $g_{\kappa,r}$ agree with $g_{\kappa,d}$ outside $B_r$
and equal the constant $g_{\kappa,d}(r)$ inside $B_r$.  Then
Section~\ref{sec:truncation_energy} proves the exact identity
\begin{equation}\label{eq:intro_smeared_charge_preview}
 (-\Delta+\kappa^2)g_{\kappa,r}
 =A_d(\kappa r)\sigma_r
  +\kappa^2g_{\kappa,d}(r)\mathbf1_{B_r}\,\dd x,
\end{equation}
where $\sigma_r$ is the probability surface measure on $\partial B_r$.
Consequently the smeared charge contains both a surface part and a positive
volume part.  Its total mass $M_d(\kappa r)$ satisfies
\begin{equation}\label{eq:intro_mass_loss_preview}
 1-M_d(\kappa r)
 =\kappa^2\int_{B_r}
   \bigl(g_{\kappa,d}(|x|)-g_{\kappa,d}(r)\bigr)\,\dd x,
 \qquad
 0\le1-M_d(\kappa r)\le\frac{\kappa^2r^2}{2d}.
\end{equation}
Thus the truncation does not preserve the total mass of the source.  Although
the smeared source is no longer neutral, it has finite Yukawa energy for every
fixed $\kappa>0$.  We leave the background $\rho$ unchanged and account
explicitly for $1-M_d(\kappa r)$ in the renormalized energy and in the source
coefficients.  The exact Green function identity
$g_{\kappa,d}*\delta_0^{\kappa,r}=g_{\kappa,r}$ reduces the
particle--background truncation error to a local term and yields estimates
that remain uniform as $\kappa\downarrow0$, without any rescaling of the
background.

A separate obstruction occurs at low frequency.  For a neutral fluctuation
$\nu$, the Coulomb and Yukawa quadratic forms have Fourier weights
$|\xi|^{-2}$ and $(|\xi|^2+\kappa^2)^{-1}$.  Their ratio is
\[
 \frac{|\xi|^{-2}}{(|\xi|^2+\kappa^2)^{-1}}
 =1+\frac{\kappa^2}{|\xi|^2},
\]
which is unbounded as $|\xi|\downarrow0$.  Section~\ref{sec:coulomb_limit}
shows by dilation that, even for a fixed $\kappa>0$, there is no constant
$C_\kappa$ such that $I_0[\nu]\le C_\kappa I_\kappa[\nu]$ for all smooth
compactly supported neutral $\nu$.  Consequently, a Coulomb commutator bound together with a separate estimate of
the Yukawa--Coulomb remainder does not, by itself, close in the natural Yukawa
modulated energy.  We therefore establish the commutator estimate directly
for $L_\kappa=-\Delta+\kappa^2$.

Let $\rho_N^0$ be a symmetric probability measure on $(\R^d)^N$.  The
deterministic particle flow induces an $N$-particle law $\rho_N(t)$, with
$k$-particle marginal $\rho_{N:k}(t)$.  We use propagation of chaos in the sense of Kac and Sznitman
\cite{Kac1956,Sznitman1991,HaurayMischler2014,ChaintronDiezI2022}.  We first
prove stability for the full $N$-particle law by combining the shifted
modulated energy with the normalized quadratic Wasserstein cost.  Symmetry
then yields corresponding estimates for each fixed marginal.  We also
construct, for smooth compactly supported data, a common local class of
classical limiting solutions with bounds uniform for
$0\le\kappa\le\kappa_*$.

\subsection{Main results and contributions}

In the statements below, $W_2$ denotes the quadratic Wasserstein distance for
the Euclidean cost on the relevant finite-dimensional space; on $(\R^d)^N$
the cost is $\sum_{i=1}^N|x_i-y_i|^2$, and the factor $1/N$ is written
explicitly whenever a normalized cost is used.  Unless stated otherwise,
``uniform in $\kappa$'' means uniform for $0<\kappa\le\kappa_*$ with a fixed
$\kappa_*<\infty$.  The Coulomb case $\kappa=0$ is treated separately.
Generic constants may change from line to line; whenever uniformity is part of
a theorem, the allowed parameter dependence of the constant is stated
explicitly.

\paragraph{Main analytic input and consequences.}
Theorem~\ref{thm:yukawa_commutator} is the main analytic result: it gives a
first-order commutator estimate in the natural Yukawa modulated energy,
uniformly for $0<\kappa\le\kappa_*$.  The estimate is proved directly for the
modified Helmholtz operator rather than by perturbing a Coulomb estimate.
Sections~\ref{sec:yukawa_main}--\ref{sec:yukawa_commutator} contain this new
analytic argument.  Theorem~\ref{thm:yukawa_meanfield} then combines the
commutator estimate with the modulated energy dissipation identity and a
normalized quadratic transport cost to obtain weak--strong stability and
propagation of chaos.  Proposition~\ref{prop:yukawa_classical_solutions}
provides a common local class of classical solutions, and
Section~\ref{sec:coulomb_limit} treats the Coulomb limit.

For a collision-free configuration \(X_N\) and a bounded probability density
\(\rho\), we define the off-diagonal modulated energy and, for a globally
Lipschitz vector field \(u:\R^d\to\R^d\), the associated first-order
commutator by
\begin{align}
 \cF_N^{g_{\kappa,d}}(X_N,\rho)
 &:=\iint_{x\ne y}g_{\kappa,d}(x-y)\,
      \dd(\mu_{X_N}-\rho)(x)\dd(\mu_{X_N}-\rho)(y),
 \notag\\
 \cH_N^{g_{\kappa,d}}(X_N,\rho;u)
 &:=\iint_{x\ne y}(u(x)-u(y))\cdot\nabla g_{\kappa,d}(x-y)\,
      \dd(\mu_{X_N}-\rho)(x)\dd(\mu_{X_N}-\rho)(y).
 \label{eq:H_def_intro}
\end{align}
Here the off-diagonal convention removes the labeled particle
self-interactions.  Under the hypotheses of Theorem~\ref{thm:yukawa_commutator},
the Yukawa energy and commutator terms above are finite;
in particular, the commutator integrals are absolutely convergent.  This uses
only the local integrability of the Coulomb singularity after the Lipschitz
cancellation and the large-scale decay of the Yukawa kernel; a direct estimate
is recorded in \eqref{eq:point_commutator_explicit_lipschitz} and the paragraph
following it.  For globally Lipschitz transport fields,
$\|\nabla u\|_{L^\infty}$ denotes the usual Lipschitz seminorm.  Whenever a
different interaction kernel $g$ is specified and the displayed integrals are
well defined, $\cF_N^g$ and $\cH_N^g$ mean these same scalar functionals with
$g$ in place of $g_{\kappa,d}$.  Their explicit particle--particle,
particle--background, and background--background expansions are recorded in
Section~\ref{sec:yukawa_main}.

The only additional normalization needed in dimension two concerns the additive
constant in the logarithmic potential.  We fix it once and for all by
\begin{equation*}
 c_{\kappa,d}:=
 \begin{cases}
 0,&d\ge3,\\[1mm]
 \dfrac{\log\kappa}{2\pi},&d=2,
 \end{cases}
 \qquad
 g_{\kappa,d}^{\sharp}:=g_{\kappa,d}+c_{\kappa,d}.
\end{equation*}
With the representative fixed in \eqref{eq:coulomb_kernel_general_intro},
$g_{\kappa,2}^{\sharp}$ converges to $g_{0,2}$ as $\kappa\downarrow0$.
Additive constants do not affect the force, but they do fix the normalization
of the finite-$N$ off-diagonal energy.  We therefore define
\begin{equation*}
 \cF_N^{\kappa,\sharp}(X_N,\rho)
 :=\cF_N^{g_{\kappa,d}}(X_N,\rho)-\frac{c_{\kappa,d}}N.
\end{equation*}
Because the signed measure $\mu_{X_N}-\rho$ has total mass zero and the
off-diagonal convention removes exactly the $N$ labeled particle self-interaction terms,
this normalization is equivalently
\begin{equation}\label{eq:sharp_energy_exact_constant_shift}
 \cF_N^{\kappa,\sharp}(X_N,\rho)
 =\cF_N^{g_{\kappa,d}^{\sharp}}(X_N,\rho).
\end{equation}

The additive convention above affects only finite-$N$ off-diagonal energies.
All source identities, truncations, positive field energies, and stress tensors
continue to use the unshifted Yukawa kernel.  The precise identities for
constant shifts, including the distinction between an off-diagonal neutral
energy and the quadratic energy of a nonneutral smeared charge, are recorded in
Remarks~\ref{rem:additive_shift_bookkeeping}--\ref{rem:two_dimensional_normalization}
in Section~\ref{sec:yukawa_main}.

For statements involving probability measures on configuration space, all
singular off-diagonal modulated energies, including their Coulomb counterparts,
are extended by $+\infty$ on collision configurations.
We define
\begin{equation*}
 \ell_d(r):=
 \begin{cases}
 1+\log_+(1/r),&d=2,\\[1mm]
 r^{2-d},&d\ge3,
 \end{cases}
\end{equation*}
and
\begin{equation}\label{eq:intro_additive_error}
 \varepsilon_N^\kappa(\eta)
 :=\frac{\ell_d(\eta)}N+\eta^2+\kappa^2\eta^2.
\end{equation}
For $d\ge3$, this is
$N^{-1}\eta^{2-d}+\eta^2+\kappa^2\eta^2$, while in $d=2$ it is
$N^{-1}(1+\log_+(1/\eta))+\eta^2+\kappa^2\eta^2$.  The term
$\kappa^2\eta^2$ is dominated by $\eta^2$ for fixed $\kappa_*$, but we keep it
separate in order to track the contribution of $1-M_d(\kappa r)$.

Our main estimate is the following.

\begin{theorem}[Uniform Yukawa commutator estimate]\label{thm:yukawa_commutator}
Fix an integer $d\ge2$, $\kappa_*>0$, and $\Lambda\ge1$.  There exists
$\eta_0=\eta_0(d,\kappa_*)>0$, and there exist $C,B\ge1$ depending only
on $d$, $\kappa_*$ and $\Lambda$, such that for every $N\ge2$, every
$0<\kappa\le\kappa_*$, every
$0<\eta<\eta_0$, every bounded probability density $\rho$ satisfying
\begin{equation}\label{eq:static_background_bound}
 \|\rho\|_{L^\infty}\le\Lambda,
\end{equation}
every pairwise distinct configuration $X_N$, and every globally Lipschitz
vector field $u:\R^d\to\R^d$, one has
\begin{equation}\label{eq:yukawa_static_shift}
 \cF_N^{\kappa,\sharp}(X_N,\rho)
 +B\varepsilon_N^\kappa(\eta)
 \ge\varepsilon_N^\kappa(\eta)\ge0
\end{equation}
and
\begin{equation}\label{eq:yukawa_static_main}
 |\cH_N^{g_{\kappa,d}}(X_N,\rho;u)|
 \le C\|\nabla u\|_{L^\infty}
 \left(\cF_N^{\kappa,\sharp}(X_N,\rho)
 +B\varepsilon_N^\kappa(\eta)\right).
\end{equation}
The same constants $C,B$ and the same scale $\eta_0$ work simultaneously
for all $N\ge2$, all $0<\kappa\le\kappa_*$, all admissible $\eta$, and all
collision-free configurations.  They are independent of the individual
particle separations, and in particular contain no factor involving
$\min_{i\ne j}|x_i-x_j|^{-1}$ or any negative power of $\kappa$.  Apart from
the probability normalization and \eqref{eq:static_background_bound}, they do
not depend on moments, support bounds, or additional regularity of $\rho$;
the field $u$ enters only through $\|\nabla u\|_{L^\infty}$.
\end{theorem}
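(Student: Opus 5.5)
We follow the Serfaty-type truncation and stress-tensor strategy, adapted to $L_\kappa=-\Delta+\kappa^2$ through the smeared charge identity \eqref{eq:intro_smeared_charge_preview}.

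\textbf{Truncated potential.} For each particle $i$ we choose a radius $r_i\le\eta$, later averaged over a dyadic range, and replace $\delta_{x_i}$ by the smeared charge
\[
\delta_{x_i}^{\kappa,r_i}:=A_d(\kappa r_i)\sigma_{r_i}(\cdot-x_i)+\kappa^2g_{\kappa,d}(r_i)\mathbf 1_{B_{r_i}(x_i)}.
\]
This charge has mass $M_d(\kappa r_i)$. We then form the truncated potential
\[
h_{\vec r}:=g_{\kappa,d}*\Bigl(\tfrac1N\sum_i\delta_{x_i}^{\kappa,r_i}-\rho\Bigr).
\]
By the exact Green identity $g_{\kappa,d}*\delta_0^{\kappa,r}=g_{\kappa,r}$, the field energy $\int|\nabla h_{\vec r}|^2+\kappa^2h_{\vec r}^2$ equals $\cF_N^{g_{\kappa,d}}$ up to three explicit corrections.

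\begin{enumerate}
\item The self-energies $N^{-2}\sum_i g_{\kappa,d}(r_i)$, plus the small volume-charge correction. After adding $c_{\kappa,d}/N$, these are bounded by $\ell_d(\eta)/N$.
\item Particle--particle terms $(g_{\kappa,d}-g_{\kappa,r_i})(x_i-x_j)$. These are nonnegative, because $g_{\kappa,d}$ is radially decreasing.
\item Particle--background terms $N^{-1}\sum_i\int(g_{\kappa,d}-g_{\kappa,r_i})(x-x_i)\rho(x)\,\dd x$. These are local and bounded by $\Lambda\int_{B_\eta}g\lesssim\Lambda\eta^2$.
\end{enumerate}
The mass defect $1-M_d(\kappa r_i)\le\kappa^2r_i^2/(2d)$ produces a further error of size $\kappa^2\eta^2$, which we track separately.

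\textbf{Bound \eqref{eq:yukawa_static_shift}.} Dropping the nonnegative particle--particle terms and the nonnegative field energy gives the lower bound $\cF_N^{\kappa,\sharp}\ge -B'\varepsilon_N^\kappa(\eta)$. Taking $B=B'+1$ then yields \eqref{eq:yukawa_static_shift}.

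\textbf{Commutator \eqref{eq:yukawa_static_main}.} We write $\cH_N$ through the Yukawa stress-energy tensor
\[
T_{ab}=2\partial_ah\,\partial_bh-\bigl(|\nabla h|^2+\kappa^2h^2\bigr)\delta_{ab}.
\]
Its divergence equals $-2L_\kappa h\,\nabla h$, so integrating against $u$ gives
\[
\int\nabla u:T_{\vec r}\;\lesssim\;\|\nabla u\|_{L^\infty}\int\bigl(|\nabla h_{\vec r}|^2+\kappa^2h_{\vec r}^2\bigr).
\]
The difference between $\cH_N$ and $\int\nabla u:T_{\vec r}$ then splits into four pieces.

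\begin{enumerate}
\item Self-interaction commutators of the smeared charges. These vanish by radial symmetry up to a term controlled by $\|\nabla u\|$ times the self-energy.
\item Interface terms on $\partial B_{r_i}$, arising from the jump in the normal derivative.
\item Particle--particle pairs at distance $\lesssim\eta$.
\item Particle--background terms. These are $O(\|\nabla u\|\Lambda\eta^2)$, using the Lipschitz cancellation $|u(x)-u(y)||\nabla g|\lesssim|x-y|^{2-d}$.
\end{enumerate}

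\textbf{Main obstacle.} The interface and close-pair terms are the hard part, and they must be bounded without using $\min|x_i-x_j|$. We will choose $r_i$ comparable to $\min(\eta,\tfrac14\min_{j\ne i}|x_i-x_j|)$ and then average the identity over $r_i\in[\tfrac12\bar r_i,\bar r_i]$. For close pairs, $|u(x_i)-u(x_j)||\nabla g(x_i-x_j)|\lesssim\|\nabla u\|\,g$-type quantities, which are absorbed by the nonnegative particle--particle excess $g_{\kappa,d}-g_{\kappa,r_i}$ retained in the energy identity. The averaging turns the sphere interface integrals into annulus integrals of $|\nabla h|^2$, which the field energy controls. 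Throughout, one must check that the Bessel asymptotics give $|g'_{\kappa,d}(r)|\le C r^{1-d}$ and $A_d(\kappa r)\in[c,1]$ uniformly for $\kappa r\le\kappa_*\eta_0$; this is what fixes $\eta_0(d,\kappa_*)$. Summing all the pieces gives
\[
|\cH_N|\le C\|\nabla u\|_{L^\infty}\bigl(\cF_N^{\kappa,\sharp}+B\varepsilon_N^\kappa(\eta)\bigr).
\]
No negative powers of $\kappa$ appear, since $\kappa$ enters only through $\kappa^2\eta^2$ and bounded Bessel factors.
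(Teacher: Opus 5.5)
There is a genuine gap, and it sits exactly at your ``main obstacle'': you never control the self-energy sum $N^{-2}\sum_i g_{\kappa,d}(r_i)$ at the nearest-neighbor radii.

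In item 1 you bound this sum by $\ell_d(\eta)/N$. That is correct for a common radius $r_i=\eta$, and with that choice your lower-bound argument is essentially the paper's common-radius lemma. For $r_i\simeq\min\{\eta,\frac14\min_{j\ne i}|x_i-x_j|\}$, however, the monotonicity of $g_{\kappa,d}$ gives the opposite inequality: a single pair at distance $\delta\ll\eta$ already contributes about $N^{-2}\delta^{2-d}$.

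You cannot avoid this sum in your scheme, for two reasons.
\begin{enumerate}
\item The exact expansion at these radii reads $\|h_{\vec r}\|_{H^1_\kappa}^2=\cF_N^{\kappa,\sharp}+N^{-2}\sum_i s^{\sharp}_{\kappa,d}(r_i)+O(\eta^2)$. Your stress bound $\|\nabla u\|_{L^\infty}\|h_{\vec r}\|_{H^1_\kappa}^2$ is therefore only as good as the self-energy sum.
\item After radial averaging, the interface terms are bounded by $\|\nabla u\|_{L^\infty}\frac1N\sum_i r_i^{1-d/2}\|\nabla h_{\vec r}\|_{L^2(B(x_i,r_i)\setminus B(x_i,r_i/2))}$. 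Cauchy--Schwarz in $i$ turns this into $(N^{-2}\sum_i r_i^{2-d})^{1/2}\|h_{\vec r}\|_{H^1_\kappa}$. So the annulus integrals are not controlled by the field energy alone: the weights $r_i^{1-d/2}$ require the same self-energy sum. The interior volume charge has the same issue.
\end{enumerate}

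Your proposed remedy, absorbing close pairs into the excess $g_{\kappa,d}-g_{\kappa,r_i}$ kept in the energy identity, fails at these radii. When the balls are disjoint, the exterior identity $g_{\kappa,d}*\delta^{\kappa,r}_{x}=g_{\kappa,r}(\cdot-x)$ makes the mutual energy of two smeared charges exactly $g_{\kappa,d}(x_i-x_j)$, so that excess is identically zero. For the same reason, no separate close-pair term appears in $\cH_N-\int\nabla u:T$; neighbors enter only through the field on and inside $B(x_i,r_i)$.

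What is missing is a two-scale argument.
\begin{enumerate}
\item First prove positivity at a common radius $R=A\eta$ with $A=A(d)$ large ($R=\eta$ suffices in $d=2$), allowing overlapping balls. Set $G_{\kappa,R}=g_{\kappa,R}*\delta_0^{\kappa,R}$, which satisfies $G_{\kappa,R}\le C_dR^{2-d}$ for $d\ge3$. This step gives $0\le L_R:=N^{-2}\sum_{i\ne j}(g_{\kappa,d}-G_{\kappa,R})(x_i-x_j)\le\cF_N^{\kappa,\sharp}+C\varepsilon_N^\kappa(\eta)$.
\item Then, for each $i$ with $r_i<\eta$, choose $j(i)$ with $|x_i-x_{j(i)}|=16r_i$. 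Combine $s_{\kappa,d}(r_i)\le Cg_{\kappa,d}(16r_i)$, the bound $G_{\kappa,R}\le\frac12 g_{\kappa,d}$ on $\{|z|\le16\eta\}$ (which is what fixes $A$), and the distinctness of the first labels. This yields $N^{-2}\sum_i s^{\sharp}_{\kappa,d}(r_i)\le C\bigl(L_R+\ell_d(\eta)/N\bigr)$.
\item In $d=2$, the lower bound $g_{\kappa,2}(16r_i)-g_{\kappa,2}(\eta)\gtrsim\log(\eta/(16r_i))$ replaces the power-law comparison.
\end{enumerate}
Only with this bound do your field-energy estimate and your averaged interface estimate close.
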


\smallskip
We retain the full term \(\varepsilon_N^\kappa(\eta)\) in
\eqref{eq:yukawa_static_shift} because its positive margin absorbs the
truncation, background, and mass-loss errors.

It is convenient to set
\begin{equation}\label{eq:dimension_dependent_N_rate}
 a_{N,d}:=
 \begin{cases}
 \dfrac{1+\log N}{N},&d=2,\\[1mm]
 N^{-2/d},&d\ge3.
 \end{cases}
\end{equation}

\begin{corollary}[Optimized finite-$N$ commutator estimate]
\label{cor:yukawa_commutator_optimized_scale}
Under the assumptions of Theorem~\ref{thm:yukawa_commutator}, set
\begin{equation}\label{eq:optimized_truncation_radius}
 \eta_N:=\frac12\min\{1,\eta_0\}
 \begin{cases}
 N^{-1/2},&d=2,\\
 N^{-1/d},&d\ge3.
 \end{cases}
\end{equation}
Then there exist $C=C(d,\kappa_*,\Lambda)\ge1$ and
$c=c(d,\kappa_*)>0$ such that
\begin{equation*}
 \cF_N^{\kappa,\sharp}(X_N,\rho)+Ca_{N,d}\ge c\,a_{N,d}>0
\end{equation*}
and
\begin{equation*}
 |\cH_N^{g_{\kappa,d}}(X_N,\rho;u)|
 \le C\|\nabla u\|_{L^\infty}
 \left(\cF_N^{\kappa,\sharp}(X_N,\rho)
 +Ca_{N,d}\right).
\end{equation*}
\end{corollary}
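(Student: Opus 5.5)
The plan is to specialize Theorem~\ref{thm:yukawa_commutator} to $\eta=\eta_N$ and then compare $\varepsilon_N^\kappa(\eta_N)$ with $a_{N,d}$ from above and below, uniformly in $N\ge2$ and $0<\kappa\le\kappa_*$. Since $\eta_N\le\frac12\eta_0<\eta_0$, the scale is admissible for every $N\ge2$. The theorem then gives, with its constants $B$ and $C$,
\[
\cF_N^{\kappa,\sharp}+B\varepsilon_N^\kappa(\eta_N)\ge\varepsilon_N^\kappa(\eta_N),\qquad
|\cH_N^{g_{\kappa,d}}|\le C\|\nabla u\|_{L^\infty}\bigl(\cF_N^{\kappa,\sharp}+B\varepsilon_N^\kappa(\eta_N)\bigr).
\]
It therefore suffices to prove two-sided bounds $c_1 a_{N,d}\le\varepsilon_N^\kappa(\eta_N)\le c_2 a_{N,d}$. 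Here $c_1$ should depend only on $(d,\kappa_*)$, since $\eta_0$ does; $c_2$ will depend on the same parameters.

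Write $\beta:=\frac12\min\{1,\eta_0\}\le\frac12$.

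\emph{Case $d\ge3$.} Here $\eta_N=\beta N^{-1/d}$. This gives $N^{-1}\eta_N^{2-d}=\beta^{2-d}N^{-2/d}$ and $\eta_N^2=\beta^2N^{-2/d}$. Also $\kappa^2\eta_N^2\le\kappa_*^2\beta^2N^{-2/d}$. Hence
\[
\beta^{2-d}a_{N,d}\le\varepsilon_N^\kappa(\eta_N)\le\bigl(\beta^{2-d}+(1+\kappa_*^2)\beta^2\bigr)a_{N,d}.
\]

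\emph{Case $d=2$.} Here $\eta_N=\beta N^{-1/2}<1$, so $\log_+(1/\eta_N)=\frac12\log N+\log(1/\beta)$. The first term is $N^{-1}\bigl(1+\log(1/\beta)+\frac12\log N\bigr)$. It lies between $\frac12(1+\log N)/N$ and $(1+\log(1/\beta))(1+\log N)/N$. The remaining terms are at most $(1+\kappa_*^2)\beta^2/N\le(1+\kappa_*^2)\beta^2 a_{N,2}$. This again gives $c_1a_{N,2}\le\varepsilon_N^\kappa(\eta_N)\le c_2a_{N,2}$.

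To conclude, use the upper bound to get $B\varepsilon_N^\kappa(\eta_N)\le Bc_2a_{N,d}$. Then
\[
\cF_N^{\kappa,\sharp}+Bc_2a_{N,d}\ge\cF_N^{\kappa,\sharp}+B\varepsilon_N^\kappa(\eta_N)\ge\varepsilon_N^\kappa(\eta_N)\ge c_1a_{N,d}.
\]
For the commutator, since $C\ge0$ and $\cF_N^{\kappa,\sharp}+B\varepsilon_N^\kappa(\eta_N)\ge0$, we may replace $B\varepsilon_N^\kappa(\eta_N)$ by the larger quantity $Bc_2a_{N,d}$. This gives $|\cH_N^{g_{\kappa,d}}|\le C\|\nabla u\|_{L^\infty}\bigl(\cF_N^{\kappa,\sharp}+Bc_2a_{N,d}\bigr)$.

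Finally, take $C'=\max\{C,Bc_2,1\}$, which depends only on $(d,\kappa_*,\Lambda)$, and $c=c_1$. Enlarging the additive constant to $C'$ keeps both inequalities valid by monotonicity, and $a_{N,d}>0$ gives strict positivity.

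The only point needing care is that the lower constant $c_1$ must not depend on $\Lambda$. It does not, because it comes only from the $\ell_d$ term and $\beta$, and $\beta$ depends on $(d,\kappa_*)$ alone.
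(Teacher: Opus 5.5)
Your proposal is correct and follows essentially the same route as the paper's proof. You apply Theorem~\ref{thm:yukawa_commutator} at the admissible scale $\eta_N<\eta_0$, prove the two-sided bound $c_1a_{N,d}\le\varepsilon_N^\kappa(\eta_N)\le c_2a_{N,d}$ uniformly for $0<\kappa\le\kappa_*$, and absorb $Bc_2$ into the final constant. Your explicit treatment of $\log_+(1/\eta_N)$ in $d=2$, and your check that $c_1$ depends only on $(d,\kappa_*)$, spell out steps the paper abbreviates as ``$\asymp_{\eta_0}$''.
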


For $d\ge3$, the correction $N^{-2/d}$ has the same order
$N^{(d-2)/d-1}$ as in the Coulomb case at the Riesz exponent $s=d-2$.  In dimension
two the argument yields $(1+\log N)/N$; no optimality of the logarithmic factor
is asserted for fixed $\kappa>0$.

The commutator estimate implies the following weak--strong stability result.  
\begin{theorem}[Uniform weak--strong stability and propagation of chaos]
\label{thm:yukawa_meanfield}
Fix $d\ge2$, $\kappa_*>0$, and $0<\kappa\le\kappa_*$.  Let
$\rho$ solve \eqref{eq:pde_intro} on $[0,T]$ and assume
\[
\rho\in C([0,T];(\mathcal P_2(\R^d),W_2)),
\]
\begin{equation}\label{eq:dynamic_solution_bound}
	\Lambda_T:=1+\sup_{t\le T}\left(
	\|\rho(t)\|_{L^\infty}
	+\|u_\rho^\kappa(t)\|_{W^{1,\infty}}
	\right)<\infty.
\end{equation}
Let $\rho_N^0\in\mathcal P_2((\R^d)^N)$ be symmetric.  Let
$B=B(d,\kappa_*,\Lambda_T)$ and $\eta_0=\eta_0(d,\kappa_*)$ be chosen
as in Theorem~\ref{thm:yukawa_commutator}.  For $0<\eta<\eta_0$, we set
\begin{equation}\label{eq:intro_initial_error}
 \mathfrak E_N^{\kappa,0}(\eta)
 :=\frac1N W_2^2(\rho_N^0,\rho(0)^{\otimes N})
 +\int\left[\cF_N^{\kappa,\sharp}(X_N,\rho(0))
 +B\varepsilon_N^\kappa(\eta)\right]\dd\rho_N^0,
\end{equation}
and assume $\mathfrak E_N^{\kappa,0}(\eta)<\infty$.  This finiteness
already implies that $\rho_N^0$ gives zero mass to the collision set and has
finite expected microscopic Yukawa energy; this is verified in
Subsection~\ref{subsec:particle_distribution_preliminaries}.  Then the full dissipative Lyapunov estimate
\begin{equation}\label{eq:yukawa_meanfield_full_lyapunov}
\begin{aligned}
 &\sup_{0\le t\le T}\left\{
 \frac1N W_2^2(\rho_N(t),\rho(t)^{\otimes N})
 +\int\!\left[\cF_N^{\kappa,\sharp}(X_N,\rho(t))
       +B\varepsilon_N^\kappa(\eta)\right]\dd\rho_N(t)\right\}\\
 &\qquad
 +\int_0^T\!\int \cD_N^\kappa(X_N,\rho(t))\,\dd\rho_N(t)\,\dd t
 \le C_T\mathfrak E_N^{\kappa,0}(\eta)
\end{aligned}
\end{equation}
holds.  In particular,
\begin{equation}\label{eq:initial_infimum}
 \sup_{0\le t\le T}\frac1N
 W_2^2(\rho_N(t),\rho(t)^{\otimes N})
 \le C_T\mathfrak E_N^{\kappa,0}(\eta).
\end{equation}
For $1\le k\le N$, we also have
\begin{equation}\label{eq:yukawa_meanfield_k_marginal}
 \sup_{0\le t\le T}
 W_2^2(\rho_{N:k}(t),\rho(t)^{\otimes k})
 \le C_Tk\mathfrak E_N^{\kappa,0}(\eta),
\end{equation}
where
\begin{equation}\label{eq:intro_mean_square_force_error}
	\cD_N^\kappa(X_N,\rho)
	:=\frac1N\sum_{i=1}^N
	\left|\frac1N\sum_{j\ne i}\nabla g_{\kappa,d}(x_i-x_j)
	-\nabla g_{\kappa,d}*\rho(x_i)\right|^2.
\end{equation}
The same constant $C_T=C(d,T,\kappa_*,\Lambda_T)$ works for every
$N$, every $1\le k\le N$, and every admissible $\eta$.  In particular,
for a family of limiting solutions with a common value of $\Lambda_T$, the
constant is uniform over $0<\kappa\le\kappa_*$ and has no dependence on
the individual solution beyond that common bound.
\end{theorem}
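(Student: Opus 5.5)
We would prove Theorem~\ref{thm:yukawa_meanfield} by a coupled Lyapunov argument at the level of the full $N$-particle law, with Theorem~\ref{thm:yukawa_commutator} as the only analytic input. Fix $\eta$ and write $\Lambda_T$ for the bound \eqref{eq:dynamic_solution_bound}.

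\textbf{Modulated energy identity.} For a collision-free trajectory $X_N(t)$ of \eqref{eq:particle_system_intro}, we differentiate $\cF_N^{\kappa,\sharp}(X_N(t),\rho(t))$ along the particle flow and \eqref{eq:pde_intro}. Writing $v_i:=-\frac1N\sum_{j\ne i}\nabla g_{\kappa,d}(x_i-x_j)$ and using the symmetry of $g_{\kappa,d}$, the standard computation gives
\[
 \frac{\dd}{\dd t}\cF_N^{\kappa,\sharp}(X_N,\rho)
 =-2\cD_N^\kappa(X_N,\rho)+\cH_N^{g_{\kappa,d}}(X_N,\rho;u^\kappa_\rho).
\]
Here the term $-2\cD_N^\kappa$ comes from completing the square $\frac1N\sum_i|v_i-u_\rho^\kappa(x_i)|^2$, and the commutator collects the $u_\rho^\kappa$ cross terms. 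The additive constant $c_{\kappa,d}/N$ is time-independent. To pass to the law $\rho_N(t)$, we push forward by the flow; this is well defined $\rho_N^0$-a.e. because finiteness of $\mathfrak E_N^{\kappa,0}$ excludes collisions, and repulsion together with energy decrease preserves this. Integrating gives the evolution of $\int\cF_N^{\kappa,\sharp}\dd\rho_N(t)$.

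\textbf{Transport cost.} We couple $X_N(t)$ to $N$ independent copies $Y_N(t)$ of the characteristic flow $\dot y_i=u_\rho^\kappa(y_i)$, starting from an optimal coupling $\pi_0$ of $\rho_N^0$ and $\rho(0)^{\otimes N}$. For $Q(t):=\frac1N\int\sum_i|x_i-y_i|^2\dd\pi_t$ we have
\[
 \frac{\dd}{\dd t}\frac1N\sum_i|x_i-y_i|^2
 =\frac2N\sum_i(x_i-y_i)\cdot\bigl(v_i-u_\rho^\kappa(x_i)\bigr)
 +\frac2N\sum_i(x_i-y_i)\cdot\bigl(u_\rho^\kappa(x_i)-u_\rho^\kappa(y_i)\bigr).
\]
The second sum is bounded by $2\Lambda_T|x-y|^2/N$. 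For the first we use Young's inequality, $\le\frac1N\sum|x_i-y_i|^2+\cD_N^\kappa$. Since $\pi_t$ couples $\rho_N(t)$ and $\rho(t)^{\otimes N}$, we get $\frac1NW_2^2\le Q(t)$.

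\textbf{Closing the estimate.} Set $\Phi(t):=Q(t)+\int[\cF_N^{\kappa,\sharp}+B\varepsilon_N^\kappa(\eta)]\dd\rho_N(t)$. Adding the two identities, one copy of $\cD_N^\kappa$ from $-2\cD_N^\kappa$ absorbs the Young term, leaving $+\int\cD_N^\kappa$ on the left. Theorem~\ref{thm:yukawa_commutator} bounds the commutator by $C\Lambda_T(\cF_N^{\kappa,\sharp}+B\varepsilon_N^\kappa)$. By \eqref{eq:yukawa_static_shift} this quantity is nonnegative, so every term in $\Phi$ is nonnegative. We obtain
\[
 \Phi'+\int\cD_N^\kappa\,\dd\rho_N\le C\Lambda_T\Phi,
\]
and Gr\"onwall gives \eqref{eq:yukawa_meanfield_full_lyapunov} with $C_T=e^{C\Lambda_T T}(1+\cdots)$. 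Since $\Phi(0)=\mathfrak E_N^{\kappa,0}$, \eqref{eq:initial_infimum} follows by dropping the nonnegative energy term.

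For \eqref{eq:yukawa_meanfield_k_marginal}, symmetry of the coupling gives $W_2^2(\rho_{N:k},\rho^{\otimes k})\le\frac kN\int\sum_i|x_i-y_i|^2\dd\pi_t$. To justify symmetry, we symmetrize $\pi_0$ over permutations first, which keeps it optimal because the cost is permutation-invariant.

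\textbf{Main obstacle.} The main obstacle is rigor in the first step rather than the algebra. Three points need care:
\begin{itemize}
 \item the time derivative of the particle--background terms needs $\rho\in C([0,T];W_2)$ with bounded density, so we test the weak formulation against $g_{\kappa,d}*(\mu_{X_N}-\rho)$, whose regularity away from the particles must be verified;
 \item collisions must be excluded for a.e.\ initial datum, so that the flow and the pushforward are well defined;
 \item we need integrability in order to exchange $\frac{\dd}{\dd t}$ with $\int\dd\rho_N$.
\end{itemize}
We would handle all three by first proving the inequality in integrated form for each fixed trajectory, using energy monotonicity $E_{N,\kappa}(X_N(t))\le E_{N,\kappa}(X_N(0))$. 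We would then integrate against $\pi_0$, invoking the lower bound \eqref{eq:yukawa_static_shift} to apply Fatou or monotone convergence.
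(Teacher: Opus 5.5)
Your proposal is correct and follows essentially the same route as the paper. The ingredients match:
\begin{itemize}
\item the exact identity $\frac{\dd}{\dd t}\cF_N^{\kappa,\sharp}=\cH_N^{g_{\kappa,d}}-2\cD_N^\kappa$;
\item the Dobrushin quadratic coupling with Young's inequality, so that one copy of $\cD_N^\kappa$ survives;
\item a pathwise Gr\"onwall argument along collision-free trajectories and Lipschitz characteristics;
\item integration against an optimal initial coupling, using Tonelli and the nonnegativity from \eqref{eq:yukawa_static_shift}.
\end{itemize}
The only cosmetic difference is the marginal bound: you symmetrize the optimal coupling and use permutation equivariance of both flows directly, whereas the paper quotes the Hauray--Mischler projection inequality $W_2^2(\rho_{N:k},\sigma_{N:k})\le\frac kN W_2^2(\rho_N,\sigma_N)$, which rests on the same symmetrization.
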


In particular, if $N\to\infty$ along a sequence for which one can choose
admissible radii $\eta_N$ with
$\mathfrak E_N^{\kappa,0}(\eta_N)\to0$, then for every fixed $k$, we have
\[
 \sup_{0\le t\le T}
 W_2\bigl(\rho_{N:k}(t),\rho(t)^{\otimes k}\bigr)\longrightarrow0.
\]
Thus \eqref{eq:yukawa_meanfield_k_marginal} gives propagation of chaos in the
usual sense of convergence of each fixed marginal, uniformly on $[0,T]$.

Theorem~\ref{thm:yukawa_meanfield} is a weak--strong statement for a
prescribed regular limiting solution.  Section~\ref{sec:mean_field_dynamics}
constructs a local class of such solutions for smooth compactly supported data,
uniformly for $0\le\kappa\le\kappa_*$.  For $0<\kappa\le\kappa_*$, product
initial distributions are then well prepared at order $a_{N,d}$; the solution
corresponding to $\kappa=0$ in the same class is used in the Coulomb limit.

\medskip
\noindent\textbf{Product data and simultaneous mean-field and Coulomb limits.}
The uniform classical theory and the preceding estimates imply the following
consequence, proved in
Corollaries~\ref{cor:yukawa_classical_chaos} and~\ref{cor:classical_simultaneous_yukawa_coulomb}.  If
$\rho_0\in C_c^{1,\beta}(\R^d)$ is a probability density and
$\rho_N^0=\rho_0^{\otimes N}$, then on every compact subinterval
$[0,T]\subset[0,T_*)$ of the common classical existence interval,
\begin{equation}\label{eq:intro_product_yukawa_headline}
\begin{aligned}
 &\sup_{t\le T}\left\{
 \frac1N W_2^2\bigl(\rho_N^\kappa(t),(\rho^\kappa(t))^{\otimes N}\bigr)
 +\int\!\left[\cF_N^{\kappa,\sharp}(X_N,\rho^\kappa(t))
       +B\varepsilon_N^\kappa(\eta_N)\right]\dd\rho_N^\kappa(t)\right\}\\
 &\qquad
 +\int_0^T\!\int\cD_N^\kappa(X_N,\rho^\kappa(t))\,\dd\rho_N^\kappa(t)\,\dd t
 \le C_Ta_{N,d},
\end{aligned}
\end{equation}
uniformly for $0<\kappa\le\kappa_*$.  If $\kappa_N\downarrow0$, then
\begin{equation}\label{eq:intro_simultaneous_headline}
 \sup_{t\le T}\frac1N
 W_2^2\bigl(\rho_N^{\kappa_N}(t),(\rho^{\mathrm C}(t))^{\otimes N}\bigr)
 \le C_T\bigl(a_{N,d}+\alpha_d(\kappa_N)^2\bigr),
\end{equation}
where $\alpha_2(\kappa)=\kappa$ and
$\alpha_d(\kappa)=\kappa^2$ for $d\ge3$.  Hence no relation between the
rates of $N\to\infty$ and $\kappa_N\downarrow0$ is imposed.  Since \eqref{eq:intro_simultaneous_headline} is a bound for the \emph{squared}
Wasserstein distance, the continuum contribution is
$O(\kappa^2)$ in $W_2$ for $d\ge3$, because it is $O(\kappa^4)$ in $W_2^2$.

\subsection{Related work and scope}\label{subsec:intro_related_work}

For globally Lipschitz forces, quantitative mean-field theory goes back to
Braun--Hepp and Dobrushin \cite{BraunHepp1977,Dobrushin1979}; broad accounts
of propagation of chaos and singular mean-field limits include
\cite{Jabin2014,Golse2016,ChaintronDiezI2022,ChaintronDiezII2022}.  For
singular deterministic systems, Hauray and Jabin developed particle
approximations below the Coulomb threshold and treated stronger singularities
with a microscopic cutoff \cite{HaurayJabin2015}.  Relative entropy methods
provide a different route for several stochastic or diffusive singular
systems \cite{JabinWang2018,BreschJabinWang2019,BreschJabinWang2020}.

\paragraph{Coulomb and Riesz modulated energies and commutators.}
The present work follows the modulated energy approach initiated for singular
repulsive gradient flows by Duerinckx and developed for Coulomb and Riesz
interactions by Serfaty and subsequent works
\cite{Duerinckx2016,Serfaty2020,Nguyen2021,Rosenzweig2022Coulomb}.  The
first-order derivative of the modulated energy is naturally a transport
commutator.  Rosenzweig and Serfaty obtained sharp estimates of arbitrary order
for Coulomb and super-Coulomb Riesz energies
\cite{RosenzweigSerfaty2024}, and Hess-Childs, Rosenzweig, and Serfaty proved
the optimal first-order estimate with additive error $N^{s/d-1}$ throughout the Riesz family and
for a class of Riesz-type potentials
\cite{HessChildsRosenzweigSerfaty2025}.  Their later work clarifies the
regularity required of the transport field
\cite{HessChildsRosenzweigSerfaty2026Regularity}.  These are the closest
functional inequalities to Theorem~\ref{thm:yukawa_commutator}.

The Yukawa kernel falls outside these frameworks for two structural reasons.
First, the class in \cite{Nguyen2021} requires, at the Coulomb
exponent, a global Fourier comparison with $|\xi|^{-2}$, whereas
\[
 \widehat g_{\kappa,d}(\xi)=\frac1{|\xi|^2+\kappa^2},
 \qquad
 |\xi|^2\widehat g_{\kappa,d}(\xi)\longrightarrow0
 \quad(|\xi|\downarrow0).
\]
Thus the low-frequency part of the Yukawa energy is genuinely weaker than the
Coulomb energy.  In addition, the Coulomb and sub-Coulomb framework in
\cite{Nguyen2021} uses local superharmonicity in the smearing argument, whereas
away from the origin the Yukawa kernel satisfies
\[
 \Delta g_{\kappa,d}=\kappa^2 g_{\kappa,d}>0.
\]
Some analytic commutator ingredients in that work require only weaker Fourier
upper bounds.  At finite $N$, the renormalized estimate in the natural Yukawa
energy also depends on the smearing argument described above.  Second, the
Riesz-type admissible class of
\cite{HessChildsRosenzweigSerfaty2025} retains Riesz behavior at large scales:
at the Coulomb exponent it is globally comparable to $|x|^{2-d}$ for
$d\ge3$, while $g_{\kappa,d}$ decays exponentially; in dimension two an
admissible logarithmic representative has $-\log|x|+O(1)$ behavior, whereas
every fixed additive normalization of the Yukawa kernel has a finite limit at
infinity.  Proposition~\ref{prop:yukawa_not_riesz_type} in
Appendix~\ref{app:yukawa_riesz_admissible} verifies directly that the Yukawa
kernel does not satisfy the corresponding Riesz-type admissibility hypotheses.

\paragraph{Comparison with a perturbative decomposition around the Coulomb kernel.}
Serfaty's Coulomb theory allows the addition of a sufficiently regular
interaction \cite{Serfaty2020}.  For fixed $\kappa>0$ in dimension two this
can yield a mean-field statement formulated in a Coulomb modulated energy,
whereas Theorem~\ref{thm:yukawa_commutator} closes in the natural Yukawa
modulated energy.  In dimensions $d\ge3$, the Yukawa--Coulomb force correction
lacks the regularity required for that reduction; in every dimension, the
natural Coulomb quadratic form is not controlled by the Yukawa quadratic form.
More precisely, for neutral $\nu$,
the two Fourier weights are $|\xi|^{-2}$ and
$(|\xi|^2+\kappa^2)^{-1}$, whose ratio is unbounded at low frequency.  As
proved in Subsection~\ref{sec:coulomb_perturbation_comparison}, even for fixed
$\kappa>0$ there is no global estimate $I_0[\nu]\le C_\kappa I_\kappa[\nu]$
on smooth compactly supported neutral data.  Our commutator theorem instead closes directly in the Yukawa energy, with a
constant uniform as the screening parameter tends to zero.

\paragraph{Other metrics and the companion paper.}
Nguyen and Serfaty recently introduced a multiscale heat-kernel mollification
metric for a broad class of first-order singular interactions
\cite{NguyenSerfaty2026}.  Their method is not restricted to potential flows
and uses a different control functional, while the estimate here is formulated
in the natural Yukawa modulated energy.  Recent complementary
directions include time-dependent particle weights
\cite{BenPoratCarrilloJabin2026} and the dual-BBGKY derivation of the
second-order two-dimensional Vlasov--Poisson limit without microscopic cutoff
\cite{DuerinckxJabin2026}.

A companion paper by the present authors studies a different problem:
weak--strong stability for Coulomb flows with bounded density and for Riesz
flows \cite{JiangQiaoWuZhangCoulombRiesz2026}.  It combines existing Coulomb
and Riesz commutator estimates with dissipation and a normalized quadratic
transport cost.  In the Coulomb case, the remaining negative force-error term
is used to handle a non-Lipschitz reference velocity.  The present paper
instead assumes a Lipschitz reference velocity in its abstract weak--strong
theorem and develops the uniform Yukawa commutator estimate together with the
modified Helmholtz truncation needed for its proof.  Once this functional inequality is available, we use the same elementary
combination of transport and dissipation estimates as in the companion paper.
Thus the two papers share the dynamical stability argument, while their
analytic inputs are distinct: the companion paper uses existing Coulomb and
Riesz commutator estimates, whereas the present work establishes the uniform
Yukawa commutator estimate for the modified Helmholtz operator.

For the Coulomb endpoint we use the standard Coulomb energy and stress-energy
representations from \cite{Serfaty2020,Duerinckx2016}; the comparison with
sharp finite-$N$ Coulomb estimates is discussed in
Section~\ref{sec:coulomb_limit}.  Surveys and lecture notes on Coulomb and Riesz
gases and modulated energy methods include
\cite{SerfatyLectures2024,Rosenzweig2026Survey}.

\subsection{Strategy of the proof}

The proof follows the standard Coulomb and Riesz strategy of smearing point
charges, choosing microscopic radii, and estimating the first variation; see,
in particular,
\cite{Serfaty2020,Nguyen2021,RosenzweigSerfaty2024,HessChildsRosenzweigSerfaty2025}.
The modified Helmholtz operator changes the truncation source and its total
mass, so the corresponding terms must be controlled directly in the Yukawa
energy.  We first obtain positivity at a common truncation radius and only
then introduce nearest-neighbor radii.  This order avoids a circular use of the field energy associated with
variable truncation radii.

\smallskip
\noindent\emph{\textbf{I.\ Truncation for the modified Helmholtz operator and loss of mass.}}
We truncate the Yukawa potential at radius $r$ by making it constant inside
$B_r$ and compute $L_\kappa g_{\kappa,r}$ exactly.  The source is the sum of
the positive surface and volume measures in
\eqref{eq:intro_smeared_charge_preview}, and its total mass satisfies
\eqref{eq:intro_mass_loss_preview}.  The resulting signed measure is generally
nonneutral but has finite Yukawa energy for every $\kappa>0$, while the
contribution of $1-M_d(\kappa r)$ is controlled uniformly at the truncation
scale.

\smallskip
\noindent\emph{\textbf{II.\ Positivity at a common truncation radius and nearest-neighbor
diagonal correction.}}
Before introducing configuration-dependent radii, we compare the original
modulated energy with a positive field energy at one common truncation radius.
This yields the lower bound required for the subsequent argument without
using the estimate for the diagonal correction at variable radii.  We then set
\[
 r_i=\min\left\{\eta,\frac1{16}\min_{j\ne i}|x_i-x_j|\right\},
\]
so that the enlarged balls remain disjoint.  The corresponding
nearest-neighbor pairs control the diagonal correction term, which yields the
required estimate for the energy of the truncated field without any uniform
lower bound on particle separation.

\smallskip
\noindent\emph{\textbf{III.\ Stress-energy identity and averaging over truncation
radii.}}
For the finite-energy truncated field we use
\[
 T_\kappa[h]
 =2\nabla h\otimes\nabla h-(|\nabla h|^2+\kappa^2h^2)\mathrm{Id}.
\]
The surface part of the truncated source produces a jump in the normal
derivative, and integration by parts across the interface expresses the
surface contribution through the arithmetic mean of the one-sided gradient
traces.  After comparing the first variations of the point and smeared
charges, we average over a common multiplicative scaling of the truncation
radii.  Coarea and Cauchy--Schwarz then convert the surface terms into annular
bulk energy and yield the uniform commutator estimate.

\smallskip
\noindent\emph{\textbf{IV.\ Dissipative weak--strong stability.}}
Along the particle and mean-field dynamics, differentiation of the modulated
energy produces the commutator and the exact negative term $-2\cD_N$.
Differentiation of the normalized quadratic transport cost contributes at most
$+\cD_N$.  Their sum therefore retains $-\cD_N$, which controls the
mean-square discrepancy between empirical and mean-field forces.  Gronwall's inequality then gives Wasserstein stability for the full
$N$-particle law, and symmetry yields the corresponding estimates for each
fixed marginal.

\smallskip
\noindent\emph{\bf V. Uniform classical solutions and the Coulomb limit.}
Uniform kernel bounds yield a common local class of classical solutions for
$0\le\kappa\le\kappa_*$.  To compare the Yukawa and Coulomb solutions, we use
the Coulomb energy and stress-energy identity together with an elementary
$\dot H^{-1}$ chain rule and a quadratic transport estimate.  This gives
$W_2^2+I_0=O(\kappa^2)$ in $d=2$ and $W_2^2+I_0=O(\kappa^4)$ in $d\ge3$, as
well as the corresponding integrated Coulomb field estimate.  In $d=3$, the
bounded pointwise force correction also gives a direct comparison at the
particle level for general symmetric initial laws with finite initial error.

\section{Yukawa kernel, modulated energy, and stress-energy tensor}\label{sec:yukawa_main}

\subsection{Notation and energy conventions}

Symbols such as $\rho$, $\rho^\kappa$, and $\rho^{\mathrm C}$ denote
one-particle probability measures; whenever $L^p$ or H\"older regularity is
assumed, we identify the measure with its density.  By contrast, $\rho_N$
denotes a probability measure on $(\R^d)^N$ and $\rho_{N:k}$ its $k$-particle
marginal.  The quantities $\cF_N$, $\cH_N$, $\cD_N$, and $\cQ_N$ are
finite-$N$ functionals, while $\mathscr F$, $\mathscr D$, and $\mathscr Q$
are reserved for their continuum counterparts in
Section~\ref{sec:coulomb_limit}.  Signed measures or signed densities are
denoted by $\nu$ (with decorations when needed), and $h$ or $\phi$ denotes
the associated scalar potential.

Throughout this section, $0<\kappa\le\kappa_*$ and $g_{\kappa,d}$ denotes the
Yukawa kernel introduced above.  We use the $H^{-1}_\kappa$--$H^1_\kappa$
energy formulation for the commutator estimate.  After fixing the energy
normalization, we establish the background estimates and the stress-energy
identity with interface terms needed for truncated point charges.  The
modulated energy $\cF_N^{g_{\kappa,d}}$ and the commutator
$\cH_N^{g_{\kappa,d}}$ were defined in the Introduction.  We write
\[
  L_\kappa:=-\Delta+\kappa^2,\qquad
  V_\rho^\kappa:=g_{\kappa,d}*\rho,\qquad
  u_\rho^\kappa=-\nabla V_\rho^\kappa.
\]
The particle collision set is
\[
  \Delta_N:=\{X_N\in(\R^d)^N:\exists i\ne j,\ x_i=x_j\}.
\]
The symbol \(\Delta\) is reserved for the Laplacian.  Singular double
integrals are taken off the pair diagonal \(x=y\) for collision-free
configurations, where this agrees with deletion of the labeled terms
\(i=j\).  Whenever lower semicontinuity on the whole configuration space is
needed, the labeled particle--particle sum is used and the energy is set to
\(+\infty\) on \(\Delta_N\), as in
Section~\ref{subsec:particle_distribution_preliminaries}.  The identity matrix is
denoted by \(\mathrm{Id}\).

\begin{remark}[Effect of additive constants on the modulated energy]
\label{rem:additive_shift_bookkeeping}
Let $g$ be an interaction kernel for which the expressions below are finite,
let $c\in\R$ be a constant, let $\rho$ be a non-atomic probability measure,
and let $X_N$ be collision free.  Then the off-diagonal convention gives
\begin{equation}\label{eq:additive_shift_off_diagonal_ledger}
 \cF_N^{g+c}(X_N,\rho)=\cF_N^g(X_N,\rho)-\frac{c}{N}.
\end{equation}
By contrast, if $\nu$ is a finite signed measure of total mass
$m:=\nu(\R^d)$ and the full quadratic integrals are finite, then
\begin{equation}\label{eq:additive_shift_full_quadratic_ledger}
 \iint (g(x-y)+c)\,\dd\nu(x)\dd\nu(y)
 =\iint g(x-y)\,\dd\nu(x)\dd\nu(y)+c m^2.
\end{equation}
For later use, the two identities can be recorded together as
\begin{equation}\label{eq:additive_shift_ledger_summary}
\begin{aligned}
 \cF_N^{g+c}(X_N,\rho)-\cF_N^g(X_N,\rho)
 &= -\frac{c}{N},\\
 \iint (g(x-y)+c)\,\dd\nu(x)\dd\nu(y)
 -\iint g(x-y)\,\dd\nu(x)\dd\nu(y)
 &= c\,m^2,
 \qquad m=\nu(\R^d).
\end{aligned}
\end{equation}
Thus an additive constant changes the off-diagonal modulated energy by
$-c/N$, whereas it changes the full quadratic energy of a finite signed
measure of total mass $m$ by $c m^2$.  The distinction between the two
identities in \eqref{eq:additive_shift_ledger_summary} is essential once
point charges are replaced by Yukawa smeared charges, because the latter have
mass $M_d(\kappa r)<1$.
In particular, for a positive measure $\delta$ of mass $M$, the additive
contribution to its genuine shifted quadratic self-energy is $cM^2$, whereas
the additive contribution to an off-diagonal empirical modulated energy is
$-c/N$.  Indeed, the full quadratic contribution of the constant kernel is
$c\,\nu(\R^d)^2$.  Applied to the neutral signed measure
$\mu_{X_N}-\rho$, this contribution vanishes before the diagonal is deleted;
removing the $N$ labeled atomic diagonal terms subtracts
$c\sum_{i=1}^N N^{-2}=c/N$, which proves
\eqref{eq:additive_shift_off_diagonal_ledger}.
\end{remark}

\begin{remark}
\label{rem:two_dimensional_normalization}
When $d=2$, the unshifted Yukawa kernel $g_{\kappa,2}$ is used in every
$L_\kappa$-source identity, potential truncation, smeared charge, positive
field energy, stress tensor, and force formula.  The additively normalized kernel
$g_{\kappa,2}^{\sharp}=g_{\kappa,2}+c_{\kappa,2}$ is used only to fix the
additive convention in off-diagonal energies and in the two-dimensional
logarithmic Coulomb limit.  Although the added constant does not change the force, the operator
$L_\kappa=-\Delta+\kappa^2$ does not annihilate constants:
\[
 L_\kappa(g_{\kappa,2}+c_{\kappa,2})
 =\delta_0+\kappa^2c_{\kappa,2}\,\dd x.
\]
Accordingly, all source identities, truncated fields, and positivity
arguments use the unshifted kernel $g_{\kappa,2}$.  The additive constant is
introduced only algebraically, after the modified Helmholtz identities have
been established.  Remark~\ref{rem:additive_shift_bookkeeping}
records the corresponding normalization rules: an off-diagonal neutral
empirical energy acquires $-c/N$, whereas the full quadratic energy of a
nonneutral measure of mass $M$ acquires $cM^2$.  Accordingly,
$c_{\kappa,2}M_d(\kappa r)$ is the diagonal correction associated with the
off-diagonal convention, while the constant contribution to the quadratic
self-energy is $c_{\kappa,2}M_d(\kappa r)^2$.  Formula
\eqref{eq:true_shifted_smeared_self_energy} gives the relation between these
quantities explicitly.

All truncation identities below are therefore formulated for $g_{\kappa,2}$;
the superscript $\sharp$ appears only in diagonal correction terms and
off-diagonal energies.  Changing the normalization by a
$\kappa$-independent constant changes the finite-$N$ off-diagonal energy by
only $O(N^{-1})$, which is absorbed by $\varepsilon_N^\kappa(\eta)$.  Theorem~\ref{thm:yukawa_commutator}
requires no moment, compact-support, or tail assumption.  A logarithmic moment
is used only for the logarithmic Coulomb limit with the chosen normalization
in Section~\ref{sec:coulomb_limit}.
\end{remark}

We use the energy norm
\begin{equation}
\label{eq:yukawa_energy_norm}
  \|h\|_{H^1_\kappa}^2
  :=\int_{\R^d}(|\nabla h|^2+\kappa^2|h|^2)\,\dd x.
\end{equation}
For each fixed $\kappa>0$, this norm is equivalent to the standard $H^1$ norm,
with
\[
 \min\{1,\kappa^2\}\|h\|_{H^1}^2
 \le \|h\|_{H^1_\kappa}^2
 \le \max\{1,\kappa^2\}\|h\|_{H^1}^2.
\]
For each fixed $\kappa>0$, $H^{-1}_\kappa$ agrees with the usual $H^{-1}$ as
a set, although the norm-equivalence constants are not uniform as
$\kappa\downarrow0$.  Density and approximation arguments in these spaces are
therefore carried out at fixed $\kappa$.  Every estimate asserted to be
uniform in the screening parameter is written directly in the weighted norms
and does not use this norm equivalence.  A dot over a Sobolev space denotes
its homogeneous version.  Our Fourier transform
convention is
\begin{equation}
\label{eq:global_fourier_convention}
  \widehat f(\xi):=\int_{\R^d}e^{-ix\cdot\xi}f(x)\,\dd x,
  \qquad
  f(x)=(2\pi)^{-d}\int_{\R^d}e^{ix\cdot\xi}\widehat f(\xi)\,\dd\xi.
\end{equation}
The same convention is used for finite measures and tempered distributions.
For $\omega\in\mathscr S'(\R^d)$ such that
$(|\xi|^2+\kappa^2)^{-1/2}\widehat\omega\in L^2(\R^d)$, set
\[
 \|\omega\|_{H^{-1}_\kappa}^2
 :=(2\pi)^{-d}\int_{\R^d}
       \frac{|\widehat\omega(\xi)|^2}{|\xi|^2+\kappa^2}\,\dd\xi.
\]
If \(h=L_\kappa^{-1}\omega\) in the distributional sense, Plancherel's identity gives
\begin{equation}
\label{eq:yukawa_dual_norm}
  \|\omega\|_{H^{-1}_\kappa}^2
  =\|h\|_{H^1_\kappa}^2
  =\langle\omega,h\rangle_{H^{-1}_\kappa,H^1_\kappa}.
\end{equation}
For \(\nu\in H^{-1}_\kappa\), define
\begin{equation}
\label{eq:global_signed_yukawa_energy_convention}
 I_\kappa[\nu]
 :=\langle \nu,L_\kappa^{-1}\nu\rangle_{H^{-1}_\kappa,H^1_\kappa}
 =\|\nu\|_{H^{-1}_\kappa}^2
 =\|g_{\kappa,d}*\nu\|_{H^1_\kappa}^2,
\end{equation}
where the convolution is understood distributionally whenever it is not an
ordinary absolutely convergent integral.
For \(\nu,\omega\in H^{-1}_\kappa\), we define the mutual energy by polarization,
\begin{equation}
\label{eq:global_mutual_yukawa_energy_convention}
 I_\kappa[\nu,\omega]
 :=\frac14\bigl(I_\kappa[\nu+\omega]-I_\kappa[\nu-\omega]\bigr)
 =\langle \nu,L_\kappa^{-1}\omega\rangle.
\end{equation}
A finite signed Radon measure is called a \emph{finite-energy measure} when it
belongs to \(H^{-1}_\kappa\).  In this class the notation
\(\iint g_{\kappa,d}(x-y)\,\dd\nu(x)\dd\omega(y)\) means
\(I_\kappa[\nu,\omega]\) unless absolute convergence is stated explicitly.
For a bounded probability density \(\rho\), this agrees with the ordinary
double integral and
\[
 I_\kappa[\rho]
 =\iint g_{\kappa,d}(x-y)\rho(x)\rho(y)\,\dd x\dd y
 =\int V_\rho^\kappa\,\rho.
\]
Thus $I_\kappa[\cdot]$ always denotes a scalar quadratic energy; its argument
may be a density or, more generally, a finite-energy signed measure.

For the Poisson operator, whenever a neutral tempered distribution $\nu$
satisfies $|\xi|^{-1}\widehat\nu\in L^2$, we use the homogeneous notation
\begin{equation*}
 I_0[\nu]
 :=(2\pi)^{-d}\int_{\R^d}
       \frac{|\widehat\nu(\xi)|^2}{|\xi|^2}\,\dd\xi.
\end{equation*}
When $d\ge3$ and $\rho$ is a bounded probability density, we also write
\begin{equation}\label{eq:global_unscreened_background_convention}
 V_\rho^{\mathrm C}:=g_{0,d}*\rho,
 \qquad
 I_0[\rho]:=\iint g_{0,d}(x-y)\rho(x)\rho(y)\,\dd x\dd y
 =\int V_\rho^{\mathrm C}\,\rho,
\end{equation}
whenever these equivalent finite quantities are used.  In dimension two we
do not use $I_0[\rho]$ or $V_\rho^{\mathrm C}$ for a nonneutral probability density:
the logarithmic potential depends on the chosen additive normalization, and
the neutral homogeneous energy needed later is recorded separately in
Section~\ref{sec:coulomb_limit}.

The estimates below require only a bounded probability density.  For
\(d\ge3\), since \(0\le g_{\kappa,d}\le g_{0,d}\), a decomposition into the
near and far fields gives
\begin{equation}\label{eq:static_background_bound_dge3}
 \|V_\rho^\kappa\|_{L^\infty}+I_\kappa[\rho]
 \le C_d(1+\|\rho\|_{L^\infty}).
\end{equation}
For \(d=2\), the Yukawa background bound is
\eqref{eq:two_dimensional_background_uniform_bounds}.  The additional estimate
\eqref{eq:two_dimensional_normalized_background_energy_bound} is needed only
when the normalized background energy must be controlled uniformly, notably
for well-prepared product initial data and for the Coulomb limit.  In every
dimension,
$
 L_\kappa V_\rho^\kappa=\rho
$
in distributions.  Hence, for every \(1<p<\infty\),
\(V_\rho^\kappa\in W^{2,p}_{\mathrm{loc}}(\R^d)\) by local elliptic regularity,
with local bounds depending only on \(d,p,\kappa_*\), the local \(L^p\) norm
of \(V_\rho^\kappa\), and \(\|\rho\|_\infty\).  Taking \(p>d\) yields the
one-sided \(C^{1,\alpha}\) traces required by the stress-energy identity with interface terms.

All estimates below are uniform for \(0<\kappa\le\kappa_*\) under the
background bounds in Theorem~\ref{thm:yukawa_commutator}.  The unscreened
limit is treated in Section~\ref{sec:coulomb_limit}.  We use
\(\varepsilon_N^\kappa(\eta)\) from \eqref{eq:intro_additive_error}.
Throughout, all sums $\sum_{i\ne j}$ are over \emph{ordered} labeled pairs.
Accordingly, the microscopic interaction energy carries the factor
\(1/(2N^2)\), whereas the particle--particle term in the modulated energy
carries \(1/N^2\); the particle--background contribution carries the factor
\(-2/N\).  The same convention of summing over ordered pairs is used when differentiating the
interaction energy along a transport.

For \(X_N\notin\Delta_N\), a bounded probability density \(\rho\), and a
globally Lipschitz vector field \(u:\R^d\to\R^d\), the commutator in
\eqref{eq:H_def_intro} expands as
\begin{equation}
\label{eq:point_commutator_explicit_lipschitz}
\begin{aligned}
 \cH_N^{g_{\kappa,d}}(X_N,\rho;u)
 &=\frac1{N^2}\sum_{i\ne j}
   (u(x_i)-u(x_j))\cdot\nabla g_{\kappa,d}(x_i-x_j)\\
 &\quad-\frac2N\sum_i\int
   (u(x_i)-u(y))\cdot\nabla g_{\kappa,d}(x_i-y)\rho(y)\,\dd y\\
 &\quad+\iint
   (u(x)-u(y))\cdot\nabla g_{\kappa,d}(x-y)\rho(x)\rho(y)\,\dd x\,\dd y.
\end{aligned}
\end{equation}
Since \(\mu_{X_N}-\rho\) has infinite Yukawa self-energy, the stress-energy
form is applied only after the point charges have been truncated.  The three
terms in \eqref{eq:point_commutator_explicit_lipschitz} are nevertheless
absolutely convergent.  Indeed, for $z\ne0$,
\[
 |u(x+z)-u(x)|\,|\nabla g_{\kappa,d}(z)|
 \le C_{d,\kappa_*}\|\nabla u\|_{L^\infty}
 \bigl(|z|^{2-d}\mathbf1_{\{|z|\le1\}}+\mathbf1_{\{|z|>1\}}\bigr),
\]
which is locally integrable in every fixed dimension $d\ge2$.  A near- and far-field decomposition, using
$\rho\in L^\infty$ near the origin and $\rho\in L^1$ at infinity, also gives
\begin{equation*}
 |\cH_N^{g_{\kappa,d}}(X_N,\rho;u)|
 \le \frac{C\|\nabla u\|_{L^\infty}}{N^2}
       \sum_{i\ne j}\bigl(g_{\kappa,d}(x_i-x_j)+1\bigr)
      +C_{d,\kappa_*}\|\nabla u\|_{L^\infty}
       (1+\|\rho\|_{L^\infty}).
\end{equation*}
In particular, the commutator is defined entirely through off-diagonal
terms.

The finite-energy stress pairing for the smeared charges is introduced next.

\subsection{Stress-energy tensor and interface terms}\label{subsec:stress_energy_interfaces}

\begin{definition}[Stress-energy pairing for finite-energy signed measures]\label{def:lipschitz_yukawa_stress_form}
Let \(0<\kappa\le \kappa_*\), let \(\nu\) be a finite signed measure on
\(\R^d\) with finite Yukawa energy, of arbitrary total mass, and set
\(h=g_{\kappa,d}*\nu\).  At fixed $\kappa>0$, the energy space does not impose a neutrality condition, because the multiplier
\((|\xi|^2+\kappa^2)^{-1}\) is nonsingular at \(\xi=0\).  The Yukawa
stress-energy tensor is
\[
  T_\kappa[h]
  :=2\nabla h\otimes\nabla h-
  \bigl(|\nabla h|^2+\kappa^2h^2\bigr)\mathrm{Id}.
\]
The pointwise quadratic bound gives
\[
 \|T_\kappa[h]\|_{L^1}
 \le C\|h\|_{H^1_\kappa}^2<\infty.
\]
For every globally Lipschitz \(u:\R^d\to\R^d\), we define
\[
\mathcal B_\kappa(\nu;u)
  :=\int_{\R^d}\nabla u(x):T_\kappa[h](x)\dd x.
\]
For smooth compactly supported sources this pairing agrees with the first
variation written as a double integral, as made explicit below.
\end{definition}

The stress-energy representation of the first variation under transport is standard
for Coulomb and Riesz interactions; see, for example,
\cite[equations~(1.23)--(1.25) and Section~4.1]{Serfaty2020}.  For the
modified Helmholtz operator the same calculation contains the additional
zeroth-order term $-\kappa^2h^2\mathrm{Id}$ in the stress tensor.  Thus, if
$f\in C_c^\infty(\R^d)$, $h=g_{\kappa,d}*f$, and
$u:\R^d\to\R^d$ is globally Lipschitz, then
\begin{equation}
\label{eq:smooth_yukawa_stress}
 \iint (u(x)-u(y))\cdot\nabla g_{\kappa,d}(x-y)f(x)f(y)\,\dd x\dd y
 =\int_{\R^d}\nabla u:T_\kappa[h] \,\dd x.
\end{equation}
Indeed, $L_\kappa h=f$ and a direct computation gives
$\nabla\!\cdot T_\kappa[h]=-2f\nabla h$.  Since $f$ is compactly supported,
$h$ and $\nabla h$ have exponential Yukawa decay at infinity, while a globally
Lipschitz $u$ has at most linear growth.  Integration by parts therefore has no
boundary contribution and yields
\[
 \int_{\R^d}\nabla u:T_\kappa[h] \,\dd x
 =2\int_{\R^d}u\cdot\nabla h\,f\,\dd x.
\]
Using $\nabla h(x)=\int\nabla g_{\kappa,d}(x-y)f(y)\,\dd y$ and the oddness
of $\nabla g_{\kappa,d}$, the last expression is exactly the left-hand side of
\eqref{eq:smooth_yukawa_stress}.  Hence, for smooth sources, the stress pairing
coincides with the classical first variation under transport.

The pointwise bound on the stress tensor gives
\begin{equation}
\label{eq:finite_energy_stress_bound}
  |\mathcal B_\kappa(\nu;u)|
  \le C_d\|\nabla u\|_{L^\infty} I_\kappa[\nu].
\end{equation}
Indeed, $\|T_\kappa[h]\|_{L^1}\le C_d\|h\|_{H^1_\kappa}^2$
and $I_\kappa[\nu]=\|h\|_{H^1_\kappa}^2$, so the constant in
\eqref{eq:finite_energy_stress_bound} depends only on the dimension.
Moreover, if $h_n\to h$ strongly in $H^1_\kappa$, then
\[
 \|T_\kappa[h_n]-T_\kappa[h]\|_{L^1}
 \le C_d(\|h_n\|_{H^1_\kappa}+\|h\|_{H^1_\kappa})
       \|h_n-h\|_{H^1_\kappa}\longrightarrow0,
\]
so the stress pairing is continuous in the finite-energy topology.  The smooth
identity \eqref{eq:smooth_yukawa_stress} gives the classical interpretation
of this pairing; the quantitative estimates below use
\eqref{eq:finite_energy_stress_bound} directly and require no approximation
argument.

\subsubsection{Interface traces}

The truncated Yukawa source contains a surface measure.  Its potential is
continuous across each truncation sphere, while the normal derivative has
distinct one-sided traces.  The interface contribution to the stress identity
is expressed through their arithmetic mean.

\begin{definition}[Average of one-sided traces]
\label{def:arithmetic_mean_interface_trace}
Let \(\Sigma\subset\R^d\) be a compact embedded \(C^2\) surface, and let
\(n\) be a unit normal pointing from \(\Omega^-\) to \(\Omega^+\).  Suppose
that \(H|_{\Omega^\pm}\) admits a \(W^{2,p}\)-extension to
\(\overline{\Omega^\pm}\) in a tubular chart for some \(p>d\).  Write
\[
 \gamma^\pm_\Sigma\nabla H\in W^{1-1/p,p}(\Sigma;\R^d)
 \hookrightarrow C^{0,1-d/p}(\Sigma;\R^d)
\]
for the one-sided Sobolev traces and define
\begin{equation*}
 \langle \nabla H\rangle_\Sigma
 :=\frac12\bigl(\gamma^+_\Sigma\nabla H+
                 \gamma^-_\Sigma\nabla H\bigr).
\end{equation*}
Reversing \(n\) exchanges the labels \(+\) and \(-\) and leaves
\(\langle \nabla H\rangle_\Sigma\) unchanged.
\end{definition}

We use the following elementary interface version of the standard
stress-energy integration-by-parts identity.  It identifies the trace
combination associated with the surface component of the truncated Yukawa
source.

\begin{proposition}[Stress-energy identity with interface terms]
\label{prop:average_trace_yukawa_green_stress}
Let \(\Sigma_1,\dots,\Sigma_m\subset\R^d\) be pairwise disjoint compact
embedded \(C^2\) surfaces with unit normals \(n_\ell\).  Let \(p>d\) and
assume
\[
 H\in H^1_\kappa(\R^d)\cap C^0(\R^d),
 \qquad
 H|_\Omega\in W^{2,p}_{\mathrm{loc}}(\Omega)
 \quad\text{for every component }\Omega\subset
 \R^d\setminus\bigcup_{\ell=1}^m\Sigma_\ell,
\]
with \(W^{2,p}\) extensions to both closed sides of each interface.  We write
\[
 \nabla H^\pm:=\gamma_{\Sigma_\ell}^\pm\nabla H,
 \qquad
 \langle \nabla H\rangle:=\frac12(\nabla H^++\nabla H^-).
\]
Assume
\begin{equation}
\label{eq:interface_source_general}
 L_\kappa H
 =f\,\dd x+
 \sum_{\ell=1}^m\lambda_\ell\,
 \mathcal H^{d-1}\lfloor_{\Sigma_\ell},
 \qquad
 f\in L^1_{\mathrm{loc}},
 \qquad
 \lambda_\ell\in L^1(\Sigma_\ell).
\end{equation}
Then we have
\begin{equation}
\label{eq:normal_jump_average_trace}
 \lambda_\ell
 =\partial_{n_\ell}H^- -\partial_{n_\ell}H^+
 \qquad\text{for }\mathcal H^{d-1}\text{-a.e.\ point of }\Sigma_\ell,
\end{equation}
and, for every compactly supported globally Lipschitz vector field \(u:\R^d\to\R^d\),
\begin{equation}
\label{eq:average_trace_stress_identity}
\begin{aligned}
 \int_{\R^d}\nabla u:T_\kappa[H]\,\dd x
 =2\int_{\R^d\setminus\cup_\ell\Sigma_\ell}
 u\cdot\nabla H\,f\,\dd x+2\sum_{\ell=1}^m\int_{\Sigma_\ell}
 u\cdot\langle \nabla H\rangle\,\lambda_\ell\,\dd\mathcal H^{d-1}.
\end{aligned}
\end{equation}
Both \eqref{eq:normal_jump_average_trace} and
\eqref{eq:average_trace_stress_identity} are independent of the choice of
orientation of \(n_\ell\).
\end{proposition}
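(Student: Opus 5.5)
The proof is a piecewise integration by parts: apply the divergence theorem to $T_\kappa[H]$ separately on each component of $\R^d\setminus\bigcup_\ell\Sigma_\ell$, then read off the interface terms. We first record the pointwise identity, valid on each open component $\Omega$ where $H\in W^{2,p}_{\mathrm{loc}}$:
\[
 \nabla\cdot T_\kappa[H]=2\Delta H\,\nabla H-2\kappa^2H\nabla H=-2(L_\kappa H)\nabla H=-2f\nabla H
 \quad\text{a.e. in }\Omega.
\]
This uses $\partial_j(\partial_iH\partial_jH)=\Delta H\,\partial_iH+\tfrac12\partial_i|\nabla H|^2$, and the fact that the absolutely continuous part of $L_\kappa H$ on $\Omega$ equals $f$ by \eqref{eq:interface_source_general}. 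The singular part is carried only by the $\Sigma_\ell$, which are disjoint from $\Omega$. Since $p>d$, $\nabla H$ is continuous up to the interfaces from each side, so all traces make sense pointwise.

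\emph{Jump relation \eqref{eq:normal_jump_average_trace}.} Test \eqref{eq:interface_source_general} against $\varphi\in C_c^\infty$ supported in a tubular neighbourhood of one $\Sigma_\ell$ that meets no other interface. On each side, integrate $\int\nabla H\cdot\nabla\varphi$ by parts using the $W^{2,p}$ extensions; $H$ is continuous, so no trace of $H$ appears. With $n=n_\ell$ pointing from $\Omega^-$ to $\Omega^+$, the outward normal of $\Omega^-$ is $n$ and that of $\Omega^+$ is $-n$. This gives
\[
\int_{\Sigma_\ell}\varphi(\partial_nH^- -\partial_nH^+)\,\dd\mathcal H^{d-1}=\int_{\Sigma_\ell}\varphi\lambda_\ell\,\dd\mathcal H^{d-1}.
\]
Since $\varphi$ is arbitrary, \eqref{eq:normal_jump_average_trace} follows. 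Reversing $n$ swaps the labels $\pm$ and flips the sign of $\partial_n$, so the formula is unchanged.

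\emph{Stress identity \eqref{eq:average_trace_stress_identity}.} Here $u$ is compactly supported and Lipschitz, so only finitely many bounded regions contribute and there is no boundary at infinity. The term $\nabla u:T_\kappa[H]$ is in $L^1$ because $H\in H^1_\kappa$, and the interfaces have measure zero. On each component, the divergence theorem for the $W^{1,p}\cdot W^{1,\infty}$ field $T_\kappa[H]^{\!\top}u$ gives
\[
\int_\Omega\nabla u:T_\kappa[H]=2\int_\Omega u\cdot\nabla H\,f+\int_{\partial\Omega}u\cdot T_\kappa[H]\,n_{\mathrm{out}}.
\]
Summing over components, each $\Sigma_\ell$ contributes $\int_{\Sigma_\ell}u\cdot\bigl(T_\kappa[H^-]-T_\kappa[H^+]\bigr)n$. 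To evaluate this, write $\nabla H^\pm=a\pm$-decomposition: continuity of $H$ across $\Sigma_\ell$ makes the tangential gradients agree, so $\nabla H^\pm=\tau+\beta^\pm n$ with a common tangential part $\tau$. The zeroth-order term $\kappa^2H^2$ is also continuous and cancels. Then
\[
T_\kappa[H^\pm]n=2\beta^\pm(\tau+\beta^\pm n)-(|\tau|^2+(\beta^\pm)^2)n,
\]
so
\[
(T^- -T^+)n=2(\beta^--\beta^+)\tau+\bigl((\beta^-)^2-(\beta^+)^2\bigr)n
=2(\beta^--\beta^+)\Bigl(\tau+\tfrac{\beta^-+\beta^+}2n\Bigr)=2\lambda_\ell\langle\nabla H\rangle,
\]
by the jump relation. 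This is exactly the surface term in \eqref{eq:average_trace_stress_identity}. Orientation independence holds because both $\lambda_\ell$ and $\langle\nabla H\rangle$ are independent of the choice of $n_\ell$.

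The main obstacle is justifying the divergence theorem on components that may be unbounded, and whose boundaries consist of several of the $\Sigma_\ell$, at the stated regularity. The plan is to localize with the compact support of $u$. In a tubular chart around each $\Sigma_\ell$ we use the $W^{2,p}$ extensions, so that $T_\kappa[H]\in W^{1,p/2}$ near the interface with continuous traces. Away from the interfaces $H\in W^{2,p}_{\mathrm{loc}}$, and the standard Gauss–Green formula for Sobolev fields on Lipschitz domains applies. The remaining step is to check that $f\nabla H\cdot u$ is integrable: $f\in L^1_{\mathrm{loc}}$ and $\nabla H$ is locally bounded because $p>d$.
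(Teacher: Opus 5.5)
Your proposal is correct and follows the paper's proof essentially step by step: the piecewise divergence identity $\nabla\cdot T_\kappa[H]=-2f\nabla H$, the jump relation obtained by testing the source identity in a tubular neighbourhood, the common tangential gradient from continuity of $H$, and the algebraic evaluation $(T_\kappa[H]^- - T_\kappa[H]^+)n_\ell=2\lambda_\ell\langle\nabla H\rangle$. The only minor difference is that you apply Gauss--Green directly to the Sobolev field $T_\kappa[H]u$ for Lipschitz $u$, whereas the paper first takes $u\in C_c^\infty$ and then passes to compactly supported Lipschitz fields by mollification and dominated convergence.
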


\begin{proof}
The jump calculation determines the arithmetic mean in
\eqref{eq:average_trace_stress_identity}: if
\(\nabla H^\pm=\tau+a_\pm n\) on an interface and
\(\lambda=a_- -a_+\), then
\[
 T_\kappa[H]^+n-T_\kappa[H]^-n
 =-2\lambda\,\frac{\nabla H^++\nabla H^-}{2}.
\]
Every term in the identity is locally integrable under the stated hypotheses.
Set
\(\alpha:=1-d/p>0\).  The assumed one-sided \(W^{2,p}\)-extensions and
Sobolev embedding give, on a tubular neighborhood of each interface,
\[
 H|_{\overline{\Omega^\pm}}\in C^{1,\alpha},
 \qquad
 \gamma_{\Sigma_\ell}^\pm\nabla H\in C^{0,\alpha}(\Sigma_\ell).
\]
In particular, \(\nabla H\) is bounded on compact subsets of each closed
side of an interface.  Since \(f\in L^1_{\mathrm{loc}}\), for every compact
set \(K\subset\R^d\) one has
\[
 \int_{K\setminus\cup_\ell\Sigma_\ell}|f|\,|\nabla H|\,\dd x<\infty.
\]
Moreover, each \(\Sigma_\ell\) is compact,
\(\lambda_\ell\in L^1(\Sigma_\ell)\), and
\(\langle \nabla H\rangle\in C^{0,\alpha}(\Sigma_\ell)\); hence
$
 \lambda_\ell\langle \nabla H\rangle\in L^1(\Sigma_\ell).
$
Finally, \(H\in H^1_\kappa\) implies
\[
 T_\kappa[H]\in L^1(\R^d),
 \qquad
 \|T_\kappa[H]\|_{L^1}
 \le C\|H\|_{H^1_\kappa}^2.
\]
Thus all volume, surface, and stress pairings below are absolutely
integrable on the support of a compactly supported transport field.

It suffices first to take \(u\in C_c^\infty(\R^d;\R^d)\).  On
\(\Sigma_\ell\), write
\[
 a_\pm:=\partial_{n_\ell}H^\pm,
 \qquad
 \nabla H^\pm=\tau+a_\pm n_\ell,
 \qquad
 \tau\cdot n_\ell=0.
\]
The two one-sided $C^{1,\alpha}$ traces restrict to the same
$C^{1,\alpha}$ function $H|_{\Sigma_\ell}$; differentiating this common trace
tangentially gives the same tangential gradient $\tau$ on both sides.
Testing \eqref{eq:interface_source_general} in a tubular neighborhood and
integrating by parts on the two sides gives
\[
 \int_{\Sigma_\ell}(a_- -a_+)\varphi\,\dd\mathcal H^{d-1}
 =\int_{\Sigma_\ell}\lambda_\ell\varphi\,\dd\mathcal H^{d-1},
\]
hence \eqref{eq:normal_jump_average_trace}.

Away from the interfaces,
\begin{equation}
\label{eq:yukawa_stress_divergence_piecewise}
\begin{aligned}
 \partial_j(T_\kappa[H])_{ij}
 &=2\partial_{ij}H\,\partial_jH+2\partial_iH\,\Delta H
   -\partial_i(|\nabla H|^2+\kappa^2H^2)\\
 &=2(\Delta H-\kappa^2H)\partial_iH
 =-2f\,\partial_iH.
\end{aligned}
\end{equation}
Moreover, we have
\[
 T_\kappa[H]^\pm n_\ell
 =2a_\pm\tau+
 \bigl(a_\pm^2-|\tau|^2-\kappa^2H^2\bigr)n_\ell,
\]
so, by \eqref{eq:normal_jump_average_trace},
\begin{equation}
\label{eq:stress_jump_average_trace}
\begin{aligned}
 T_\kappa[H]^+n_\ell-T_\kappa[H]^-n_\ell
 &=2(a_+-a_-)\tau+(a_+^2-a_-^2)n_\ell\\
 &=-2\lambda_\ell
 \left(\tau+\frac{a_++a_-}{2}n_\ell\right)\\
 &=-2\lambda_\ell\langle \nabla H\rangle.
\end{aligned}
\end{equation}
This computation also fixes the sign and the factor of two in the global
identity.  Indeed,
$n_\ell$ is the outward normal of the ``minus'' side and $-n_\ell$ is the
outward normal of the ``plus'' side.  Hence the two boundary contributions
from a common interface sum to
\[
 u\cdot\bigl(T_\kappa[H]^-n_\ell-T_\kappa[H]^+n_\ell\bigr)
 =2\lambda_\ell\,u\cdot\langle \nabla H\rangle.
\]
Together with $\nabla\cdot T_\kappa[H]=-2f\nabla H$ on each open side, this
produces the two positive terms on the right-hand side of
\eqref{eq:average_trace_stress_identity}.  Reversing $n_\ell$ exchanges the
labels $+$ and $-$ and leaves this summed contribution unchanged.

For every bounded connected component \(\Omega\) of the complement,
\[
 \int_\Omega\nabla u:T_\kappa[H] \,\dd x
 =\int_{\partial\Omega}u\cdot T_\kappa[H]n_\Omega\,\dd\mathcal H^{d-1}
  -\int_\Omega u\cdot\nabla\cdot T_\kappa[H] \,\dd x.
\]
For the unbounded component, choose a ball whose interior contains
\(\operatorname{supp}u\) and apply the same identity to the intersection with
that ball.  The additional outer boundary term vanishes because \(u=0\)
there.  Summing over all components and using
\eqref{eq:yukawa_stress_divergence_piecewise} and
\eqref{eq:stress_jump_average_trace} gives
\eqref{eq:average_trace_stress_identity}.

For compactly supported globally Lipschitz \(u\), choose
\(u_n\in C_c^\infty\) with a common compact support such that
\[
 u_n\to u\ \text{locally uniformly},
 \qquad
 \nabla u_n\to\nabla u\ \text{a.e.},
 \qquad
 \sup_n\bigl(\|u_n\|_{L^\infty}+\|\nabla u_n\|_{L^\infty}\bigr)<\infty.
\]
The integrability established at the beginning of the proof, together with
these uniform bounds, supplies a common integrable majorant for the volume,
surface, and stress terms.  Dominated convergence therefore gives
\eqref{eq:average_trace_stress_identity} for \(u\).
Replacing \(n_\ell\) by \(-n_\ell\) exchanges \(+\) and \(-\) and leaves
both displayed identities unchanged.
\end{proof}

\section{Potential truncation and renormalized energy estimates}\label{sec:truncation_energy}

Throughout this section, each point charge is replaced by the smeared charge
associated with the truncated Yukawa potential.  We first use a common
truncation radius to obtain a lower bound for the modulated energy.  Only after
this positivity has been established do we choose the radii at the
nearest-neighbor scale.  We keep the loss of total smeared mass explicit
rather than rescaling the reference density.  For fixed $\kappa>0$, the
resulting nonneutral signed source belongs to $H^{-1}_\kappa$.  The uniform
estimates never use the zero-mode bound
$\|\cdot\|_{H^{-1}_\kappa}\lesssim\kappa^{-1}\|\cdot\|_{L^2}$.  Instead,
the exact energy expansion and the Green function identity reduce the
background errors to local terms, while a short-range interaction estimate
controls $s_{\kappa,d}^{\sharp}(r_i)$.

\subsection{Potential truncation and smeared charges}

Throughout this section $d\ge2$ is fixed, $0<\kappa\le\kappa_*$,
$L_\kappa=-\Delta+\kappa^2$, and the $H^1_\kappa$ norm is the one fixed in
\eqref{eq:yukawa_energy_norm}.

Unlike the Newtonian kernel, a uniformly charged spherical shell for the
Yukawa interaction does not generate a constant interior potential
\cite[Lemma~3]{CundenFacchiLigaboVivo2019}.  We therefore prescribe a constant
interior potential and apply $L_\kappa=-\Delta+\kappa^2$ to the truncated
potential.  The zeroth-order term $\kappa^2$ then produces a positive volume
charge density inside the ball.

\begin{lemma}[Potential truncation and smeared charges]
\label{lem:yukawa_operator_truncation}
Fix $\kappa>0$.  For \(r>0\), let
\[
 \sigma_r:=(\Sd r^{d-1})^{-1}\mathcal H^{d-1}|_{\partial B_r},
 \qquad
 \sigma_{r,x}:=(y\mapsto x+y)_\#\sigma_r
\]
be the probability surface measure on \(\partial B_r\) and its translate to
\(\partial B(x,r)\).  Define
\begin{equation*}
 g_{\kappa,r}(x):=\begin{cases}
 g_{\kappa,d}(x),&|x|\ge r,\\
 g_{\kappa,d}(r),&|x|<r,
 \end{cases}
 \qquad
 \delta_0^{\kappa,r}:=L_\kappa g_{\kappa,r}.
\end{equation*}
Then \(\delta_0^{\kappa,r}\) is the positive measure with a surface component
and an interior volume component
\begin{equation}\label{eq:general_dimensional_smeared_charge}
 \delta_0^{\kappa,r}
 =A_d(\kappa r)\sigma_r
 +\kappa^2g_{\kappa,d}(r)\mathbf1_{B_r}\,\dd x,
\end{equation}
where
\begin{equation*}
 A_d(z)=\frac{z^{d/2}K_{d/2}(z)}{2^{d/2-1}\Gamma(d/2)},
\end{equation*}
and the total mass of the interior component is
\begin{equation*}
 B_d(z)=\frac{z^{d/2+1}K_{d/2-1}(z)}{2^{d/2}\Gamma(d/2+1)}.
\end{equation*}
Thus
\begin{equation}\label{eq:Md_general}
 \delta_0^{\kappa,r}(\R^d)=M_d(\kappa r):=A_d(\kappa r)+B_d(\kappa r).
\end{equation}
Moreover,
\begin{equation}\label{eq:truncated_charge_mass_loss_identity}
 1-M_d(\kappa r)
 =\kappa^2\int_{B_r}
 \bigl(g_{\kappa,d}(|x|)-g_{\kappa,d}(r)\bigr)\,\dd x\ge0,
\end{equation}
so $0<M_d(\kappa r)\le1$.  The following quadratic upper bound is global:
for every $\kappa>0$ and $r>0$,
\begin{equation}\label{eq:truncated_charge_mass_loss_bound}
 0\le1-M_d(\kappa r)\le \frac{\kappa^2r^2}{2d}.
\end{equation}
As $\kappa r\downarrow0$, we also have
\begin{equation}\label{eq:truncated_charge_mass_loss_asymptotic}
 1-M_d(\kappa r)=\frac{\kappa^2r^2}{2d}+o(\kappa^2r^2)
 \qquad(\kappa r\downarrow0).
\end{equation}
The potential generated by the smeared charge is exactly the truncated potential,
\begin{equation}\label{eq:smeared_charge_resolvent}
 g_{\kappa,d}*\delta_0^{\kappa,r}=g_{\kappa,r},
\end{equation}
and its self-energy is
\begin{equation}\label{eq:general_dimensional_self_energy}
 s_{\kappa,d}(r):=\iint g_{\kappa,d}(x-y)\,\dd\delta_0^{\kappa,r}(x)
 \dd\delta_0^{\kappa,r}(y)
 =M_d(\kappa r)g_{\kappa,d}(r).
\end{equation}
Define the diagonal correction term
\begin{equation}\label{eq:normalized_self_counterterm_definition}
 s_{\kappa,d}^{\sharp}(r)
 :=M_d(\kappa r)g_{\kappa,d}^{\sharp}(r)
 =s_{\kappa,d}(r)+c_{\kappa,d}M_d(\kappa r),
\end{equation}
and the associated correction due to the additive normalization
\begin{equation}\label{eq:additive_normalization_defect_definition}
 \mathfrak b_{\kappa,d}(r)
 :=c_{\kappa,d}\bigl(1-M_d(\kappa r)\bigr).
\end{equation}
For $d\ge3$, $c_{\kappa,d}=0$ and $s_{\kappa,d}^{\sharp}=s_{\kappa,d}$ is
the genuine Yukawa self-energy.  In dimension two, $s_{\kappa,2}^{\sharp}$ is a
\emph{diagonal correction term} associated with the off-diagonal normalization
of $\cF_N^{\kappa,\sharp}$, whereas the quadratic self-energy of
$\delta_0^{\kappa,r}$ computed with the additively normalized kernel is given
by
\begin{equation}\label{eq:true_shifted_smeared_self_energy}
 \iint g_{\kappa,d}^{\sharp}(x-y)\,
   \dd\delta_0^{\kappa,r}(x)\dd\delta_0^{\kappa,r}(y)
 =s_{\kappa,d}(r)+c_{\kappa,d}M_d(\kappa r)^2
 =s_{\kappa,d}^{\sharp}(r)
  -M_d(\kappa r)\mathfrak b_{\kappa,d}(r).
\end{equation}
Thus the linear factor $M_d$ in
\eqref{eq:normalized_self_counterterm_definition} gives the diagonal correction
compatible with omitting the particle self-interactions, while the full
quadratic self-energy retains the factor $M_d^2$ displayed in
\eqref{eq:true_shifted_smeared_self_energy}.
Moreover,
\eqref{eq:truncated_charge_mass_loss_bound} yields
\begin{equation}\label{eq:additive_normalization_defect_bound}
 |\mathfrak b_{\kappa,d}(r)|
 \le \frac{|c_{\kappa,d}|\kappa^2r^2}{2d}
 \le C_{d,\kappa_*}r^2
 \qquad(0<\kappa\le\kappa_*),
\end{equation}
where the second inequality uses
$\sup_{0<\kappa\le\kappa_*}\kappa^2|\log\kappa|<\infty$ in dimension two.
In particular, for every fixed $r>0$,
$\mathfrak b_{\kappa,2}(r)\to0$ as $\kappa\downarrow0$.  None of these
normalization identities changes the source or force formulas.
For every fixed $d\ge3$ there are $0<c_d<C_d<\infty$ such that
\begin{equation}\label{eq:general_dimensional_self_energy_power_bounds}
 c_dr^{2-d}\le s_{\kappa,d}(r)\le C_dr^{2-d}
 \qquad(0<\kappa r\le1),
\end{equation}
and, when $\kappa r\le1/32$, we have
\begin{equation}\label{eq:general_dimensional_self_energy_close_pair}
 s_{\kappa,d}(r)\le C_d g_{\kappa,d}(16r).
\end{equation}
The translated smeared charge is denoted by
$\delta_x^{\kappa,r}:=\delta_0^{\kappa,r}(\cdot-x)$.
\end{lemma}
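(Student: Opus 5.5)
The plan is to compute \(L_\kappa g_{\kappa,r}\) distributionally by splitting \(\R^d\) into \(B_r\) and its complement, and then read off every other claim from the resulting formula. Outside \(\overline{B_r}\) we have \(g_{\kappa,r}=g_{\kappa,d}\), so \(L_\kappa g_{\kappa,r}=0\) there. Inside \(B_r\) the function is the constant \(g_{\kappa,d}(r)\), so \(L_\kappa g_{\kappa,r}=\kappa^2g_{\kappa,d}(r)\). Across \(\partial B_r\) the function is continuous, with inner radial derivative \(0\) and outer radial derivative \(g_{\kappa,d}'(r)\). Testing against \(\varphi\in C_c^\infty\) and applying Green's formula on both sides (the jump relation of Proposition~\ref{prop:average_trace_yukawa_green_stress}, with \(n\) pointing outward) gives a surface density \(\lambda=-g_{\kappa,d}'(r)\) with respect to \(\mathcal H^{d-1}\lfloor\partial B_r\). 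Its total mass is \(-\Sd r^{d-1}g_{\kappa,d}'(r)\). Inserting \eqref{eq:yukawa_force_general_intro} and \(\Sd=2\pi^{d/2}/\Gamma(d/2)\), this becomes
\[
2\pi^{d/2}(2\pi)^{-d/2}\Gamma(d/2)^{-1}\kappa^{d/2}r^{d/2}K_{d/2}(\kappa r)=A_d(\kappa r).
\]
This proves \eqref{eq:general_dimensional_smeared_charge} and positivity. For the volume mass, \(\kappa^2g_{\kappa,d}(r)|B_r|\) with \(|B_r|=\Sd r^d/d\) and the explicit form of \(g_{\kappa,d}\) simplifies to \(B_d(\kappa r)\), using \(\Gamma(d/2+1)=(d/2)\Gamma(d/2)\).

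For \eqref{eq:smeared_charge_resolvent}, note that \(g_{\kappa,r}\) is bounded and exponentially decaying, hence a tempered distribution. The difference \(g_{\kappa,d}*\delta_0^{\kappa,r}-g_{\kappa,r}\) is then a tempered solution of \(L_\kappa w=0\), and on the Fourier side this forces \((|\xi|^2+\kappa^2)\widehat w=0\), so \(w=0\). The same uniqueness argument yields the mass identity. Integrate \(L_\kappa g_{\kappa,r}\) over \(\R^d\), i.e. evaluate \(\widehat{\,\cdot\,}(0)\); this gives \(M_d=\kappa^2\int g_{\kappa,r}\). On the other hand \(\kappa^2\int g_{\kappa,d}=\kappa^2\widehat g_{\kappa,d}(0)=1\). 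Subtracting the two yields \eqref{eq:truncated_charge_mass_loss_identity}, and the sign follows because \(g_{\kappa,d}\) is radially decreasing.

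The quadratic bound \eqref{eq:truncated_charge_mass_loss_bound} is the step needing the most care, since it must hold for all \(\kappa r\) and not only asymptotically. Writing \(g(|x|)-g(r)=\int_{|x|}^r(-g'(s))\,\dd s\) and applying Fubini, we get
\[
1-M_d=\kappa^2\int_0^r(-g'(s))\,\Sd s^d/d\,\dd s .
\]
Next, the divergence theorem on \(B_s\) for \(L_\kappa g=\delta_0\) gives \(-\Sd s^{d-1}g'(s)=1-\kappa^2\int_{B_s}g\le1\); equivalently, \(A_d(z)\) is decreasing in \(z\) with \(A_d(0^+)=1\). Therefore \(-g'(s)\Sd s^{d}/d\le s/d\), and integrating in \(s\) gives \(\kappa^2r^2/(2d)\). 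The asymptotic statement \eqref{eq:truncated_charge_mass_loss_asymptotic} follows from the same formula, because \(1-\kappa^2\int_{B_s}g\to1\) uniformly for \(s\le r\) as \(\kappa r\to0\) (the integral is \(O(\kappa^2 s^2\ell_d(\kappa s))\)).

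The self-energy is \(\int g_{\kappa,r}\,\dd\delta_0^{\kappa,r}\). The measure \(\delta_0^{\kappa,r}\) is supported in \(\overline{B_r}\), where \(g_{\kappa,r}\equiv g_{\kappa,d}(r)\), so this equals \(M_dg_{\kappa,d}(r)\), which is \eqref{eq:general_dimensional_self_energy}. The identities \eqref{eq:normalized_self_counterterm_definition}--\eqref{eq:true_shifted_smeared_self_energy} are then pure algebra via \eqref{eq:additive_shift_full_quadratic_ledger}. The bound \eqref{eq:additive_normalization_defect_bound} follows from \eqref{eq:truncated_charge_mass_loss_bound} together with \(\kappa^2|\log\kappa|\) being bounded on \((0,\kappa_*]\).

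For \(d\ge3\), the small-argument behaviour \(K_\nu(z)\sim\frac12\Gamma(\nu)(z/2)^{-\nu}\) combined with monotonicity of \(z^\nu K_\nu(z)\) gives \(g_{\kappa,d}(r)\asymp r^{2-d}\) and \(M_d\in[1-1/(2d),1]\) on \(\kappa r\le1\). This yields \eqref{eq:general_dimensional_self_energy_power_bounds}. For \eqref{eq:general_dimensional_self_energy_close_pair}: when \(\kappa r\le1/32\) we have \(16\kappa r\le1/2\), so \(g_{\kappa,d}(16r)\ge c_d(16r)^{2-d}\), and this comparison gives the claim.
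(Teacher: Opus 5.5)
Your proposal is correct and follows essentially the paper's route. Both arguments use the direct Green's-formula computation of the source, uniqueness for $g_{\kappa,d}*\delta_0^{\kappa,r}=g_{\kappa,r}$, the key bound $A_d(\kappa s)\le1$ (i.e.\ $-g_{\kappa,d}'\le-g_{0,d}'$) integrated over $B_r$, constancy of $g_{\kappa,r}$ on $\overline{B_r}$ for the self-energy, and small-argument Bessel bounds for $d\ge3$. The differences are only in implementation: you use Fourier uniqueness and evaluation at $\xi=0$ instead of the $H^1_\kappa$ energy argument and a cutoff test function, and the flux identity $A_d(\kappa s)=1-\kappa^2\int_{B_s}g_{\kappa,d}$ instead of $A_d\le M_d\le1$. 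For the asymptotics you use $1-M_d(\kappa r)=\kappa^2\int_0^rA_d(\kappa s)\,s\,\dd s/d$ with $A_d(\kappa s)\to1$ uniformly, where the paper uses rescaled dominated convergence.

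One harmless slip: your parenthetical rate $O(\kappa^2s^2\ell_d(\kappa s))$ does not tend to zero for $d\ge4$ with the paper's $\ell_d(r)=r^{2-d}$. The correct bound there is $O((\kappa s)^2)$, since $\kappa^2\int_{B_s}g_{\kappa,d}=\int_{B_{\kappa s}}g_{1,d}$. The uniform convergence you need already follows from the monotonicity of $A_d$ and $A_d(0^+)=1$, which you state just before.
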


\begin{proof}
The source formula follows from a direct distributional computation with all
dimension-dependent factors kept explicit.  Set
\[
 g:=g_{\kappa,d},\qquad
 \Omega_r:=\R^d\setminus\overline{B_r},\qquad
 n(x):=\frac{x}{|x|}\quad(x\in\partial B_r).
\]
\medskip
\noindent\emph{Step 1: the first distributional derivative of the truncated potential.}
Let $\mathbf u\in C_c^\infty(\R^d;\R^d)$.  By the definition of the
distributional gradient and by the fact that $g_{\kappa,r}$ is constant on
$B_r$ and equal to $g$ on $\Omega_r$,
\begin{align*}
 \langle\nabla g_{\kappa,r},\mathbf u\rangle
 =-\int_{\R^d}g_{\kappa,r}\,\operatorname{div}\mathbf u\,\dd x\notag=-g(r)\int_{B_r}\operatorname{div}\mathbf u\,\dd x
   -\int_{\Omega_r}g\,\operatorname{div}\mathbf u\,\dd x.
\end{align*}
The divergence theorem in the ball gives
\begin{equation}
 -g(r)\int_{B_r}\operatorname{div}\mathbf u\,\dd x
 =-g(r)\int_{\partial B_r}\mathbf u\cdot n\,\dd\mathcal H^{d-1}.
 \label{eq:general_trunc_distributional_gradient_inside}
\end{equation}
Choose $R>r$ so large that $\operatorname{supp}\mathbf u\subset B_R$ and
integrate by parts on $B_R\setminus\overline{B_r}$.  The outer boundary term
vanishes, whereas the outward unit normal of the exterior domain $\Omega_r$
on its inner boundary is $-n$.  Hence
\begin{align}
 -\int_{\Omega_r}g\,\operatorname{div}\mathbf u\,\dd x
 &=\int_{\Omega_r}\nabla g\cdot\mathbf u\,\dd x
   -\int_{\partial\Omega_r}g\,\mathbf u\cdot\nu_{\Omega_r}\,
       \dd\mathcal H^{d-1}\notag\\
 &=\int_{\Omega_r}\nabla g\cdot\mathbf u\,\dd x
   +g(r)\int_{\partial B_r}\mathbf u\cdot n\,\dd\mathcal H^{d-1}.
 \label{eq:general_trunc_distributional_gradient_outside}
\end{align}
The two boundary terms in
\eqref{eq:general_trunc_distributional_gradient_inside} and
\eqref{eq:general_trunc_distributional_gradient_outside} cancel exactly.
Therefore
\begin{equation}\label{eq:truncated_potential_weak_gradient}
 \nabla g_{\kappa,r}=\mathbf1_{\Omega_r}\nabla g
 \qquad\text{in }\mathcal D'(\R^d;\R^d).
\end{equation}
Thus the continuity of the truncated potential prevents a surface measure from
appearing in its first distributional derivative.  The surface measure appears
only after differentiating once more, through the jump of the normal
derivative.

\medskip
\noindent\emph{Step 2: the surface and volume parts of the smeared charge and their masses.}
Let $\varphi\in C_c^\infty(\R^d)$.  From
\eqref{eq:truncated_potential_weak_gradient},
\begin{align}
 \big\langle L_\kappa g_{\kappa,r},\varphi\big\rangle
 &=\int_{\Omega_r}\nabla g\cdot\nabla\varphi\,\dd x
   +\kappa^2\int_{\Omega_r}g\varphi\,\dd x
   +\kappa^2g(r)\int_{B_r}\varphi\,\dd x.
 \label{eq:general_trunc_distributional_operator_start}
\end{align}
Since the origin does not belong to $\Omega_r$,
$L_\kappa g=(-\Delta+\kappa^2)g=0$ classically on $\Omega_r$.  Integrating
the first two terms in \eqref{eq:general_trunc_distributional_operator_start}
by parts gives
\begin{align*}
 \int_{\Omega_r}\bigl(\nabla g\cdot\nabla\varphi+\kappa^2g\varphi\bigr)
   \,\dd x\notag&=\int_{\Omega_r}(-\Delta g+\kappa^2g)\varphi\,\dd x
   +\int_{\partial B_r}\varphi\,\partial_{\nu_{\Omega_r}}g\,
       \dd\mathcal H^{d-1}\notag\\
 &=-g'(r)\int_{\partial B_r}\varphi\,\dd\mathcal H^{d-1},
\end{align*}
because $\nu_{\Omega_r}=-n$ on $\partial B_r$.  Substitution into
\eqref{eq:general_trunc_distributional_operator_start} yields the exact
source identity with surface measure,
\begin{equation}
 L_\kappa g_{\kappa,r}
 =-g_{\kappa,d}'(r)\,\mathcal H^{d-1}\lfloor_{\partial B_r}
  +\kappa^2g_{\kappa,d}(r)\mathbf1_{B_r}\,\dd x.
 \label{eq:general_trunc_source_area_measure}
\end{equation}
Both pieces are nonnegative because $g_{\kappa,d}>0$ and
$g_{\kappa,d}'<0$.

By the radial Bessel derivative formula \eqref{eq:yukawa_force_general_intro},
with $z=\kappa r$,
\begin{align*}
 -|\mathbb S^{d-1}|r^{d-1}g_{\kappa,d}'(r)
 =|\mathbb S^{d-1}|(2\pi)^{-d/2}
   \kappa^{d/2}r^{d/2}K_{d/2}(z)\notag=\frac{z^{d/2}K_{d/2}(z)}{2^{d/2-1}\Gamma(d/2)}
 =A_d(z).
\end{align*}
Since
$\sigma_r=(|\mathbb S^{d-1}|r^{d-1})^{-1}
\mathcal H^{d-1}\lfloor_{\partial B_r}$, the first term in
\eqref{eq:general_trunc_source_area_measure} is precisely
$A_d(\kappa r)\sigma_r$.

The volume charge density in the ball is the constant $\kappa^2g_{\kappa,d}(r)$.  Its mass is
\begin{align*}
 \kappa^2g_{\kappa,d}(r)|B_r|
 &=\kappa^2(2\pi)^{-d/2}
   \left(\frac\kappa r\right)^{d/2-1}K_{d/2-1}(z)
   \frac{|\mathbb S^{d-1}|}{d}r^d\notag\\
 &=\frac{z^{d/2+1}K_{d/2-1}(z)}{2^{d/2}\Gamma(d/2+1)}
 =B_d(z),
\end{align*}
where we used
$|\mathbb S^{d-1}|=2\pi^{d/2}/\Gamma(d/2)$ and
$\Gamma(d/2+1)=(d/2)\Gamma(d/2)$.  This proves
\eqref{eq:general_dimensional_smeared_charge}--\eqref{eq:Md_general}, with
$M_d(\kappa r)=A_d(\kappa r)+B_d(\kappa r)$.

\medskip
\noindent\emph{Step 3: the loss of total mass, its quadratic bound, and its leading coefficient.}
Set
$
 w:=g_{\kappa,d}-g_{\kappa,r}.
$
The radial monotonicity of the Yukawa kernel gives $w\ge0$,
$\operatorname{supp}w\subset\overline{B_r}$, and
$w(x)=g_{\kappa,d}(|x|)-g_{\kappa,d}(r)$ for $|x|<r$.  Moreover,
\begin{equation}
 L_\kappa w=\delta_0-\delta_0^{\kappa,r}
 \qquad\text{in }\mathcal D'(\R^d).
 \label{eq:truncated_charge_mass_difference_equation}
\end{equation}
Choose $\chi\in C_c^\infty(\R^d)$ equal to one on a neighborhood of
$\overline{B_r}$.  Testing \eqref{eq:truncated_charge_mass_difference_equation} against
$\chi$ gives, on the one hand,
$
 \big\langle L_\kappa w,\chi\big\rangle=1-M_d(\kappa r),
$
and, on the other hand, because $\nabla\chi=0$ on $\operatorname{supp}w$,
$
 \big\langle L_\kappa w,\chi\big\rangle
 =\kappa^2\int_{B_r}w(x)\,\dd x.
$
Thus
\begin{equation}
 1-M_d(\kappa r)
 =\kappa^2\int_{B_r}
   \bigl(g_{\kappa,d}(|x|)-g_{\kappa,d}(r)\bigr)\,\dd x,
 \label{eq:truncated_charge_mass_identity_detailed}
\end{equation}
which is \eqref{eq:truncated_charge_mass_loss_identity}.  In particular
$M_d(\kappa r)<1$ for $\kappa>0$ and $r>0$.

The source formula from Step~2 and the mass identity above also give a useful
comparison with the Coulomb kernel.  Indeed,
$A_d(\kappa s)\le M_d(\kappa s)\le1$, and hence
\begin{equation}\label{eq:yukawa_force_bound}
 0\le -g_{\kappa,d}'(s)
 =\frac{A_d(\kappa s)}{|\mathbb S^{d-1}|s^{d-1}}
 \le \frac1{|\mathbb S^{d-1}|s^{d-1}}
 =-g_{0,d}'(s),\qquad s>0.
\end{equation}
Here the last identity is valid for the Newtonian kernel when $d\ge3$ and
for the logarithmic kernel when $d=2$.  Therefore, for $0<s<r$, we obtain
\[
 0\le g_{\kappa,d}(s)-g_{\kappa,d}(r)
 \le g_{0,d}(s)-g_{0,d}(r).
\]
A direct radial calculation, valid for every $d\ge2$, yields
\begin{equation}\label{eq:coulomb_truncation_integral}
 \int_{B_r}\bigl(g_{0,d}(|x|)-g_{0,d}(r)\bigr)\,\dd x
 =\frac{r^2}{2d}.
\end{equation}
For $d\ge3$ this is the usual Newtonian computation, while for $d=2$ it is
$\int_0^r s\log(r/s)\,\dd s=r^2/4$.  Combining the last two displays with
\eqref{eq:truncated_charge_mass_identity_detailed} proves the global bound
\eqref{eq:truncated_charge_mass_loss_bound} simultaneously in all dimensions.

The same comparison also gives the leading coefficient.  Set $z=\kappa r$
and, after the change of variables $x=ry$, define
\[
 F_z(y):=r^{d-2}
 \bigl(g_{\kappa,d}(r|y|)-g_{\kappa,d}(r)\bigr),
 \qquad 0<|y|<1.
\]
With $\nu_d=d/2-1$, the Bessel representation gives the single formula
\[
 F_z(y)=(2\pi)^{-d/2}z^{\nu_d}
 \left(|y|^{-\nu_d}K_{\nu_d}(z|y|)-K_{\nu_d}(z)\right).
\]
The small-argument asymptotics of $K_{\nu_d}$ (including
$K_0(\zeta)=-\log(\zeta/2)-\gamma_E+o(1)$ when $d=2$) imply
\[
 F_z(y)\longrightarrow g_{0,d}(y)-g_{0,d}(1)
 \qquad (z\downarrow0)
\]
for every $0<|y|<1$.  The preceding Yukawa--Coulomb comparison gives the
integrable domination
$0\le F_z(y)\le g_{0,d}(y)-g_{0,d}(1)$.  Dominated convergence and
\eqref{eq:coulomb_truncation_integral} at $r=1$ therefore yield
\[
 \lim_{z\downarrow0}\frac{1-M_d(z)}{z^2}=\frac1{2d},
\]
which proves \eqref{eq:truncated_charge_mass_loss_asymptotic} for every $d\ge2$.

\medskip
\noindent\emph{Step 4: the Green function identity and the exact self-energy.}
By the source formula, $\delta_0^{\kappa,r}$ is a finite positive measure
supported in $\overline{B_r}$.  The truncated potential belongs to
$H^1_\kappa(\R^d)$: the singularity at the origin has been removed, while the
Yukawa tail decays exponentially.  Since
$L_\kappa g_{\kappa,r}=\delta_0^{\kappa,r}$ in distributions, we also have
$\delta_0^{\kappa,r}\in H^{-1}_\kappa(\R^d)$.  Consequently,
$L_\kappa^{-1}\delta_0^{\kappa,r}
=g_{\kappa,d}*\delta_0^{\kappa,r}$ belongs to $H^1_\kappa(\R^d)$, and both
this potential and $g_{\kappa,r}$ solve
$
 L_\kappa h=\delta_0^{\kappa,r}.
$
Their difference $H$ satisfies $L_\kappa H=0$ in $\R^d$ and
$H\in H^1_\kappa(\R^d)$.  Testing against $H$ gives
\[
 \int_{\R^d}\bigl(|\nabla H|^2+\kappa^2|H|^2\bigr)\,\dd x=0,
\]
so $H=0$.  Hence
$
 g_{\kappa,d}*\delta_0^{\kappa,r}=g_{\kappa,r},
$
which is \eqref{eq:smeared_charge_resolvent}.

Tonelli's theorem applies because both the kernel and the source are
nonnegative.  Since $g_{\kappa,r}$ is constant with value $g_{\kappa,d}(r)$
on the support of $\delta_0^{\kappa,r}$,
\begin{align*}
 s_{\kappa,d}(r)
 =\int_{\R^d}(g_{\kappa,d}*\delta_0^{\kappa,r})(x)
   \,\dd\delta_0^{\kappa,r}(x)=\int_{\R^d}g_{\kappa,r}(x)\,\dd\delta_0^{\kappa,r}(x)
 =M_d(\kappa r)g_{\kappa,d}(r),
\end{align*}
which proves \eqref{eq:general_dimensional_self_energy}.

\medskip
\noindent\emph{Step 5: power-law self-energy bounds in dimensions $d\ge3$.}
Assume first $d\ge3$ and set $z=\kappa r$.  The exact scaled identity is
\begin{equation*}
 r^{d-2}g_{\kappa,d}(r)
 =(2\pi)^{-d/2}z^{d/2-1}K_{d/2-1}(z).
\end{equation*}
The right-hand side extends continuously to $z=0$ with a strictly positive
limit and is positive on $[0,1]$.  Together with the continuity,
positivity, and convergence $M_d(z)\to1$, this proves
\eqref{eq:general_dimensional_self_energy_power_bounds}.  If
$\kappa r\le1/32$, then both $\kappa r$ and $16\kappa r$ remain in a fixed
compact subset of the small-argument regime.  The same Bessel bounds imply
$g_{\kappa,d}(r)\le C_dg_{\kappa,d}(16r)$; since $M_d\le1$, this proves
\eqref{eq:general_dimensional_self_energy_close_pair}.

\end{proof}

\begin{lemma}[Exterior potential identity for the truncated Yukawa charge]
\label{lem:yukawa_smeared_exterior_exactness}
Let $d\ge2$, $\kappa>0$, and $r>0$.  For every $x,y\in\R^d$ with
$|x-y|\ge r$,
\begin{equation}\label{eq:yukawa_smeared_exterior_exactness}
 \int_{\R^d} g_{\kappa,d}(y-z)\,\dd\delta_x^{\kappa,r}(z)
 =g_{\kappa,d}(y-x).
\end{equation}
Consequently, if $r_i,r_j>0$ and
$|x_i-x_j|\ge r_i+r_j$, then
\begin{equation}\label{eq:yukawa_disjoint_smeared_mutual_energy}
 I_\kappa[\delta_{x_i}^{\kappa,r_i},\delta_{x_j}^{\kappa,r_j}]
 =g_{\kappa,d}(x_i-x_j).
\end{equation}
In particular, the loss of total mass
$M_d(\kappa r)<1$ does not alter the exterior Yukawa potential of the
truncated charge.
\end{lemma}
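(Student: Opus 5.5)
The argument rests on the Green function identity \eqref{eq:smeared_charge_resolvent} together with the evenness of $g_{\kappa,d}$. By translation,
\[
 \int g_{\kappa,d}(y-z)\,\dd\delta_x^{\kappa,r}(z)
 =\bigl(g_{\kappa,d}*\delta_0^{\kappa,r}\bigr)(y-x).
\]
Here we use that $\delta_x^{\kappa,r}$ is the push-forward of $\delta_0^{\kappa,r}$ under $z\mapsto z+x$. Since $\delta_0^{\kappa,r}$ and the kernel are nonnegative, Tonelli applies, so the convolution is a genuine absolutely convergent integral. By \eqref{eq:smeared_charge_resolvent} it equals $g_{\kappa,r}(y-x)$.

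Under the hypothesis $|y-x|\ge r$, the definition of $g_{\kappa,r}$ gives exactly $g_{\kappa,d}(y-x)$, including on the sphere $|y-x|=r$. This proves \eqref{eq:yukawa_smeared_exterior_exactness}. One should check that \eqref{eq:smeared_charge_resolvent} holds pointwise and not only almost everywhere. The convolution of a positive measure with the lower semicontinuous kernel is superharmonic-type and lower semicontinuous. It is in fact continuous outside $\overline{B_r}$, since the source is supported in $\overline{B_r}$ and the kernel is smooth away from the origin. On the sphere itself, I would use continuity of $g_{\kappa,r}$ and a monotone or dominated convergence argument, or simply note that the integrand is bounded when the distance is at least $r$ except on a null set. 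This pointwise upgrade is the only minor obstacle.

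For \eqref{eq:yukawa_disjoint_smeared_mutual_energy}, both smeared charges are finite positive measures in $H^{-1}_\kappa$, by Lemma~\ref{lem:yukawa_operator_truncation}. Their mutual energy is therefore given by the absolutely convergent double integral
\[
 \iint g_{\kappa,d}(y-z)\,\dd\delta_{x_i}^{\kappa,r_i}(z)\,\dd\delta_{x_j}^{\kappa,r_j}(y),
\]
which follows from Tonelli and the Plancherel identity for nonnegative sources. Integrate first in $z$. Every $y$ in the support of $\delta_{x_j}^{\kappa,r_j}$ lies in $\overline{B(x_j,r_j)}$, so
\[
 |y-x_i|\ge|x_i-x_j|-r_j\ge r_i .
\]
By the first part, the inner integral is therefore $g_{\kappa,d}(y-x_i)$.

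Next integrate this in $y$ against $\delta_{x_j}^{\kappa,r_j}$. Using evenness, this is $\int g_{\kappa,d}(x_i-y)\,\dd\delta_{x_j}^{\kappa,r_j}(y)$, and now $|x_i-x_j|\ge r_j$. Applying the first part again gives $g_{\kappa,d}(x_i-x_j)$. The final remark follows because the exterior potential coincides with that of a unit point charge even though $M_d(\kappa r)<1$.
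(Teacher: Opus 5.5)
Your proposal is correct and follows the paper's proof. You translate the Green function identity $g_{\kappa,d}*\delta_0^{\kappa,r}=g_{\kappa,r}$, read off the exterior value from the definition of the truncated potential, and apply this identity twice using $\operatorname{supp}\delta_x^{\kappa,r}\subset\overline{B(x,r)}$; the paper pairs $\delta_{x_i}^{\kappa,r_i}$ with the potential of the $j$th charge first and you do the reverse, which is immaterial by the evenness of $g_{\kappa,d}$.

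Your extra care about upgrading the a.e.\ identity to a pointwise one goes beyond what the paper writes. On the sphere $|y-x|=r$, however, the integrand is not bounded near $z=y$, so drop that alternative. Instead use monotone convergence along $y_\epsilon=x+(1+\epsilon)(y-x)$: for every $z\in\overline{B(x,r)}$ one has $|y_\epsilon-z|\downarrow|y-z|$ as $\epsilon\downarrow0$, and combined with the continuity you already established in the open exterior this gives the value $g_{\kappa,d}(r)$.
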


\begin{proof}
By the Green function identity \eqref{eq:smeared_charge_resolvent} and translation
invariance,
\[
 g_{\kappa,d}*\delta_x^{\kappa,r}
 =g_{\kappa,r}(\cdot-x).
\]
If $|x-y|\ge r$, the definition of the truncated potential gives
$g_{\kappa,r}(y-x)=g_{\kappa,d}(y-x)$, proving
\eqref{eq:yukawa_smeared_exterior_exactness}.

Now assume $|x_i-x_j|\ge r_i+r_j$.  If
$z\in\operatorname{supp}\delta_{x_i}^{\kappa,r_i}\subset
\overline{B(x_i,r_i)}$, then
$|z-x_j|\ge |x_i-x_j|-r_i\ge r_j$.  Hence, using
\eqref{eq:yukawa_smeared_exterior_exactness} first for the $j$th charge,
\[
 I_\kappa[\delta_{x_i}^{\kappa,r_i},\delta_{x_j}^{\kappa,r_j}]
 =\int g_{\kappa,d}(z-x_j)\,\dd\delta_{x_i}^{\kappa,r_i}(z).
\]
Applying \eqref{eq:yukawa_smeared_exterior_exactness} once more, now to the
$i$th charge evaluated at $x_j$, gives
\[
 \int g_{\kappa,d}(z-x_j)\,\dd\delta_{x_i}^{\kappa,r_i}(z)
 =g_{\kappa,d}(x_i-x_j),
\]
which proves \eqref{eq:yukawa_disjoint_smeared_mutual_energy}.  No shell theorem
or normalization by the total mass of the smeared charge is used; the result
follows directly from the exact Green function identity for the modified
Helmholtz operator.
\end{proof}

A direct consequence of the Green function identity and the truncation integral in
Lemma~\ref{lem:yukawa_operator_truncation} is the background truncation estimate:
for every bounded probability density $\rho$, every $x\in\R^d$, and every $r>0$,
\begin{equation}\label{eq:background_truncation_bound}
 \begin{aligned}
 0\le V_\rho^\kappa(x)-\int V_\rho^\kappa\,\dd\delta_x^{\kappa,r}
 &=\int_{B(x,r)}\bigl(g_{\kappa,d}(x-y)-g_{\kappa,d}(r)\bigr)\rho(y)\,\dd y
 \le\frac{\|\rho\|_\infty}{2d}r^2.
 \end{aligned}
\end{equation}

\medskip
\noindent\emph{The logarithmic normalization in dimension two.}
With the convention fixed in Remark~\ref{rem:two_dimensional_normalization},
\begin{equation}\label{eq:renormalized_2d_limit}
 g_{\kappa,2}^{\sharp}(x)\longrightarrow
 g_{0,2}(x)
 =-\frac1{2\pi}\log|x|+\frac{\log2-\gamma_E}{2\pi}
 \qquad(\kappa\downarrow0)
\end{equation}
locally away from the origin.  The standard bounds for $K_0$ also give the
global majorant
\begin{equation}\label{eq:renormalized_2d_global_majorant}
 \sup_{0<\kappa\le\kappa_*}
 |g_{\kappa,2}^{\sharp}(x)|
 \le C_{\kappa_*}\bigl(1+|\log|x||\bigr),
 \qquad x\ne0.
\end{equation}
Indeed, when $\kappa|x|\le1$ the logarithms of $\kappa$ cancel, while when
$\kappa|x|\ge1$ one uses the decay of $K_0$ together with
$|\log\kappa|\le |\log|x||+C_{\kappa_*}$.  All source, truncation, and
stress-energy identities continue to use the unshifted kernel
$g_{\kappa,2}$.

\medskip
\noindent\emph{Consequences at the preliminary truncation scale.}
Fix $d\ge2$ and $\kappa_*>0$.  Choose a preliminary truncation scale
$\eta_{\mathrm{se}}=\eta_{\mathrm{se}}(d,\kappa_*)>0$ small enough that the
self-energy estimates in Lemma~\ref{lem:yukawa_operator_truncation} and the
small-argument asymptotics of $K_0$ in dimension two give constants
$0<c<C<\infty$ such that, uniformly for $0<\kappa\le\kappa_*$ and
$0<r\le32\eta_{\mathrm{se}}$,
\begin{equation}\label{eq:microscopic_self_energy_bound}
 c\,\ell_d(r)\le s_{\kappa,d}^{\sharp}(r)\le C\,\ell_d(r).
\end{equation}
The constants in this comparison depend only on $d$ and $\kappa_*$.  Indeed,
for $d\ge3$ this is \eqref{eq:general_dimensional_self_energy_power_bounds},
while for $d=2$ it follows from
$K_0(z)+\log z\to\log2-\gamma_E$ and $M_2(z)\to1$ as $z\downarrow0$.
For $d\ge3$, we also require
$32\kappa_*\eta_{\mathrm{se}}\le1$; then
\eqref{eq:general_dimensional_self_energy_close_pair} gives
\begin{equation}\label{eq:close_pair_self_energy_bound}
 s_{\kappa,d}(r)\le C_d g_{\kappa,d}(16r)
 \qquad(0<r\le\eta_{\mathrm{se}}).
\end{equation}
The final truncation scale $\eta_0$ used in the commutator theorem is chosen
below, after fixing the constants in the common-radius and short-range
estimates.  This separates the small-argument Bessel asymptotics from the
later geometric restrictions.

\medskip
\noindent\emph{Two-dimensional background bounds.}
Fix $\kappa_*>0$.  Uniformly for $0<\kappa\le\kappa_*$, every bounded
probability density $\rho$ satisfies
\begin{equation}\label{eq:two_dimensional_background_uniform_bounds}
 \|V_\rho^\kappa\|_{L^\infty}+I_\kappa[\rho]
 \le C_{\kappa_*}\bigl(1+\|\rho\|_{L^\infty}\bigr)
       (1+|\log\kappa|).
\end{equation}
If in addition
\[
 \int_{\R^2}\log(2+|x|)\rho(x)\,\dd x<\infty,
\]
then
\begin{equation}\label{eq:two_dimensional_normalized_background_energy_bound}
 |I_{g_{\kappa,2}^{\sharp}}[\rho]|
 \le C_{\kappa_*}\left(
 1+\|\rho\|_{L^\infty}
 +\int_{\R^2}\log(2+|x|)\rho(x)\,\dd x\right).
\end{equation}
The first estimate requires no moment assumption.  Both bounds follow from the
standard small- and large-argument estimates for $K_0$; we include the short
argument for completeness.

The bounds for $K_0$ imply, uniformly for $0<\kappa\le\kappa_*$,
\[
 0\le K_0(\kappa r)
 \le C_{\kappa_*}\bigl(1+|\log\kappa|+|\log r|\bigr)
 \quad(0<r\le1),
\]
and $K_0(\kappa r)$ is bounded on $r\ge1$ by a constant times
$1+|\log\kappa|$, with exponential decay once $\kappa r\ge1$.  Splitting
at unit distance therefore yields, uniformly in $x$,
\[
 |V_\rho^\kappa(x)|
 \le C_{\kappa_*}\left[
 (1+|\log\kappa|)\|\rho\|_{L^1}
 +\|\rho\|_{L^\infty}
   \int_{|z|\le1}|\log|z||\,\dd z\right].
\]
This proves the potential bound, and
$I_\kappa[\rho]\le\|V_\rho^\kappa\|_{L^\infty}$ because $\rho$ is a
probability density.

For the normalized kernel, the uniform majorant
\eqref{eq:renormalized_2d_global_majorant} gives an integrable logarithmic
singularity near the diagonal.  In the far field,
$|\log|x-y||\le C+\log(2+|x|)+\log(2+|y|)$, so the assumed logarithmic
moment controls the double integral.  This proves
\eqref{eq:two_dimensional_normalized_background_energy_bound}.

\smallskip
The regularity required to apply Proposition~\ref{prop:average_trace_yukawa_green_stress}
to the truncated fields follows directly from the source formula
\eqref{eq:general_dimensional_smeared_charge}; we verify it once, at the point
where the interface identity is first used in the commutator argument.

The next lemma compares the modulated energy of the point configuration with
the energy obtained by smearing all particles at a common truncation radius.  It is
proved before the individual radii are chosen from nearest-neighbor distances,
so the lower bound for the modulated energy, up to the additive error, does not
rely on those distances.  No disjointness of the balls is assumed here: the
interaction between two smeared charges is represented by $G_{\kappa,R}$, and
the argument remains valid even when $R$ exceeds some interparticle distances.

\begin{lemma}\label{lem:common_truncation_energy_comparison}
Fix $d\ge2$ and $\kappa_*>0$.  There exists $r_*\in(0,1]$,
depending only on $d$ and $\kappa_*$, with the following property.  Let $0<\kappa\le\kappa_*$, $0<R\le r_*$, let $\rho$ be a
bounded probability density, and let $X_N\notin\Delta_N$.  Define
\[
 G_{\kappa,R}(z):=\iint g_{\kappa,d}(z+a-b)\,
 \dd\delta_0^{\kappa,R}(a)\dd\delta_0^{\kappa,R}(b),
 \qquad M_R:=M_d(\kappa R),
\]
\[
 \mathcal E_R:=I_\kappa\!\left[
 \frac1N\sum_i\delta_{x_i}^{\kappa,R}-\rho\right]\ge0,
 \qquad
 L_R:=\frac1{N^2}\sum_{i\ne j}
 \bigl(g_{\kappa,d}-G_{\kappa,R}\bigr)(x_i-x_j).
\]
Then
\begin{equation}\label{eq:common_truncation_kernel_order}
 0\le G_{\kappa,R}\le g_{\kappa,R}\le g_{\kappa,d},
 \qquad L_R\ge0.
\end{equation}
Writing
\[
 J_i^R:=\int V_\rho^\kappa\,\dd\delta_{x_i}^{\kappa,R},
 \qquad V_i:=V_\rho^\kappa(x_i),
\]
one has the exact identity
\begin{equation}\label{eq:common_truncation_energy_identity}
\begin{aligned}
 L_R
 =\cF_N^{\kappa,\sharp}(X_N,\rho)-\mathcal E_R
   +\frac{s_{\kappa,d}^{\sharp}(R)}N
   +\frac{\mathfrak b_{\kappa,d}(R)}N
   +\frac2N\sum_i\bigl(V_i-J_i^R\bigr).
\end{aligned}
\end{equation}
Moreover, with $C=C(d,\kappa_*,\|\rho\|_{L^\infty})$,
\begin{equation}\label{eq:common_truncation_energy_bound}
 0\le L_R\le
 \cF_N^{\kappa,\sharp}(X_N,\rho)
 +C\left(\frac{\ell_d(R)}N+R^2+\kappa^2R^2\right).
\end{equation}
In particular, the expression on the right-hand side is nonnegative.
\end{lemma}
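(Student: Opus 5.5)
We prove the kernel ordering first, then the exact identity by expanding $\mathcal E_R$, and finally the bound by estimating the remaining error terms.

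\emph{Kernel ordering.} By the Green function identity \eqref{eq:smeared_charge_resolvent}, $G_{\kappa,R}(z)=\int g_{\kappa,R}(z+a)\,\dd\delta_0^{\kappa,R}(a)$. Since $g_{\kappa,R}\le g_{\kappa,d}$, this is bounded by $\int g_{\kappa,d}(z+a)\,\dd\delta_0^{\kappa,R}(a)=(g_{\kappa,d}*\delta_0^{\kappa,R})(-z)=g_{\kappa,R}(z)$, where we used that the kernel and the smeared charge are even. Nonnegativity of $G_{\kappa,R}$ follows from positivity of $g_{\kappa,d}$ and of $\delta_0^{\kappa,R}$. Hence $G_{\kappa,R}\le g_{\kappa,R}\le g_{\kappa,d}$, and every summand in $L_R$ is nonnegative.

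\emph{Exact identity.} We expand $\mathcal E_R$ by bilinearity of $I_\kappa$. The background–background term is $I_\kappa[\rho]$. The cross term is $-\frac2N\sum_i\int V_\rho^\kappa\,\dd\delta_{x_i}^{\kappa,R}=-\frac2N\sum_iJ_i^R$. The diagonal particle terms give $\frac1N s_{\kappa,d}(R)$ by \eqref{eq:general_dimensional_self_energy}. The off-diagonal particle terms give $\frac1{N^2}\sum_{i\ne j}G_{\kappa,R}(x_i-x_j)$. On the other side, we write $\cF_N^{g_{\kappa,d}}$ in its point expansion,
\[
\frac1{N^2}\sum_{i\ne j}g_{\kappa,d}(x_i-x_j)-\frac2N\sum_iV_i+I_\kappa[\rho],
\]
and use $\cF_N^{\kappa,\sharp}=\cF_N^{g_{\kappa,d}}-c_{\kappa,d}/N$. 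Subtracting the two expansions gives
\[
L_R=\cF_N^{\kappa,\sharp}-\mathcal E_R+\frac{c_{\kappa,d}+s_{\kappa,d}(R)}N+\frac2N\sum_i(V_i-J_i^R).
\]
It remains to match the constant: $c_{\kappa,d}+s_{\kappa,d}=s^\sharp_{\kappa,d}+c_{\kappa,d}(1-M_R)=s^\sharp_{\kappa,d}+\mathfrak b_{\kappa,d}$, which is \eqref{eq:common_truncation_energy_identity}. Because $\rho$ is bounded and each smeared charge has finite energy, all quantities involved lie in $H^{-1}_\kappa$, so the bilinear expansion is legitimate.

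\emph{Upper bound.} We use $\mathcal E_R\ge0$ and the background truncation estimate \eqref{eq:background_truncation_bound}, which gives $0\le V_i-J_i^R\le\|\rho\|_\infty R^2/(2d)$. By \eqref{eq:additive_normalization_defect_bound}, $|\mathfrak b_{\kappa,d}(R)|\le CR^2$. For $s^\sharp_{\kappa,d}(R)$ we apply \eqref{eq:microscopic_self_energy_bound}, taking $r_*\le32\eta_{\mathrm{se}}$ and $r_*\le1$. Together these yield \eqref{eq:common_truncation_energy_bound}.

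\emph{Main obstacle.} The delicate point is the $d=2$ bookkeeping. The constant $c_{\kappa,2}=\log\kappa/(2\pi)$ is unbounded as $\kappa\downarrow0$, so it must appear only through $s^\sharp$ and $\mathfrak b$ and never as a stray $c_{\kappa,2}/N$. In addition, $s^\sharp_{\kappa,2}(R)$ must be bounded uniformly in $\kappa$ by $C(1+\log_+(1/R))$ even when $\kappa R$ is not small. This requires $\kappa_*r_*$ to be small enough for the $K_0$ asymptotics behind \eqref{eq:microscopic_self_energy_bound} to apply, so that the $\log\kappa$ terms cancel.
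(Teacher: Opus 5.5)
Your proposal is correct and follows the paper's proof essentially step for step. You get the kernel ordering from the Green function identity and positivity, the exact identity by expanding $\mathcal E_R$ and converting via $s_{\kappa,d}=s^\sharp_{\kappa,d}-c_{\kappa,d}M_R$, and the bound by dropping $\mathcal E_R\ge0$ and applying the background truncation, $\mathfrak b_{\kappa,d}$, and self-energy estimates. The only cosmetic difference is that you allow $r_*\le\min\{1,32\eta_{\mathrm{se}}\}$, whereas the paper takes $r_*\le\min\{1,\eta_{\mathrm{se}}\}$; both are valid here, since \eqref{eq:microscopic_self_energy_bound} holds up to $32\eta_{\mathrm{se}}$.
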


\begin{proof}
The Green function identity and positivity of the smeared charge give
$G_{\kappa,R}=g_{\kappa,R}*\delta_0^{\kappa,R}\ge0$.  Since
$g_{\kappa,R}\le g_{\kappa,d}$ and $\delta_0^{\kappa,R}$ is positive,
\[
 G_{\kappa,R}=g_{\kappa,R}*\delta_0^{\kappa,R}
 \le g_{\kappa,d}*\delta_0^{\kappa,R}
 =g_{\kappa,R}\le g_{\kappa,d},
\]
which proves \eqref{eq:common_truncation_kernel_order}.

Expanding the (generally non-neutral) energy of the smeared signed measure gives
\[
 \mathcal E_R
 =\frac1{N^2}\sum_{i\ne j}G_{\kappa,R}(x_i-x_j)
  +\frac{s_{\kappa,d}(R)}N
  -\frac2N\sum_iJ_i^R+I_\kappa[\rho].
\]
This energy is finite because each smeared charge and the bounded background
belong to $H^{-1}_\kappa$ at fixed $\kappa>0$.  Subtract the last identity from the definition of
$\cF_N^{g_{\kappa,d}}$.  Using
$\cF_N^{\kappa,\sharp}=\cF_N^{g_{\kappa,d}}-c_{\kappa,d}/N$,
$s_{\kappa,d}=s_{\kappa,d}^{\sharp}-c_{\kappa,d}M_R$, and
\eqref{eq:additive_normalization_defect_definition} gives
\eqref{eq:common_truncation_energy_identity}.  Thus the only correction from
the additive normalization that remains after accounting for the omitted
particle self-interactions is $\mathfrak b_{\kappa,d}(R)/N$.

The background truncation estimate \eqref{eq:background_truncation_bound} gives
\[
 0\le V_i-J_i^R\le C\|\rho\|_{L^\infty}R^2,
 \qquad
 0\le1-M_R\le\frac{\kappa^2R^2}{2d}.
\]
By \eqref{eq:additive_normalization_defect_bound} and the preceding
background estimate,
\[
 \frac{|\mathfrak b_{\kappa,d}(R)|}{N}
 +\frac2N\sum_i|V_i-J_i^R|
 \le C R^2
 \le C(R^2+\kappa^2R^2).
\]
Choose
\begin{equation}\label{eq:common_radius_scale_choice}
 0<r_*\le\min\{1,\eta_{\mathrm{se}}\}
\end{equation}
depending only on $d$ and $\kappa_*$, so that
\eqref{eq:microscopic_self_energy_bound} applies throughout $0<R\le r_*$.  Then
$s_{\kappa,d}^{\sharp}(R)\le C\ell_d(R)$ for $R\le r_*$.  Dropping the
nonnegative energy $\mathcal E_R$ in
\eqref{eq:common_truncation_energy_identity} proves
\eqref{eq:common_truncation_energy_bound}.
\end{proof}

For $d\ge3$, we now choose the parameters used in the short-range estimate.
In dimension two, the common-radius comparison is used directly.

\medskip
\noindent\emph{Choice of parameters for $d\ge3$.}
Choose $c_d>0$ so that
\begin{equation*}
 g_{\kappa,d}(z)\ge c_d|z|^{2-d}
 \qquad\text{whenever }0\le\kappa|z|\le\frac12.
\end{equation*}
For the kernel associated with the common truncation radius $G_{\kappa,R}$, positivity and the Green function identity give
$0\le G_{\kappa,R}\le g_{\kappa,R}\le g_{\kappa,d}$.  Moreover, for
$d\ge3$ and $0<R\le1$,
\begin{equation}\label{eq:GR_uniform_power_bound}
 G_{\kappa,R}(z)\le C_dR^{2-d}
 \qquad(z\in\R^d),
\end{equation}
uniformly in $\kappa>0$.  Indeed, decompose
$\delta_0^{\kappa,R}$ into its surface and volume parts from
Lemma~\ref{lem:yukawa_operator_truncation}; both have mass at most one.
Since $g_{\kappa,d}\le g_{0,d}$, Newton's shell theorem bounds the
surface--surface and surface--volume terms by $C_dR^{2-d}$.  If
$b_{\kappa,R}=\kappa^2g_{\kappa,d}(R)$ denotes the volume density, then
$b_{\kappa,R}R^d\le C_d$ and
\[
 \sup_z\iint_{B_R\times B_R}|z+a-b|^{2-d}\,\dd a\dd b
 \le C_dR^{d+2},
\]
so the volume--volume contribution is likewise bounded by $C_dR^{2-d}$.
Choose $A=A(d)\ge1$ so large that
\[
 2C_dA^{2-d}\le c_d16^{2-d}.
\]
For dimensions $d\ge3$, define the final microscopic truncation scale by
\begin{equation}\label{eq:final_microscopic_scale_dge3}
 \eta_0
 :=\min\left\{
 \eta_{\mathrm{se}},\frac{r_*}{A},\frac1A,
 \frac1{32\kappa_*}
 \right\}.
\end{equation}
Thus $\eta_0=\eta_0(d,\kappa_*)$ and, in particular,
$A\eta_0\le\min\{1,r_*\}$ and
$\kappa_*\eta_0\le1/32$.  For $R=A\eta$,
$0<\eta<\eta_0$, and $|z|\le16\eta$, the preceding bounds give
\[
 g_{\kappa,d}(z)
 \ge c_d(16\eta)^{2-d}
 \ge2C_dR^{2-d}
 \ge2G_{\kappa,R}(z).
\]
Thus
\begin{equation*}
 G_{\kappa,R}(z)\le\frac12g_{\kappa,d}(z)
 \qquad(0<\kappa\le\kappa_*,\ 0<\eta<\eta_0,\ |z|\le16\eta).
\end{equation*}
The same hierarchy guarantees $\kappa r\le1/32$ for every truncation radius
$r\le\eta$.  All energy estimates in this section are stated for
\(0<\kappa\le\kappa_*\); the Coulomb limit is treated in
Section~\ref{sec:coulomb_limit}.

\subsection{Truncation at the nearest-neighbor scale and self-energy estimates}

We next choose the truncation radii at the nearest-neighbor scale.  This
preserves disjointness of the truncation balls and yields the energy estimate
below.
For $d\ge3$, $\eta_0$ is the scale in
\eqref{eq:final_microscopic_scale_dge3}.  In dimension two, define instead
\begin{equation}\label{eq:final_microscopic_scale_d2}
 \eta_0
 :=\min\left\{
 \eta_{\mathrm{se}},r_*,\frac1{32},\frac1{32\kappa_*}
 \right\}.
\end{equation}
Thus in every dimension
\begin{equation}\label{eq:final_microscopic_scale_dependency}
 \eta_0=\eta_0(d,\kappa_*)
\end{equation}
and the final microscopic scale is independent of $N$, $\rho$, the
configuration, and the particle separation.  All estimates below are stated
for $0<\eta<\eta_0$.

\begin{proposition}[Renormalized energy with variable truncation radii]
\label{prop:yukawa_energy_variable_radii}
Let $d\ge2$, $\kappa_*>0$, $N\ge2$, $0<\kappa\le\kappa_*$, and let
$\rho$ satisfy the background bound \eqref{eq:static_background_bound}.  Let
$X_N\notin\Delta_N$, $0<\eta<\eta_0$, and let $0<r_i\le\eta$ be such that
the balls $B(x_i,r_i)$ are pairwise disjoint.  We set
\[
 \delta_i:=\delta_{x_i}^{\kappa,r_i},\qquad
 M_i:=M_d(\kappa r_i),\qquad
 \bar M:=\frac1N\sum_iM_i,
\]
where $\bar M$ is used only to keep track of the two-dimensional additive
normalization.  Define
\[
 h_{\mathbf r}
 :=\frac1N\sum_i g_{\kappa,r_i}(\cdot-x_i)-V_\rho^\kappa,
\qquad
 \mathcal E_{\mathbf r}
 :=\|h_{\mathbf r}\|_{H^1_\kappa}^2
 =I_\kappa\!\left[\frac1N\sum_i\delta_i-\rho\right]\ge0.
\]
Then
\begin{equation}\label{eq:renormalized_energy_expansion}
 \mathcal E_{\mathbf r}
 =\cF_N^{\kappa,\sharp}(X_N,\rho)
 +\frac1{N^2}\sum_i s_{\kappa,d}^{\sharp}(r_i)
 +\mathcal R_{\mathbf r},
\end{equation}
where, with
\[
 J_i:=\int V_\rho^\kappa\,\dd\delta_i,
 \qquad
 D_i:=V_\rho^\kappa(x_i)-J_i,
\]
\begin{equation}\label{eq:renormalized_energy_remainder}
 \mathcal R_{\mathbf r}
 =\frac1{N^2}\sum_i\mathfrak b_{\kappa,d}(r_i)
 +\frac2N\sum_iD_i
 =\frac{c_{\kappa,d}(1-\bar M)}N
 +\frac2N\sum_iD_i.
\end{equation}
Moreover, with $C=C(d,\kappa_*,\|\rho\|_{L^\infty})$,
\begin{equation}\label{eq:yukawa_energy_variable_radii_bound}
 |\mathcal R_{\mathbf r}|
 \le C(\eta^2+\kappa^2\eta^2).
\end{equation}

For the nearest-neighbor truncation radii
\begin{equation}\label{eq:variable_radii_yukawa}
 r_i(X_N):=\min\left\{\eta,\frac1{16}\min_{j\ne i}|x_i-x_j|\right\},
\end{equation}
the doubled balls are pairwise disjoint, and there exist constants
$B=B(d,\kappa_*,\|\rho\|_{L^\infty})\ge1$ and
$C=C(d,\kappa_*,\|\rho\|_{L^\infty})$ such that
\begin{equation}\label{eq:yukawa_self_term_control}
 \frac1{N^2}\sum_i s_{\kappa,d}^{\sharp}(r_i)
 \le C\left(\cF_N^{\kappa,\sharp}(X_N,\rho)
 +B\varepsilon_N^\kappa(\eta)\right),
\end{equation}
\begin{equation}\label{eq:yukawa_energy_with_error_nonnegative}
 \cF_N^{\kappa,\sharp}(X_N,\rho)
 +B\varepsilon_N^\kappa(\eta)\ge\varepsilon_N^\kappa(\eta),
\end{equation}
and
\begin{equation}\label{eq:variable_radius_field_energy}
 \mathcal E_{\mathbf r}
 +\frac1{N^2}\sum_i s_{\kappa,d}^{\sharp}(r_i)
 \le C\left(\cF_N^{\kappa,\sharp}
 +B\varepsilon_N^\kappa(\eta)\right).
\end{equation}
\end{proposition}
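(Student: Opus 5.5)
The plan has three stages: first the exact expansion of $\mathcal E_{\mathbf r}$ with the remainder bound, then the lower bound \eqref{eq:yukawa_energy_with_error_nonnegative}, and finally the self-term control \eqref{eq:yukawa_self_term_control}.

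\textbf{Exact expansion and remainder.} Since the smeared charges lie in $H^{-1}_\kappa$ at fixed $\kappa>0$, we may expand the quadratic energy $\mathcal E_{\mathbf r}=I_\kappa[\frac1N\sum_i\delta_i-\rho]$ into four pieces:
\begin{itemize}
\item the off-diagonal smeared pair terms $N^{-2}\sum_{i\ne j}I_\kappa[\delta_i,\delta_j]$;
\item the diagonal terms $N^{-2}\sum_i s_{\kappa,d}(r_i)$, from \eqref{eq:general_dimensional_self_energy};
\item the cross terms $-\frac2N\sum_iJ_i$;
\item the background energy $I_\kappa[\rho]$.
\end{itemize}
Because the balls $B(x_i,r_i)$ are disjoint, $|x_i-x_j|\ge r_i+r_j$. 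Lemma~\ref{lem:yukawa_smeared_exterior_exactness} then gives $I_\kappa[\delta_i,\delta_j]=g_{\kappa,d}(x_i-x_j)$ exactly, so the pair terms coincide with those in $\cF_N^{g_{\kappa,d}}$.

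Subtracting the point expansion of $\cF_N^{g_{\kappa,d}}$ leaves two discrepancies: the diagonal terms and $\frac2N\sum_i D_i$. Inserting $\cF_N^{\kappa,\sharp}=\cF_N^{g_{\kappa,d}}-c_{\kappa,d}/N$ and $s_{\kappa,d}=s^\sharp_{\kappa,d}-c_{\kappa,d}M_i$, the constants collect into $N^{-2}\sum_i c_{\kappa,d}(1-M_i)=N^{-2}\sum_i\mathfrak b_{\kappa,d}(r_i)$. This proves \eqref{eq:renormalized_energy_expansion}--\eqref{eq:renormalized_energy_remainder}.

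The bound \eqref{eq:yukawa_energy_variable_radii_bound} follows from two estimates:
\begin{itemize}
\item \eqref{eq:background_truncation_bound} gives $0\le D_i\le\Lambda r_i^2/(2d)\le C\eta^2$;
\item \eqref{eq:additive_normalization_defect_bound} gives $|\mathfrak b_{\kappa,d}(r_i)|\le C\eta^2$, and this term is even divided by an extra factor $N$.
\end{itemize}

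\textbf{Lower bound.} For \eqref{eq:yukawa_energy_with_error_nonnegative} we use Lemma~\ref{lem:common_truncation_energy_comparison} with a common radius $R$ tied to $\eta$. In $d=2$ take $R=\eta\le r_*$; for $d\ge3$ take $R=A\eta\le r_*$. Then \eqref{eq:common_truncation_energy_bound} gives
\[
0\le \cF_N^{\kappa,\sharp}+C\bigl(\ell_d(R)/N+R^2+\kappa^2R^2\bigr).
\]
Since $\ell_d(A\eta)\le\ell_d(\eta)$, the error is at most $C'\varepsilon_N^\kappa(\eta)$. Choosing $B\ge C'+1$ then yields $\cF_N^{\kappa,\sharp}+B\varepsilon_N^\kappa\ge\varepsilon_N^\kappa$.

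\textbf{Self-term control.} This is the main obstacle, because $s^\sharp(r_i)\sim\ell_d(r_i)$ can be huge when particles nearly collide. Split the indices into two groups.
\begin{itemize}
\item If $r_i=\eta$, then by \eqref{eq:microscopic_self_energy_bound} the contribution is $\le C\ell_d(\eta)/N\le C\varepsilon_N^\kappa$.
\item Otherwise $r_i=\frac1{16}|x_i-x_{j(i)}|$ for a nearest neighbour $j(i)$, with $|x_i-x_{j(i)}|\le16\eta$.
\end{itemize}
For $d\ge3$ in the second case, \eqref{eq:close_pair_self_energy_bound} gives $s(r_i)\le C_d g_{\kappa,d}(x_i-x_{j(i)})$. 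The choice of $\eta_0$ gives $G_{\kappa,R}\le\frac12 g_{\kappa,d}$ on $|z|\le16\eta$, so $g_{\kappa,d}\le2(g_{\kappa,d}-G_{\kappa,R})$ there. Each ordered pair $(i,j(i))$ appears at most once per $i$, and all summands in $L_R$ are nonnegative by \eqref{eq:common_truncation_kernel_order}. Hence
\[
N^{-2}\sum_i s(r_i)\le 2C_dL_R+C\varepsilon_N^\kappa,
\]
and $L_R$ is bounded by \eqref{eq:common_truncation_energy_bound}.

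In $d=2$ the short-range estimate is unavailable, and I would instead use \eqref{eq:renormalized_energy_expansion} with the nearest-neighbour radii directly. Since $\mathcal E_{\mathbf r}\ge0$ and $|\mathcal R_{\mathbf r}|\le C\eta^2$, this gives only the one-sided relation
\[
N^{-2}\sum_i s^\sharp(r_i)\ge -\cF_N^{\kappa,\sharp}-C\eta^2,
\]
which points the wrong way. So in $d=2$ I would bound $\log(1/r_i)$ by $\log(16\eta/|x_i-x_{j(i)}|)+\log(1/\eta)$ instead. The first part is $\lesssim g_{\kappa,2}^\sharp(x_i-x_{j(i)})-g^\sharp_{\kappa,2}(16\eta)$. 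This quantity should be comparable to $g-G_{\kappa,R}$ at the pair, with $R=\eta$ and $|z|\le16\eta$; I would verify that comparison via the Coulomb-like behaviour of $K_0$ at small arguments.

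\textbf{Conclusion.} Estimate \eqref{eq:variable_radius_field_energy} then follows by combining \eqref{eq:renormalized_energy_expansion}, \eqref{eq:yukawa_energy_variable_radii_bound} and \eqref{eq:yukawa_self_term_control}, absorbing $\eta^2+\kappa^2\eta^2$ into $B\varepsilon_N^\kappa$.
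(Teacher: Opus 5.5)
Your proposal follows the paper's proof essentially step by step. The shared steps are:
\begin{enumerate}
\item the exact expansion via the exterior identity of Lemma~\ref{lem:yukawa_smeared_exterior_exactness};
\item positivity from the common-radius comparison, with $R=A\eta$ for $d\ge3$ and $R=\eta$ for $d=2$;
\item the nearest-neighbour selection, whose ordered pairs have distinct first labels;
\item for $d\ge3$, the chain $s_{\kappa,d}(r_i)\le C g_{\kappa,d}(x_i-x_{j(i)})\le 2C(g_{\kappa,d}-G_{\kappa,A\eta})(x_i-x_{j(i)})$, summed into $L_R$.
\end{enumerate}
The remainder bound and the final field-energy estimate are also handled as in the paper.

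The one step that fails as written is the $d=2$ comparison. The claim that $g_{\kappa,2}(z)-g_{\kappa,2}(16\eta)$ is dominated by $(g_{\kappa,2}-G_{\kappa,\eta})(z)$ on $|z|\le16\eta$ is false. The Green function identity gives $G_{\kappa,\eta}(z)=\int g_{\kappa,\eta}(z-b)\,\dd\delta_0^{\kappa,\eta}(b)=g_{\kappa,2}(z)$ whenever $|z|\ge2\eta$. So the right-hand side vanishes on $2\eta\le|z|<16\eta$, while the left-hand side is positive there.

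The fix, which is what the paper does, is to anchor at $g_{\kappa,2}(\eta)$ instead of $g_{\kappa,2}(16\eta)$ and split into two cases.
\begin{itemize}
\item If $16r_i\ge\eta$, use $s_{\kappa,2}^{\sharp}(r_i)\le C\ell_2(r_i)\le C\ell_2(\eta)$ directly.
\item If $16r_i<\eta$, then $G_{\kappa,\eta}\le g_{\kappa,\eta}\equiv g_{\kappa,2}(\eta)$ on $B_\eta$. This gives
\[
(g_{\kappa,2}-G_{\kappa,\eta})(x_i-x_{j(i)})\ge g_{\kappa,2}(16r_i)-g_{\kappa,2}(\eta)\ge c\log\bigl(\eta/(16r_i)\bigr).
\]
The last inequality uses $-g_{\kappa,2}'(s)\ge c_0/(2\pi s)$ for $s\le\eta_0$, which follows from $zK_1(z)\to1$ and $\kappa_*\eta_0\le1/32$.
\end{itemize}
With this split you get $s_{\kappa,2}^{\sharp}(r_i)\le C\bigl(\ell_2(\eta)+(g_{\kappa,2}-G_{\kappa,\eta})(x_i-x_{j(i)})\bigr)$, and the rest of your argument goes through.
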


\begin{proof}
Because the balls are pairwise disjoint, $|x_i-x_j|\ge r_i+r_j$ for
$i\ne j$.  Lemma~\ref{lem:yukawa_smeared_exterior_exactness} therefore gives the
exact interaction energy identity
$ I_\kappa[\delta_i,\delta_j]=g_{\kappa,d}(x_i-x_j)$.
Here we use the fact that the smeared charges preserve the exterior Yukawa
potential despite their loss of total mass.  Hence
\[
 \mathcal E_{\mathbf r}
 =\frac1{N^2}\sum_{i\ne j}g_{\kappa,d}(x_i-x_j)
 +\frac1{N^2}\sum_i s_{\kappa,d}(r_i)
 -\frac2N\sum_iJ_i+I_\kappa[\rho].
\]
Using
$\cF_N^{\kappa,\sharp}=\cF_N^{g_{\kappa,d}}-c_{\kappa,d}/N$ and
$s_{\kappa,d}=s_{\kappa,d}^{\sharp}-c_{\kappa,d}M_i$, together with
$N^{-1}\sum_iM_i=\bar M$, gives
\eqref{eq:renormalized_energy_expansion}--
\eqref{eq:renormalized_energy_remainder}.  By the background truncation estimate
\eqref{eq:background_truncation_bound},
$0\le D_i\le C\|\rho\|_\infty r_i^2$.  Moreover,
\eqref{eq:additive_normalization_defect_bound} gives
\[
 \frac1{N^2}\sum_i|\mathfrak b_{\kappa,d}(r_i)|
 \le \frac{C\eta^2}{N}.
\]
Therefore
\[
 |\mathcal R_{\mathbf r}|\le C\eta^2,
\]
which is stronger than \eqref{eq:yukawa_energy_variable_radii_bound}.  The
factor $|\log\kappa|$ is controlled by the uniform bound
\eqref{eq:additive_normalization_defect_bound} on the correction due to the
additive normalization.

For \eqref{eq:variable_radii_yukawa}, the doubled balls are disjoint.  Before
estimating the diagonal correction terms, we establish the positivity needed in both
dimensions by the same argument with a common truncation radius.  Set
\[
 R:=
 \begin{cases}
  A\eta,&d\ge3,\\
  \eta,&d=2.
 \end{cases}
\]
By the choices of $\eta_0$, Lemma~\ref{lem:common_truncation_energy_comparison}
applies at this radius.  Since $R$ is a fixed multiple of $\eta$, there is
$B_0\ge1$, depending only on $d$, $\kappa_*$, and $\|\rho\|_{L^\infty}$,
such that
\begin{equation}\label{eq:common_truncation_energy_lower_bound}
 0\le L_R\le
 \cF_N^{\kappa,\sharp}(X_N,\rho)
 +B_0\varepsilon_N^\kappa(\eta).
\end{equation}
Thus, before estimating any configuration-dependent diagonal correction
term, we have
\begin{equation}\label{eq:common_radius_positive_margin}
 \cF_N^{\kappa,\sharp}(X_N,\rho)
 +(B_0+1)\varepsilon_N^\kappa(\eta)
 \ge \varepsilon_N^\kappa(\eta).
\end{equation}
The indices for which $r_i=\eta$ are handled uniformly in all dimensions.  Estimate~\eqref{eq:microscopic_self_energy_bound}
gives
\begin{equation}\label{eq:nearest_neighbor_self_energy_control}
 \frac1{N^2}\sum_{i:\ r_i=\eta}s_{\kappa,d}^{\sharp}(r_i)
 \le C\frac{\ell_d(\eta)}N
 \le C\varepsilon_N^\kappa(\eta).
\end{equation}
For every remaining index $r_i<\eta$, choose $j(i)$ so that
$|x_i-x_{j(i)}|=16r_i$.  The corresponding ordered pairs have distinct first
labels and satisfy $|x_i-x_{j(i)}|\le16\eta$.  Consequently, for every
nonnegative function $\Phi$,
\begin{equation}\label{eq:selected_ordered_pair_multiplicity}
 \sum_{i:\,r_i<\eta}\Phi(x_i-x_{j(i)})
 \le \sum_{\substack{i\ne j\, :\,|x_i-x_j|\le16\eta}}
       \Phi(x_i-x_j).
\end{equation}
Only the treatment of the nearest-neighbor diagonal correction terms depends
on the dimension.

\smallskip
\noindent\emph{Case $d\ge3$.}
Since $r_i\le\eta<\eta_0$, the parameter choice above gives
$\kappa r_i\le\kappa_*\eta_0\le1/32$.  Hence
\eqref{eq:close_pair_self_energy_bound} applies and
\[
 s_{\kappa,d}^{\sharp}(r_i)=s_{\kappa,d}(r_i)
 \le Cg_{\kappa,d}(x_i-x_{j(i)}).
\]
At the common radius $R=A\eta$, the parameter choice above gives
$G_{\kappa,R}(z)\le\frac12g_{\kappa,d}(z)$ whenever $|z|\le16\eta$.
Hence, with $L_R$ from Lemma~\ref{lem:common_truncation_energy_comparison},
\[
 \frac1{N^2}\sum_{i\ne j:\ |x_i-x_j|\le16\eta}
 g_{\kappa,d}(x_i-x_j)
 \le2L_R
 \le C\left(\cF_N^{\kappa,\sharp}(X_N,\rho)
 +B_0\varepsilon_N^\kappa(\eta)\right).
\]
Summing over $r_i<\eta$ and using
\eqref{eq:selected_ordered_pair_multiplicity} with
$\Phi=g_{\kappa,d}$ therefore gives
\[
 \frac1{N^2}\sum_{i:\ r_i<\eta}s_{\kappa,d}^{\sharp}(r_i)
 \le C\left(\cF_N^{\kappa,\sharp}(X_N,\rho)
 +B\varepsilon_N^\kappa(\eta)\right),
\]
with $B$ chosen sufficiently large.  Together with
\eqref{eq:nearest_neighbor_self_energy_control}, this proves
\eqref{eq:yukawa_self_term_control} in dimensions $d\ge3$.

\smallskip
\noindent\emph{Case $d=2$.}
We use $G_{\kappa,\eta}$ and $L_\eta$ from
Lemma~\ref{lem:common_truncation_energy_comparison}.  Since
$zK_1(z)\to1$ as $z\downarrow0$ and $\kappa_*\eta_0\le1/32$, there is
$c_0>0$, independent of $\kappa$, such that
\[
 -g_{\kappa,2}'(r)\ge\frac{c_0}{2\pi r}
 \qquad(0<r\le\eta_0).
\]
If $16r_i\ge\eta$, then $r_i\ge\eta/16$ and
\eqref{eq:microscopic_self_energy_bound} gives
$
 s_{\kappa,2}^{\sharp}(r_i)\le C\ell_2(\eta).
$

If $16r_i<\eta$, then \eqref{eq:common_truncation_kernel_order} and the fact that
$g_{\kappa,\eta}$ is constant on $B_\eta$ give
\begin{align*}
 (g_{\kappa,2}-G_{\kappa,\eta})(x_i-x_{j(i)})
 \ge g_{\kappa,2}(16r_i)-g_{\kappa,2}(\eta)=\int_{16r_i}^{\eta}-g_{\kappa,2}'(s)\,\dd s
 \ge c\log\frac{\eta}{16r_i}.
\end{align*}
Combining this with
$\ell_2(r_i)\le C\bigl(\ell_2(\eta)+\log(\eta/(16r_i))\bigr)$ and
\eqref{eq:microscopic_self_energy_bound} yields, in both subcases,
\begin{equation}\label{eq:two_dimensional_self_loss_comparison}
 s_{\kappa,2}^{\sharp}(r_i)
 \le C\left(\ell_2(\eta)
 +(g_{\kappa,2}-G_{\kappa,\eta})(x_i-x_{j(i)})\right).
\end{equation}
Since $g_{\kappa,2}-G_{\kappa,\eta}\ge0$, summing
\eqref{eq:two_dimensional_self_loss_comparison} and using
\eqref{eq:selected_ordered_pair_multiplicity} yields
\[
 \frac1{N^2}\sum_{i:\ r_i<\eta}s_{\kappa,2}^{\sharp}(r_i)
 \le C\left(\frac{\ell_2(\eta)}N+L_\eta\right).
\]
By \eqref{eq:common_truncation_energy_lower_bound} and
\eqref{eq:nearest_neighbor_self_energy_control}, this proves
\eqref{eq:yukawa_self_term_control} in $d=2$ as well.

Choose the final constant $B$ larger than all constants required in the
preceding self-energy estimates and also larger than $B_0+1$.  Since
$\varepsilon_N^\kappa(\eta)\ge0$, increasing its coefficient preserves the
lower bound already established in
\eqref{eq:common_radius_positive_margin}; hence
\[
 \cF_N^{\kappa,\sharp}+B\varepsilon_N^\kappa(\eta)
 \ge \varepsilon_N^\kappa(\eta).
\]
This proves \eqref{eq:yukawa_energy_with_error_nonnegative}.  Then
\eqref{eq:renormalized_energy_expansion}, the remainder bound, and
\eqref{eq:yukawa_self_term_control} give
\eqref{eq:variable_radius_field_energy} directly.

\end{proof}

\section{Commutator estimate}\label{sec:yukawa_commutator}

Fix $d\ge2$ and let $\rho$ satisfy
\eqref{eq:static_background_bound}.  Throughout this section,
$\eta_0=\eta_0(d,\kappa_*)$ denotes the microscopic scale fixed in
\eqref{eq:final_microscopic_scale_dependency}.  Generic constants $C$ depend only on $(d,\kappa_*,\Lambda)$ and may change
from line to line, while constants $C_d$ depend only on the dimension.  We fix
$B=B(d,\kappa_*,\Lambda)$ as in
Proposition~\ref{prop:yukawa_energy_variable_radii}.

We combine the energy estimate for variable truncation radii from
Proposition~\ref{prop:yukawa_energy_variable_radii} with the stress-energy
identity with interface terms from Section~\ref{sec:yukawa_main}.  First we
compare the commutator for the point charges with the finite-energy field of
the smeared charges.  We then average the resulting errors over a common
scaling of the truncation radii and select one admissible scale.

\subsection{Comparison with smeared charges}

Let $N\ge2$, $0<\kappa\le\kappa_*$, $X_N\notin\Delta_N$ and
$0<\eta<\eta_0$.  Let the truncation radii be given by
\eqref{eq:variable_radii_yukawa}, and let $B$ be the constant in
Proposition~\ref{prop:yukawa_energy_variable_radii}.  Define
\begin{equation}\label{eq:commutator_shift}
 \mathcal S:=\cF_N^{\kappa,\sharp}(X_N,\rho)
 +B\varepsilon_N^\kappa(\eta).
\end{equation}
By Proposition~\ref{prop:yukawa_energy_variable_radii},
\begin{equation}\label{eq:modulated_energy_with_error_positive}
 \mathcal S\ge\varepsilon_N^\kappa(\eta)\ge0.
\end{equation}

For \(t\in[1/2,1]\), set \(r_i(t)=tr_i\) and abbreviate
\[
 \delta_i^t:=\delta_{x_i}^{\kappa,r_i(t)}.
\]
Here \(t\) is an auxiliary scaling parameter, not the physical time; it
rescales all truncation radii by the same factor.  For \(i\ne j\),
\[
 2r_i(t)+2r_j(t)\le2r_i+2r_j\le\frac12|x_i-x_j|,
\]
so the enlarged balls \(B(x_i,2r_i(t))\) are pairwise disjoint.  Define
\begin{equation}
\label{eq:point_and_smeared_sources}
 \widetilde\nu_t:=\frac1N\sum_i\delta_i^t-\rho,
 \qquad
 \widetilde h_t:=g_{\kappa,d}*\widetilde\nu_t
 =\frac1N\sum_i g_{\kappa,r_i(t)}(\cdot-x_i)-V_\rho^\kappa.
\end{equation}
The signed source is generally non-neutral, with total mass
$N^{-1}\sum_iM_d(\kappa r_i(t))-1$, but it belongs to $H^{-1}_\kappa$ for
fixed $\kappa>0$.  The same source is used in both the comparison between the
point charges and the smeared charges and the energy estimate.  Set
\[
 \mathcal E_t:=\|\widetilde h_t\|_{H^1_\kappa}^2
 =I_\kappa[\widetilde\nu_t].
\]
Each smeared charge and the bounded background have finite Yukawa energy, so
all stress pairings below are finite.

At fixed \(0<\kappa\le\kappa_*\), the common scaling parameter enters
continuously in the energy topology.  Indeed, if \(t_n\to t\), then all
radii \(r_i(t_n)\) and \(r_i(t)\) lie in \([r_i/2,r_i]\).  Dominated
convergence, together with the exponential Yukawa decay, therefore gives
\[
 g_{\kappa,r_i(t_n)}\longrightarrow g_{\kappa,r_i(t)}
 \quad\text{strongly in }H^1_\kappa(\R^d).
\]
After translation and finite summation, \(t\mapsto\widetilde h_t\) is
continuous in $H^1_\kappa$, and hence so are
$\mathcal E_t=\|\widetilde h_t\|_{H^1_\kappa}^2$ and the self-energies.
Moreover,
\[
 \|T_\kappa[\widetilde h_t]-T_\kappa[\widetilde h_s]\|_{L^1}
 \le C\bigl(\|\widetilde h_t\|_{H^1_\kappa}+\|\widetilde h_s\|_{H^1_\kappa}\bigr)
       \|\widetilde h_t-\widetilde h_s\|_{H^1_\kappa},
\]
so, for every fixed globally Lipschitz field \(u\),
\begin{equation}
\label{eq:scaled_truncation_stress_continuity}
 t\longmapsto \mathcal B_\kappa(\widetilde\nu_t;u)
 \quad\text{is continuous on }[1/2,1].
\end{equation}
The continuity assertion is used only at fixed \(\kappa>0\); the uniform
commutator estimates below rely on the quantitative bounds established above.

We next compare the commutator for the point charges with the stress-energy
pairing for the smeared charges.  The required weak derivative follows from
the Green function identity \eqref{eq:smeared_charge_resolvent}.  Since
\(g_{\kappa,d}\in L^1_{\rm loc}\) and \(\delta_0^{\kappa,r}\) is finite and
compactly supported, distributional differentiation gives
\begin{equation}
\label{eq:yukawa_weak_derivative_convolution}
 \nabla(g_{\kappa,d}*\delta_0^{\kappa,r})
 =\nabla g_{\kappa,d}*\delta_0^{\kappa,r}
 =\nabla g_{\kappa,r}
 \qquad\text{in }\mathcal D'(\R^d;\R^d).
\end{equation}
Thus, after translation,
\begin{equation}
\label{eq:yukawa_translated_weak_derivative}
 \int_{\R^d}\nabla_z g_{\kappa,d}(z-y)\,\dd\delta_x^{\kappa,r}(z)
 =\nabla g_{\kappa,r}(x-y)
 \qquad\text{in }\mathcal D'_y(\R^d;\R^d),
\end{equation}
and the right-hand side equals \(\nabla g_{\kappa,d}(x-y)\) whenever
\(|x-y|>r\).  If \(\rho\in L^1\cap L^\infty\), then the pairing is
absolutely convergent because
\[
 |\nabla g_{\kappa,d}(w)|
 \le C\bigl(|w|^{1-d}\mathbf1_{\{|w|<1\}}+\mathbf1_{\{|w|\ge1\}}\bigr),
 \qquad M_d(\kappa r)\le1,
\]
and hence Fubini gives
\begin{equation}
\label{eq:yukawa_weak_derivative_background_pairing}
 \int_{\R^d}\rho(y)
 \left[\int\nabla_z g_{\kappa,d}(z-y)\,\dd\delta_x^{\kappa,r}(z)\right]\dd y
 =\int_{\R^d}\rho(y)\nabla g_{\kappa,r}(x-y)\dd y.
\end{equation}

For the remainder of this subsection, write
\[
 \cH_N^{g_{\kappa,d}}(u):=\cH_N^{g_{\kappa,d}}(X_N,\rho;u),
 \qquad
 h_N:=\frac1N\sum_jg_{\kappa,d}(\cdot-x_j)-V_\rho^\kappa,
 \qquad
 h_N^i:=h_N-\frac1Ng_{\kappa,d}(\cdot-x_i).
\]
Both $h_N$ and $h_N^i$ are singular point-charge potentials.  Neither belongs
globally to $H^1_\kappa$ because of the untruncated self-interaction
singularities.  Only $h_N^i$ away from $x_i$ and its gradient are used below.
By contrast, $\widetilde h_t$ is the finite-energy potential generated by the
smeared charges and the unchanged background.

We can now express the first variation of the point charges as the stress-energy
pairing of the smeared charges plus explicit error terms.

\begin{lemma}
\label{lem:point_smeared_charge_identity}
With the preceding notation, let \(u\in C_c^\infty(\R^d;\R^d)\).  Define
\begin{equation}
\label{eq:point_smeared_first_variation_term}
\begin{aligned}
 \mathcal R_t(u)
 :=&\frac2N\sum_i\int (u(x_i)-u)\cdot\nabla h_N^i\,\dd\delta_i^t
 -\frac2{N^2}\sum_i\int
       (u-u(x_i))\cdot\nabla g_{\kappa,r_i(t)}(\cdot-x_i)\,\dd\delta_i^t\\
 &+\frac2N\sum_i\int
       (u(x)-u(x_i))\cdot\nabla(g_{\kappa,r_i(t)}-g_{\kappa,d})(x-x_i)
       \rho(x)\,\dd x.
\end{aligned}
\end{equation}
For almost every \(t\in[1/2,1]\), surface pairings are understood using the
arithmetic mean of the one-sided traces from
Proposition~\ref{prop:average_trace_yukawa_green_stress}.  Then
\begin{equation}
\label{eq:point_smeared_charge_identity}
 \cH_N^{g_{\kappa,d}}(X_N,\rho;u)
 =\mathcal B_\kappa(\widetilde\nu_t;u)+\mathcal R_t(u).
\end{equation}
\end{lemma}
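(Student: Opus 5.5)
The plan is to apply Proposition~\ref{prop:average_trace_yukawa_green_stress} to $H=\widetilde h_t$, compute its right-hand side, and compare it term by term with the three-term expansion \eqref{eq:point_commutator_explicit_lipschitz} of $\cH_N^{g_{\kappa,d}}$.

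\emph{Step 1: hypotheses of Proposition~\ref{prop:average_trace_yukawa_green_stress}.} By \eqref{eq:general_dimensional_smeared_charge}, $L_\kappa\widetilde h_t=f\,\dd x+\sum_i\lambda_i\mathcal H^{d-1}\lfloor_{\partial B(x_i,r_i(t))}$, with
\[
 f=\frac1N\sum_i\kappa^2g_{\kappa,d}(r_i(t))\mathbf 1_{B(x_i,r_i(t))}-\rho,\qquad
 \lambda_i=-\frac1N g_{\kappa,d}'(r_i(t)),
\]
and $\lambda_i$ is constant on each sphere. The spheres are disjoint $C^2$ surfaces. Next, $\widetilde h_t$ is continuous and lies in $H^1_\kappa$. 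On each side of each sphere it is a smooth Yukawa function (or a constant inside the ball) plus $-V^\kappa_\rho\in W^{2,p}_{\rm loc}$ for $p>d$, so the one-sided $W^{2,p}$ extensions exist.

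\emph{Step 2: expand the right-hand side of \eqref{eq:average_trace_stress_identity}.} I would write $\nabla\widetilde h_t$ near particle $i$ as
\[
 \nabla\widetilde h_t=\frac1N\nabla g_{\kappa,r_i(t)}(\cdot-x_i)+\nabla\widetilde h_t^{i},
 \qquad
 \widetilde h_t^{i}:=\frac1N\sum_{j\ne i}g_{\kappa,r_j(t)}(\cdot-x_j)-V_\rho^\kappa .
\]
On the closed ball $\overline B(x_i,r_i(t))$, which is disjoint from the other enlarged balls, the field $\widetilde h_t^i$ agrees with $h_N^i$ and is $C^1$ there. Hence the average trace of $\nabla\widetilde h_t$ on $\partial B(x_i,r_i(t))$ equals
\[
 \nabla h_N^i+\frac1N\bigl\langle\nabla g_{\kappa,r_i(t)}\bigr\rangle ,
\]
and the averaged trace is the radial vector $\tfrac12 g'_{\kappa,d}(r_i(t))\,n$. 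Inside the ball, $\nabla g_{\kappa,r_i(t)}=0$.

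The right-hand side then splits into four groups:
\begin{itemize}
\item[(a)] $\frac2N\sum_i\int u\cdot\nabla h_N^i\,\dd\delta_i^t$, which collects both the surface and the volume parts of $\delta_i^t$;
\item[(b)] the self term $\frac2{N^2}\sum_i\int u\cdot\nabla g_{\kappa,r_i(t)}(\cdot-x_i)\,\dd\delta_i^t$, read with average traces;
\item[(c)] the mixed term $-2\int u\cdot\nabla\widetilde h_t\,\rho$;
\item[(d)] the background-only piece, which equals $\iint(u(x)-u(y))\cdot\nabla g\,\rho\rho$ by oddness.
\end{itemize}

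\emph{Step 3: recover the point commutator.} Inserting $u(x_i)$ in (a) gives
\[
 \frac2N\sum_i u(x_i)\cdot\int\nabla h_N^i\,\dd\delta_i^t+\frac2N\sum_i\int(u-u(x_i))\cdot\nabla h_N^i\,\dd\delta_i^t ,
\]
whose second piece matches the first term of $\mathcal R_t$ up to sign. Because $\nabla h_N^i$ is a sum of exterior fields, Lemma~\ref{lem:yukawa_smeared_exterior_exactness}, applied in its differentiated form \eqref{eq:yukawa_translated_weak_derivative}, shows that $\int\nabla h_N^i\,\dd\delta_i^t$ equals
\[
 \frac1N\sum_{j\ne i}\nabla g_{\kappa,d}(x_i-x_j)-\int\nabla g_{\kappa,r_i(t)}(x_i-y)\rho(y)\,\dd y ,
\]
where the background part is handled by \eqref{eq:yukawa_weak_derivative_background_pairing}. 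Symmetrizing over ordered pairs turns the particle part into the particle--particle term of \eqref{eq:point_commutator_explicit_lipschitz}.

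For (b), the integral $\int\nabla g_{\kappa,r}\,\dd\delta_0^{\kappa,r}$ vanishes: the volume part sees zero gradient and the surface part sees an odd radial field. So $u$ may be replaced by $u-u(x_i)$, which yields the second term of $\mathcal R_t$.

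For (c), expand $\nabla\widetilde h_t$ and combine with the leftover background piece from (a). The particle--background term of \eqref{eq:point_commutator_explicit_lipschitz} appears with $g_{\kappa,d}$ replaced by $g_{\kappa,r_i(t)}$, and correcting this replacement produces the third term of $\mathcal R_t$. This works because the $u(x_i)$ parts cancel by oddness.

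\emph{Main obstacle.} I expect the main difficulty to be the sign and factor bookkeeping in Step 3. This includes the ordered-pair factor of $2$, the orientation of the average trace, and checking that the $u(x_i)$ pieces cancel exactly rather than leaving a mass-loss term proportional to $1-M_d$. It also includes justifying Fubini in the background pairings; for this I would use the bound $M_d\le1$ and the local integrability of $|\nabla g|$.

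Finally, the ``almost every $t$'' qualifier is harmless, since all the traces above are classical for every $t$. Noncompactly supported Lipschitz $u$ are excluded here because the lemma assumes $u\in C_c^\infty$.
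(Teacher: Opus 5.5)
Your proposal is correct and follows essentially the same route as the paper: you apply the interface stress identity to $\widetilde h_t$, split the field on each closed ball into $\nabla h_N^i$ plus $\frac1N$ times the averaged self-trace, evaluate $\int\nabla h_N^i\,\dd\delta_i^t$ via the exterior weak-derivative identity, use the vanishing spherical average of the self-field, and let the $u(x_i)$ background pieces cancel by oddness, so no $1-M_d$ term survives. The only difference is bookkeeping: the paper first desymmetrizes $\cH_N^{g_{\kappa,d}}$ into $\frac2N\sum_i u(x_i)\cdot\nabla h_N^i(x_i)-2\int u\cdot\nabla h_N\,\rho$ and subtracts the source form of $\mathcal B_\kappa(\widetilde\nu_t;u)$, whereas you reassemble the three-term expansion directly from $\mathcal B_\kappa$; your remark that the identity in fact holds for every $t\in[1/2,1]$ is also right.
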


\begin{proof}
We first rewrite the commutator for the point charges.  Since
\(g_{\kappa,d}\) is radial, \(\nabla g_{\kappa,d}\) is odd; exchanging the
indices in the particle--particle term and the variables in the
background--background term gives
\begin{equation}
\label{eq:point_smeared_desymmetrization}
 \cH_N^{g_{\kappa,d}}(u)
 =\frac2N\sum_i u(x_i)\cdot\nabla h_N^i(x_i)
 -2\int u\cdot\nabla h_N\,\rho\,\dd x.
\end{equation}

We set \(B_i^t:=B(x_i,r_i(t))\).  By the disjointness of
\(B(x_i,2r_i(t))\), the truncations centered at \(x_j\), \(j\ne i\), are
inactive on \(B_i^t\).  Hence
\begin{equation}
\label{eq:local_truncated_field}
 \nabla\widetilde h_t
 =\nabla h_N^i+\frac1N\nabla g_{\kappa,r_i(t)}(\cdot-x_i)
 \quad\text{in }B_i^t,
\end{equation}
and
\begin{equation}
\label{eq:point_smeared_field_difference}
 \nabla(h_N-\widetilde h_t)
 =\frac1N\sum_j\nabla(g_{\kappa,d}-g_{\kappa,r_j(t)})(\cdot-x_j)
 \quad\text{a.e.}
\end{equation}

To apply Proposition~\ref{prop:average_trace_yukawa_green_stress}, note from
\eqref{eq:general_dimensional_smeared_charge} that
\begin{equation}
\label{eq:smeared_charge_source_decomposition}
 \dd\delta_i^t
 =A_d(\kappa r_i(t))\,\dd\sigma_{r_i(t),x_i}
 +\kappa^2g_{\kappa,d}(r_i(t))\mathbf1_{B_i^t}\,\dd x.
\end{equation}
Therefore
\begin{equation}
\label{eq:point_smeared_source_difference}
 L_\kappa\widetilde h_t
 =\widetilde\nu_t
 =f_t\,\dd x+\sum_i\lambda_i^t
   \mathcal H^{d-1}\lfloor_{\partial B_i^t},
\end{equation}
with
\begin{equation}
\label{eq:source_pairing_derivative}
 f_t=\frac{\kappa^2}{N}\sum_i g_{\kappa,d}(r_i(t))
       \mathbf1_{B_i^t}-\rho,
 \qquad
 \lambda_i^t=\frac{A_d(\kappa r_i(t))}
 {N\Sd r_i(t)^{d-1}},
\end{equation}
since
\(\dd\sigma_{r_i(t),x_i}=(\Sd r_i(t)^{d-1})^{-1}
\dd\mathcal H^{d-1}\lfloor_{\partial B_i^t}\).
The hypotheses of Proposition~\ref{prop:average_trace_yukawa_green_stress}
are satisfied.  Indeed, \(\widetilde h_t\in H^1_\kappa\cap C^0\) and the
spheres \(\partial B_i^t\) are pairwise disjoint.  Near each sphere,
\(\widetilde h_t=N^{-1}g_{\kappa,r_i(t)}(\cdot-x_i)+H_i^t\), where
\(H_i^t\) is \(W^{2,p}\) across the interface and the truncated self-potential
has \(W^{2,p}\) extensions on both sides.  Hence, for any \(p>d\), the
proposition prescribes the average
\[
 \langle \nabla\widetilde h_t\rangle
 =\frac12(\nabla\widetilde h_t^++\nabla\widetilde h_t^-)
\]
on each truncation sphere.  Substituting \eqref{eq:source_pairing_derivative} in the
stress-energy identity with interface terms gives
\begin{equation}
\label{eq:explicit_surface_volume_pairing}
\begin{aligned}
 \mathcal B_\kappa(\widetilde\nu_t;u)
 &=\frac2N\sum_i A_d(\kappa r_i(t))
   \int_{\partial B_i^t}u\cdot\langle \nabla\widetilde h_t\rangle\,
      \dd\sigma_{r_i(t),x_i}\\
 &\quad+\frac2N\sum_i \kappa^2g_{\kappa,d}(r_i(t))
   \int_{B_i^t}u\cdot\nabla\widetilde h_t\,\dd x
   -2\int u\cdot\nabla\widetilde h_t\,\rho\,\dd x.
\end{aligned}
\end{equation}
Equivalently, with the same convention for averaging the one-sided traces on the surface part,
\begin{equation}
\label{eq:compactly_supported_source_pairing}
 \mathcal B_\kappa(\widetilde\nu_t;u)
 =\frac2N\sum_i\int u\cdot\nabla\widetilde h_t\,\dd\delta_i^t
 -2\int u\cdot\nabla\widetilde h_t\,\rho\,\dd x.
\end{equation}

Using \eqref{eq:local_truncated_field}--\eqref{eq:point_smeared_field_difference}
and subtracting \eqref{eq:compactly_supported_source_pairing} from
\eqref{eq:point_smeared_desymmetrization}, we obtain
\begin{align*}
 \cH_N^{g_{\kappa,d}}(u)-\mathcal B_\kappa(\widetilde\nu_t;u)
 &=\frac2N\sum_i\left[u(x_i)\cdot\nabla h_N^i(x_i)
      -\int u\cdot\nabla h_N^i\,\dd\delta_i^t\right]\\
 &\quad-\frac2{N^2}\sum_i\int
      u\cdot\nabla g_{\kappa,r_i(t)}(\cdot-x_i)\,\dd\delta_i^t\\
 &\quad+\frac2N\sum_i\int u(x)\cdot\nabla
      (g_{\kappa,r_i(t)}-g_{\kappa,d})(x-x_i)\rho(x)\,\dd x.
\end{align*}
To complete the identity, we subtract the constant values \(u(x_i)\).  By \eqref{eq:yukawa_translated_weak_derivative}, for \(j\ne i\),
\[
 \int\nabla_zg_{\kappa,d}(z-x_j)\,\dd\delta_i^t(z)
 =\nabla g_{\kappa,r_i(t)}(x_i-x_j)
 =\nabla g_{\kappa,d}(x_i-x_j),
\]
and hence
\begin{equation}
\label{eq:external_field_comparison_error}
 \nabla h_N^i(x_i)-\int\nabla h_N^i\,\dd\delta_i^t
 =\int\nabla(g_{\kappa,r_i(t)}-g_{\kappa,d})(x_i-y)\rho(y)\,\dd y.
\end{equation}
Moreover, radial symmetry gives
\begin{equation}
\label{eq:radial_self_average_zero}
 \int\nabla g_{\kappa,r_i(t)}(z-x_i)\,\dd\delta_i^t(z)=0.
\end{equation}
In this identity the value of the gradient on the surface part of
$\delta_i^t$ is the average of the interior and exterior traces
prescribed by Proposition~\ref{prop:average_trace_yukawa_green_stress}; the
surface contribution is radial and has zero spherical average, while the volume
contribution vanishes because $g_{\kappa,r_i(t)}$ is constant in the ball.
After adding and subtracting \(u(x_i)\) in the three terms above,
\eqref{eq:radial_self_average_zero} removes the self constant, while the
remaining two background constants cancel because
\(g_{\kappa,r_i(t)}-g_{\kappa,d}\) is radial and its gradient is odd.  The
remaining three terms are exactly those in
\eqref{eq:point_smeared_first_variation_term}, which proves
\eqref{eq:point_smeared_charge_identity}.
\end{proof}

We first record the uniform bounds in the scaling parameter that enter the
averaging argument.  They follow directly from the renormalized energy
estimate for variable truncation radii.

\smallskip\noindent\emph{Uniform estimates for the rescaled radii.}
For $t\in[1/2,1]$, the radii $r_i(t)=tr_i$ remain pairwise admissible and
$r_i(t)\le\eta$.  Estimate~\eqref{eq:microscopic_self_energy_bound} and
$\ell_d(tr)\le C_d\ell_d(r)$ and the two-sided comparison
\eqref{eq:microscopic_self_energy_bound} give
\begin{equation}\label{eq:scaled_self_term_comparison}
 s_{\kappa,d}^{\sharp}(tr_i)
 \le C_{d,\kappa_*} s_{\kappa,d}^{\sharp}(r_i),
 \qquad t\in[1/2,1].
\end{equation}
Apply the exact expansion \eqref{eq:renormalized_energy_expansion} to the
radii $r_i(t)$, with the background still equal to $\rho$.  Since
$\mathcal S\ge\varepsilon_N^\kappa(\eta)$ by
\eqref{eq:modulated_energy_with_error_positive}, the uniform remainder bound,
\eqref{eq:scaled_self_term_comparison}, and
\eqref{eq:yukawa_self_term_control} yield, uniformly in $t$,
\begin{equation}\label{eq:scaled_radius_energy_bound}
 \mathcal E_t+\frac1{N^2}\sum_i s_{\kappa,d}^{\sharp}(r_i(t))
 \le C\mathcal S.
\end{equation}
Equivalently,
\begin{equation}\label{eq:uniform_truncated_potential_bounds}
 \|\widetilde h_t\|_{H^1_\kappa}^2
 +\frac1{N^2}\sum_i s_{\kappa,d}^{\sharp}(r_i(t))
 \le C\mathcal S,
 \qquad t\in[1/2,1].
\end{equation}
No neutrality correction and no estimate involving a factor $\kappa^{-1}$ is used here.

\subsubsection{Averaging over truncation radii}

Averaging over the scaling parameter allows us to choose a truncation radius
for which the commutator error and the field energy are controlled
simultaneously.

\begin{lemma}[Averaged commutator estimate and selection of a truncation radius]
\label{lem:averaged_commutator_comparison}
There exists $C=C(d,\kappa_*,\Lambda)\ge1$ with the following properties.
For every $u\in C_c^\infty(\R^d;\R^d)$,
\begin{equation}\label{eq:averaged_commutator_comparison}
 \int_{1/2}^1
 \left|\cH_N^{g_{\kappa,d}}(X_N,\rho;u)-\mathcal B_\kappa(\widetilde\nu_t;u)\right|\,\dd t
 \le C\|\nabla u\|_{L^\infty}\mathcal S.
\end{equation}
For every globally Lipschitz $u:\R^d\to\R^d$, there exists
$t_*\in[1/2,1]$ such that, with $\widetilde r_i=t_*r_i$,
\begin{equation}\label{eq:common_truncation_scale_remainder}
\left|\cH_N^{g_{\kappa,d}}(X_N,\rho;u)
-\mathcal B_\kappa(\widetilde\nu_{t_*};u)\right|
\le C\|\nabla u\|_{L^\infty}\mathcal S,
\end{equation}
\begin{equation}\label{eq:common_truncation_scale_energy}
\mathcal E_{t_*}+\frac1{N^2}\sum_i s_{\kappa,d}^{\sharp}(\widetilde r_i)
\le C\mathcal S,
\end{equation}
and consequently
\begin{equation}\label{eq:selected_truncation_stress_bound}
 |\mathcal B_\kappa(\widetilde\nu_{t_*};u)|
 \le C\|\nabla u\|_{L^\infty}\mathcal S.
\end{equation}
\end{lemma}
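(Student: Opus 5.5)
By Lemma~\ref{lem:point_smeared_charge_identity}, for $u\in C_c^\infty$ the difference $\cH_N^{g_{\kappa,d}}(u)-\mathcal B_\kappa(\widetilde\nu_t;u)$ equals $\mathcal R_t(u)$. The plan is therefore to bound $\int_{1/2}^1|\mathcal R_t(u)|\,\dd t$ term by term, then pass from compactly supported $u$ to global Lipschitz $u$, and finally select $t_*$ by a Chebyshev-type argument.

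\emph{Self term.} The second term in \eqref{eq:point_smeared_first_variation_term} is bounded by $|u-u(x_i)|\le\|\nabla u\|_{L^\infty}r_i(t)$ on $\operatorname{supp}\delta_i^t$. On the sphere, the averaged gradient trace has size $\frac12|g_{\kappa,d}'(r_i(t))|$, and $A_d|g'_{\kappa,d}(r)|\,r\lesssim r^{2-d}$ (respectively $\lesssim 1$ for $d=2$) $\lesssim s^\sharp_{\kappa,d}(r)$ by \eqref{eq:microscopic_self_energy_bound}. The volume part vanishes because $g_{\kappa,r}$ is constant inside the ball. This term is therefore $\lesssim\|\nabla u\|_{L^\infty}N^{-2}\sum_i s^\sharp_{\kappa,d}(r_i)\le C\|\nabla u\|_{L^\infty}\mathcal S$ by \eqref{eq:yukawa_self_term_control}, pointwise in $t$.

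\emph{Background term.} The third term is supported in $B(x_i,r_i(t))$. There $|\nabla(g_{\kappa,r}-g_{\kappa,d})|=|\nabla g_{\kappa,d}|\lesssim|x-x_i|^{1-d}$, so the integral is $\lesssim\|\nabla u\|_{L^\infty}\Lambda r_i^2\le C\|\nabla u\|_{L^\infty}\eta^2$, hence $\le C\|\nabla u\|_{L^\infty}\varepsilon_N^\kappa(\eta)\le C\|\nabla u\|_{L^\infty}\mathcal S$.

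\emph{First term: the main obstacle.} We need to control $\frac2N\sum_i\int(u(x_i)-u)\cdot\nabla h_N^i\,\dd\delta_i^t$. On $B_i^t$ we have $\nabla h_N^i=\nabla\widetilde h_t-\frac1N\nabla g_{\kappa,r_i(t)}(\cdot-x_i)$ by \eqref{eq:local_truncated_field}. The subtracted self-field gives the same bound as in the self term. It remains to bound
\[
\frac2N\sum_i\|\nabla u\|_{L^\infty}\,r_i(t)\int|\nabla \widetilde h_t|\,\dd\delta_i^t,
\]
and this is where the averaging in $t$ is used. The volume part has density $\kappa^2 g_{\kappa,d}(r)\lesssim r^{-d}(\kappa r)^2$ on the ball, which is at most $\lesssim r^{-d}$. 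Cauchy--Schwarz on $B_i$ then gives $r\cdot r^{-d}\cdot r^{d/2}\|\nabla\widetilde h_t\|_{L^2(B_i)}$. For the surface part, $\int_{1/2}^1\int_{\partial B(x_i,tr_i)}|\langle\nabla\widetilde h_t\rangle|\,\dd\sigma\,\dd t$ is converted by the coarea formula into an annular bulk integral $\lesssim r_i^{-d}\int_{B(x_i,r_i)\setminus B(x_i,r_i/2)}|\nabla\widetilde h_t|$. One subtlety here: $\widetilde h_t$ itself depends on $t$. I would handle it by writing $\widetilde h_t=h^{(i),\mathrm{ext}}+\frac1N g_{\kappa,tr_i}(\cdot-x_i)$ on $B(x_i,2r_i)$. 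The exterior part $h_N^i$ is independent of $t$ there, up to the inactive truncations of the other charges, and the self part was already treated. The bulk integral is then bounded by $r_i^{1-d/2}\|\nabla h_N^i\|_{L^2(\text{annulus})}$. Summing over $i$ with Cauchy--Schwarz gives
\[
\|\nabla u\|_{L^\infty}\Bigl(\frac1{N^2}\sum_i r_i^{2-d}\Bigr)^{1/2}\Bigl(\sum_i\|\nabla h_N^i\|^2_{L^2(\text{ann}_i)}\Bigr)^{1/2}.
\]
The first factor is $\lesssim(N^{-2}\sum_i s^\sharp)^{1/2}$. The second factor is bounded by $\|\nabla\widetilde h_1\|_{L^2}^2+N^{-2}\sum_i s^\sharp$, using the disjointness of the annuli, and both are $\lesssim\mathcal S$ by \eqref{eq:uniform_truncated_potential_bounds}. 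This yields \eqref{eq:averaged_commutator_comparison}. In $d=2$ the factor $r^{2-d}$ is replaced by a bounded factor, and the argument is unchanged.

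\emph{Lipschitz $u$ and selection of $t_*$.} For general Lipschitz $u$, approximate by $u_n\in C_c^\infty$ with uniformly bounded gradients, converging locally uniformly with $\nabla u_n\to\nabla u$ a.e. Pass to the limit in $\cH_N$ using the absolute convergence bound stated after \eqref{eq:point_commutator_explicit_lipschitz}, and in $\mathcal B_\kappa$ using dominated convergence with $T_\kappa\in L^1$. Fatou's lemma then gives \eqref{eq:averaged_commutator_comparison} for $u$. Since the integrand is continuous in $t$ by \eqref{eq:scaled_truncation_stress_continuity}, there is a $t_*$ at which it is at most twice its average; this gives \eqref{eq:common_truncation_scale_remainder}. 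Estimate \eqref{eq:common_truncation_scale_energy} holds for every $t$ by \eqref{eq:scaled_radius_energy_bound}. Finally, \eqref{eq:selected_truncation_stress_bound} follows from \eqref{eq:finite_energy_stress_bound} together with \eqref{eq:common_truncation_scale_energy}.
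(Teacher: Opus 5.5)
Your proposal is correct and follows the paper's proof essentially step for step. The common steps are: the exact self-trace bound by $N^{-2}\sum_i s_{\kappa,d}^{\sharp}$; the $O(\eta^2)$ particle--background bound; averaging over the common scaling parameter with label-space Cauchy--Schwarz for the surface and volume parts of the first term; cutoff and mollification plus Fatou for Lipschitz $u$; and selection of $t_*$ from the averaged bound, with \eqref{eq:common_truncation_scale_energy} holding for every $t$.

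The differences are cosmetic. You detour through $\nabla\widetilde h_t$ before returning to $\nabla h_N^i$, and you apply coarea in $L^1$ followed by Cauchy--Schwarz on the annulus rather than Cauchy--Schwarz on each sphere first. There is also one harmless slip: in $d=2$ the volume density $\kappa^2 g_{\kappa,2}(r)$ is not $\lesssim r^{-2}(\kappa r)^2$, because of a logarithm. The bound you actually use, $\kappa^2 g_{\kappa,d}(r)\le C_d r^{-d}$, still follows from $B_d(\kappa r)\le 1$, as in the paper.
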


\begin{proof}
All constants below have the dependence stated in the lemma.  Since
$r_i(t)\le\eta<\eta_0$ and $0<\kappa\le\kappa_*$, every estimate for $s_{\kappa,d}^{\sharp}$ used below is within the scale fixed in
Section~\ref{sec:truncation_energy}.  For almost every $t\in[1/2,1]$, Lemma~\ref{lem:point_smeared_charge_identity}
gives
\begin{equation}
 \cH_N^{g_{\kappa,d}}(u)-\mathcal B_\kappa(\widetilde\nu_t;u)
 =\mathcal R_t(u).
 \label{eq:averaged_comparison_remainder_start}
\end{equation}
We first verify measurability in the scaling parameter and then estimate the
surface, self-interaction, volume, and background truncation terms
separately.

\medskip
\noindent\emph{Measurability of the radius-dependent terms.}
By \eqref{eq:scaled_truncation_stress_continuity},
$t\mapsto\mathcal B_\kappa(\widetilde\nu_t;u)$ is continuous on $[1/2,1]$.  The
explicit source terms in $\mathcal R_t(u)$ are Borel functions of $t$: a surface
integral is parameterized by
$x_i+r_i(t)\omega$, $\omega\in\mathbb S^{d-1}$, whereas the volume terms are
Lebesgue integrals over $B(x_i,r_i(t))$ with Borel dependence on the radius.
Consequently every term in \eqref{eq:averaged_comparison_remainder_start}
has a Borel representative.  The estimates below provide an
$L^1(1/2,1)$ majorant, so Tonelli's and Fubini's theorems can be used without
any further choice of representatives.

The smeared charge has the decomposition
\begin{equation}
 \dd\delta_i^t
 =A_d(\kappa r_i(t))\,\dd\sigma_{r_i(t),x_i}
 +\kappa^2g_{\kappa,d}(r_i(t))
   \mathbf1_{B(x_i,r_i(t))}\,\dd x.
 \label{eq:averaged_smeared_charge_decomposition}
\end{equation}
Since $A_d(\kappa r)\le M_d(\kappa r)\le1$, the surface coefficient is
uniformly bounded.

\medskip
\noindent\emph{The self term.}
Consider the second term in \eqref{eq:point_smeared_first_variation_term}.  The volume part
of \eqref{eq:averaged_smeared_charge_decomposition} gives no contribution,
because $g_{\kappa,r}$ is constant on $B_r$.  On $\partial B_r$ the interior
gradient is zero and the exterior gradient is
$g_{\kappa,d}'(r)n$, so the average of the one-sided traces has magnitude
$-g_{\kappa,d}'(r)/2$.  Hence
\begin{align}
 r\int_{\partial B_r}|\langle\nabla g_{\kappa,r}\rangle|
 \,\dd\delta_0^{\kappa,r}
 =\frac r2 A_d(\kappa r)(-g_{\kappa,d}'(r))
 =\frac{A_d(\kappa r)^2}
        {2|\mathbb S^{d-1}|r^{d-2}}.
 \label{eq:averaged_self_trace_exact}
\end{align}
Since $A_d(\kappa r)\le1$ and, for every $d\ge2$ on the microscopic scale,
\[
 r^{2-d}\le \ell_d(r)\le C s_{\kappa,d}^{\sharp}(r)
\]
(with $r^{2-d}=1$ when $d=2$), \eqref{eq:averaged_self_trace_exact} gives, for every fixed $d\ge2$,
\begin{equation*}
 \left|\frac2{N^2}\sum_i\int
 (u-u(x_i))\cdot\nabla g_{\kappa,r_i(t)}\,\dd\delta_i^t\right|
 \le C\|\nabla u\|_{L^\infty}\frac1{N^2}\sum_i
 s_{\kappa,d}^{\sharp}(r_i(t)).
\end{equation*}
The uniform estimate \eqref{eq:uniform_truncated_potential_bounds} therefore controls this term by
$C\|\nabla u\|_{L^\infty}\mathcal S$, uniformly in $t$.

\medskip
\noindent\emph{The particle--background truncation error.}
The third term in \eqref{eq:point_smeared_first_variation_term} is supported in the
truncation ball.  Since
$|u(x)-u(x_i)|\le\|\nabla u\|_{L^\infty}|x-x_i|$ and the force bound valid in every dimension
\eqref{eq:yukawa_force_bound} gives
\[
 |\nabla g_{\kappa,d}(z)|
 \le \frac1{|\mathbb S^{d-1}|\,|z|^{d-1}},\qquad z\ne0,
\]
polar coordinates yield, for every fixed $d\ge2$,
\begin{align*}
 \left|\int
 (u-u(x_i))\cdot\nabla(g_{\kappa,r_i(t)}-g_{\kappa,d})(\cdot-x_i)
 \rho\,\dd x\right|
 &\le C_d\|\nabla u\|_{L^\infty}\|\rho\|_{L^\infty}
 \int_0^{r_i(t)} s\,\dd s\\
 &\le C_d\|\nabla u\|_{L^\infty}\|\rho\|_{L^\infty}r_i(t)^2.
\end{align*}
After summation with the prefactor $2/N$, this is bounded by
$C\|\nabla u\|_{L^\infty}\eta^2\le C\|\nabla u\|_{L^\infty}\mathcal S$.

\medskip
\noindent\emph{The surface contribution and averaging in the radius.}
For the first term in \eqref{eq:point_smeared_first_variation_term}, we define
\[
 E_i(r):=\int_{\partial B(x_i,r)}|\nabla h_N^i|^2
 \,\dd\mathcal H^{d-1}.
\]
Cauchy--Schwarz on the sphere, together with the normalization of $\sigma_r$,
gives
\begin{align*}
 r\int_{\partial B(x_i,r)}|\nabla h_N^i|\,\dd\sigma_{r,x_i}
 \le \frac{r}{(|\mathbb S^{d-1}|r^{d-1})^{1/2}}E_i(r)^{1/2}\notag\le C_dr^{-(d-3)/2}E_i(r)^{1/2}.
\end{align*}
With $r=r_i(t)=tr_i$ and $\dd t=\dd r/r_i$,
\begin{align}
 \int_{1/2}^1 r_i(t)
 \int_{\partial B(x_i,r_i(t))}|\nabla h_N^i|\,\dd\sigma_{r_i(t),x_i}\,\dd t\notag
 &\le\frac C{r_i}\int_{r_i/2}^{r_i}
 r^{-(d-3)/2}E_i(r)^{1/2}\,\dd r\notag\\
 &\le C_dr_i^{1-d/2}
 \left(\int_{B(x_i,r_i)\setminus B(x_i,r_i/2)}
 |\nabla h_N^i|^2\,\dd x\right)^{1/2}.
 \label{eq:single_radius_average}
\end{align}
Indeed, for $d\ne4$ the radial weight satisfies
\[
 \frac1{r_i}\left(\int_{r_i/2}^{r_i}r^{3-d}\,\dd r\right)^{1/2}
 \le C_d r_i^{1-d/2}.
\]
For $d=4$ the corresponding critical factor is
\[
 \frac1{r_i}\left(\int_{r_i/2}^{r_i}\frac{\dd r}{r}\right)^{1/2}
 =\frac{(\log2)^{1/2}}{r_i}
 =(\log2)^{1/2}r_i^{1-d/2}.
\]
Thus the logarithmic constant is absorbed into $C_d$, consistently with the
power displayed above.

At $t=1$, set
\[
 \widetilde h_{\mathbf r}:=\widetilde h_{t=1}
 =\frac1N\sum_i g_{\kappa,r_i}(\cdot-x_i)-V_\rho^\kappa.
\]
If $x\in B(x_i,r_i)$, the $i$th truncated self-potential is
constant.  For $j\ne i$, the nearest-neighbor definition gives
$r_i\le|x_i-x_j|/16$ and $r_j\le|x_i-x_j|/16$.  Hence
\[
 |x-x_j|\ge |x_i-x_j|-r_i
 \ge\frac{15}{16}|x_i-x_j|>r_j.
\]
Therefore every other truncated potential agrees with the original Yukawa
potential on $B(x_i,r_i)$ and
\begin{equation*}
 \nabla h_N^i=\nabla\widetilde h_{\mathbf r}
 \qquad\text{a.e.\ in }B(x_i,r_i).
\end{equation*}
Moreover, \eqref{eq:microscopic_self_energy_bound} gives the single estimate
\begin{equation}\label{eq:radius_weight_by_self_energy}
 r_i^{2-d}\le \ell_d(r_i)\le C s_{\kappa,d}^{\sharp}(r_i),
 \qquad d\ge2,
\end{equation}
where $r_i^{2-d}=1$ in dimension two.
The doubled nearest-neighbor balls are pairwise disjoint; hence the base
balls, and therefore the annuli
$B(x_i,r_i)\setminus B(x_i,r_i/2)$, are pairwise disjoint.  We use the
Cauchy--Schwarz inequality over the particle labels in the form
\begin{equation}\label{eq:label_space_cauchy_schwarz}
 \frac1N\sum_i a_i b_i
 \le
 \left(\frac1{N^2}\sum_i a_i^2\right)^{1/2}
 \left(\sum_i b_i^2\right)^{1/2}.
\end{equation}
Taking $a_i=r_i^{1-d/2}$ and
$b_i=\|\nabla\widetilde h_{\mathbf r}\|_{L^2(B(x_i,r_i)\setminus
B(x_i,r_i/2))}$, and using
\eqref{eq:radius_weight_by_self_energy}, we may sum
\eqref{eq:single_radius_average} to obtain
\begin{align*}
 \int_{1/2}^1\frac1N\sum_i r_i(t)
 \int_{\partial B(x_i,r_i(t))}|\nabla h_N^i|\,\dd\sigma_{r_i(t),x_i}\,\dd t
 \le&
 C\left(\frac1{N^2}\sum_i s_{\kappa,d}^{\sharp}(r_i)\right)^{1/2}
 \left(\sum_i\int_{B(x_i,r_i)\setminus B(x_i,r_i/2)}
 |\nabla\widetilde h_{\mathbf r}|^2\,\dd x\right)^{1/2}\notag\\
 \le&
 C\left(\frac1{N^2}\sum_i s_{\kappa,d}^{\sharp}(r_i)\right)^{1/2}
 \|\widetilde h_{\mathbf r}\|_{H^1_\kappa}
 \le C\mathcal S.
\end{align*}
The last inequality follows from \eqref{eq:uniform_truncated_potential_bounds}
at $t=1$.

\medskip
\noindent\emph{The volume contribution.}
Let
\[
 e_i^t:=\int_{B(x_i,r_i(t))}|\nabla h_N^i|^2\,\dd x.
\]
By Cauchy--Schwarz in the ball, we obtain
\begin{equation}
 \int_{B(x_i,r_i(t))}|\nabla h_N^i|\,\dd x
 \le C_dr_i(t)^{d/2}(e_i^t)^{1/2}.
 \label{eq:averaged_bulk_ball_cs}
\end{equation}
For each $t\in[1/2,1]$, the doubled balls
$B(x_i,2r_i(t))$ are pairwise disjoint.  The same local argument as above
shows that, on $B(x_i,r_i(t))$,
$\nabla h_N^i=\nabla\widetilde h_t$.  Hence
\begin{equation}
 \sum_i e_i^t\le\|\nabla\widetilde h_t\|_{L^2}^2
 \le\|\widetilde h_t\|_{H^1_\kappa}^2.
 \label{eq:averaged_bulk_disjoint_energy}
\end{equation}

The mass of the volume part of the smeared charge is
\[
 B_d(\kappa r)
 =\kappa^2g_{\kappa,d}(r)|B_r|
 =\frac{|\mathbb S^{d-1}|}{d}\,\kappa^2g_{\kappa,d}(r)r^d,
\]
and Lemma~\ref{lem:yukawa_operator_truncation} gives
$0\le B_d(\kappa r)\le M_d(\kappa r)\le1$.  Since
\[
 r^{1-d/2}=(r^{2-d})^{1/2}
 \le C\bigl(s_{\kappa,d}^{\sharp}(r)\bigr)^{1/2}
\]
by \eqref{eq:radius_weight_by_self_energy} (with $r^{1-d/2}=1$ in
dimension two), we obtain
\begin{equation}\label{eq:volume_coefficient_by_self_energy}
 \kappa^2g_{\kappa,d}(r)r^{1+d/2}
 =\frac{d}{|\mathbb S^{d-1}|}B_d(\kappa r)r^{1-d/2}
 \le C_d\bigl(s_{\kappa,d}^{\sharp}(r)\bigr)^{1/2}.
\end{equation}  Combining
\eqref{eq:averaged_bulk_ball_cs},
\eqref{eq:averaged_bulk_disjoint_energy}, and
\eqref{eq:volume_coefficient_by_self_energy}, and then applying
\eqref{eq:label_space_cauchy_schwarz}, yields in every dimension $d\ge2$
\begin{align*}
 \frac{\kappa^2}{N}\sum_i r_i(t)g_{\kappa,d}(r_i(t))
 \int_{B(x_i,r_i(t))}|\nabla h_N^i|\,\dd x\notag\le C_d
 \left(\frac1{N^2}\sum_i s_{\kappa,d}^{\sharp}(r_i(t))\right)^{1/2}
 \left(\sum_i e_i^t\right)^{1/2}
 \le C\mathcal S.
\end{align*}
Thus the interior volume contribution is bounded by
$C\|\nabla u\|_{L^\infty}\mathcal S$ in every fixed dimension $d\ge2$.

Combining these four estimates and integrating in $t$ gives
\begin{equation}\label{eq:averaged_point_smeared_charge_bound}
 \int_{1/2}^1
 \left|\cH_N^{g_{\kappa,d}}(u)-\mathcal B_\kappa(\widetilde\nu_t;u)\right|\,\dd t
 \le C\|\nabla u\|_{L^\infty}\mathcal S.
\end{equation}
This proves the averaged assertion \eqref{eq:averaged_commutator_comparison}.
Every estimate so far depends only on the microscopic scale $\eta_0$ and the
bounds specified in the lemma, not on the individual radii or particle separations.

\medskip
\noindent\emph{Extension to Lipschitz fields and selection of a radius.}
It remains to prove the second assertion.
If $\|\nabla u\|_{L^\infty}=0$, then $u$ is constant; both the
commutator and the stress-energy pairing vanish, so any
$t_*\in[1/2,1]$ is admissible and \eqref{eq:common_truncation_scale_energy}
follows from \eqref{eq:scaled_radius_energy_bound}.  The stress bound
\eqref{eq:selected_truncation_stress_bound} is then immediate.  We may
therefore assume $\|\nabla u\|_{L^\infty}>0$.

We first extend the estimate from smooth compactly supported vector fields to
the given Lipschitz field and then select a common truncation parameter by
averaging.  Surface terms are interpreted using
Proposition~\ref{prop:average_trace_yukawa_green_stress}.  For the extension,
we use cutoff and mollification.  Let $\zeta$ be a mollifier.  Choose $\chi\in C_c^\infty$ with $\chi=1$ on $B_1$
and set
\[
 u_R(x):=\chi(x/R)\bigl(u(x)-u(0)\bigr),
 \qquad
 u_{R,\varepsilon}:=\zeta_\varepsilon*u_R.
\]
Since $|u(x)-u(0)|\le\|\nabla u\|_{L^\infty}|x|$, differentiation of
the cutoff gives
\begin{equation}
\label{eq:truncated_field_approximation_gradient}
 \|\nabla u_R\|_{L^\infty}
 +\|\nabla u_{R,\varepsilon}\|_{L^\infty}
 \le C_\chi\|\nabla u\|_{L^\infty},
\end{equation}
with a constant independent of $R$ and $\varepsilon$.  For fixed $R$,
$u_{R,\varepsilon}\to u_R$ locally uniformly and
$\nabla u_{R,\varepsilon}\to\nabla u_R$ almost everywhere.  Hence, for
fixed $t$, dominated convergence and
$T_\kappa[g_{\kappa,d}*\widetilde\nu_t]\in L^1$ give
\[
 \mathcal B_\kappa(\widetilde\nu_t;u_{R,\varepsilon})
 \longrightarrow \mathcal B_\kappa(\widetilde\nu_t;u_R).
\]
The three terms in \eqref{eq:point_commutator_explicit_lipschitz}
depend only on differences of the transport field.  The uniform Lipschitz
bound \eqref{eq:truncated_field_approximation_gradient} therefore gives
the same integrable majorant as in the absolute convergence estimate following
\eqref{eq:point_commutator_explicit_lipschitz}, and dominated convergence
yields
$\cH_N^{g_{\kappa,d}}(u_{R,\varepsilon})\to
 \cH_N^{g_{\kappa,d}}(u_R)$.

Next let $R\to\infty$.  At every fixed particle location and for almost
every background point, $u_R\to u-u(0)$ and
$\nabla u_R\to\nabla u$.  Estimate
\eqref{eq:truncated_field_approximation_gradient} again dominates the
stress and commutator integrands.  Therefore
\[
 \cH_N^{g_{\kappa,d}}(u_R)-\mathcal B_\kappa(\widetilde\nu_t;u_R)
 \longrightarrow
 \cH_N^{g_{\kappa,d}}(u)-\mathcal B_\kappa(\widetilde\nu_t;u)
\]
for almost every $t\in[1/2,1]$.  The constant vector $u(0)$ contributes to
neither expression.  The estimates are uniform in $R$ and $\varepsilon$, so
Fatou's lemma yields
\[
 \int_{1/2}^1
 |\cH_N^{g_{\kappa,d}}(u)-\mathcal B_\kappa(\widetilde\nu_t;u)|\,\dd t
 \le C\|\nabla u\|_{L^\infty}\mathcal S.
\]
Let $G\subset[1/2,1]$ be the full-measure set on which the source identities,
the one-sided traces, and the stress-energy identity are defined.
By \eqref{eq:scaled_truncation_stress_continuity}, the map
$
 t\longmapsto
 |\cH_N^{g_{\kappa,d}}(u)-\mathcal B_\kappa(\widetilde\nu_t;u)|
$
is continuous, and hence Borel measurable, on \([1/2,1]\).  Since
\([1/2,1]\setminus G\) is null, the preceding integral estimate is
unchanged when restricted to \(G\).  As the interval has length \(1/2\),
there must exist \(t_*\in G\) such that
\[
 |\cH_N^{g_{\kappa,d}}(u)-\mathcal B_\kappa(\widetilde\nu_{t_*};u)|
 \le 2C\|\nabla u\|_{L^\infty}\mathcal S;
\]
otherwise the same quantity would exceed
$2C\|\nabla u\|_{L^\infty}\mathcal S$ almost everywhere on \(G\),
contradicting the integral bound.  Thus $t_*$ can be chosen in $G$, so all
source and trace identities hold at this value.  The uniform energy estimate
\eqref{eq:scaled_radius_energy_bound} also holds at the same \(t_*\).  Setting \(\widetilde r_i=t_*r_i\), the preceding integral estimate gives
\eqref{eq:common_truncation_scale_remainder}; the same uniform estimate gives
\eqref{eq:common_truncation_scale_energy}.  Finally,
\eqref{eq:finite_energy_stress_bound} and
\eqref{eq:common_truncation_scale_energy} give
\eqref{eq:selected_truncation_stress_bound}.  This proves the lemma.
\end{proof}

\begin{proof}[Proof of Theorem~\ref{thm:yukawa_commutator}]
Fix $B=B(d,\kappa_*,\Lambda)$ as in
Proposition~\ref{prop:yukawa_energy_variable_radii} and let
$\mathcal S$ be \eqref{eq:commutator_shift}.  Then
\eqref{eq:modulated_energy_with_error_positive} gives
$\mathcal S\ge\varepsilon_N^\kappa(\eta)$, which is exactly
\eqref{eq:yukawa_static_shift}.  Lemma~\ref{lem:averaged_commutator_comparison}
gives, with the same $t_*$,
\[
 |\cH_N^{g_{\kappa,d}}(X_N,\rho;u)|
 \le
 |\cH_N^{g_{\kappa,d}}(X_N,\rho;u)
      -\mathcal B_\kappa(\widetilde\nu_{t_*};u)|
 +|\mathcal B_\kappa(\widetilde\nu_{t_*};u)|
 \le C\|\nabla u\|_{L^\infty}\mathcal S.
\]
This proves \eqref{eq:yukawa_static_main}.  The uniformity follows from
\eqref{eq:final_microscopic_scale_dependency} and the dependence of the
constants recorded at the beginning of this section.  The stress constant in
\eqref{eq:finite_energy_stress_bound} depends only on $d$.
\end{proof}

\begin{proof}[Proof of Corollary~\ref{cor:yukawa_commutator_optimized_scale}]
Put $c_0:=\frac12\min\{1,\eta_0\}$, so that
$\eta_N=c_0N^{-1/2}$ in $d=2$ and
$\eta_N=c_0N^{-1/d}$ in $d\ge3$.  Thus $\eta_N<\eta_0$ for every
$N\ge2$.  If $d\ge3$, then
\[
 \frac{\eta_N^{2-d}}N=c_0^{2-d}N^{-2/d},
 \qquad
 \eta_N^2=c_0^2N^{-2/d},
 \qquad
 \kappa^2\eta_N^2\le\kappa_*^2c_0^2N^{-2/d}.
\]
If $d=2$, then
\[
 \frac{1+\log_+(1/\eta_N)}N
 \asymp_{\eta_0}\frac{1+\log N}{N},
 \qquad
 \eta_N^2=\frac{c_0^2}{N},
 \qquad
 \kappa^2\eta_N^2\le\frac{\kappa_*^2c_0^2}{N}.
\]
Consequently there are constants
$0<c_1=c_1(d,\kappa_*)\le C_1=C_1(d,\kappa_*)$ such that
\begin{equation}\label{eq:optimized_error_two_sided}
 c_1a_{N,d}\le\varepsilon_N^\kappa(\eta_N)
 \le C_1a_{N,d}
 \qquad(0<\kappa\le\kappa_*).
\end{equation}
Apply Theorem~\ref{thm:yukawa_commutator} at $\eta=\eta_N$ and absorb
$B C_1$ into the displayed constant $C$.  The lower bound in the theorem,
together with the left-hand inequality in
\eqref{eq:optimized_error_two_sided}, gives the quantitative lower bound
$c\,a_{N,d}$ after increasing $C$ if necessary.  The commutator estimate
follows from the upper bound in \eqref{eq:optimized_error_two_sided}.
\end{proof}

\section{Mean-field dynamics and propagation of chaos}\label{sec:mean_field_dynamics}

This section combines the commutator estimate with the exact modulated energy
identity and a quadratic transport estimate.  Throughout, fix
$0<\kappa\le\kappa_*$ and write $g=g_{\kappa,d}$.

Let \(\rho\) be a probability density solving \eqref{eq:pde_intro} on
\([0,T]\) and assume
\begin{equation}
\label{eq:rho_assumption}
  \rho\in C\bigl([0,T];(\mathcal P_2(\R^d),W_2)\bigr).
\end{equation}
Write \(u_\rho=-\nabla g*\rho\) and assume that the quantity \(\Lambda_T\)
defined in \eqref{eq:dynamic_solution_bound} is finite.  These assumptions are
imposed on the limiting solution.  The $L^\infty$ bound controls the
background terms, while the $W^{1,\infty}$ bound on $u_\rho$ enters the
characteristic comparison.  Continuity in $W_2$ on the compact interval
ensures finite second moments, but these moments enter neither the commutator
estimate nor the Gronwall constant.  The theorem is conditional on these
bounds.  Subsection~\ref{subsec:yukawa_classical_limiting_solutions}
constructs classical solutions that satisfy them uniformly in the screening
parameter.

By \eqref{eq:dynamic_solution_bound}, the background bound
\eqref{eq:static_background_bound} holds uniformly in time.  Fix throughout
\[
 B=B(d,\kappa_*,\Lambda_T),\qquad
 \eta_0=\eta_0(d,\kappa_*)
\]
as in Theorem~\ref{thm:yukawa_commutator}.  Once the common bound
$\Lambda_T$ is fixed, these choices are independent of the particular value
of $\kappa\in(0,\kappa_*]$ and are valid at every $t\in[0,T]$.

\subsection{Particle flow and microscopic energy dissipation}
We first establish global well-posedness of the Yukawa particle system in
every dimension $d\ge2$.  The microscopic energy dissipation is the standard
gradient flow identity; compare, for example,
\cite{Duerinckx2016,JiangQiaoWuZhangCoulombRiesz2026}.  We record it together
with the collision barrier and moment estimate needed for global continuation.
The proof uses only positivity, radial monotonicity, and divergence of the
Yukawa kernel at the origin.  In
dimension two the microscopic energy in
this subsection always uses the positive, unshifted kernel $g_{\kappa,2}$;
the additive normalization $g_{\kappa,2}^{\sharp}$ is used only for the
modulated energy normalization and the Coulomb limit.

\begin{proposition}[Global well-posedness, energy dissipation, and moment bounds for the Yukawa particle system]\label{prop:yukawa_particle_flow_moment}
Fix $d\ge2$, $N\ge2$, and $\kappa>0$, and let
\(X_N^0\in(\R^d)^N\setminus\Delta_N\).  Then
\eqref{eq:particle_system_intro} has a unique global smooth solution in
\((\R^d)^N\setminus\Delta_N\).  With
\[
 K_i(X_N):=\frac1N\sum_{j\ne i}\nabla g_{\kappa,d}(x_i-x_j),
\]
one has, for every $t\ge0$,
\begin{equation}
\label{eq:yukawa_particle_energy_dissipation_integrated}
 E_{N,\kappa}(X_N(t))
 +\int_0^t\frac1N\sum_{i=1}^N|K_i(X_N(s))|^2\,\dd s
 =E_{N,\kappa}(X_N^0),
\end{equation}
and hence
\begin{equation}
\label{eq:yukawa_particle_energy_dissipation_differential}
 \frac{\dd}{\dd t}E_{N,\kappa}(X_N(t))
 =-\frac1N\sum_{i=1}^N|K_i(X_N(t))|^2\le0.
\end{equation}
Moreover,
\begin{equation}
\label{eq:yukawa_particle_moment_bound}
 m_N(X_N(t))^{1/2}
 \le m_N(X_N^0)^{1/2}
      +t^{1/2}E_{N,\kappa}(X_N^0)^{1/2},
\end{equation}
and therefore
\begin{equation}
\label{eq:yukawa_particle_moment_bound_squared}
 m_N(X_N(t))
 \le 2m_N(X_N^0)+2tE_{N,\kappa}(X_N^0).
\end{equation}
In particular, for every $T<\infty$ the right-hand side gives a bound uniform
on $0\le t\le T$ whose numerical constant is independent of $d$, $N$, and
$\kappa$.

If \(\rho_N^0\) is symmetric, \(\rho_N^0(\Delta_N)=0\), and
\[
 \int\bigl(m_N+E_{N,\kappa}\bigr)\,\dd\rho_N^0<\infty,
\]
then \(\rho_N(t):=(\Phi_t^{N,\kappa})_\#\rho_N^0\) is symmetric and
\[
 \sup_{0\le t\le T}\int m_N\,\dd\rho_N(t)<\infty
 \qquad(T<\infty).
\]
\end{proposition}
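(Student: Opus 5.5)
The plan is the standard argument: local ODE theory, a collision barrier coming from the energy identity, and a moment bound to exclude escape to infinity.

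\emph{Local existence.} On the open set \((\R^d)^N\setminus\Delta_N\) the vector field \(X_N\mapsto(-K_i(X_N))_i\) is smooth, since \(g_{\kappa,d}\) is smooth away from the origin. Cauchy--Lipschitz therefore gives a unique maximal smooth solution on \([0,T_{\max})\). Along this solution, the chain rule together with the oddness of \(\nabla g_{\kappa,d}\) gives
\[
\frac{\dd}{\dd t}E_{N,\kappa}=\frac1{2N^2}\sum_{i\ne j}\nabla g_{\kappa,d}(x_i-x_j)\cdot(\dot x_i-\dot x_j)=\frac1N\sum_i K_i\cdot\dot x_i=-\frac1N\sum_i|K_i|^2 ,
\]
which is \eqref{eq:yukawa_particle_energy_dissipation_differential}. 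Integrating in time gives \eqref{eq:yukawa_particle_energy_dissipation_integrated} for \(t<T_{\max}\).

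\emph{Moment bound.} Since \(m_N^{1/2}\) is the normalized \(\ell^2\) norm \(N^{-1/2}|X_N|\), we have \(\frac{\dd}{\dd t}m_N^{1/2}\le(N^{-1}\sum_i|\dot x_i|^2)^{1/2}\). Integrating and applying Cauchy--Schwarz in time yields
\[
m_N(t)^{1/2}\le m_N(0)^{1/2}+t^{1/2}\Bigl(\int_0^t\frac1N\sum_i|K_i|^2\Bigr)^{1/2}\le m_N(0)^{1/2}+t^{1/2}E_{N,\kappa}(X_N^0)^{1/2},
\]
where the last step uses \(E_{N,\kappa}\ge0\), because \(g_{\kappa,d}>0\). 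This is \eqref{eq:yukawa_particle_moment_bound}, and squaring gives \eqref{eq:yukawa_particle_moment_bound_squared}.

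\emph{Global existence.} This is the only mildly delicate point. If \(T_{\max}<\infty\), the trajectory must leave every compact subset of \((\R^d)^N\setminus\Delta_N\). It stays bounded by the moment bound. Each pair term satisfies \(g_{\kappa,d}(x_i-x_j)\le 2N^2E_{N,\kappa}(X_N^0)\), since all terms in the energy are positive. The kernel is radially decreasing and diverges at the origin, so this gives a uniform lower bound \(|x_i-x_j|\ge\delta>0\) on \([0,T_{\max})\). The trajectory therefore stays in a compact subset of the complement of \(\Delta_N\), which is a contradiction.

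\emph{Statements for the \(N\)-particle law.} The flow map \(\Phi^{N,\kappa}_t\) is defined \(\rho_N^0\)-a.e. because \(\rho_N^0(\Delta_N)=0\). The vector field commutes with permutations of the labels, so by uniqueness the flow is permutation-equivariant, and the pushforward \(\rho_N(t)\) is symmetric. Integrating \eqref{eq:yukawa_particle_moment_bound_squared} against \(\rho_N^0\) gives \(\int m_N\,\dd\rho_N(t)\le 2\int m_N\,\dd\rho_N^0+2T\int E_{N,\kappa}\,\dd\rho_N^0\), which bounds the second moments uniformly on \([0,T]\).
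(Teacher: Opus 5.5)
Your proposal is correct and follows the paper's proof essentially step for step: smoothness of the vector field off \(\Delta_N\), the gradient-flow dissipation identity, the collision barrier from positivity and radial monotonicity of \(g_{\kappa,d}\), the \(m_N^{1/2}\) bound via Cauchy--Schwarz in time, ODE continuation, and permutation equivariance for the \(N\)-particle law. The only difference is cosmetic: the paper handles the derivative of \(m_N^{1/2}\) where \(m_N=0\) by an \(m_N+\delta\) regularization, whereas in your version this point is avoided by integrating directly with the triangle inequality \(|X_N(t)|\le|X_N(0)|+\int_0^t|\dot X_N(s)|\,\dd s\).
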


\begin{proof}
The vector field is smooth on $(\R^d)^N\setminus\Delta_N$, so there is a
unique maximal smooth solution.  Since the particle system is the gradient
flow of $E_{N,\kappa}$, the standard finite-dimensional gradient flow identity
gives
\[
 \frac{\dd}{\dd t}E_{N,\kappa}(X_N(t))
 =-\frac1N\sum_i|K_i(X_N(t))|^2,
\]
which proves \eqref{eq:yukawa_particle_energy_dissipation_differential} and,
after integration, \eqref{eq:yukawa_particle_energy_dissipation_integrated}.
Because $g_{\kappa,d}$ is positive and strictly decreasing, and
$g_{\kappa,d}(r)\to+\infty$ as $r\downarrow0$, if
$d_N(t):=\min_{i\ne j}|x_i(t)-x_j(t)|$ then
\[
 \frac1{N^2}g_{\kappa,d}(d_N(t))
 \le E_{N,\kappa}(X_N(t))\le E_{N,\kappa}(X_N^0).
\]
Hence $d_N(t)$ stays bounded away from zero on every finite time interval, so
no finite-time collision is possible.

For $m_N=N^{-1}\sum_i|x_i|^2$, Cauchy--Schwarz and the dissipation identity give
\[
 \frac{\dd}{\dd t}m_N^{1/2}
 \le \left(\frac1N\sum_i|K_i|^2\right)^{1/2}
 \quad\text{a.e.\ on }\{m_N>0\},
\]
and the usual $m_N+\delta$ regularization followed by Cauchy--Schwarz in time
yields \eqref{eq:yukawa_particle_moment_bound};
\eqref{eq:yukawa_particle_moment_bound_squared} follows from
$(a+b)^2\le2a^2+2b^2$.  The collision barrier and this moment bound keep the
trajectory in a compact subset of $(\R^d)^N\setminus\Delta_N$ on every finite
interval, so the ODE continuation theorem gives global existence.  Finally,
smooth dependence and permutation equivariance of the flow imply symmetry of
$(\Phi_t^{N,\kappa})_\#\rho_N^0$, and integrating
\eqref{eq:yukawa_particle_moment_bound_squared} gives the asserted moment bound for the $N$-particle law.
\end{proof}

\smallskip\noindent\emph{Coulomb case for the particle flow.}
The same argument covers $\kappa=0$ when $d\ge3$.  In dimension two the
unscreened logarithmic interaction is not bounded below at infinity, so the
preceding collision bound and moment estimate do not yield the same global
statement for the particle dynamics.  No corresponding two-dimensional particle result is used below.

\subsection{N-particle laws and Wasserstein distance}
\label{subsec:particle_distribution_preliminaries}

\smallskip\noindent\emph{Initial distributions with finite Lyapunov functional.}
Fix $0<\eta<\eta_0$ and let $\rho_N^0\in\mathcal P_2((\R^d)^N)$ be
symmetric with $\mathfrak E_N^{\kappa,0}(\eta)<\infty$.  The corrected
modulated energy
$\cF_N^{\kappa,\sharp}+B\varepsilon_N^\kappa(\eta)$ is nonnegative and is
set equal to $+\infty$ on $\Delta_N$.  Hence
$\rho_N^0(\Delta_N)=0$.  Moreover,
\[
 \cF_N^{g_{\kappa,d}}(X_N,\rho(0))
 =2E_{N,\kappa}(X_N)
  -\frac2N\sum_iV_{\rho(0)}^\kappa(x_i)+I_\kappa[\rho(0)].
\]
For fixed $\kappa>0$, the screened background potential is bounded under the
$L^\infty$ hypothesis.  Since the additive normalization is finite, the
finiteness of the shifted modulated energy integral therefore implies
$\int E_{N,\kappa}\,\dd\rho_N^0<\infty$.  Finally,
$\rho_N^0\in\mathcal P_2$ gives $\int m_N\,\dd\rho_N^0<\infty$.
Thus Proposition~\ref{prop:yukawa_particle_flow_moment} applies.  Smooth
dependence of the flow implies that
$(t,X_N^0)\mapsto\Phi_t^{N,\kappa}(X_N^0)$ is Borel on
$[0,\infty)\times((\R^d)^N\setminus\Delta_N)$.  If a map on the whole
configuration space is needed, we extend the flow arbitrarily to the closed
set $\Delta_N$, for instance by the identity.  The extension is jointly Borel.
Because $\rho_N^0(\Delta_N)=0$, the push-forward below is independent of how
the flow is defined on $\Delta_N$.  We therefore set
$
  \rho_N(t)=(\Phi_t^{N,\kappa})_\#\rho_N^0.
$
In particular,
\begin{equation*}
  \sup_{0\le t\le T}\int \frac1N\sum_{i=1}^N |x_i|^2\,\dd \rho_N(t)(X_N)
  <\infty
\end{equation*}
whenever
\(
  \int m_N\,\dd \rho_N^0
  +\int E_{N,\kappa}\,\dd \rho_N^0<\infty
\).
We set $g_{\kappa,d}(0)=+\infty$.  At fixed $\kappa>0$,
$g_{\kappa,d}\in L^1(\R^d)$ and translation is continuous in $L^1$; hence,
for bounded $\rho(t)$,
\[
 |V_\rho^\kappa(x+h)-V_\rho^\kappa(x)|
 \le \|\rho(t)\|_{L^\infty}
      \|g_{\kappa,d}(\cdot+h)-g_{\kappa,d}\|_{L^1}\longrightarrow0.
\]
Thus the particle--background term is continuous in $X_N$.  Monotone
truncation of the positive particle--particle interaction shows that
$X_N\mapsto\cF_N^{g_{\kappa,d}}(X_N,\rho(t))$ is lower semicontinuous as an
extended-real-valued function.  It is finite on
$(\R^d)^N\setminus\Delta_N$ and equals $+\infty$ on $\Delta_N$.  Since
$\cF_N^{\kappa,\sharp}=\cF_N^{g_{\kappa,d}}-c_{\kappa,d}/N$, the same is
true for the modulated energy with the chosen additive normalization.  By
Theorem~\ref{thm:yukawa_commutator},
\[
 \cF_N^{\kappa,\sharp}+B\varepsilon_N^\kappa(\eta)\ge0.
\]
The same near-field truncation, together with narrow continuity of $\rho$ and
the uniform $L^\infty$ bound, gives jointly Borel representatives for the
particle--background term and $I_\kappa[\rho(t)]$.  Hence
$\cF_N^{\kappa,\sharp}+B\varepsilon_N^\kappa(\eta)$ is a nonnegative
extended-real Borel function of $(t,X_N)$.  This is the measurability needed
for the coupling argument below.

We extend $\cD_N^\kappa(\cdot,\rho(t))$ by $+\infty$ on the collision set;
off $\Delta_N$ it is given by \eqref{eq:intro_mean_square_force_error}.  The
particle force is continuous away from collisions.  The background force
$(t,x)\mapsto\nabla g_{\kappa,d}*\rho(t)(x)$ has a Borel representative as
well.  Indeed, truncate the kernel near the origin and at infinity, use narrow
continuity of $\rho$ for the truncated convolution, and then remove the
truncations using the uniform near-field bound and the integrable Yukawa tail.
Thus $(t,X_N)\mapsto\cD_N^\kappa(X_N,\rho(t))$ is a nonnegative Borel
function.
Since $\rho_N(t)(\Delta_N)=0$, the value assigned on $\Delta_N$ never affects
the integral with respect to the $N$-particle law.

\subsection{Modulated energy and Wasserstein stability}\label{subsec:modulated_energy_wasserstein}

We now combine the exact modulated energy identity with the quadratic
transport cost.

\subsubsection{Time derivative of the modulated energy}

Differentiation of singular modulated energies along continuity equations is
standard; see \cite[Lemma~2.1]{Serfaty2020} and
\cite[Section~2]{Duerinckx2016}, as well as \cite{Nguyen2021}.  The Yukawa
kernel has the same local Coulomb singularity and a better tail, so the same
regularization argument applies.  We record only the perturbed identity needed
in both the Yukawa mean-field theorem and the direct three-dimensional
Yukawa--Coulomb comparison.

Write $g=g_{\kappa,d}$ and set
\[
 K_i:=\frac1N\sum_{j\ne i}\nabla g(x_i-x_j),
 \qquad
 R_i:=K_i-\nabla g*\rho(x_i)=K_i+u_\rho(x_i),
 \qquad
 \cD_N:=\frac1N\sum_i|R_i|^2.
\]

\begin{lemma}[Exact perturbed chain rule for the modulated energy]
\label{lem:exact_perturbed_modulated_energy_chain_rule}
Let $g=g_{\kappa,d}$ with $d\ge2$ and $\kappa>0$; when $d\ge3$ we also
allow $\kappa=0$ with the Coulomb kernel.  Let
$\rho\in C([0,T];\mathcal P(\R^d))\cap L^\infty(0,T;L^\infty)$ solve
\[
 \partial_t\rho+\nabla\cdot(\rho u_\rho)=0,
 \qquad u_\rho=-\nabla g*\rho\in L^1(0,T;W^{1,\infty}),
\]
and assume that
$X_N(t)\notin\Delta_N$ on $[0,T]$ and that the particle trajectories belong
to $W^{1,\infty}(0,T;\R^d)$ and satisfy
\begin{equation}
\label{eq:general_perturbed_particle_velocity}
 \dot x_i=-K_i-Z_i
\end{equation}
for some $Z_i\in L^1_{\mathrm{loc}}(0,T;\R^d)$.  Then
$t\mapsto\cF_N^g(X_N(t),\rho(t))$ is absolutely continuous and, for almost
every time,
\begin{equation}
\label{eq:general_perturbed_modulated_energy_identity}
 \frac{\dd}{\dd t}\cF_N^g(X_N,\rho)
 =\cH_N^g(X_N,\rho;u_\rho)
 -2\cD_N
 -\frac2N\sum_{i=1}^N R_i\cdot Z_i.
\end{equation}
Equivalently, for every $0\le t_1\le t_2\le T$,
\begin{equation}
\label{eq:modulated_energy_integrated_all_times}
\begin{aligned}
 \cF_N^g(X_N(t_2),\rho(t_2))-\cF_N^g(X_N(t_1),\rho(t_1))
 =\int_{t_1}^{t_2}\!\left[
 \cH_N^g(X_N,\rho;u_\rho)-2\cD_N
 -\frac2N\sum_iR_i\cdot Z_i\right]\dd t.
\end{aligned}
\end{equation}
\end{lemma}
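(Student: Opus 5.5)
The plan is to expand $\cF_N^g(X_N,\rho)$ into its three pieces,
\[
 \cF_N^g=\frac1{N^2}\sum_{i\ne j}g(x_i-x_j)-\frac2N\sum_iV_\rho(x_i)+I[\rho],
 \qquad V_\rho=g*\rho,
\]
and differentiate each piece separately. Then we recombine using the oddness of $\nabla g$. This follows the regularization scheme of \cite[Lemma~2.1]{Serfaty2020}.

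\emph{Particle--particle term.} Since $X_N(t)\notin\Delta_N$ on the compact interval $[0,T]$, continuity gives $\min_t\min_{i\ne j}|x_i-x_j|>0$. On the compact set swept by the trajectories, $g$ is smooth, so the composition with the $W^{1,\infty}$ trajectories is absolutely continuous. Its derivative is
\[
 \frac1{N^2}\sum_{i\ne j}\nabla g(x_i-x_j)\cdot(\dot x_i-\dot x_j)
 =\frac2N\sum_iK_i\cdot\dot x_i .
\]

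\emph{Background--background term.} I would first mollify, setting $\rho_\epsilon=\zeta_\epsilon*\rho$. Because $u_\rho$ is Lipschitz, the commutator between mollification and transport is controlled, and we can pass $\epsilon\to0$. The result is
\[
 \frac{\dd}{\dd t}I[\rho]=2\int\nabla V_\rho\cdot u_\rho\,\rho=-2\int|u_\rho|^2\rho .
\]
In the weak form we must test against $V_\rho$, which is only $W^{2,p}_{\rm loc}$ and, for fixed $\kappa>0$, bounded with a bounded gradient. When $\kappa=0$ and $d\ge3$ it is still bounded with a bounded gradient, by the $L^1\cap L^\infty$ bound on $\rho$. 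Since $\rho$ is merely narrowly continuous, the duality must be justified by a time-mollification or Steklov-average argument.

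\emph{Particle--background term.} We need
\[
 \frac{\dd}{\dd t}V_\rho(t,x_i(t))=\nabla V_\rho(x_i)\cdot\dot x_i+(\partial_tV_\rho)(x_i),
 \qquad
 \partial_tV_\rho=g*\partial_t\rho=\nabla g*(\rho u_\rho).
\]
This chain rule for a function that is Lipschitz in $x$ and absolutely continuous in $t$ is again proved after mollifying $g$ near the origin. The errors vanish because $|\nabla g|\lesssim|z|^{1-d}$ is locally integrable and $\rho u_\rho\in L^\infty$.

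\emph{Recombination.} Substitute $\dot x_i=-K_i-Z_i$ and write $u_\rho(x_i)=R_i-K_i$. The resulting terms can be compared with the desymmetrized commutator \eqref{eq:point_smeared_desymmetrization}, namely
\[
 \cH_N^g=\frac2N\sum_iu(x_i)\cdot\nabla h_N^i(x_i)-2\int u\cdot\nabla h_N\,\rho,
 \qquad u=u_\rho .
\]
Here $\nabla h_N^i(x_i)=K_i+u_\rho(x_i)=R_i$ and $\nabla h_N=-(\text{empirical force})+u_\rho$. The total derivative is $-\frac2N\sum_i R_i\cdot(K_i+Z_i)$ plus the background time-derivative terms. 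After a direct algebraic computation, completing the square should give exactly
\[
 \cH_N^g-\frac2N\sum_i|R_i|^2-\frac2N\sum_iR_i\cdot Z_i .
\]
Here the quadratic term $\frac2N\sum_i|R_i|^2=2\cD_N$ arises from combining $-\frac2N\sum_iR_i\cdot K_i$ with $\frac2N\sum_iR_i\cdot u_\rho(x_i)$, which are pieces of $\cH_N^g$. I would verify this bookkeeping carefully, term by term, against \eqref{eq:point_commutator_explicit_lipschitz}. Absolute continuity then follows, and integrating gives \eqref{eq:modulated_energy_integrated_all_times}.

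The main obstacle is the rigorous time differentiation of the background terms, namely $I[\rho(t)]$ and $V_{\rho(t)}(x_i(t))$, under only narrow continuity and $L^\infty$ bounds. I would handle it by mollifying $g$ at scale $\epsilon$ so that all functions become smooth, and differentiating the regularized functional exactly. Then I would pass $\epsilon\to0$ in the integrated identity by dominated convergence. This is possible because $\nabla g_\epsilon$ is dominated uniformly by $C(|z|^{1-d}\mathbf1_{|z|<1}+e^{-c|z|}\text{ or }|z|^{1-d})$, and because the particle separation is bounded below, which keeps the particle--particle terms unaffected for small $\epsilon$.
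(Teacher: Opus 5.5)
Your plan follows the paper's proof: expand $\cF_N^g$ into particle--particle, particle--background and background--background terms, and differentiate the first classically using the positive minimal separation. You justify the other two by regularizing the kernel and passing to the limit by dominated convergence. You then recombine against $\cH_N^g=\frac2N\sum_iu_\rho(x_i)\cdot R_i+\frac2N\sum_iJ_i-2\int|u_\rho|^2\rho$, where $J_i:=\int\nabla g(x_i-y)\cdot u_\rho(y)\rho(y)\,\dd y$.

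There is one sign error that breaks the bookkeeping as written. Since $g*(\nabla\cdot F)=\sum_k\partial_kg*F_k$, the correct formula is
\[
\partial_tV_\rho(x)=g*\partial_t\rho(x)=-\int\nabla g(x-y)\cdot u_\rho(y)\rho(y)\,\dd y .
\]
So $\partial_tV_\rho(x_i)=-J_i$, not $+\nabla g*(\rho u_\rho)(x_i)=+J_i$. With your sign, the particle--background term would contribute $-\frac2N\sum_iJ_i$, which cannot match the $+\frac2N\sum_iJ_i$ in $\cH_N^g$.

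With the correct sign, the total derivative is
\[
-\frac2N\sum_iR_i\cdot K_i-\frac2N\sum_iR_i\cdot Z_i+\frac2N\sum_iJ_i-2\int|u_\rho|^2\rho .
\]
Your ``completing the square'' step is then just $K_i=R_i-u_\rho(x_i)$, which gives
\[
-\frac2N\sum_iR_i\cdot K_i=-2\cD_N+\frac2N\sum_iR_i\cdot u_\rho(x_i),
\]
and the identity closes exactly as in the paper.

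A minor point concerns your regularization. A radial mollification of the Yukawa kernel changes $g$ outside $B_\epsilon$ as well, because $g$ is not harmonic there. So either modify $g$ only inside a small ball, as the paper does by cutting off near the origin, or leave the particle--particle term unregularized, since you already treat it classically.
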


\begin{proof}
We give the calculation because the perturbed form is used twice below and it
also fixes the signs and normalizations.  Write
\[
 V(t,x):=(g*\rho(t))(x),\qquad u_\rho=-\nabla V,
\]
and, for the moment, suppress the time variable.  Since $X_N(t)$ is continuous
and collision free on the compact interval $[0,T]$, there is a number
$\delta>0$ such that
\[
 \min_{0\le t\le T}\min_{i\ne j}|x_i(t)-x_j(t)|\ge\delta.
\]
Thus the particle--particle part of the energy is a classical absolutely
continuous function of time.  The remaining differentiations can be justified
by first cutting off $g$ near the origin and at infinity, applying the
continuity equation to the resulting smooth compactly supported kernels, and
then removing the cutoffs.  Indeed, for Yukawa interactions $g$ and $\nabla g$
have the Coulomb local singularities $|x|^{2-d}$ and $|x|^{1-d}$ (with the
usual logarithmic modification in $d=2$) and exponential decay at infinity.
For the Coulomb kernel in $d\ge3$, both $g$ and $\nabla g$ are locally
integrable and bounded on $\{|x|\ge1\}$.  Since $\rho\in L^\infty\cap L^1$
and $u_\rho\in W^{1,\infty}$, dominated convergence applies to all terms
below.  This standard regularization also proves the asserted absolute
continuity.

The off-diagonal expansion is
\begin{equation}\label{eq:chain_rule_expanded_energy}
 \cF_N^g(X_N,\rho)
 =\frac1{N^2}\sum_{i\ne j}g(x_i-x_j)
 -\frac2N\sum_{i=1}^N V(x_i)
 +\int_{\R^d}V(x)\rho(x)\,\dd x.
\end{equation}
For the first term,
\begin{equation}\label{eq:chain_rule_particle_particle_derivative}
 \frac{\dd}{\dd t}
 \left(\frac1{N^2}\sum_{i\ne j}g(x_i-x_j)\right)
 =\frac2N\sum_{i=1}^N K_i\cdot\dot x_i.
\end{equation}
Next, the continuity equation gives, for almost every $t$ and every fixed $x$,
\begin{equation}\label{eq:chain_rule_V_time_derivative}
 \partial_tV(t,x)
 =-\int_{\R^d}\nabla g(x-y)\cdot u_\rho(t,y)\rho(t,y)\,\dd y.
\end{equation}
Set
\[
 J_i:=\int_{\R^d}\nabla g(x_i-y)\cdot u_\rho(y)\rho(y)\,\dd y.
\]
Since $\nabla V(x_i)=-u_\rho(x_i)$,
\begin{equation}\label{eq:chain_rule_particle_background_derivative}
 \frac{\dd}{\dd t}\left(-\frac2N\sum_iV(x_i)\right)
 =\frac2N\sum_i u_\rho(x_i)\cdot\dot x_i
 +\frac2N\sum_iJ_i.
\end{equation}
The same regularized continuity equation calculation, now in both variables,
yields
\begin{equation}\label{eq:chain_rule_background_background_derivative}
 \frac{\dd}{\dd t}\int V\rho
 =2\int \nabla V\cdot u_\rho\,\rho
 =-2\int |u_\rho|^2\rho.
\end{equation}
Combining \eqref{eq:chain_rule_particle_particle_derivative}--
\eqref{eq:chain_rule_background_background_derivative} and using
$R_i=K_i+u_\rho(x_i)$ gives
\begin{equation}\label{eq:chain_rule_derivative_intermediate}
 \frac{\dd}{\dd t}\cF_N^g(X_N,\rho)
 =\frac2N\sum_iR_i\cdot\dot x_i
 +\frac2N\sum_iJ_i
 -2\int |u_\rho|^2\rho.
\end{equation}
Inserting $\dot x_i=-K_i-Z_i$ therefore gives
\begin{equation}\label{eq:chain_rule_derivative_with_perturbation_intermediate}
 \frac{\dd}{\dd t}\cF_N^g(X_N,\rho)
 =-\frac2N\sum_iR_i\cdot K_i
 -\frac2N\sum_iR_i\cdot Z_i
 +\frac2N\sum_iJ_i
 -2\int |u_\rho|^2\rho.
\end{equation}

It remains to identify the first, third, and fourth terms with the commutator
minus $2\cD_N$.  Expanding the off-diagonal commutator and using its symmetry
under interchange of $x$ and $y$, we obtain
\begin{align}
 \cH_N^g(X_N,\rho;u_\rho)
 &=\frac2N\sum_i u_\rho(x_i)\cdot K_i
   -\frac2N\sum_i\int
     \bigl(u_\rho(x_i)-u_\rho(y)\bigr)
     \cdot\nabla g(x_i-y)\rho(y)\,\dd y
 \notag\\
 &\qquad
   +\iint
     \bigl(u_\rho(x)-u_\rho(y)\bigr)\cdot\nabla g(x-y)
     \rho(x)\rho(y)\,\dd x\dd y
 \notag\\
 &=\frac2N\sum_i u_\rho(x_i)\cdot R_i
   +\frac2N\sum_iJ_i
   -2\int|u_\rho|^2\rho.
 \label{eq:chain_rule_commutator_expansion}
\end{align}
Here we used $\nabla g*\rho=-u_\rho$ in the last equality.  Since
$\cD_N=N^{-1}\sum_i|R_i|^2$ and $R_i-u_\rho(x_i)=K_i$,
\begin{equation}\label{eq:chain_rule_commutator_minus_dissipation}
 \cH_N^g(X_N,\rho;u_\rho)-2\cD_N
 =-\frac2N\sum_iR_i\cdot K_i
   +\frac2N\sum_iJ_i
   -2\int|u_\rho|^2\rho.
\end{equation}
Comparing \eqref{eq:chain_rule_derivative_with_perturbation_intermediate}
with \eqref{eq:chain_rule_commutator_minus_dissipation} proves
\eqref{eq:general_perturbed_modulated_energy_identity}.  The right-hand side
belongs to $L^1(0,T)$.  Indeed, the collision-free compact trajectory keeps
$K_i$ and hence $R_i$ bounded, while $u_\rho$ is bounded; moreover,
\eqref{eq:general_perturbed_particle_velocity} and
$\dot x_i\in L^\infty(0,T)$ imply that $Z_i=-\dot x_i-K_i$ is itself bounded
on $(0,T)$.  Hence the absolutely continuous representative satisfies the
integrated identity \eqref{eq:modulated_energy_integrated_all_times} for every
$0\le t_1\le t_2\le T$.
\end{proof}

In particular, $Z_i\equiv0$ is the exact modulated energy identity used for
the Yukawa mean-field theorem, whereas the perturbation term in
Lemma~\ref{lem:exact_perturbed_modulated_energy_chain_rule} is the only
additional term needed in the Yukawa--Coulomb comparison.

\subsubsection{Wasserstein stability}

For $u\in L^1_tW^{1,\infty}_x$, the continuity equation is transported by the
unique bi-Lipschitz Carath\'eodory flow; see, for instance,
\cite[Chapter~8]{AmbrosioGigliSavare2008}.  Thus, if
$\rho:[0,T]\to\mathcal P(\R^d)$ is narrowly continuous and solves
$\partial_t\rho+\nabla\cdot(\rho u)=0$, and if $\Psi_{t,s}$ denotes the
flow generated by $u$, then for every $0\le s,t\le T$,
\begin{equation}
\label{eq:limiting_solution_characteristic_representation}
 \rho(t)=(\Psi_{t,s})_\#\rho(s).
\end{equation}
In particular, writing $\Psi_t:=\Psi_{t,0}$, one has
$\rho(t)=(\Psi_t)_\#\rho(0)$.  We use this standard representative in the
coupling argument below.

We combine the standard Dobrushin quadratic coupling estimate with the
modulated energy dissipation.  See \cite{Dobrushin1979} for the original
coupling argument and \cite{JiangQiaoWuZhangCoulombRiesz2026} for the same
dissipative combination in the Coulomb and Riesz settings.  For completeness,
we record the short calculation in our normalization.

Let
\[
 \cQ_N(t):=\frac1N\sum_i|x_i(t)-y_i(t)|^2,
 \qquad
 \cS_N(t):=\mathcal M_N(t)+B\varepsilon_N\ge0,
 \qquad
 \mathcal L_N(t):=\cQ_N(t)+\cS_N(t),
\]
where
$\mathcal M_N'=\cH_N^g-2\cD_N$ and
$|\cH_N^g|\le\gamma(t)\cS_N$.
For characteristic trajectories $\dot y_i=u_\rho(y_i)$ and particle
trajectories $\dot x_i=-K_i$, recall that
$R_i=K_i+u_\rho(x_i)$, so that
$\dot x_i=u_\rho(x_i)-R_i$.  Therefore, for almost every time,
\begin{align*}
 \frac{\dd}{\dd t}\cQ_N
 &=\frac2N\sum_i(x_i-y_i)\cdot
   \bigl(u_\rho(x_i)-u_\rho(y_i)-R_i\bigr)\\
 &\le 2\|\nabla u_\rho\|_{L^\infty}\cQ_N
   +\cQ_N+\cD_N,
\end{align*}
where Young's inequality was used in the last term.  On the other hand,
\[
 \cS_N'=\mathcal M_N'
 \le \gamma(t)\cS_N-2\cD_N.
\]
Adding the two inequalities gives
\begin{equation}
\label{eq:abstract_transfer_differential}
 \frac{\dd}{\dd t}\mathcal L_N(t)
 \le
 \bigl(1+2\|\nabla u_\rho(t)\|_{L^\infty}+\gamma(t)\bigr)
 \mathcal L_N(t)-\cD_N(t).
\end{equation}
With
\[
 \Gamma(t):=\int_0^t
 \bigl(1+2\|\nabla u_\rho(s)\|_{L^\infty}+\gamma(s)\bigr)\,\dd s,
\]
multiplication by $e^{-\Gamma}$ and integration give
\begin{equation}
\label{eq:abstract_transfer_dissipation}
 e^{-\Gamma(t)}\mathcal L_N(t)
 +\int_0^t e^{-\Gamma(s)}\cD_N(s)\,\dd s
 \le \mathcal L_N(0),
\end{equation}
and in particular
\begin{equation}
\label{eq:abstract_transfer_pathwise}
 \cQ_N(t)\le\mathcal L_N(t)\le e^{\Gamma(t)}\mathcal L_N(0).
\end{equation}
When the corrected modulated energy term is viewed as a function of the
current particle configuration rather than along one trajectory, we write it
as $\cS_N(t,X_N)$.

Now let $\pi_0$ be any coupling of $\rho_N^0$ and
$\rho(0)^{\otimes N}$, and push it forward by the particle flow together with
the coordinatewise characteristic flow.  The resulting $\pi_t$ couples
$\rho_N(t)$ and $\rho(t)^{\otimes N}$.  Its transport cost is
$N\int\cQ_N(t)\,\dd\pi_0$, while $\cS_N(t)$ depends only on the particle
coordinate.  The pathwise estimate therefore yields
\begin{equation}
\label{eq:abstract_transfer_distribution}
 \frac1N W_2^2(\rho_N(t),\rho(t)^{\otimes N})
 +\int \cS_N(t,X_N)\,\dd\rho_N(t)(X_N)
 \le e^{\Gamma(t)}
 \int\bigl(\cQ_N(0)+\cS_N(0)\bigr)\,\dd\pi_0.
\end{equation}
Likewise, integrating \eqref{eq:abstract_transfer_dissipation} with respect
to $\pi_0$ and using that $\cD_N(t)$ depends only on the particle
coordinate gives
\begin{equation}
\label{eq:abstract_transfer_distribution_dissipation}
 \int_0^T\int\cD_N(t)\,\dd\rho_N(t)\,\dd t
 \le e^{\Gamma(T)}
 \int\bigl(\cQ_N(0)+\cS_N(0)\bigr)\,\dd\pi_0.
\end{equation}
No differentiation of the $N$-particle law is needed.  The argument is
pathwise, along collision-free particle trajectories and the Lipschitz
characteristics of the limiting solution.  Joint Borel measurability and the
nonnegativity of $\cS_N$ and $\cD_N$ allow integration by Tonelli's theorem,
while the push-forward identities convert trajectory integrals into integrals
against $\rho_N(t)$.  In particular, the left-hand sides of
\eqref{eq:abstract_transfer_distribution}--
\eqref{eq:abstract_transfer_distribution_dissipation} are finite whenever
the initial Lyapunov functional is finite.

By Subsection~\ref{subsec:particle_distribution_preliminaries} and
Theorem~\ref{thm:yukawa_commutator}, the corrected modulated energy integrand
in \eqref{eq:intro_initial_error} is a nonnegative Borel function.  Therefore
the initial error has the coupling representation
\begin{equation}
\label{eq:initial_error_coupling_representation}
\mathfrak E_N^{\kappa,0}(\eta)
=\inf_{\pi_0\in\Pi(\rho_N^0,\rho(0)^{\otimes N})}
 \int\left[
 \frac1N\sum_{i=1}^N|x_i-y_i|^2
 +\cF_N^{\kappa,\sharp}(X_N,\rho(0))
 +B\varepsilon_N^\kappa(\eta)
 \right]\,\dd\pi_0.
\end{equation}
The shifted modulated energy depends only on the particle coordinate $X_N$,
so its integral is the same for every coupling with first marginal
$\rho_N^0$.  The minimization in
\eqref{eq:initial_error_coupling_representation} therefore reduces to the
quadratic Wasserstein problem.  Since both marginals belong to
$\mathcal P_2$, an optimal $W_2$ coupling exists and the infimum is attained.

For symmetric probability measures $\rho_N$ and $\sigma_N$ on $(\R^d)^N$
with finite second moments, we use the standard symmetric projection estimate
\begin{equation}
\label{eq:exact_symmetric_marginal_projection}
 W_2^2(\rho_{N:k},\sigma_{N:k})
 \le \frac{k}{N}W_2^2(\rho_N,\sigma_N),
 \qquad 1\le k\le N.
\end{equation}
This is the quadratic-cost version of the Wasserstein projection inequality on
product spaces; see \cite[Proposition~2.6 and the discussion following
it]{HaurayMischler2014}, whose argument applies verbatim to $W_2$.

\subsection{Mean-field convergence for the Yukawa flow}\label{subsec:yukawa_meanfield_proof}

We first prove Theorem~\ref{thm:yukawa_meanfield} for an arbitrary admissible
radius $0<\eta<\eta_0$.  We then choose the optimized radius $\eta_N$ from
Corollary~\ref{cor:yukawa_commutator_optimized_scale}.  By construction,
\begin{equation}\label{eq:admissible_radius_scale_bound}
 \varepsilon_N^\kappa(\eta_N)\le C a_{N,d}.
\end{equation}

\begin{proof}[Proof of Theorem~\ref{thm:yukawa_meanfield}]
The dynamic bound \eqref{eq:dynamic_solution_bound} implies the
hypothesis \eqref{eq:static_background_bound} at every time.
By Subsection~\ref{subsec:particle_distribution_preliminaries}, finiteness of
$\mathfrak E_N^{\kappa,0}(\eta)$ implies that $\rho_N^0$ assigns zero mass to
the collision set and has finite expected microscopic Yukawa energy.
Proposition~\ref{prop:yukawa_particle_flow_moment} therefore yields the global
particle flow.  The unshifted and normalized modulated energies differ only by
the time-independent constant $c_{\kappa,d}/N$.  Lemma~\ref{lem:exact_perturbed_modulated_energy_chain_rule} gives
\[
 \frac{\dd}{\dd t}\cF_N^{g_{\kappa,d}}
 =\cH_N^{g_{\kappa,d}}-2\cD_N^\kappa.
\]
Since $c_{\kappa,d}/N$ is time independent, the same identity holds with
$\cF_N^{\kappa,\sharp}$ in place of the uncorrected modulated energy.  We therefore apply the combined Dobrushin and modulated energy estimate
\eqref{eq:abstract_transfer_distribution}--
\eqref{eq:abstract_transfer_distribution_dissipation} with
\[
 \mathcal M_N(t)=\cF_N^{\kappa,\sharp}(X_N(t),\rho(t)),
 \qquad
 \varepsilon_N=\varepsilon_N^\kappa(\eta),
 \qquad
 \gamma(t)=C\|\nabla u_\rho(t)\|_{L^\infty},
\]
where $C=C(d,\kappa_*,\Lambda_T)$ is the constant in
Theorem~\ref{thm:yukawa_commutator}.  The lower bound gives
$\cS_N\ge0$.  If $C_{\rm com}=C(d,\kappa_*,\Lambda_T)$ denotes the
commutator constant, then
\[
 \gamma(t)=C_{\rm com}\|\nabla u_\rho(t)\|_{L^\infty},
 \qquad
 \Gamma(T)\le T\bigl[1+(2+C_{\rm com})\Lambda_T\bigr],
\]
so the Gronwall factor is uniform for $0<\kappa\le\kappa_*$.
Choose an optimal initial coupling in
\eqref{eq:initial_error_coupling_representation}.  The resulting Lyapunov and
dissipation estimates give \eqref{eq:yukawa_meanfield_full_lyapunov}, with
$C_T=C(d,T,\kappa_*,\Lambda_T)$.  Since both $\rho_N(t)$ and
$\rho(t)^{\otimes N}$ are symmetric, applying
\eqref{eq:exact_symmetric_marginal_projection} gives
\eqref{eq:yukawa_meanfield_k_marginal}.  Finally
\eqref{eq:admissible_radius_scale_bound} gives the optimized scale.
\end{proof}

\subsection{Uniform kernel estimates and classical solutions}
\label{subsec:yukawa_classical_limiting_solutions}

The local classical solutions constructed below satisfy the regularity
assumptions of Theorem~\ref{thm:yukawa_meanfield}.  The kernel estimates in
this subsection provide the uniform velocity bounds needed for both the
nonlinear flow and the Coulomb limit.
Define
\begin{equation}\label{eq:alpha_screening_parameter}
 \alpha_d(\kappa):=
 \begin{cases}
 \kappa,&d=2,\\
 \kappa^2,&d\ge3.
 \end{cases}
\end{equation}

\begin{lemma}
\label{lem:yukawa_coulomb_kernel_estimates}
Fix $d\ge2$ and $\kappa_*>0$.  For $0<\kappa\le\kappa_*$ set
$q_{\kappa,d}:=g_{\kappa,d}^{\sharp}-g_{0,d}$, where
$g_{\kappa,d}^{\sharp}=g_{\kappa,d}$ for $d\ge3$.  Then, for every $x\ne0$,
\begin{equation}\label{eq:force_difference_general_pointwise}
 |\nabla q_{\kappa,d}(x)|
 \le
 \begin{cases}
 C\kappa,&d=2,\\
 C_d\kappa^2|x|^{3-d},&d\ge3.
 \end{cases}
\end{equation}
For $d\ge3$ and $m=1,2,3$,
\begin{equation}\label{eq:screened_correction_derivatives_general}
 |D^m q_{\kappa,d}(x)|\le C_{d,\kappa_*}
 \begin{cases}
 \kappa^2|x|^{4-d-m},&0<|x|\le1,\\
 |x|^{2-d-m},&|x|>1,
 \end{cases}
\end{equation}
where the case $(d,m)=(3,1)$ is understood through the sharper bound below.
For $d=2$,
\begin{equation}\label{eq:two_dimensional_screened_correction_derivatives}
 |D^2q_{\kappa,2}(x)|\le C_{\kappa_*}
 \begin{cases}
 \kappa^2(1+|\log(\kappa|x|)|),&0<\kappa|x|\le1,\\
 |x|^{-2},&\kappa|x|\ge1,
 \end{cases}
\end{equation}
\begin{equation}\label{eq:two_dimensional_screened_correction_third}
 |D^3q_{\kappa,2}(x)|\le C_{\kappa_*}
 \begin{cases}
 \kappa^2|x|^{-1},&0<\kappa|x|\le1,\\
 |x|^{-3},&\kappa|x|\ge1.
 \end{cases}
\end{equation}
Moreover,
\begin{equation}\label{eq:uniform_third_derivative_kernel_difference}
 \sup_{0<\kappa\le\kappa_*}\|D^3q_{\kappa,d}\|_{L^1(\R^d)}<\infty,
\end{equation}
and every $\rho\in L^1(\R^d)\cap L^\infty(\R^d)$ satisfies
\begin{equation}\label{eq:kernel_difference_convolution_bound}
 \|\nabla q_{\kappa,d}*\rho\|_{L^\infty}
 \le C_d\alpha_d(\kappa)
       (\|\rho\|_{L^1}+\|\rho\|_{L^\infty}).
\end{equation}
The constant in the convolution estimate is independent of an upper bound on
$\kappa$; the parameter $\kappa_*$ enters only the higher derivative
estimates.
In dimension $d=3$ one has the sharper pointwise estimate
\begin{equation}\label{eq:single_screened_coulomb_force_difference}
 \sup_{x\ne0}|\nabla g_{\kappa,3}(x)-\nabla g_{0,3}(x)|
 \le\frac{\kappa^2}{8\pi}.
\end{equation}
Consequently, if
\[
 Z_i^\kappa(X_N)
 :=\frac1N\sum_{j\ne i}\nabla(g_{\kappa,3}-g_{0,3})(x_i-x_j),
\]
then
\begin{equation}\label{eq:particle_force_difference_square}
 \frac1N\sum_{i=1}^N|Z_i^\kappa(X_N)|^2
 \le\frac{\kappa^4}{64\pi^2}
\end{equation}
for every collision-free configuration in $(\R^3)^N$.
\end{lemma}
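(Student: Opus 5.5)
The proof is radial calculus with modified Bessel functions. Write $q_{\kappa,d}(x)=\phi(|x|)$, so $\nabla q=\phi'(r)\hat x$. The force formula \eqref{eq:yukawa_force_general_intro} and the identity $-g_{\kappa,d}'(r)=A_d(\kappa r)/(\Sd r^{d-1})$ from \eqref{eq:yukawa_force_bound} give
\[
 \phi'(r)=\frac{1-A_d(\kappa r)}{\Sd r^{d-1}} .
\]
The whole lemma therefore reduces to bounds on $1-A_d(z)$ and its derivatives.

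\emph{First derivative.} Since $A_d(z)=z^{d/2}K_{d/2}(z)/(2^{d/2-1}\Gamma(d/2))$ decreases from $1$ to $0$, we have $0\le1-A_d\le1$. The series of $K_{d/2}$ gives $1-A_d(z)\le C_d\min\{z^2,1\}$ for $d\ge3$; this is in fact $\frac{z^2}{2(d-2)}$ at leading order, and for $d=3$ it is exactly $1-(1+z)e^{-z}\le z^2/2$. For $d=2$ the bound is only $1-A_2(z)=1-zK_1(z)\le Cz\min\{z|\log z|,1\}\lesssim\min\{z,1\}$.

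These bounds give \eqref{eq:force_difference_general_pointwise}. For $d\ge3$, $|\nabla q|\le C\kappa^2 r^{3-d}$. For $d=2$, $|\nabla q|\le C\min\{\kappa^2 r|\log(\kappa r)|,1/r\}\le C\kappa$.

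For $d=3$ the sharp constant follows from maximizing $\bigl(1-(1+z)e^{-z}\bigr)/z^2\le 1/2$, which yields $\kappa^2/(8\pi)$ in \eqref{eq:single_screened_coulomb_force_difference}. Then \eqref{eq:particle_force_difference_square} is immediate: $|Z_i|\le\frac{N-1}{N}\frac{\kappa^2}{8\pi}$.

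\emph{Convolution bound.} Split at $|x|=\kappa^{-1}$ and use $\|\rho\|_\infty$ near the origin and $\|\rho\|_1$ away from it.

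For $d\ge3$, on $\{\kappa r\le1\}$ use $\kappa^2 r^{3-d}$. Its integral over $B_1$ is $\lesssim\kappa^2\|\rho\|_\infty$, and on $1\le r\le\kappa^{-1}$ it is bounded by $\kappa^2\|\rho\|_1$. On $\{\kappa r>1\}$ use $1/r^{d-1}\le\kappa^{d-1}\le\kappa^2$ when $\kappa\le1$. When $\kappa>1$, use instead $|\nabla q|\le2/(\Sd r^{d-1})$ and bound crudely by $\|\rho\|_1+\|\rho\|_\infty\le\kappa^2(\cdots)$. This yields a constant independent of $\kappa_*$.

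For $d=2$, $|\nabla q|\le C\kappa$ when $\kappa\le1$, and $\le 1/r$ in general, which handles $\kappa>1$.

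\emph{Higher derivatives.} For $D^m q$, $m=2,3$, differentiate the radial formula: $D^2$ and $D^3$ are combinations of $\phi''$, $\phi'/r$, $\phi'''$, $\phi''/r$ and $\phi'/r^2$. Using $\frac{d}{dz}A_d(z)=-z^{d/2}K_{d/2-1}(z)/(2^{d/2-1}\Gamma(d/2))$ and the small- and large-argument asymptotics of $K_\nu$, each term is bounded by $\kappa^2 r^{4-d-m}$ for $r\le1$ and by the Coulomb size $r^{2-d-m}$ for $r>1$ (times $C_{\kappa_*}$).

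In $d=2$ the derivative of $zK_1$ is $-zK_0(z)$, which produces the $\kappa^2|\log(\kappa r)|$ factor in \eqref{eq:two_dimensional_screened_correction_derivatives}. For the third derivative one more differentiation gives $\kappa^2/r$ there.

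The $L^1$ bound \eqref{eq:uniform_third_derivative_kernel_difference} then follows by integration. In $d\ge3$, $\kappa^2 r^{1-d}$ is integrable on $B_1$ and $r^{-1-d}$ is integrable outside. In $d=2$, on $\{\kappa r\le1\}$ we get $\int\kappa^2/r\,\dd x\approx\kappa^2\cdot\kappa^{-1}=\kappa$, and $\int_{\kappa r>1}r^{-3}\,\dd x\approx\kappa$, both bounded.

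\emph{Main obstacle.} The delicate step is the bookkeeping of the exact cancellations in the small-$z$ expansion of $1-A_d$ and its derivatives. For $d=3,4$ the second term in the expansion of $z^{\nu}K_\nu$ is logarithmic or otherwise nonanalytic, so I expect to treat $d=3$ explicitly, $d=4$ with the $z^2\log z$ term, and $d\ge5$ by the series. Dimension two additionally needs the $\log$ from $K_0$. I would organize this via the single identity $\frac{d}{dz}(1-A_d)=c_d z^{d/2}K_{d/2-1}(z)$ together with the uniform bound $z^{\nu}K_\nu(z)\le C_\nu$ for $\nu>0$, integrating from $0$. This gives all cases uniformly without separate expansions.
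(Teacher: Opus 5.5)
Your proposal is correct for $d\ge3$ and follows a different route from the paper there; in $d=2$ one step fails as written, namely the pointwise third-derivative bound.

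\textbf{Comparison for $d\ge3$.} The paper writes $q_{\kappa,d}=-\int_0^\infty(1-e^{-\kappa^2t})p_t\,\dd t$ and applies Gaussian derivative bounds with the weight $\min\{1,\kappa^2t\}$. That handles every order $m$ at once. It diverges, however, at the borderline $d+m=4$, so the case $(d,m)=(3,1)$ has to be done by hand from $e^{-\kappa r}/(4\pi r)$.

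Your reduction works instead with $\phi'(r)=(1-A_d(\kappa r))/(\Sd r^{d-1})$ and the representation $1-A_d(z)=c_d\int_0^z s\cdot s^{\nu}K_\nu(s)\,\dd s$, where $\nu=d/2-1>0$.
\begin{itemize}
\item For $m=1$ it gives the global bound $C_d\kappa^2r^{3-d}$ in every $d\ge3$ simultaneously, including the sharp $d=3$ constant.
\item For $m=2,3$ it also goes through: $F:=1-A_d$ satisfies $F\le Cz^2$, $|F'|\le Cz$, $|F''|\le C$, and $zF'$, $z^2F''$ are bounded.
\end{itemize}
The cost is Bessel bookkeeping in place of a single heat-kernel integral. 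Your convolution estimate, with a constant independent of $\kappa_*$, and the $d=3$ particle bound are fine.

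A minor slip: $1-zK_1(z)\le Cz\min\{z|\log z|,1\}$ is false near $z=1$, where the right side vanishes. It should read $C\min\{z^2(1+|\log z|),1\}$. Your conclusion $\lesssim\min\{z,1\}$ is still correct.

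\textbf{The gap in $d=2$.} The claim that ``one more differentiation gives $\kappa^2/r$'' is not true term by term. With $F(z)=1-zK_1(z)$:
\begin{itemize}
\item $F''(z)=K_0(z)-zK_1(z)\sim|\log z|$;
\item $F(z)\sim\tfrac{z^2}{2}|\log z|$.
\end{itemize}
So each of $\phi'''$, $\phi''/r$ and $\phi'/r^2$ has size $\kappa^2|\log(\kappa r)|/r$ when $\kappa r\ll1$. Your uniform device (boundedness of $s^\nu K_\nu$) is unavailable because $\nu=0$.

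The bound $\kappa^2/r$ holds only for the two tensor coefficients of $D^3q_{\kappa,2}$, where the logarithms cancel; this is what the paper's explicit Bessel formulas exhibit. You can prove it in your own framework. With $z=\kappa r$,
\[
\frac{\phi''}{r}-\frac{\phi'}{r^2}=\frac{\kappa^3}{2\pi z^3}\bigl(zF'-2F\bigr),\qquad
\phi'''-\frac{3\phi''}{r}+\frac{3\phi'}{r^2}=\frac{\kappa^3}{2\pi z^3}\bigl(z^2F''-5zF'+8F\bigr).
\]
Differentiating and integrating from $0$ gives
\[
zF'-2F=-\int_0^z s^2K_1(s)\,\dd s,\qquad
z^2F''-5zF'+8F=\int_0^z\bigl(2s^2K_1(s)+s^3K_0(s)\bigr)\,\dd s .
\]
Both are $O(\min\{z^2,1\})$, using $sK_1(s)\le1$, boundedness of $s^2K_0(s)$, and integrability on $(0,\infty)$. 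This yields $|D^3q_{\kappa,2}|\le C\min\{\kappa^2/r,\,r^{-3}\}$.

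Without this step, the term-by-term bound $\kappa^2(1+|\log(\kappa r)|)/r$ still gives the uniform $L^1$ estimate, since its integral over $\{\kappa r\le1\}$ is $O(\kappa)$. So nothing downstream breaks, but the pointwise estimate \eqref{eq:two_dimensional_screened_correction_third} as stated needs the cancellation.
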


\begin{proof}
Assume first $d\ge3$.  With
$p_t(x)=(4\pi t)^{-d/2}e^{-|x|^2/(4t)}$, the resolvent representation gives
\[
 q_{\kappa,d}(x)
 =-\int_0^\infty(1-e^{-\kappa^2t})p_t(x)\,\dd t.
\]
The Gaussian derivative estimate
\[
 |D^mp_t(x)|\le C_{d,m}t^{-(d+m)/2}e^{-|x|^2/(8t)}
\]
therefore yields
\begin{equation}\label{eq:screening_heat_kernel_derivative_integral}
 |D^mq_{\kappa,d}(x)|
 \le C_{d,m}\int_0^\infty
 \min\{1,\kappa^2t\}t^{-(d+m)/2}e^{-|x|^2/(8t)}\,\dd t.
\end{equation}
If $|x|\le1$ and $d+m>4$, use
$1-e^{-\kappa^2t}\le\kappa^2t$ and the change of variables
$u=|x|^2/t$ to obtain
$|D^mq_{\kappa,d}(x)|\le C\kappa^2|x|^{4-d-m}$.
If $|x|>1$, use the bound by $1$ in
\eqref{eq:screening_heat_kernel_derivative_integral} to obtain
$|D^mq_{\kappa,d}(x)|\le C|x|^{2-d-m}$.
This proves \eqref{eq:screened_correction_derivatives_general} except for
$(d,m)=(3,1)$.  In that case
$g_{\kappa,3}(r)=e^{-\kappa r}/(4\pi r)$ gives, with $z=\kappa r$,
\[
 |\nabla q_{\kappa,3}(x)|
 =\frac{1-(1+z)e^{-z}}{4\pi r^2}
 \le\frac{\kappa^2}{8\pi},
\]
because $1-(1+z)e^{-z}=\int_0^z\tau e^{-\tau}\,\dd\tau\le z^2/2$.
This proves both the missing case and
\eqref{eq:single_screened_coulomb_force_difference}.  For $d\ge4$, the same
heat-kernel calculation with $m=1$ gives
$|\nabla q_{\kappa,d}(x)|\le C_d\kappa^2|x|^{3-d}$, so
\eqref{eq:force_difference_general_pointwise} holds for every $d\ge3$.
Furthermore,
\[
 \int_{|x|\le1}|D^3q_{\kappa,d}(x)|\,\dd x
 \le C_d\kappa^2\int_0^1\dd r,
 \qquad
 \int_{|x|>1}|D^3q_{\kappa,d}(x)|\,\dd x
 \le C_d\int_1^\infty r^{-2}\,\dd r,
\]
which proves \eqref{eq:uniform_third_derivative_kernel_difference} for
$d\ge3$.

For $d=2$, constants disappear after differentiation.  Put
\[
 Q(z):=K_0(z)+\log z-(\log2-\gamma_E),
 \qquad q_{\kappa,2}(x)=\frac1{2\pi}Q(\kappa|x|).
\]
Using $K_0'=-K_1$ and $K_1'=-K_0-z^{-1}K_1$,
\[
 Q'=-K_1+z^{-1},\qquad Q''=K_0+z^{-1}K_1-z^{-2}.
\]
The small-argument expansion gives $Q'(z)=O(z|\log z|)$, while
$Q'(z)=z^{-1}+O(e^{-z}z^{-1/2})$ as $z\to\infty$; hence $Q'$ is bounded
and \eqref{eq:force_difference_general_pointwise} follows in dimension two.
For a radial function the two third-order tensor coefficients are
\[
 \frac{Q''}z-\frac{Q'}{z^2}
 =\frac{K_0}z+\frac{2K_1}{z^2}-\frac2{z^3},
 \qquad
 Q'''-\frac{3Q''}z+\frac{3Q'}{z^2}
 =-K_1-\frac{4K_0}z-\frac{8K_1}{z^2}+\frac8{z^3}.
\]
The Bessel expansions at zero and infinity give
\eqref{eq:two_dimensional_screened_correction_derivatives}--
\eqref{eq:two_dimensional_screened_correction_third}.  In particular,
\[
 \int_{\kappa r\le1}\frac{\kappa^2}{r}\,\dd x
 +\int_{\kappa r\ge1}r^{-3}\,\dd x\le C\kappa,
\]
which proves \eqref{eq:uniform_third_derivative_kernel_difference} in $d=2$.

Finally, \eqref{eq:kernel_difference_convolution_bound} follows directly from
\eqref{eq:force_difference_general_pointwise}, with a constant independent of
$\kappa_*$.  In $d=2,3$ the pointwise Yukawa--Coulomb force difference is globally bounded by
$C\alpha_d(\kappa)$.  If $d\ge4$, split into $|x-y|\le1$ and $|x-y|>1$:
$|x-y|^{3-d}$ is locally integrable in the near field and is at most one in
the far field.  Thus the near part is controlled by $\|\rho\|_{L^\infty}$
and the far part by $\|\rho\|_{L^1}$.  Finally,
\eqref{eq:single_screened_coulomb_force_difference} gives, for every $i$,
\[
 |Z_i^\kappa(X_N)|
 \le \frac{N-1}{N}\frac{\kappa^2}{8\pi}
 \le \frac{\kappa^2}{8\pi},
\]
and averaging the squares yields
\eqref{eq:particle_force_difference_square}.
\end{proof}

\begin{proposition}[Uniform estimates for the velocity field]
\label{prop:yukawa_velocity_field}
Fix $d\ge2$, $0<\beta<1$, and $\kappa_*>0$.  The following estimates hold
uniformly for $0\le\kappa\le\kappa_*$.  At $\kappa=0$ we use the
logarithmic kernel in $d=2$ and the Newtonian kernel in $d\ge3$.

\smallskip
\noindent\textup{(i) Estimate for bounded densities.}
For every
$\rho\in L^1(\R^d)\cap L^\infty(\R^d)\cap C^{0,\beta}(\R^d)$,
\begin{equation}\label{eq:standard_velocity_regularity_criterion}
 \|\nabla g_{\kappa,d}*\rho\|_{W^{1,\infty}}
 \le C_{d,\beta,\kappa_*}
 \bigl(\|\rho\|_{L^1}+\|\rho\|_{L^\infty}+[\rho]_{C^{0,\beta}}\bigr).
\end{equation}

\smallskip
\noindent\textup{(ii) Local estimates on compactly supported densities.}
Fix $R>0$.  For
$f\in C_c^{1,\beta}(\R^d)$ with
$\operatorname{supp}f\subset\overline{B_R}$, set
\[
 u_f^\kappa:=-\nabla g_{\kappa,d}*f,
 \qquad
 \vartheta_f^\kappa:=\nabla\cdot u_f^\kappa
 =f-\kappa^2g_{\kappa,d}*f,
\]
with the second term omitted when $\kappa=0$.  There is
$C=C(d,R,\beta,\kappa_*)$ such that
\begin{equation}
\label{eq:yukawa_velocity_field_strong}
 \|u_f^\kappa\|_{C^{1,\beta}}
 +\|\vartheta_f^\kappa\|_{C^{1,\beta}}
 \le C\|f\|_{C^{1,\beta}}.
\end{equation}
If $f,g\in C_c^{1,\beta}(\R^d)$ have support in $\overline{B_R}$, then
\begin{equation}
\label{eq:yukawa_velocity_field_weak}
 \|u_f^\kappa-u_g^\kappa\|_{L^\infty}
 +\|\vartheta_f^\kappa-\vartheta_g^\kappa\|_{L^\infty}
 \le C\|f-g\|_{L^\infty}.
\end{equation}
For every $0<\beta'<\beta$,
\begin{equation}
\label{eq:yukawa_velocity_field_holder_continuity}
 \|u_f^\kappa-u_g^\kappa\|_{C^{1,\beta'}}
 +\|\vartheta_f^\kappa-\vartheta_g^\kappa\|_{C^{1,\beta'}}
 \le C_{d,R,\beta',\kappa_*}\|f-g\|_{C^{1,\beta'}}.
\end{equation}
\end{proposition}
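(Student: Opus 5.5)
The plan is to reduce everything to the Coulomb case plus the screening correction $q_{\kappa,d}=g_{\kappa,d}^{\sharp}-g_{0,d}$, whose derivatives are controlled uniformly by Lemma~\ref{lem:yukawa_coulomb_kernel_estimates}. Thus $\nabla g_{\kappa,d}*\rho=\nabla g_{0,d}*\rho+\nabla q_{\kappa,d}*\rho$. At $\kappa=0$ only the Coulomb piece is present.

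\emph{Coulomb piece.} For (i) we use the classical Newtonian/logarithmic potential estimates. The first derivatives are split into near and far fields: $|\nabla g_{0,d}|\lesssim|x|^{1-d}$ is integrable near the origin, which gives the bound $\|\rho\|_{L^\infty}$, and it is bounded by $1$ for $|x|\ge1$, which gives $\|\rho\|_{L^1}$. The second derivatives form a Calder\'on--Zygmund kernel with cancellation on spheres, so the standard H\"older argument applies. We write
\[
\partial_{jk}g_{0,d}*\rho(x)=\mathrm{p.v.}\int_{|y|<1}\partial_{jk}g_{0,d}(y)\bigl(\rho(x-y)-\rho(x)\bigr)\dd y+\int_{|y|\ge1}\cdots-\tfrac{\delta_{jk}}d\rho(x).
\]
The near part is bounded by $[\rho]_{C^{0,\beta}}$ and the far part by $\|\rho\|_{L^1}$. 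For (ii), Schauder theory for the Newtonian potential gives $\|D^2(g_{0,d}*f)\|_{C^{1,\beta}}\lesssim\|f\|_{C^{1,\beta}}$, applied to $\nabla f$ via $\partial(g*f)=g*\partial f$. The lower-order $L^\infty$ norms of $u_f$ and $\nabla u_f$ follow from part (i), using $\|f\|_{L^1}\le|B_R|\|f\|_{L^\infty}$. This is the point where the support radius $R$ enters.

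\emph{Screening correction.} By \eqref{eq:kernel_difference_convolution_bound}, $\|\nabla q_{\kappa,d}*\rho\|_{L^\infty}\lesssim\|\rho\|_{L^1}+\|\rho\|_{L^\infty}$. For the second derivatives we avoid needing $D^2q$ in $L^1$. We write $D^2q*\rho=D^3q*\Phi$ for a suitable antiderivative, or more simply split near and far fields. Near $0$, $|D^2q|\lesssim|x|^{2-d}$ for $d\ge3$, or a logarithm in $d=2$, which is locally integrable; this gives $\|\rho\|_{L^\infty}$. For $|x|>1$ the bound $|D^2q|\le C|x|^{-d}$ is not integrable, so there we use the identity $D^2(g_{\kappa,d}*\rho)=D^2(g_{0,d}*\rho)+D^2(q*\rho)$ the other way. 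The far parts of both $D^2g_{\kappa,d}$, which decays exponentially for $\kappa|x|\gtrsim1$ and is bounded by $|x|^{-d}$ otherwise, and $D^2g_{0,d}$ are bounded kernels on $|x|\ge1$, so each is controlled by $\|\rho\|_{L^1}$. Hence it is cleaner to run the Calder\'on--Zygmund argument of the previous paragraph directly on $g_{\kappa,d}$. The near-field cancellation still holds because $D^2 g_{\kappa,d}=D^2g_{0,d}+D^2q$ with $D^2q$ locally integrable uniformly in $\kappa\le\kappa_*$ by \eqref{eq:screened_correction_derivatives_general} and \eqref{eq:two_dimensional_screened_correction_derivatives}. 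The far field on $|y|\ge1$ is bounded uniformly in $\kappa$, since $|D^2g_{\kappa,d}(y)|\le C|y|^{-d}\le C$, which gives $\|\rho\|_{L^1}$.

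For the $C^{1,\beta}$ bound in (ii), $D^2(q*f)=D^3q*\nabla^{-1}$ is awkward, so we use $D^3q\in L^1$ uniformly by \eqref{eq:uniform_third_derivative_kernel_difference}. This gives $\|D^2(q*f)\|_{C^{0,\beta}}\le\|D^3q\|_{L^1}\|f\|_{C^{0,\beta}}$ and $\|D^2 u\|$-type bounds via $D^3q*\nabla f$. The divergence term follows from $L_\kappa g=\delta$, which gives $\vartheta_f^\kappa=f-\kappa^2 g_{\kappa,d}*f$. Its $C^{1,\beta}$ norm requires bounds on $\kappa^2\nabla g_{\kappa,d}*f$ and its H\"older seminorm. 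These come from $\kappa^2\|\nabla g_{\kappa,d}\|_{L^1}\le C\kappa\le C\kappa_*$, since $\kappa^2\int|g'_{\kappa,d}|r^{d-1}\dd r\lesssim\kappa\int_0^\infty A_d(z)\dd z$. Estimates \eqref{eq:yukawa_velocity_field_weak} and \eqref{eq:yukawa_velocity_field_holder_continuity} follow by linearity applied to $f-g$, which is supported in $\overline{B_R}$. The $L^\infty$ bound uses $\|\nabla g_{\kappa,d}\|_{L^1(B_{2R})}$ together with the far-field boundedness, and the $\beta'$ bound is the strong estimate with $\beta'$ in place of $\beta$.

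The main obstacle is the uniform-in-$\kappa$ far-field control of the second derivatives in $d=2$. There the Yukawa kernel behaves like the logarithm up to distance $\kappa^{-1}$, so the tail of $D^2 g$ must be taken as $\lesssim|y|^{-2}\mathbf 1_{|y|\ge1}\le1$ and controlled only through $\|\rho\|_{L^1}$, not through any integrability of $D^2q$. I would check carefully that no $|\log\kappa|$ factor enters through the cancellation constant on spheres. That constant is exactly $-\frac1d\rho(x)$ from the delta mass together with a $\kappa^2 g$ contribution that is integrable near the origin.
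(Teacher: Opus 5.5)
Your proposal is correct and follows essentially the paper's route. You treat the Coulomb part by the Calder\'on--Zygmund Hölder estimate and the screening correction $q_{\kappa,d}$ by near-field integrability and far-field boundedness of $D^2q_{\kappa,d}$ in (i). In (ii) you use the uniform $L^1$ bound on $D^3q_{\kappa,d}$ and the identity $\vartheta_f^\kappa=f-\kappa^2g_{\kappa,d}*f$; bounding $\kappa^2\nabla g_{\kappa,d}*f$ via $\kappa^2\|\nabla g_{\kappa,d}\|_{L^1}\lesssim\kappa$ is equivalent to the paper's $\nabla\vartheta_f^\kappa=\nabla f+\kappa^2u_f^\kappa$. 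One displayed inequality needs repair: $\|D^2(q_{\kappa,d}*f)\|_{C^{0,\beta}}\le\|D^3q_{\kappa,d}\|_{L^1}\|f\|_{C^{0,\beta}}$ is not right as written. It should be read as a Lipschitz bound on $D^2(q_{\kappa,d}*f)$ through $\nabla D^2(q_{\kappa,d}*f)=D^3q_{\kappa,d}*f$, with the sup norm of $D^2(q_{\kappa,d}*f)$ taken from your near/far split in (i), which is exactly how the paper combines them.
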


\begin{proof}
The Hessian of the Coulomb kernel is a Calder\'on--Zygmund kernel with
zero angular mean.  Distributionally,
\[
 \partial_{ij}g_{0,d}
 =\operatorname{p.v.}K_{ij}-\frac1d\delta_{ij}\delta_0,
 \qquad
 \int_{\mathbb S^{d-1}}K_{ij}(\omega)\,\dd\omega=0.
\]
The H\"older cancellation estimate for Calder\'on--Zygmund operators
\cite{Stein1970}, together with the elementary near- and far-field estimate
for $\nabla g_{0,d}*\rho$, gives
\eqref{eq:standard_velocity_regularity_criterion} at $\kappa=0$.
For $\kappa>0$, additive constants do not affect the velocity and
$\nabla g_{\kappa,d}=\nabla g_{0,d}+\nabla q_{\kappa,d}$.  The force
correction is controlled by \eqref{eq:kernel_difference_convolution_bound}.
The bounds for $D^2q_{\kappa,d}$ in
\eqref{eq:screened_correction_derivatives_general} and
\eqref{eq:two_dimensional_screened_correction_derivatives} are uniformly
integrable in the near field and uniformly bounded in the far field.
This controls $D^2q_{\kappa,d}*\rho$ by
$\|\rho\|_{L^\infty}+\|\rho\|_{L^1}$ and proves part \textup{(i)}.

For part \textup{(ii)}, the Coulomb H\"older estimate gives
\[
 \|\nabla g_{0,d}*f\|_{C^{1,\beta}}
 \le C_{d,R,\beta}\bigl(\|f\|_{L^1}+\|f\|_{C^{0,\beta}}\bigr).
\]
The singular parts of $D^2g_{\kappa,d}$ and $D^2g_{0,d}$ cancel.  More
precisely, their distributional contributions supported at the origin are
identical, so $D^3q_{\kappa,d}$ has no distributional term supported at the
origin.  In particular,
$D^2q_{\kappa,d}\in W^{1,1}_{\mathrm{loc}}(\R^d)$, and for compactly
supported $f$ one has
$\nabla(D^2q_{\kappa,d}*f)=D^3q_{\kappa,d}*f$ distributionally.  By
Lemma~\ref{lem:yukawa_coulomb_kernel_estimates}, $D^3q_{\kappa,d}$ is
uniformly in $L^1$, while $\nabla q_{\kappa,d}$ and $D^2q_{\kappa,d}$ have
the required local $L^1$ bounds.  Since $f$ is supported in
$\overline{B_R}$,
\[
 \|\nabla q_{\kappa,d}*f\|_{L^\infty}
 +\|D^2q_{\kappa,d}*f\|_{L^\infty}
 \le C_{d,R,\kappa_*}(\|f\|_{L^1}+\|f\|_{L^\infty}),
\]
and
\[
 \|\nabla(D^2q_{\kappa,d}*f)\|_{L^\infty}
 \le \|D^3q_{\kappa,d}\|_{L^1}\|f\|_{L^\infty}.
\]
Hence $D^2q_{\kappa,d}*f$ is bounded and Lipschitz, which gives the
$C^{0,\beta}$ bound needed in
\eqref{eq:yukawa_velocity_field_strong}.  The estimate for
$\vartheta_f^\kappa$ follows from
\[
 \vartheta_f^\kappa=f-\kappa^2g_{\kappa,d}*f,
 \qquad
 \nabla\vartheta_f^\kappa=\nabla f+\kappa^2u_f^\kappa,
\]
together with $\kappa^2\int g_{\kappa,d}=1$ for $\kappa>0$.

For the difference estimates, local integrability of the force and the
common support give
\[
 \|u_f^\kappa-u_g^\kappa\|_{L^\infty}
 \le C_{d,R,\kappa_*}\|f-g\|_{L^\infty}.
\]
The formula for $\vartheta_f^\kappa-\vartheta_g^\kappa$ and
$\kappa^2\int g_{\kappa,d}=1$ yield the second term in
\eqref{eq:yukawa_velocity_field_weak}.  Finally, applying the Coulomb
H\"older estimate with exponent $\beta'<\beta$ to $f-g$, and using the same
$q_{\kappa,d}$ bounds, proves
\eqref{eq:yukawa_velocity_field_holder_continuity}.
\end{proof}

Let $\rho=\rho^\kappa$ be a solution of \eqref{eq:pde_intro} satisfying
\eqref{eq:rho_assumption} and, for some $0<\beta<1$,
\begin{equation}
\label{eq:holder_velocity_criterion_bound}
 1+\sup_{0\le t\le T}\left[
   \|\rho^\kappa(t)\|_{L^\infty}
   +[\rho^\kappa(t)]_{C^{0,\beta}}
 \right]<\infty.
\end{equation}
Part \textup{(i)} then implies \eqref{eq:dynamic_solution_bound}, with a
constant independent of $\kappa\in[0,\kappa_*]$.  Hence
Theorem~\ref{thm:yukawa_meanfield} applies for every
$0<\kappa\le\kappa_*$, uniformly over families satisfying a common bound in
\eqref{eq:holder_velocity_criterion_bound}.  The corresponding estimate at
$\kappa=0$ is used in the Coulomb limit below.

Local well-posedness for related aggregation equations is treated in
\cite{Laurent2007,CozziGieKelliher2017}.  The proposition below additionally
provides a common existence time and regularity bounds uniform for
$0\le\kappa\le\kappa_*$.  These uniform bounds are used both in the Yukawa
mean-field estimate and in the Coulomb limit.

\begin{proposition}[Uniform local classical solutions]
\label{prop:yukawa_classical_solutions}
Fix an integer $d\ge2$, $0<\beta<1$, and $\kappa_*>0$.  Let
$\rho_0\in C_c^{1,\beta}(\R^d)$ satisfy
\[
 \rho_0\ge0,\qquad \int_{\R^d}\rho_0\,\dd x=1.
\]
There exists $T_*>0$, depending only on $d$, $\rho_0$, $\beta$, and $\kappa_*$,
such that, for every $0\le\kappa\le\kappa_*$, the Cauchy problem
\eqref{eq:pde_intro} with the Coulomb kernel from
\eqref{eq:coulomb_kernel_general_intro} when $\kappa=0$ has a
unique classical solution $\rho^\kappa$ satisfying
\begin{equation}
\label{eq:yukawa_classical_solution_class}
 \rho^\kappa\in L^\infty_{\mathrm{loc}}\bigl([0,T_*);C^{1,\beta}(\R^d)\bigr),
 \qquad
 \rho^\kappa\in C\bigl([0,T_*);C^{1,\beta'}(\R^d)\bigr)
 \cap C^1\bigl([0,T_*);C^{0,\beta'}(\R^d)\bigr)
\end{equation}
for every $0<\beta'<\beta$.
Here ``classical'' means that the equation holds pointwise with the time and
space regularity displayed above.  Uniqueness holds within the class of
nonnegative unit-mass solutions satisfying these bounds and having compact
support on each compact time interval.  Moreover, for every $0<T<T_*$ there is a radius
$R_T=R_T(d,\beta,\kappa_*,\rho_0,T)<\infty$, independent of
$\kappa\in[0,\kappa_*]$, such that
\begin{equation*}
 \operatorname{supp}\rho^\kappa(t)\subset B_{R_T}
 \qquad(0\le t\le T,\ 0\le\kappa\le\kappa_*).
\end{equation*}
For every $0<T<T_*$,
\begin{equation}
\label{eq:yukawa_classical_solution_bound}
 \sup_{0\le\kappa\le\kappa_*}\sup_{0\le t\le T}
 \left[
   \|\rho^\kappa(t)\|_{L^\infty}
   +[\rho^\kappa(t)]_{C^{0,\beta}}
   +\int_{\R^d}|x|^2\rho^\kappa(t,x)\,\dd x
   +\|u_{\rho^\kappa}^\kappa(t)\|_{W^{1,\infty}}
 \right]<\infty.
\end{equation}
In particular, for $0<\kappa\le\kappa_*$ these solutions satisfy
\eqref{eq:rho_assumption}--\eqref{eq:dynamic_solution_bound} on every
$[0,T]\subset[0,T_*)$.
\end{proposition}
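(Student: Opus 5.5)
The plan is the classical Lagrangian fixed-point argument for aggregation equations, run with constants drawn only from Proposition~\ref{prop:yukawa_velocity_field}, which is uniform in $0\le\kappa\le\kappa_*$. Fix $R_0$ with $\operatorname{supp}\rho_0\subset B_{R_0}$ and set $R:=R_0+1$. For $T>0$, $M>0$, let $X_T$ be the set of $\rho\in C([0,T];C^{0,\beta'}(\R^d))$ such that $\rho(t)\ge0$, $\int\rho(t)=1$, $\operatorname{supp}\rho(t)\subset\overline{B_R}$ and $\|\rho(t)\|_{C^{1,\beta}}\le M$ for all $t$. Take $M=2\|\rho_0\|_{C^{1,\beta}}+1$.

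Given $\rho\in X_T$, set $u=u^\kappa_{\rho}$ and $\vartheta=\nabla\cdot u$. By \eqref{eq:yukawa_velocity_field_strong}, $\|u(t)\|_{C^{1,\beta}}+\|\vartheta(t)\|_{C^{1,\beta}}\le C(d,R,\beta,\kappa_*)M$. Let $\Psi_t$ be the flow of $u$, and define $\Phi(\rho)$ by the characteristic formula
\[
 \Phi(\rho)(t,\Psi_t(x))=\rho_0(x)\exp\Bigl(-\int_0^t\vartheta(s,\Psi_s(x))\,\dd s\Bigr),
\]
which is the push-forward $(\Psi_t)_\#\rho_0$. Positivity and unit mass are then automatic, and the support stays in $B_{R_0+CMT}$. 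The standard flow bounds give $\|D\Psi_t\|_{C^{0,\beta}}\le e^{CMT}$, and composition estimates give $\|\Phi(\rho)(t)\|_{C^{1,\beta}}\le\|\rho_0\|_{C^{1,\beta}}e^{CMT}$. Choosing $T_*$ small in terms of $d,\rho_0,\beta,\kappa_*$ makes $\Phi$ map $X_T$ into itself for every $T<T_*$. For contraction I would use the $L^\infty$ metric: \eqref{eq:yukawa_velocity_field_weak} and Gronwall on the characteristics give $\sup_t\|\Phi(\rho)-\Phi(\tilde\rho)\|_{L^\infty}\le CT\sup_t\|\rho-\tilde\rho\|_{L^\infty}$, again with $C$ independent of $\kappa$. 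Since $X_T$ is closed in $L^\infty$ under the uniform $C^{1,\beta}$ bound (by Arzel\`a--Ascoli and lower semicontinuity of Hölder norms), Banach's theorem yields a fixed point $\rho^\kappa$. To reach every $T<T_*$ with uniform support radius $R_T$ rather than only a short time, I would define $T_*$ as the time where an a priori ODE comparison $y'=Cy^2$ for $y(t)=\|\rho(t)\|_{C^{1,\beta}}$ blows up, and iterate the construction while $y$ stays finite. Because $C$ is uniform in $\kappa$, both the blow-up time and $R_T=R_0+\int_0^T\|u\|_{L^\infty}$ are uniform as well.

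Regularity \eqref{eq:yukawa_classical_solution_class} then follows. The $L^\infty_{\rm loc}C^{1,\beta}$ bound is built in. Continuity into $C^{1,\beta'}$ comes from interpolating the Lipschitz-in-time $L^\infty$ continuity, which holds because $\partial_t\rho=-\nabla\cdot(\rho u)$ is bounded, against the uniform $C^{1,\beta}$ bound. Then $\partial_t\rho=-u\cdot\nabla\rho-\rho\vartheta\in C([0,T];C^{0,\beta'})$, using \eqref{eq:yukawa_velocity_field_holder_continuity} for time continuity of $u$ and $\vartheta$. Uniqueness in the stated class reuses the $L^\infty$ contraction estimate: any two such solutions are transported by their own Lipschitz velocities, so they are fixed points of $\Phi$ and coincide on short intervals, hence on the whole interval. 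For the bound \eqref{eq:yukawa_classical_solution_bound}, the $L^\infty$ and Hölder terms are immediate, the second moment is at most $R_T^2$, and $\|u^\kappa_{\rho^\kappa}\|_{W^{1,\infty}}$ comes from \eqref{eq:standard_velocity_regularity_criterion}. Continuity in $W_2$ holds because the supports are uniformly bounded and the solution is narrowly continuous, which gives \eqref{eq:rho_assumption}--\eqref{eq:dynamic_solution_bound}.

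The main obstacle is the Hölder composition estimate that closes the $C^{1,\beta}$ bound with constants independent of $\kappa$. The derivative $\nabla\Phi(\rho)$ involves $D\Psi_t^{-1}$, $\nabla\rho_0\circ\Psi_t^{-1}$ and $\int\nabla\vartheta\circ\Psi_s$. This is exactly why \eqref{eq:yukawa_velocity_field_strong} must control $\vartheta$ in $C^{1,\beta}$, and that control rests on the uniform integrability of $D^3q_{\kappa,d}$ in \eqref{eq:uniform_third_derivative_kernel_difference}. I would verify this estimate first, using the standard inequality $[f\circ\Psi]_{C^{0,\beta}}\le[f]_{C^{0,\beta}}\|D\Psi\|_\infty^\beta$.
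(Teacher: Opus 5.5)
Your proposal is correct and follows the paper's proof essentially step for step: a Lagrangian fixed point for the characteristic map on uniformly $C^{1,\beta}$-bounded, compactly supported probability densities with the $L^\infty$-in-space metric, contraction via characteristics and \eqref{eq:yukawa_velocity_field_weak}, interpolation for the time regularity and uniqueness by the same contraction, with all $\kappa$-uniform constants taken from Proposition~\ref{prop:yukawa_velocity_field}. The only difference is that the paper simply takes $T_*$ to be the common contraction time, so that $R_T=R$ and the $C^{1,\beta}$ bound $M$ hold on all of $[0,T_*]$; this makes your continuation up to an ODE blow-up time unnecessary, and if you keep that step you must also handle the fact that the constants in part~(ii) depend on the support radius $R$, which grows along the iteration.
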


The proof is given in Appendix~\ref{app:local_classical_solutions}.

\begin{corollary}[Yukawa mean-field limit for tensorized initial data]
\label{cor:yukawa_classical_chaos}
Under the assumptions of Proposition~\ref{prop:yukawa_classical_solutions},
fix $0<T<T_*$.  There exists
$C_T=C(d,\beta,\kappa_*,\rho_0,T)<\infty$ such that, for every
$N\ge2$ and every $0<\kappa\le\kappa_*$, if $\rho^\kappa$ is the
classical solution and the particle system starts from
$\rho_N^0=\rho_0^{\otimes N}$, then
\begin{equation*}
\begin{aligned}
 &\sup_{t\le T}\left\{
 \frac1N W_2^2\bigl(\rho_N(t),\rho^\kappa(t)^{\otimes N}\bigr)
 +\int\!\left[\cF_N^{\kappa,\sharp}(X_N,\rho^\kappa(t))
       +B\varepsilon_N^\kappa(\eta_N)\right]\dd\rho_N(t)\right\}\\
 &\qquad
 +\int_0^T\!\int \cD_N^\kappa(X_N,\rho^\kappa(t))\,\dd\rho_N(t)\,\dd t
 \le C_Ta_{N,d},
\end{aligned}
\end{equation*}
and, for every $1\le k\le N$, we have
\begin{equation*}
 \sup_{t\le T}
 W_2^2\bigl(\rho_{N:k}(t),\rho^\kappa(t)^{\otimes k}\bigr)
 \le C_Tk a_{N,d}.
\end{equation*}
\end{corollary}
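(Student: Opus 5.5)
The plan is to apply Theorem~\ref{thm:yukawa_meanfield} with $\rho=\rho^\kappa$ from Proposition~\ref{prop:yukawa_classical_solutions}, $\rho_N^0=\rho_0^{\otimes N}$ and $\eta=\eta_N$ from \eqref{eq:optimized_truncation_radius}. Then it remains to check that the initial error satisfies $\mathfrak E_N^{\kappa,0}(\eta_N)\le Ca_{N,d}$ uniformly in $0<\kappa\le\kappa_*$.

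First, fix $0<T<T_*$. By \eqref{eq:yukawa_classical_solution_bound}, the quantity $\Lambda_T$ in \eqref{eq:dynamic_solution_bound} is bounded by a constant depending only on $(d,\beta,\kappa_*,\rho_0,T)$ and uniform in $\kappa$. The regularity class also gives $\rho^\kappa\in C([0,T];\mathcal P_2)$. Hence $B$, $\eta_0$ and $C_T$ in Theorem~\ref{thm:yukawa_meanfield} have the stated dependence. Since $\rho^\kappa(0)=\rho_0$, the Wasserstein term in \eqref{eq:intro_initial_error} vanishes, using the diagonal coupling. Moreover, \eqref{eq:admissible_radius_scale_bound} gives $B\varepsilon_N^\kappa(\eta_N)\le Ca_{N,d}$.

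The main step is to bound the expected modulated energy under i.i.d.\ sampling. Expanding $\cF_N^{g_{\kappa,d}}$ via the off-diagonal expansion \eqref{eq:chain_rule_expanded_energy}, independence gives
\[
 \mathbb E\Bigl[\frac1{N^2}\sum_{i\ne j}g_{\kappa,d}(x_i-x_j)\Bigr]=\frac{N(N-1)}{N^2}I_\kappa[\rho_0],
 \qquad
 \mathbb E\Bigl[\frac2N\sum_iV^\kappa_{\rho_0}(x_i)\Bigr]=2I_\kappa[\rho_0].
\]
Here all integrals are finite because $\rho_0$ is bounded and $g_{\kappa,d}$ is locally integrable. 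Therefore
\[
 \int\cF_N^{g_{\kappa,d}}\,\dd\rho_0^{\otimes N}=-\frac1N I_\kappa[\rho_0],
 \qquad
 \int\cF_N^{\kappa,\sharp}\,\dd\rho_0^{\otimes N}=-\frac1N\bigl(I_\kappa[\rho_0]+c_{\kappa,d}\bigr)=-\frac1N I_{g^\sharp_{\kappa,d}}[\rho_0],
\]
where the last equality uses unit mass. For $d\ge3$, \eqref{eq:static_background_bound_dge3} bounds this by $C/N$. For $d=2$, \eqref{eq:two_dimensional_normalized_background_energy_bound} bounds it by $C/N$, since $\rho_0$ is compactly supported and so has a finite logarithmic moment. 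Both bounds are uniform in $\kappa$. Because $1/N\le a_{N,d}$, we get $\mathfrak E_N^{\kappa,0}(\eta_N)\le Ca_{N,d}$. We also need $\rho_0^{\otimes N}(\Delta_N)=0$ and finiteness of $\mathfrak E_N^{\kappa,0}$; both follow from absolute continuity and the computation above.

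Finally, \eqref{eq:yukawa_meanfield_full_lyapunov} and \eqref{eq:yukawa_meanfield_k_marginal} give the two displayed estimates with $C_T a_{N,d}$ and $C_Tka_{N,d}$. The only delicate point is the two-dimensional additive normalization. The $\kappa$-uniformity there relies on the $|\log\kappa|$ cancellation already encoded in the majorant \eqref{eq:renormalized_2d_global_majorant}, and this is what \eqref{eq:two_dimensional_normalized_background_energy_bound} provides. The rest is bookkeeping of constants.
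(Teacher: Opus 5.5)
Your proposal is correct and follows essentially the same route as the paper. You verify the hypotheses of Theorem~\ref{thm:yukawa_meanfield} through the $\kappa$-uniform bounds of Proposition~\ref{prop:yukawa_classical_solutions}, note that the transport term vanishes, compute $\mathbb E\,\cF_N^{\kappa,\sharp}=-N^{-1}I_{g^\sharp_{\kappa,d}}[\rho_0]$ by independence, and bound it uniformly via \eqref{eq:static_background_bound_dge3} or \eqref{eq:two_dimensional_normalized_background_energy_bound} together with $N^{-1}\le a_{N,d}$; the only cosmetic difference is that you get the logarithmic moment from compact support rather than from $\rho_0\in\mathcal P_2$.
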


\begin{proof}
The uniform bounds in Proposition~\ref{prop:yukawa_classical_solutions}
verify the hypotheses of Theorem~\ref{thm:yukawa_meanfield}.  It remains only
to check that the product distribution is well prepared.  The transport term in
\eqref{eq:intro_initial_error} vanishes for
$\rho_N^0=\rho_0^{\otimes N}$.  The finite-$N$ off-diagonal modulated energy is not itself nonnegative:
removing the labeled diagonal self-interactions leaves a negative
$O(N^{-1})$ expectation for independent samples.  This is why
Theorem~\ref{thm:yukawa_commutator} is formulated for the corrected quantity
$\cF_N^{\kappa,\sharp}+Ca_{N,d}$.  Independence, Fubini's theorem, and the
off-diagonal shift identity \eqref{eq:additive_shift_off_diagonal_ledger}
give
\[
 \mathbb E_{\rho_0^{\otimes N}}
 \cF_N^{\kappa,\sharp}(X_N,\rho_0)
 =-\frac1N\iint g_{\kappa,d}^{\sharp}(x-y)
 \rho_0(x)\rho_0(y)\,\dd x\dd y.
\]
For $d\ge3$ the double integral is uniformly bounded by
\eqref{eq:static_background_bound_dge3}.  In dimension two,
$\rho_0\in\mathcal P_2$ has a finite logarithmic moment, so
\eqref{eq:two_dimensional_normalized_background_energy_bound} gives the same
uniform bound for the normalized kernel.  Together with
$\varepsilon_N^\kappa(\eta_N)\le Ca_{N,d}$ and $N^{-1}\le a_{N,d}$, this yields
\[
 \mathfrak E_N^{\kappa,0}(\eta_N)\le C_Ta_{N,d}
\]
uniformly for $0<\kappa\le\kappa_*$.  Theorem~\ref{thm:yukawa_meanfield}
now gives the uniform-in-time bound on the expected modulated energy, the time
integral of $\cD_N$, and the marginal estimates.
\end{proof}

\section{The Coulomb limit}
\label{sec:coulomb_limit}

We first examine the decomposition of the Yukawa kernel into the Coulomb
kernel and a remainder and identify the obstructions to controlling the
Yukawa modulated energy by existing Coulomb estimates.  We then establish a
quantitative Coulomb limit using the uniform Yukawa estimates and the kernel
bounds from Lemma~\ref{lem:yukawa_coulomb_kernel_estimates}.  Throughout,
$\alpha_d(\kappa)$ is defined by \eqref{eq:alpha_screening_parameter}.

\subsection{Coulomb perturbation arguments and their limitations}
\label{sec:coulomb_perturbation_comparison}

Recall the notation $q_{\kappa,d}$ from
Lemma~\ref{lem:yukawa_coulomb_kernel_estimates}, and set
$F_{\kappa,d}:=\nabla q_{\kappa,d}$.  We also write
\[
 u_\rho^\kappa:=-\nabla g_{\kappa,d}*\rho,
 \qquad
 u_\rho^{\mathrm C}:=-\nabla g_{0,d}*\rho.
\]
For $d\ge3$ no additive normalization is needed; in dimension two the chosen
normalization changes neither $F_{\kappa,2}$ nor the velocity field.

\subsubsection{Comparison with the sharp Coulomb commutator estimate}

We compare our result with the global first-order estimate of
\cite[Theorem~1.1, estimate~(1.7)]{RosenzweigSerfaty2024}.  In the notation of
that paper, the microscopic scale is
\[
 \lambda_{N,\rho}:=(N\|\rho\|_{L^\infty})^{-1/d},
\]
and the theorem is stated under the condition $\lambda_{N,\rho}<1$.  Their modulated energy contains a factor $1/2$ in front of the off-diagonal
quadratic form, and their Riesz and logarithmic kernels use a different fixed
normalization from $g_{0,d}$ in \eqref{eq:coulomb_kernel_general_intro}.
Multiplying their inequality by the appropriate dimensional constant converts
the left-hand side to the Coulomb commutator used here.  Their modulated
energy is one half of our quadratic form; in $d=2$, the difference in additive
normalization contributes only an $O(N^{-1})$ shift.

At the Coulomb exponent $s=d-2$ and for $\|\rho\|_{L^\infty}\le\Lambda$, the
additive term in \cite[Theorem~1.1]{RosenzweigSerfaty2024} is bounded by
\[
 C_{d,\Lambda}N^{-2/d}\qquad(d\ge3),
\]
whenever $\lambda_{N,\rho}<1$.  In dimension two, their logarithmic shift and
remaining error are bounded by
\[
 C_\Lambda\frac{1+\log N}{N}
\]
under the same condition; the required integrability of the logarithmic energy
follows, for example, from bounded density and a finite logarithmic moment.
Together with the lower bound in
\cite[Proposition~2.1, estimate~(2.29)]{RosenzweigSerfaty2024}, this gives the
Coulomb analogue of the estimate used below whenever
$\lambda_{N,\rho}<1$, with correction $a_{N,d}$ after adjusting constants.

This identifies the finite-$N$ scale above with the sharp Coulomb theory.  For
the arguments below, however, we use the Coulomb bound valid for all $N$
obtained in Subsection~\ref{subsec:static_coulomb_endpoint} by passing
$\kappa\downarrow0$ in Theorem~\ref{thm:yukawa_commutator}.  This formulation
also avoids translating between different kernel and energy normalizations.
Evaluating a Coulomb commutator at $u=u^\kappa$ also requires a common
$W^{1,\infty}$ bound on the Yukawa velocity.  This is assumed in
Theorem~\ref{thm:yukawa_meanfield} and, for the smooth data considered below,
follows uniformly in $\kappa$ from Propositions~\ref{prop:yukawa_velocity_field}
and~\ref{prop:yukawa_classical_solutions}.  We next compare the force
correction with the assumptions on additional regular interactions in
\cite{Serfaty2020}.

\subsubsection{Regularity of the Yukawa--Coulomb force difference}

For fixed $\kappa>0$ in dimension two,
\eqref{eq:force_difference_general_pointwise} and
\eqref{eq:two_dimensional_screened_correction_derivatives} show that
$F_{\kappa,2}=\nabla q_{\kappa,2}$ is bounded, belongs to $\dot H^1$, and is
H\"older continuous for every exponent $0<\alpha<1$.  Thus, at fixed
$\kappa$, the two-dimensional Yukawa system may be viewed as a logarithmic
Coulomb system with an additional regular interaction.  The framework of
\cite[Theorem~1 and Lemma~2.3]{Serfaty2020} then applies to data well prepared
in the Coulomb modulated energy.  Our uniform Yukawa estimate is proved
independently of this reduction.

For $d\ge3$, this reduction is not directly available because the
Yukawa--Coulomb force correction is not continuous at the origin.  In $d=3$, the explicit formula gives
\begin{equation}\label{eq:d3_screened_correction_local_expansion}
 F_{\kappa,3}(x)
 =\frac{\kappa^2}{8\pi}\frac{x}{|x|}+O_\kappa(|x|)
 \qquad (|x|\downarrow0),
\end{equation}
so no continuous extension exists.  For $d\ge4$, the standard small-argument
Bessel expansions give
\begin{equation}\label{eq:higher_dimensional_screened_correction_force_expansion}
 F_{\kappa,d}(x)=
 \begin{cases}
 \dfrac{\kappa^2}{8\pi^2}\dfrac{x}{|x|^2}
 +O_\kappa\!\left(|x|(1+|\log|x||)\right),&d=4,\\[2mm]
 \dfrac{\kappa^2}{2(d-2)|\mathbb S^{d-1}|}
 \dfrac{x}{|x|^{d-2}}+o_\kappa(|x|^{3-d}),&d\ge5,
 \end{cases}
\end{equation}
which is unbounded at the origin.  The result of \cite{Serfaty2020} for an
additional regular interaction requires, in particular, H\"older continuity of
the added force and therefore does not directly apply in dimensions
$d\ge3$.

\subsubsection{Low-frequency comparison of the Coulomb and Yukawa energies}

For a smooth compactly supported neutral density $\nu$,
\[
 I_0[\nu]=(2\pi)^{-d}\int_{\R^d}
 \frac{|\widehat\nu(\xi)|^2}{|\xi|^2}\,\dd\xi,
 \qquad
 I_\kappa[\nu]=(2\pi)^{-d}\int_{\R^d}
 \frac{|\widehat\nu(\xi)|^2}{|\xi|^2+\kappa^2}\,\dd\xi.
\]
The comparison already fails for every fixed $\kappa>0$: there is no finite
constant
$C_\kappa$ such that
\[
 I_0[\nu]\le C_\kappa I_\kappa[\nu]
\]
for all nonzero smooth compactly supported neutral densities $\nu$.  Indeed,
fix one such $\nu$ and set $\nu_R(x)=R^{-d}\nu(x/R)$.  Then
\begin{equation}\label{eq:dilation_coulomb_yukawa_energy}
 I_0[\nu_R]=R^{2-d}I_0[\nu],
 \qquad
 I_\kappa[\nu_R]
 =R^{2-d}(2\pi)^{-d}\int_{\R^d}
 \frac{|\widehat\nu(\zeta)|^2}
 {|\zeta|^2+(\kappa R)^2}\,\dd\zeta.
\end{equation}
Neutrality makes $I_0[\nu]$ finite.  Moreover, dominated convergence and
Plancherel give
\[
 (\kappa R)^2R^{d-2}I_\kappa[\nu_R]
 \longrightarrow \|\nu\|_{L^2}^2>0,
\]
and therefore
\[
 \frac{I_0[\nu_R]}{I_\kappa[\nu_R]}
 \sim \frac{I_0[\nu]}{\|\nu\|_{L^2}^2}(\kappa R)^2
 \longrightarrow\infty.
\]
Thus the Coulomb energy cannot be bounded globally by the Yukawa energy even
for fixed $\kappa>0$; in particular, no such bound can hold uniformly as
$\kappa\downarrow0$.

The same scaling also exhibits the cancellation between the Coulomb term and
the remainder.  If $d\ge3$, $u(x)=x$, and
\[
 \mathscr H^g(\nu;u)
 :=\iint (u(x)-u(y))\cdot\nabla g(x-y)\nu(x)\nu(y)\,\dd x\dd y,
\]
then homogeneity gives
$\mathscr H^{g_{0,d}}(\nu;u)=(2-d)I_0[\nu]$.  For smooth compactly supported
$\nu$, the stress bound \eqref{eq:finite_energy_stress_bound} gives
$|\mathscr H^{g_{\kappa,d}}(\nu;u)|\le C_dI_\kappa[\nu]$.  Consequently,
\[
 \mathscr H^{q_{\kappa,d}}(\nu_R;u)
 =-(2-d)I_0[\nu_R]+O(I_\kappa[\nu_R]).
\]
Estimating the two commutator components separately loses this cancellation
and introduces the Coulomb energy.

The preceding regularity obstruction and the low-frequency incomparability of
the two quadratic energies are already sufficient to show why estimating the
Coulomb and remainder pieces separately does not close in the natural Yukawa
energy.  The direct Yukawa commutator avoids precisely this loss.

Once the Yukawa estimate is available uniformly in $\kappa$, the Coulomb
limit can be taken directly.

\subsection{Limit of the static commutator estimate}\label{subsec:static_coulomb_endpoint}

Throughout this subsection, the superscript $\mathrm C$ denotes the unscreened
Coulomb quantity (logarithmic Coulomb when $d=2$), while the superscript $0$
is reserved for initial data.  We use the dimension-dependent kernel
$g_{0,d}$ fixed in \eqref{eq:coulomb_kernel_general_intro}.  When $d\ge3$ we
write
\[
 E_N^{\mathrm C}(X_N):=\frac1{2N^2}\sum_{i\ne j}g_{0,d}(x_i-x_j),
\]
and use $V_\rho^{\mathrm C}$ and $I_0[\rho]$ with the convention in
\eqref{eq:global_unscreened_background_convention}.

For fixed $N$, a collision-free configuration $X_N$, a bounded probability
density $\rho$, and a globally Lipschitz field $u$, the Yukawa functionals
converge to their Coulomb counterparts as $\kappa\downarrow0$:
\[
 \cF_N^{\kappa,\sharp}(X_N,\rho)\longrightarrow
 \cF_N^{g_{0,d}}(X_N,\rho),
 \qquad
 \cH_N^{g_{\kappa,d}}(X_N,\rho;u)\longrightarrow
 \cH_N^{g_{0,d}}(X_N,\rho;u).
\]
In dimension two we additionally assume
$\int_{\R^2}\log(2+|x|)\rho(x)\,\dd x<\infty$ and use the normalized
representative in \eqref{eq:renormalized_2d_limit}.  All statements in this
subsection involving the logarithmic Coulomb energy are made under this
finite-logarithmic-moment assumption.  For the energy,
\eqref{eq:renormalized_2d_limit} and the global majorant
\eqref{eq:renormalized_2d_global_majorant} give dominated convergence in the
particle--background and background--background terms.  The finite
particle--particle sum converges pointwise away from the diagonal.  Hence
$\cF_N^{\kappa,\sharp}(X_N,\rho)\to\cF_N^{g_{0,2}}(X_N,\rho)$.  For the
commutator, the additive normalization is irrelevant because gradients remove
constants.  When $d\ge3$ one has
\[
 0<g_{\kappa,d}(z)\le g_{0,d}(z)=\frac{|z|^{2-d}}{(d-2)\Sd},
 \qquad
 |\nabla g_{\kappa,d}(z)|\le C_d|z|^{1-d},
\]
while in $d=2$,
\[
 |\nabla g_{\kappa,2}(z)|\le(2\pi|z|)^{-1}.
\]
After using
$|u(x)-u(y)|\le\|\nabla u\|_{L^\infty}|x-y|$, the Lipschitz cancellation
leaves commutator kernels with standard integrable majorants in both the near
and far fields.
Dominated convergence therefore gives the commutator limit.  This separates
the only place where the logarithmic additive normalization matters (the
energy) from the convergence of the force, where it does not.

Consequently, Theorem~\ref{thm:yukawa_commutator} also yields a first-order
Coulomb estimate at $\kappa=0$ with the sharp finite-$N$ scaling.  To see
this, fix $N$, $X_N$, $\rho$, $u$, and an admissible truncation radius
$\eta$, apply the theorem with $\kappa_*=1$, and then let
$\kappa\downarrow0$.  The resulting Coulomb inequalities have correction
$N^{-1}\ell_d(\eta)+\eta^2$.  Optimizing $\eta$ as in
Corollary~\ref{cor:yukawa_commutator_optimized_scale} yields constants $C,B$
depending only on $d$ and an upper bound for $\|\rho\|_{L^\infty}$ such that
\begin{equation}\label{eq:optimized_coulomb_energy_shift}
 \cF_N^{g_{0,d}}(X_N,\rho)+B a_{N,d}\ge0,
\end{equation}
and
\begin{equation}\label{eq:optimized_coulomb_commutator}
 |\cH_N^{g_{0,d}}(X_N,\rho;u)|
 \le C\|\nabla u\|_{L^\infty}
 \bigl(\cF_N^{g_{0,d}}(X_N,\rho)+B a_{N,d}\bigr).
\end{equation}
Thus, with our normalization, the Yukawa estimate yields a first-order
Coulomb bound with the same sharp $N$-scaling as in
\cite{RosenzweigSerfaty2024}.  We use
\eqref{eq:optimized_coulomb_energy_shift}--
\eqref{eq:optimized_coulomb_commutator} in the direct three-dimensional
comparison below.  Here $N$ is fixed while $\kappa\downarrow0$, so this
endpoint argument does not interchange the limits $\kappa\downarrow0$ and
$N\to\infty$.

\subsection{Quantitative Coulomb limit}

We use two complementary comparisons.  In dimension three, the
Yukawa--Coulomb force difference is bounded pointwise, allowing a direct
comparison at the particle level.  In every dimension, convolution against a
bounded density controls the force difference and yields a continuum
comparison.

\subsubsection{Comparison of the particle dynamics in dimension three}

The proof applies the exact perturbed chain rule for the modulated energy from
Lemma~\ref{lem:exact_perturbed_modulated_energy_chain_rule} with
$Z_i^\kappa$, the difference between the Yukawa and Coulomb particle forces.

\begin{theorem}[Simultaneous mean-field and Coulomb limit at the particle level in $d=3$]
\label{thm:simultaneous_yukawa_coulomb}
Assume $d=3$.  Let $\rho^{\mathrm C}$ be a probability density solving, in the
sense of distributions,
\[
 \partial_t\rho^{\mathrm C}+\nabla\cdot(\rho^{\mathrm C}u^{\mathrm C})=0,
 \qquad u^{\mathrm C}=-\nabla g_{0,3}*\rho^{\mathrm C},
\]
on $[0,T]$ and assume
\[
 \rho^{\mathrm C}\in C([0,T];(\mathcal P_2,W_2)),
 \qquad
 \Lambda^{\mathrm C}:=1+\sup_{t\le T}\left[
   \|\rho^{\mathrm C}(t)\|_{L^\infty}
   +\|u^{\mathrm C}(t)\|_{W^{1,\infty}}
 \right]<\infty.
\]
Choose $B\ge1$, depending only on $\Lambda^{\mathrm C}$, so that the
endpoint lower bound \eqref{eq:optimized_coulomb_energy_shift} with $d=3$
holds uniformly for $t\in[0,T]$.  For $N\ge2$ and a symmetric
$\rho_N^0\in\mathcal P_2((\R^3)^N)$, define
\[
 \mathfrak E_N^{\mathrm C,0}
 :=\frac1N W_2^2(\rho_N^0,\rho^{\mathrm C}(0)^{\otimes N})
 +\int\left(\cF_N^{g_{0,3}}(X_N,\rho^{\mathrm C}(0))
        +BN^{-2/3}\right)\,\dd \rho_N^0(X_N).
\]
There exists $C_T=C(T,\Lambda^{\mathrm C})<\infty$ such that, whenever
$\mathfrak E_N^{\mathrm C,0}<\infty$, the Yukawa $N$-particle law
$\rho_N^\kappa(t)$ is well defined for every $\kappa>0$ and, for
$1\le k\le N$,
\begin{align}
 \sup_{0\le t\le T}\frac1N W_2^2
 \bigl(\rho_N^\kappa(t),\rho^{\mathrm C}(t)^{\otimes N}\bigr)
 &\le C_T\left(\mathfrak E_N^{\mathrm C,0}+\kappa^4\right),
 \label{eq:simultaneous_yukawa_coulomb_full}\\
 \sup_{0\le t\le T}W_2^2
 \bigl(\rho_{N:k}^\kappa(t),\rho^{\mathrm C}(t)^{\otimes k}\bigr)
 &\le C_Tk\left(\mathfrak E_N^{\mathrm C,0}+\kappa^4\right).
 \notag
\end{align}
In particular, if $N_m\to\infty$, $\kappa_m\downarrow0$, and
$\mathfrak E_{N_m}^{\mathrm C,0}\to0$, then the normalized Wasserstein error
of the $N$-particle law tends to zero.  The same holds for every fixed
$k$-particle marginal, with no relation required between the two rates.
\end{theorem}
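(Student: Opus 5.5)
We compare the Yukawa particle flow directly with the Coulomb limiting solution, treating the Yukawa force as a perturbation of the Coulomb force. This is the setting of Lemma~\ref{lem:exact_perturbed_modulated_energy_chain_rule} with $g=g_{0,3}$ and $\kappa=0$, which is allowed there since $d=3$.

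\emph{Setup and the perturbed chain rule.} First, $\mathfrak E_N^{\mathrm C,0}<\infty$ gives $\rho_N^0(\Delta_N)=0$, as in Subsection~\ref{subsec:particle_distribution_preliminaries}. The reason is that $\cF_N^{g_{0,3}}+BN^{-2/3}\ge0$ by \eqref{eq:optimized_coulomb_energy_shift} and equals $+\infty$ on collisions. Moreover $V^{\mathrm C}_{\rho^{\mathrm C}(0)}$ is bounded, so the expected Coulomb energy $\int E_N^{\mathrm C}\,\dd\rho_N^0$ is finite. Since $0\le g_{\kappa,3}\le g_{0,3}$, the expected Yukawa energy is finite too. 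Proposition~\ref{prop:yukawa_particle_flow_moment} then gives the global Yukawa flow for every $\kappa>0$, so $\rho_N^\kappa(t)$ is well defined.

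Along a Yukawa trajectory we write
\[
\dot x_i=-K_i^{\mathrm C}-Z_i^\kappa,\qquad K_i^{\mathrm C}=\frac1N\sum_{j\ne i}\nabla g_{0,3}(x_i-x_j),
\]
with $Z_i^\kappa$ as in Lemma~\ref{lem:yukawa_coulomb_kernel_estimates}. Trajectories are collision free and smooth, hence $W^{1,\infty}$ on $[0,T]$. The perturbed chain rule then yields
\[
\frac{\dd}{\dd t}\cF_N^{g_{0,3}}=\cH_N^{g_{0,3}}(u^{\mathrm C})-2\cD_N^{\mathrm C}-\frac2N\sum_iR_i\cdot Z_i^\kappa,
\]
where $R_i=K_i^{\mathrm C}+u^{\mathrm C}(x_i)$. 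Young's inequality together with \eqref{eq:particle_force_difference_square} bounds the last term by $\tfrac12\cD_N^{\mathrm C}+\tfrac{2\kappa^4}{64\pi^2}$.

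\emph{Gronwall with the transport cost.} Set $\cS_N=\cF_N^{g_{0,3}}+BN^{-2/3}\ge0$. The endpoint estimate \eqref{eq:optimized_coulomb_commutator} gives $|\cH_N^{g_{0,3}}|\le C\Lambda^{\mathrm C}\cS_N$. We couple with the Coulomb characteristics $\dot y_i=u^{\mathrm C}(y_i)$. Now $\dot x_i=u^{\mathrm C}(x_i)-R_i-Z_i^\kappa$, so
\[
\cQ_N'\le(2\Lambda^{\mathrm C}+2)\cQ_N+\tfrac12\cD_N^{\mathrm C}+\tfrac1N\sum_i|Z_i^\kappa|^2.
\]
This uses Young's inequality with weight $1/2$ on the $R_i$ term. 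Adding the two inequalities, the dissipation coefficient is $-2+\tfrac12+\tfrac12\le0$. We obtain
\[
\mathcal L_N'\le C\mathcal L_N+C\kappa^4,\qquad \mathcal L_N=\cQ_N+\cS_N.
\]
Gronwall gives the pathwise bound $\mathcal L_N(t)\le e^{Ct}(\mathcal L_N(0)+CT\kappa^4)$. We integrate it against an optimal initial coupling pushed forward by the joint flow, exactly as in \eqref{eq:abstract_transfer_distribution} and \eqref{eq:initial_error_coupling_representation}, and use $\cS_N\ge0$. This yields \eqref{eq:simultaneous_yukawa_coulomb_full}. Symmetry and \eqref{eq:exact_symmetric_marginal_projection} give the marginal bound.

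\emph{Main obstacle.} The delicate point is the balance of dissipation coefficients. The perturbation and the transport cost each consume a share of $\cD_N^{\mathrm C}$, and the weights must be chosen so their total stays below the $2\cD_N^{\mathrm C}$ available. A second concern is measurability and integrability, needed for Tonelli's theorem to apply to the pathwise inequality; these follow the Yukawa case. The essential input making $\kappa^4$ appear with no dependence on $N$ is the uniform pointwise bound \eqref{eq:single_screened_coulomb_force_difference}, which holds only in $d=3$.
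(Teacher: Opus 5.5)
Your proposal is correct and follows the paper's proof essentially step for step: the perturbed chain rule with $g=g_{0,3}$ and $Z_i^\kappa$, the pointwise bound \eqref{eq:particle_force_difference_square}, the endpoint Coulomb commutator estimate, the Dobrushin coupling with the Coulomb characteristics, Gronwall, and the symmetric projection \eqref{eq:exact_symmetric_marginal_projection}. The only difference is how you split the Young weights: each of the two terms uses $\tfrac12\cD_N^{\mathrm C}$, leaving a residual $-\cD_N^{\mathrm C}$, whereas the paper uses the full $\cD_N^{\mathrm C}$ in each and gets exact cancellation. This difference is immaterial.
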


\begin{proof}
Apply the endpoint Coulomb estimates
\eqref{eq:optimized_coulomb_energy_shift}--\eqref{eq:optimized_coulomb_commutator}
with $d=3$, $\rho=\rho^{\mathrm C}(t)$, and $u=u^{\mathrm C}(t)$.  The common
$L^\infty$ and $W^{1,\infty}$ bounds give a constant
$C=C(\Lambda^{\mathrm C})$ such that, for every $N\ge2$, $t\in[0,T]$, and
collision-free $X_N$,
\begin{equation}
\label{eq:coulomb_limit_time_uniform_bounds}
 \cF_N^{g_{0,3}}(X_N,\rho^{\mathrm C}(t))+BN^{-2/3}\ge0,
\end{equation}
and
\begin{equation}
\label{eq:coulomb_limit_time_uniform_commutator}
 |\cH_N^{g_{0,3}}(X_N,\rho^{\mathrm C}(t);u^{\mathrm C}(t))|
 \le C\left(
 \cF_N^{g_{0,3}}(X_N,\rho^{\mathrm C}(t))
 +BN^{-2/3}\right).
\end{equation}
Under the stated $L^\infty$ bound, the Coulomb background potential is
bounded and continuous.  After setting $g_{0,3}(0)=+\infty$, the positive
particle--particle term is lower semicontinuous.  Hence the corrected modulated
energy in $\mathfrak E_N^{\mathrm C,0}$ is a nonnegative
extended-real-valued Borel function, and finiteness of
$\mathfrak E_N^{\mathrm C,0}$ implies $\rho_N^0(\Delta_N)=0$.  Moreover,
\[
 \cF_N^{g_{0,3}}(X_N,\rho^{\mathrm C}(0))
 =2E_N^{\mathrm C}(X_N)-\frac2N\sum_i
 V_{\rho^{\mathrm C}(0)}^{\mathrm C}(x_i)+I_0[\rho^{\mathrm C}(0)],
\]
so the expected Coulomb microscopic energy is finite.  Since
$0<g_{\kappa,3}\le g_{0,3}$ and $\rho_N^0\in\mathcal P_2$,
Proposition~\ref{prop:yukawa_particle_flow_moment} defines the global
collision-free Yukawa $N$-particle law for every $\kappa>0$.

Along the Yukawa particle dynamics set
\[
 K_i^\kappa:=\frac1N\sum_{j\ne i}\nabla g_{\kappa,3}(x_i-x_j),
 \qquad
 K_i^{\mathrm C}:=\frac1N\sum_{j\ne i}\nabla g_{0,3}(x_i-x_j),
\]
\[
 Z_i^\kappa:=K_i^\kappa-K_i^{\mathrm C},
 \qquad
 R_i^{\mathrm C}:=K_i^{\mathrm C}-\nabla g_{0,3}*\rho^{\mathrm C}(x_i),
 \qquad
 \cD_N^{\mathrm C}:=\frac1N\sum_i|R_i^{\mathrm C}|^2.
\]
Let
\[
 \dot y_i=u^{\mathrm C}(t,y_i),
 \qquad
 \cQ_N^{\mathrm C}:=\frac1N\sum_i|x_i-y_i|^2,
 \qquad
 \cS_N^{\mathrm C}:=\cF_N^{g_{0,3}}(X_N,\rho^{\mathrm C})
 +BN^{-2/3}.
\]
The Yukawa trajectory satisfies
\[
 \dot x_i=-K_i^\kappa=-K_i^{\mathrm C}-Z_i^\kappa,
\]
which is precisely the perturbation convention $\dot x_i=-K_i-Z_i$ in
\eqref{eq:general_perturbed_modulated_energy_identity} for the Coulomb kernel.
Hence
\begin{equation*}
 \frac{\dd}{\dd t}\cF_N^{g_{0,3}}(X_N,\rho^{\mathrm C})
 =\cH_N^{g_{0,3}}(X_N,\rho^{\mathrm C};u^{\mathrm C})
 -2\cD_N^{\mathrm C}
 -\frac2N\sum_i R_i^{\mathrm C}\cdot Z_i^\kappa.
\end{equation*}
Together with \eqref{eq:coulomb_limit_time_uniform_commutator}, this yields
\[
 \frac{\dd}{\dd t}\cS_N^{\mathrm C}
 \le C\cS_N^{\mathrm C}-2\cD_N^{\mathrm C}
      -\frac2N\sum_i R_i^{\mathrm C}\cdot Z_i^\kappa.
\]
By Young's inequality and \eqref{eq:particle_force_difference_square},
\[
 -\frac2N\sum_i R_i^{\mathrm C}\cdot Z_i^\kappa
 \le \frac1N\sum_i|R_i^{\mathrm C}|^2
      +\frac1N\sum_i|Z_i^\kappa|^2
 =\cD_N^{\mathrm C}+\frac1N\sum_i|Z_i^\kappa|^2
 \le \cD_N^{\mathrm C}+\frac{\kappa^4}{64\pi^2}.
\]
Combining this estimate with the preceding differential inequality gives
\begin{equation*}
 \frac{\dd}{\dd t}\cS_N^{\mathrm C}
 \le C_T\cS_N^{\mathrm C}-\cD_N^{\mathrm C}+C\kappa^4.
\end{equation*}
Since $R_i^{\mathrm C}=K_i^{\mathrm C}+u^{\mathrm C}(x_i)$, the Yukawa
trajectory also satisfies
\[
 \dot x_i-u^{\mathrm C}(y_i)
 =-R_i^{\mathrm C}+u^{\mathrm C}(x_i)-u^{\mathrm C}(y_i)-Z_i^\kappa.
\]
Writing $z_i=x_i-y_i$, we therefore have
\begin{align*}
 \frac{\dd}{\dd t}\cQ_N^{\mathrm C}
 &=\frac2N\sum_i z_i\cdot
 \bigl[-R_i^{\mathrm C}+u^{\mathrm C}(x_i)-u^{\mathrm C}(y_i)
       -Z_i^\kappa\bigr]\\
 &\le \frac1N\sum_i|R_i^{\mathrm C}|^2
   +\frac1N\sum_i|Z_i^\kappa|^2
   +(2+2\|\nabla u^{\mathrm C}\|_{L^\infty})\cQ_N^{\mathrm C}\\
 &\le \cD_N^{\mathrm C}
   +(2+2\|\nabla u^{\mathrm C}\|_{L^\infty})\cQ_N^{\mathrm C}
   +C\kappa^4.
\end{align*}
Here the two copies of $\cQ_N^{\mathrm C}$ come from applying
$2|z_i||R_i^{\mathrm C}|\le|z_i|^2+|R_i^{\mathrm C}|^2$ and
$2|z_i||Z_i^\kappa|\le|z_i|^2+|Z_i^\kappa|^2$ separately.
All coefficients in the two differential inequalities are controlled by
$\Lambda^{\mathrm C}$ and numerical constants.  The dependence on the
screening parameter is confined to the explicit term $\kappa^4$.  Therefore
\[
 \frac{\dd}{\dd t}(\cQ_N^{\mathrm C}+\cS_N^{\mathrm C})
 \le C_T(\cQ_N^{\mathrm C}+\cS_N^{\mathrm C})+C\kappa^4.
\]
Here $\cQ_N^{\mathrm C}\ge0$ by definition and
$\cS_N^{\mathrm C}\ge0$ by \eqref{eq:coulomb_limit_time_uniform_bounds}.
Gronwall's inequality therefore controls their sum.  Dropping the nonnegative
term $\cS_N^{\mathrm C}(t)$ gives
\begin{equation}
\label{eq:perturbed_coulomb_pathwise_result}
 \cQ_N^{\mathrm C}(t)
 \le C_T\left(\cQ_N^{\mathrm C}(0)+\cS_N^{\mathrm C}(0)+\kappa^4\right).
\end{equation}

Let $\Psi_t^{\mathrm C}$ be the characteristic flow of $u^{\mathrm C}$ and
$\Phi_t^{N,\kappa}$ the Yukawa particle flow.  By
the characteristic representation \eqref{eq:limiting_solution_characteristic_representation},
\[
 \rho^{\mathrm C}(t)=(\Psi_t^{\mathrm C})_\#\rho^{\mathrm C}(0).
\]
We use the Borel extension of the Yukawa flow from
Subsection~\ref{subsec:particle_distribution_preliminaries}.  For any
$\pi_0\in\Pi(\rho_N^0,\rho^{\mathrm C}(0)^{\otimes N})$, set
$
 \pi_t:=(\Phi_t^{N,\kappa},(\Psi_t^{\mathrm C})^{\otimes N})_\#\pi_0.
$
Then
\[
 \pi_t\in\Pi(\rho_N^\kappa(t),\rho^{\mathrm C}(t)^{\otimes N}).
\]
For $\pi_0$-almost every initial configuration, the pathwise estimate
\eqref{eq:perturbed_coulomb_pathwise_result} holds for all $t\in[0,T]$.
Its terms are Borel functions of the initial coordinates, so the estimate may
be integrated against $\pi_0$.  Moreover,
\[
 \int\cQ_N^{\mathrm C}(t)\,\dd\pi_0
 =\int\frac1N\sum_i|x_i-y_i|^2\,\dd\pi_t
 \ge \frac1N W_2^2\bigl(\rho_N^\kappa(t),
          \rho^{\mathrm C}(t)^{\otimes N}\bigr).
\]
Since $\cS_N^{\mathrm C}(0)$ depends only on the particle coordinate, choosing
an optimal initial coupling gives
\[
 \int\bigl(\cQ_N^{\mathrm C}(0)+\cS_N^{\mathrm C}(0)\bigr)\,\dd\pi_0
 =\mathfrak E_N^{\mathrm C,0}.
\]
This proves \eqref{eq:simultaneous_yukawa_coulomb_full}.  The marginal
estimate follows from symmetry of the Yukawa $N$-particle law and
\eqref{eq:exact_symmetric_marginal_projection}.
\end{proof}

\subsubsection{Comparison of the mean-field equations in all dimensions}

For $d\ge4$, the singular pointwise force difference rules out the preceding
direct perturbative comparison at the particle level.  After convolution with bounded densities the
singularity is locally integrable, so we compare the continuum flows first and
then use tensorization with the Yukawa mean-field theorem.  We formulate this
continuum comparison for every $d\ge2$: in $d=3$ it complements the stronger
direct particle theorem, while in $d=2$ it yields the logarithmic Coulomb
limit.

\smallskip\noindent\emph{Neutral Coulomb energy and the $\dot H^{-1}$ chain rule.}
The static identities needed below are standard.  If
$\nu\in L^1(\R^d)\cap L^\infty(\R^d)$ is neutral and, in $d=2$,
$\int |x|\,|\nu(x)|\,\dd x<\infty$, then, with
$\phi=g_{0,d}*\nu$,
\begin{equation}
\label{eq:neutral_coulomb_energy_identity}
 I_0[\nu]
 =(2\pi)^{-d}\int_{\R^d}\frac{|\widehat\nu(\xi)|^2}{|\xi|^2}\,\dd\xi
 =\int_{\R^d}|\nabla\phi|^2\,\dd x\ge0.
\end{equation}
For $u\in W^{1,\infty}(\R^d;\R^d)$, the cancelled Coulomb commutator
\begin{equation}
\label{eq:neutral_coulomb_commutator_definition}
 \mathscr H_{0,d}(\nu;u)
 :=\iint
 (u(x)-u(y))\cdot\nabla g_{0,d}(x-y)\nu(x)\nu(y)\,\dd x\dd y
\end{equation}
is absolutely convergent and satisfies the stress-energy identity for finite Coulomb energy
\begin{equation}
\label{eq:neutral_coulomb_commutator_stress}
 \mathscr H_{0,d}(\nu;u)
 =\int_{\R^d}\nabla u:
 \bigl(2\nabla\phi\otimes\nabla\phi-|\nabla\phi|^2\mathrm{Id}\bigr)\,\dd x,
\end{equation}
and hence
\begin{equation}
\label{eq:neutral_coulomb_commutator_bound}
 |\mathscr H_{0,d}(\nu;u)|
 \le C_d\|\nabla u\|_{L^\infty}I_0[\nu].
\end{equation}
These are the standard Coulomb electric-field and stress-energy identities;
see, for example, \cite[Lemma~4.3]{Serfaty2020} and the weak--strong
Coulomb argument of \cite{Duerinckx2016}.  Under the stated assumptions the
extension from smooth neutral densities follows by the usual cutoff and
mollification approximation.  In dimension two, neutrality together with the
first moment gives
$|\widehat\nu(\xi)|\le |\xi|\int |x||\nu(x)|\,\dd x$ near the origin, so
\eqref{eq:neutral_coulomb_energy_identity} has no low-frequency divergence.

The only time-dependent fact that we shall use is the following elementary
Hilbert-space chain rule.  We record it because the flux arising in the
Yukawa--Coulomb comparison is not simply of the form $\nu u$.

\begin{lemma}[$\dot H^{-1}$ chain rule for a continuity equation]
\label{lem:Hminusone_continuity_chain_rule}
Let $\nu\in L^\infty(0,T;\dot H^{-1}(\R^d))$ and
$J\in L^2((0,T)\times\R^d;\R^d)$ satisfy
\begin{equation}
\label{eq:Hminusone_chain_rule_continuity_equation}
 \partial_t\nu+\nabla\cdot J=0
 \qquad\text{in }\mathscr D'((0,T)\times\R^d).
\end{equation}
Then $\nu$ has an absolutely continuous representative with values in
$\dot H^{-1}$, in fact
$\nu\in W^{1,2}(0,T;\dot H^{-1})$, and for almost every $t$,
\begin{equation}
\label{eq:neutral_coulomb_chain_rule}
 \frac{\dd}{\dd t}\|\nu(t)\|_{\dot H^{-1}}^2
 =2\int_{\R^d}\nabla(-\Delta)^{-1}\nu(t,x)\cdot J(t,x)\,\dd x.
\end{equation}
\end{lemma}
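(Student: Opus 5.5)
We will work entirely through the Riesz isometry between $\dot H^{-1}$ and $L^2$ gradient fields.  Set $\Phi(t):=\nabla(-\Delta)^{-1}\nu(t)$, which for $\nu(t)\in\dot H^{-1}$ is a well-defined element of $L^2(\R^d;\R^d)$ with $\|\Phi(t)\|_{L^2}=\|\nu(t)\|_{\dot H^{-1}}$.  Let $P$ denote the Leray-type orthogonal projection of $L^2(\R^d;\R^d)$ onto the closed subspace of gradients, $\overline{\{\nabla\varphi:\varphi\in C_c^\infty\}}^{L^2}$; in Fourier variables $\widehat{PF}(\xi)=|\xi|^{-2}\xi\,(\xi\cdot\widehat F(\xi))$.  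Formally, the continuity equation says $\partial_t\nu=-\nabla\cdot J$, so $\partial_t\Phi=-\nabla(-\Delta)^{-1}\nabla\cdot J=P J$ up to sign convention; we will verify the sign by testing: since $-\nabla\cdot\Phi=\nu$, the identity $\partial_t(-\nabla\cdot\Phi)=-\nabla\cdot J$ gives $\partial_t\Phi-J$ divergence free, hence $\partial_t\Phi=PJ$.

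The first step makes this rigorous.  For $\varphi\in C_c^\infty(\R^d)$ and $\theta\in C_c^\infty(0,T)$, testing \eqref{eq:Hminusone_chain_rule_continuity_equation} against $\theta(t)\varphi(x)$ gives $-\int\theta'\langle\nu,\varphi\rangle\,\dd t=\int\theta\int J\cdot\nabla\varphi\,\dd x\,\dd t$.  Rewriting $\langle\nu,\varphi\rangle=\int\Phi\cdot\nabla\varphi$ (the defining duality of $\dot H^{-1}$), we obtain that the weak time derivative of $t\mapsto\int\Phi\cdot\nabla\varphi$ equals $\int J\cdot\nabla\varphi=\int PJ\cdot\nabla\varphi$.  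Since $\{\nabla\varphi\}$ is dense in $\operatorname{Ran}P$ and $\Phi(t)\in\operatorname{Ran}P$, while $PJ\in L^2(0,T;\operatorname{Ran}P)$ and $\Phi\in L^\infty(0,T;\operatorname{Ran}P)$, a standard density argument (using a countable dense set of test gradients and the uniform $L^2_t$ bound) identifies the Hilbert-space-valued weak derivative: $\Phi\in W^{1,2}(0,T;L^2)$ with $\partial_t\Phi=PJ$.  Transferring back via the isometry yields $\nu\in W^{1,2}(0,T;\dot H^{-1})$ and its absolutely continuous representative.

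The second step is the chain rule for the squared norm of a $W^{1,2}(0,T;H)$ function in a Hilbert space $H$: $\frac{\dd}{\dd t}\|\Phi\|_{L^2}^2=2\langle\Phi,\partial_t\Phi\rangle$ a.e.  This follows by mollifying in time, where it is classical, and passing to the limit using strong $W^{1,2}$ convergence (the right side converges in $L^1(0,T)$).  Finally $2\langle\Phi,PJ\rangle=2\langle P\Phi,J\rangle=2\int\Phi\cdot J$ since $P$ is self-adjoint and $P\Phi=\Phi$, which is \eqref{eq:neutral_coulomb_chain_rule}.

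The main obstacle is the identification $\partial_t\Phi=PJ$ in the first step: one must be careful that $\dot H^{-1}$ is realized as a genuine Hilbert space of distributions (modulo the low-frequency subtleties, especially in $d=2$) so that the duality $\langle\nu,\varphi\rangle=\int\Phi\cdot\nabla\varphi$ is valid for all $\varphi\in C_c^\infty$ and the gradient subspace is the correct target.  I would handle this by defining $\dot H^{-1}$ via the Fourier weight $|\xi|^{-1}\widehat\nu\in L^2$ and checking the duality on the Fourier side, where density of $\{\xi\widehat\varphi\}$ in the gradient subspace is elementary.
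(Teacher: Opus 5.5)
Your proposal is correct and is essentially the paper's argument viewed through the isometry $\nu\mapsto\Phi=\nabla(-\Delta)^{-1}\nu$. The paper shows $\partial_t\nu=-\nabla\cdot J\in L^2(0,T;\dot H^{-1})$ via $\|\nabla\cdot J\|_{\dot H^{-1}}\le\|J\|_{L^2}$, applies the Hilbert-space chain rule, and evaluates $(\nu,-\nabla\cdot J)_{\dot H^{-1}}=\int J\cdot\nabla(-\Delta)^{-1}\nu$; under your isometry these become $\partial_t\Phi=PJ$ and $\langle\Phi,PJ\rangle=\int\Phi\cdot J$, and you additionally spell out the identification of the weak time derivative that the paper compresses into a single ``hence.''
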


\begin{proof}
By Fourier duality,
\[
 \|\nabla\cdot J(t)\|_{\dot H^{-1}}\le \|J(t)\|_{L^2}
 \qquad\text{for a.e. }t.
\]
Hence \eqref{eq:Hminusone_chain_rule_continuity_equation} gives
$\partial_t\nu\in L^2(0,T;\dot H^{-1})$, and therefore
$\nu\in W^{1,2}(0,T;\dot H^{-1})$.  The Hilbert-space chain rule yields
\[
 \frac{\dd}{\dd t}\|\nu\|_{\dot H^{-1}}^2
 =2(\nu,\partial_t\nu)_{\dot H^{-1}}.
\]
Writing $\phi=(-\Delta)^{-1}\nu$ and using
$\partial_t\nu=-\nabla\cdot J$ gives
\[
 (\nu,\partial_t\nu)_{\dot H^{-1}}
 =\langle-\nabla\cdot J,\phi\rangle
 =\int_{\R^d}J\cdot\nabla\phi,
\]
which is finite because $J,\nabla\phi\in L^2$.
\end{proof}

\begin{proposition}[Continuum Yukawa-to-Coulomb stability]
\label{prop:continuum_screening_stability}
Fix $d\ge2$, $T>0$, and $\Lambda_T\ge1$.  There exists
$C_T=C(d,T,\Lambda_T)<\infty$ such that the following holds for every
$\kappa>0$.  Let $\rho^\kappa$ and $\rho^{\mathrm C}$ solve on $[0,T]$,
with the same initial probability density $\rho_0\in\mathcal P_2(\R^d)$,
\[
 \partial_t\rho^\kappa+\nabla\cdot(\rho^\kappa u^\kappa)=0,
 \qquad u^\kappa=-\nabla g_{\kappa,d}*\rho^\kappa,
\]
\[
 \partial_t\rho^{\mathrm C}+\nabla\cdot(\rho^{\mathrm C} u^{\mathrm C})=0,
 \qquad u^{\mathrm C}=-\nabla g_{0,d}*\rho^{\mathrm C},
\]
where $g_{0,d}$ is logarithmic in $d=2$ and Newtonian in $d\ge3$.  Assume
\[
 \rho^\kappa,\rho^{\mathrm C}\in C([0,T];(\mathcal P_2,W_2)),
\]
\[
 \sup_{t\le T}\left(
 \|\rho^\kappa(t)\|_\infty+\|\rho^{\mathrm C}(t)\|_\infty
 +\|u^\kappa(t)\|_{W^{1,\infty}}+\|u^{\mathrm C}(t)\|_{W^{1,\infty}}
 \right)\le\Lambda_T.
\]
Set $\nu:=\rho^\kappa-\rho^{\mathrm C}$ and
$w:=-\nabla g_{0,d}*\nu$.  Then
\begin{equation}
\label{eq:continuum_screening_stability_strong}
 \sup_{t\le T}\left[
 W_2^2(\rho^\kappa(t),\rho^{\mathrm C}(t))+I_0[\nu(t)]\right]
 +\int_0^T\!\int_{\R^d}\rho^\kappa|w|^2\,\dd x\dd t
 \le C_T\alpha_d(\kappa)^2.
\end{equation}
In particular,
\[
 \sup_{t\le T}W_2^2(\rho^\kappa(t),\rho^{\mathrm C}(t))
 \le C_T\alpha_d(\kappa)^2.
\]
Thus the squared transport and Coulomb energy errors are $O(\kappa^2)$ in
$d=2$ and $O(\kappa^4)$ in $d\ge3$, and the same order controls the
integrated Coulomb field discrepancy generated by $\nu$ in
\eqref{eq:continuum_screening_stability_strong}.  The constant is uniform for
any family of solution pairs satisfying the same bound $\Lambda_T$.  For the
classical solutions constructed in Proposition~\ref{prop:yukawa_classical_solutions},
the required uniform bounds hold on every compact subinterval of the common
existence interval.
\end{proposition}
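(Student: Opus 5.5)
We combine a Coulomb modulated energy for the neutral difference $\nu=\rho^\kappa-\rho^{\mathrm C}$ with a quadratic transport cost along the two characteristic flows, and close the estimate with Gronwall. The flux driving $\nu$ is
\[
 \partial_t\nu+\nabla\cdot J=0,\qquad
 J=\rho^\kappa u^\kappa-\rho^{\mathrm C}u^{\mathrm C}
 =\nu u^{\mathrm C}+\rho^\kappa(u^\kappa-u^{\mathrm C}).
\]
We split the velocity difference as
\[
 u^\kappa-u^{\mathrm C}=w+e,\qquad e:=-\nabla q_{\kappa,d}*\rho^\kappa,
\]
where $w=-\nabla g_{0,d}*\nu$. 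By \eqref{eq:kernel_difference_convolution_bound}, $\|e\|_{L^\infty}\le C\alpha_d(\kappa)$. Both $\nu$ and $J$ are in $L^2$, since the densities are bounded, have unit mass and are $L^\infty$, and the velocities are bounded. Hence Lemma~\ref{lem:Hminusone_continuity_chain_rule} applies. In $d=2$ we also need $\nu\in\dot H^{-1}$ at each time. For this we use neutrality together with the first moment, which is finite because $\rho^\kappa,\rho^{\mathrm C}\in\mathcal P_2$.

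With $\phi=g_{0,d}*\nu$, so that $\nabla\phi=-w$, the chain rule gives
\[
 \frac{\dd}{\dd t}I_0[\nu]=-2\int w\cdot\bigl(\nu u^{\mathrm C}+\rho^\kappa w+\rho^\kappa e\bigr)\,\dd x .
\]
We treat the three terms in turn.

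\emph{Transport term.} By symmetrization,
\[
 -2\int w\cdot\nu u^{\mathrm C}\,\dd x
 =\iint (u^{\mathrm C}(x)-u^{\mathrm C}(y))\cdot\nabla g_{0,d}(x-y)\nu(x)\nu(y)\,\dd x\,\dd y
 =\mathscr H_{0,d}(\nu;u^{\mathrm C}).
\]
By \eqref{eq:neutral_coulomb_commutator_bound} this is at most $C\Lambda_T I_0[\nu]$.

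\emph{Dissipation term.} The middle term equals $-2\int\rho^\kappa|w|^2$.

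\emph{Screening term.} By Young's inequality,
\[
 -2\int\rho^\kappa w\cdot e\le \int\rho^\kappa|w|^2+\|e\|_{L^\infty}^2 ,
\]
using that $\rho^\kappa$ has unit mass, and $\|e\|_{L^\infty}^2\le C\alpha_d(\kappa)^2$.

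Collecting the three terms,
\[
 \frac{\dd}{\dd t}I_0[\nu]\le C I_0[\nu]-\int\rho^\kappa|w|^2+C\alpha_d(\kappa)^2 .
\]
Justifying the absolute continuity and the identification of the pairing, including in $d=2$, is the main technical obstacle. We would handle it through the lemma together with the stress identity \eqref{eq:neutral_coulomb_commutator_stress}, after checking that $\nabla\phi\in L^2$ uniformly in time.

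For the transport part, let $X^\kappa_t$ and $X^{\mathrm C}_t$ be the flows of $u^\kappa$ and $u^{\mathrm C}$ started from the same point $x\sim\rho_0$. Set
\[
 Q(t)=\int|X^\kappa_t-X^{\mathrm C}_t|^2\,\dd\rho_0\ \ge W_2^2 .
\]
Then
\[
 \dot Q=2\int (X^\kappa-X^{\mathrm C})\cdot\bigl[u^{\mathrm C}(X^\kappa)-u^{\mathrm C}(X^{\mathrm C})+w(X^\kappa)+e(X^\kappa)\bigr]\,\dd\rho_0 .
\]
The first bracket contributes at most $2\Lambda_TQ$. For the remaining terms we use Young's inequality. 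Because $(X^\kappa_t)_\#\rho_0=\rho^\kappa(t)$, we get
\[
 \int|w(X^\kappa)|^2\,\dd\rho_0=\int\rho^\kappa|w|^2 ,
\]
which is exactly the dissipation above. Choosing the Young weight so that this contributes $\frac12\int\rho^\kappa|w|^2$, we obtain
\[
 \dot Q\le CQ+\tfrac12\int\rho^\kappa|w|^2+C\alpha_d(\kappa)^2 .
\]

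Adding the two inequalities and applying Gronwall, starting from $Q(0)=0$ and $I_0[\nu(0)]=0$, bounds $Q+I_0[\nu]$ by $C_T\alpha_d(\kappa)^2$. The estimate also retains $\frac12\int_0^T\!\int\rho^\kappa|w|^2$, which gives \eqref{eq:continuum_screening_stability_strong}. The constants depend only on $d$, $T$ and $\Lambda_T$. The final remark, that the required bounds hold for the classical solutions, follows from the uniform bound \eqref{eq:yukawa_classical_solution_bound}.
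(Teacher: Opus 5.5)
Your proposal is correct and matches the paper's proof step for step. It uses the same splitting $u^\kappa=u^{\mathrm C}+w+e_\kappa$ with $\|e_\kappa\|_{L^\infty}\le C\alpha_d(\kappa)$, the $\dot H^{-1}$ chain rule of Lemma~\ref{lem:Hminusone_continuity_chain_rule} with the symmetrized commutator bounded by \eqref{eq:neutral_coulomb_commutator_bound}, and the diagonal-coupling transport cost whose $\tfrac12\int\rho^\kappa|w|^2$ term is absorbed by the retained dissipation before Gronwall. The differences are cosmetic: your Young weight leaves $-\int\rho^\kappa|w|^2$ instead of $-\tfrac32\int\rho^\kappa|w|^2$, and you verify $J\in L^2$ directly from $J=\rho^\kappa u^\kappa-\rho^{\mathrm C}u^{\mathrm C}$. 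Note only that in $d=2$ the first-moment bound must be uniform in $t$, which follows from $W_2$-continuity on $[0,T]$.
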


\begin{proof}
Set
\[
 \nu:=\rho^\kappa-\rho^{\mathrm C},
 \qquad \phi:=g_{0,d}*\nu,
 \qquad w:=-\nabla\phi,
\]
and
\[
 e_\kappa:=-(\nabla g_{\kappa,d}-\nabla g_{0,d})*\rho^\kappa.
\]
For each $x$,
\begin{equation}\label{eq:continuum_screening_velocity_decomposition}
 u^\kappa=u^{\mathrm C}+w+e_\kappa.
\end{equation}
Because the constant in
\eqref{eq:kernel_difference_convolution_bound} is independent of $\kappa_*$,
the estimate applies to every fixed $\kappa>0$.  Since $\rho^\kappa$ has unit mass and is
bounded by $\Lambda_T$, it gives
\begin{equation}\label{eq:continuum_screening_error_bound}
 \|e_\kappa(t)\|_{L^\infty}\le C_T\alpha_d(\kappa)
 \qquad(0\le t\le T).
\end{equation}

\smallskip
\noindent\emph{Coulomb energy and exact chain rule.}
The signed density $\nu$ is neutral and uniformly bounded in
$L^1\cap L^\infty$, with $\|\nu(t)\|_{L^1}\le2$ and
$\|\nu(t)\|_{L^\infty}\le2\Lambda_T$.  In dimension two, continuity in
$W_2$ on the compact time interval gives uniform second moments and hence
\begin{equation}
\label{eq:continuum_screening_first_moment_bound}
 \sup_{t\le T}\int_{\R^2}|x|\,|\nu(t,x)|\,\dd x
 \le \sup_{t\le T}\left(
    \int|x|\rho^\kappa(t,x)\,\dd x+
    \int|x|\rho^{\mathrm C}(t,x)\,\dd x\right)<\infty.
\end{equation}
The preceding neutral Coulomb identities give
\[
 \mathscr F(t):=I_0[\nu(t)]
 =\int_{\R^d}|\nabla\phi(t,x)|^2\,\dd x\ge0.
\]
The uniform $L^1\cap L^\infty$ bounds, together with
\eqref{eq:continuum_screening_first_moment_bound} when $d=2$, also give
\begin{equation}
\label{eq:continuum_screening_coulomb_energy_uniform_bound}
 \sup_{t\le T}\mathscr F(t)\le C_T.
\end{equation}
Define
\begin{equation}
\label{eq:continuum_screening_dissipation_definition}
 \mathscr D(t):=\int_{\R^d}\rho^\kappa(t,x)|w(t,x)|^2\,\dd x.
\end{equation}
It is finite and satisfies
$\mathscr D(t)\le\Lambda_T\mathscr F(t)$.

Subtracting the two continuity equations and using
\eqref{eq:continuum_screening_velocity_decomposition} gives
\begin{equation}
\label{eq:continuum_screening_difference_flux}
 \partial_t\nu+\nabla\cdot J=0,
 \qquad
 J:=\nu u^{\mathrm C}+\rho^\kappa(w+e_\kappa).
\end{equation}
To apply Lemma~\ref{lem:Hminusone_continuity_chain_rule}, it remains to check
that $J\in L^2((0,T)\times\R^d)$.  The bound
$\nu\in L^\infty_t(L^1\cap L^\infty)$ implies a uniform $L^2$ estimate,
and $u^{\mathrm C}\in L^\infty_tL^\infty_x$.  Moreover,
\begin{align}
 \|\rho^\kappa w\|_{L^2}^2
 &\le \|\rho^\kappa\|_{L^\infty}\mathscr D
 \le \Lambda_T^2\mathscr F,
 \label{eq:continuum_screening_flux_w_L2}\\
 \|\rho^\kappa e_\kappa\|_{L^2}^2
 &\le \|\rho^\kappa\|_{L^\infty}
       \int\rho^\kappa|e_\kappa|^2
 \le C_T\alpha_d(\kappa)^2,
 \label{eq:continuum_screening_flux_e_L2}
\end{align}
where we used unit mass and
\eqref{eq:continuum_screening_error_bound}.  Therefore
Lemma~\ref{lem:Hminusone_continuity_chain_rule} applies directly, without an auxiliary
spatial or Fourier regularization parameter.

Since $\nabla\phi=-w$, the exact chain rule and
\eqref{eq:continuum_screening_difference_flux} give, for almost every time,
\begin{align}
 \mathscr F'
 &=2\int\nabla\phi\cdot
    \bigl(\nu u^{\mathrm C}+\rho^\kappa(w+e_\kappa)\bigr)\,\dd x\notag\\
 &=\mathscr H_{0,d}(\nu;u^{\mathrm C})
   -2\mathscr D
   -2\int\rho^\kappa w\cdot e_\kappa\,\dd x.
 \label{eq:continuum_screening_energy_identity}
\end{align}
Here
\[
 2\int\nu u^{\mathrm C}\cdot\nabla\phi
 =\mathscr H_{0,d}(\nu;u^{\mathrm C})
\]
follows by symmetrizing the double integral; its absolute convergence follows
from the cancellation in \eqref{eq:neutral_coulomb_commutator_definition}.  The stress estimate
\eqref{eq:neutral_coulomb_commutator_bound} gives
\begin{equation}
\label{eq:continuum_coulomb_commutator_bound}
 |\mathscr H_{0,d}(\nu;u^{\mathrm C})|
 \le C_d\|\nabla u^{\mathrm C}\|_{L^\infty}\mathscr F.
\end{equation}
Finally, weighted Young's inequality and
\eqref{eq:continuum_screening_error_bound} yield
\begin{align*}
 2\left|\int\rho^\kappa w\cdot e_\kappa\right|
 &\le \frac12\mathscr D
      +2\int\rho^\kappa|e_\kappa|^2\\
 &\le \frac12\mathscr D+C_T\alpha_d(\kappa)^2.
\end{align*}
Inserting these bounds into
\eqref{eq:continuum_screening_energy_identity} leaves the quantitative
coercive inequality
\begin{equation}
\label{eq:continuum_screening_energy_differential_inequality}
 \mathscr F'
 \le C_T\mathscr F-\frac32\mathscr D
      +C_T\alpha_d(\kappa)^2.
\end{equation}
Only the positivity of the remaining dissipation is relevant; the particular
coefficient $3/2$ is immaterial.  This remaining term absorbs the corresponding
contribution in the transport estimate.

\smallskip
\noindent\emph{Quadratic transport estimate.}
Let $X_t^\kappa(a)$ and $X_t^{\mathrm C}(a)$ be the characteristic flows
starting from the same point $a$, and transport the diagonal initial coupling
by $(X_t^\kappa,X_t^{\mathrm C})$.  Denote the resulting coupling by $\pi_t$
and set
\[
 \mathscr Q(t):=\int_{\R^d\times\R^d}|x-y|^2\,\dd\pi_t(x,y)
 =\int_{\R^d}|X_t^\kappa(a)-X_t^{\mathrm C}(a)|^2\,\dd\rho_0(a).
\]
The velocity fields are globally Lipschitz and have at most linear growth,
uniformly on $[0,T]$.  Since $\rho_0\in\mathcal P_2$, both flow maps belong
to $AC([0,T];L^2(\rho_0))$.  Consequently $\mathscr Q\in AC([0,T])$, and
Cauchy--Schwarz justifies differentiation under the $\rho_0$-integral.  Using \eqref{eq:continuum_screening_velocity_decomposition},
\[
 u^\kappa(x)-u^{\mathrm C}(y)
 =w(x)+e_\kappa(x)+u^{\mathrm C}(x)-u^{\mathrm C}(y).
\]
The $x$-marginal of $\pi_t$ is $\rho^\kappa(t)$.  Writing
$\delta=x-y$ and differentiating under the coupling gives
\begin{align*}
 \mathscr Q'
 &=2\int \delta\cdot w(x)\,\dd\pi_t
   +2\int \delta\cdot e_\kappa(x)\,\dd\pi_t
   +2\int \delta\cdot\bigl(u^{\mathrm C}(x)-u^{\mathrm C}(y)\bigr)\,\dd\pi_t\\
 &\le \frac12\mathscr D+2\mathscr Q
   +\mathscr Q+\int\rho^\kappa|e_\kappa|^2
   +2\|\nabla u^{\mathrm C}\|_{L^\infty}\mathscr Q\\
 &\le \frac12\mathscr D+C_T\mathscr Q
      +C_T\alpha_d(\kappa)^2.
\end{align*}
Here we used
$2|\delta||w|\le\frac12|w|^2+2|\delta|^2$ for the first term and
$2|\delta||e_\kappa|\le|\delta|^2+|e_\kappa|^2$ for the second; because the
$x$-marginal of $\pi_t$ is $\rho^\kappa(t)$, the weighted $|w|^2$ term is
exactly $\mathscr D$.

Set $\mathscr Y:=\mathscr F+\mathscr Q$.  Adding the preceding inequality to
\eqref{eq:continuum_screening_energy_differential_inequality} yields
\begin{equation}
\label{eq:continuum_screening_combined_differential_inequality}
 \mathscr Y'+\mathscr D
 \le C_T\mathscr Y+C_T\alpha_d(\kappa)^2
 \qquad\text{for a.e. }t.
\end{equation}
Since the two solutions have the same initial density,
$\mathscr F(0)=\mathscr Q(0)=0$.  Multiplying
\eqref{eq:continuum_screening_combined_differential_inequality} by the
integrating factor $e^{-C_Tt}$ gives
\[
 \sup_{t\le T}\mathscr Y(t)+\int_0^T\mathscr D(t)\,\dd t
 \le C_T\alpha_d(\kappa)^2.
\]
Both $\mathscr F$ and $\mathscr Q$ are nonnegative, and
$W_2^2(\rho^\kappa(t),\rho^{\mathrm C}(t))\le\mathscr Q(t)$.  Recalling
\eqref{eq:continuum_screening_dissipation_definition} proves the stronger
estimate \eqref{eq:continuum_screening_stability_strong} and hence the
proposition.
\end{proof}

\subsubsection{Product data and simultaneous limits}

\begin{corollary}[Simultaneous Yukawa--Coulomb limit for tensorized initial data]
\label{cor:classical_simultaneous_yukawa_coulomb}
Assume the hypotheses of Proposition~\ref{prop:yukawa_classical_solutions} and
let $\rho^{\mathrm C}$ be the Coulomb solution on the common existence interval
$[0,T_*)$.  Fix $0<T<T_*$.  There exists
$C_T=C(d,\beta,\kappa_*,\rho_0,T)<\infty$ such that, for every
$N\ge2$, every choice $0<\kappa_N\le\kappa_*$, and every
$1\le k\le N$, the particle system starting from
$\rho_0^{\otimes N}$ satisfies
\begin{align*}
 \sup_{t\le T}\frac1N W_2^2
 (\rho_N^{\kappa_N}(t),(\rho^{\mathrm C}(t))^{\otimes N})
 &\le C_T\bigl(a_{N,d}+\alpha_d(\kappa_N)^2\bigr),
 \\
 \sup_{t\le T}W_2^2
 (\rho_{N:k}^{\kappa_N}(t),(\rho^{\mathrm C}(t))^{\otimes k})
 &\le C_Tk\bigl(a_{N,d}+\alpha_d(\kappa_N)^2\bigr).
\end{align*}
Thus the error is $N^{-2/d}+\kappa_N^4$ for $d\ge3$ and
$(1+\log N)/N+\kappa_N^2$ for $d=2$.  If $\kappa_N\downarrow0$, both the
normalized $N$-particle error and the error of every fixed marginal tend to
zero as $N\to\infty$, without any condition on the relative rates.
\end{corollary}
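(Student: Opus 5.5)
The plan is a triangle inequality in $W_2$ that passes through the tensorized Yukawa solution $(\rho^{\kappa_N}(t))^{\otimes N}$, with no further dynamical argument. Fix $0<T<T_*$ and $\kappa=\kappa_N\in(0,\kappa_*]$. For every $t\le T$,
\[
 W_2\bigl(\rho_N^{\kappa_N}(t),(\rho^{\mathrm C}(t))^{\otimes N}\bigr)
 \le W_2\bigl(\rho_N^{\kappa_N}(t),(\rho^{\kappa_N}(t))^{\otimes N}\bigr)
 +W_2\bigl((\rho^{\kappa_N}(t))^{\otimes N},(\rho^{\mathrm C}(t))^{\otimes N}\bigr).
\]
We then square this and use $(a+b)^2\le2a^2+2b^2$.

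The first term comes from Corollary~\ref{cor:yukawa_commutator_optimized_scale}'s dynamical counterpart, Corollary~\ref{cor:yukawa_classical_chaos}. Its constant $C_T=C(d,\beta,\kappa_*,\rho_0,T)$ is uniform for $0<\kappa\le\kappa_*$. It gives $\frac1N W_2^2(\rho_N^{\kappa_N}(t),(\rho^{\kappa_N}(t))^{\otimes N})\le C_Ta_{N,d}$ for every $N$ and every choice of $\kappa_N$. This uniformity is what removes any relation between the rates of $N\to\infty$ and $\kappa_N\downarrow0$.

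The second term is the continuum comparison. Couple the two product measures coordinatewise with an optimal one-particle coupling. For the Euclidean cost on $(\R^d)^N$ this gives the exact tensorization bound $W_2^2(\mu^{\otimes N},\nu^{\otimes N})\le N\,W_2^2(\mu,\nu)$. Hence
\[
 \frac1N W_2^2\bigl((\rho^{\kappa_N}(t))^{\otimes N},(\rho^{\mathrm C}(t))^{\otimes N}\bigr)\le W_2^2(\rho^{\kappa_N}(t),\rho^{\mathrm C}(t)).
\]
We bound the right-hand side by Proposition~\ref{prop:continuum_screening_stability}, applied with the common initial density $\rho_0$. Its hypotheses hold because Proposition~\ref{prop:yukawa_classical_solutions} supplies, on $[0,T]$, a bound $\Lambda_T$ that is uniform over $0\le\kappa\le\kappa_*$. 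That bound controls $\|\rho^\kappa\|_\infty$, $\|\rho^{\mathrm C}\|_\infty$, $\|u^\kappa\|_{W^{1,\infty}}$ and $\|u^{\mathrm C}\|_{W^{1,\infty}}$. Continuity in $W_2$ follows from the compact supports in the fixed ball $B_{R_T}$ and the classical time regularity. This yields the bound $C_T\alpha_d(\kappa_N)^2$. Adding the two pieces gives the full $N$-particle estimate.

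For the marginals, both $\rho_N^{\kappa_N}(t)$ and $(\rho^{\mathrm C}(t))^{\otimes N}$ are symmetric. The projection estimate \eqref{eq:exact_symmetric_marginal_projection} then gives the factor $k$, since the $k$-marginal of the product is $(\rho^{\mathrm C}(t))^{\otimes k}$. The explicit rates follow from $a_{N,d}$ and from $\alpha_d$ in \eqref{eq:alpha_screening_parameter}.

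The only step needing care is checking that Proposition~\ref{prop:continuum_screening_stability} applies with a constant independent of $\kappa_N$. In particular, the Coulomb solution $\rho^{\mathrm C}$ must lie in the same uniform class, which is the $\kappa=0$ case of Proposition~\ref{prop:yukawa_classical_solutions}. In $d=2$ the compact support supplies the needed first moment. All of this is already provided by the uniform statements of Proposition~\ref{prop:yukawa_classical_solutions}, so no genuine obstacle remains.
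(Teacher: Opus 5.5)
Your proposal is correct and is essentially the paper's proof: a triangle inequality through $(\rho^{\kappa_N}(t))^{\otimes N}$, with Corollary~\ref{cor:yukawa_classical_chaos} supplying the uniform-in-$\kappa$ mean-field term, Proposition~\ref{prop:continuum_screening_stability} plus tensorization supplying the continuum term, and $(a+b)^2\le 2a^2+2b^2$ combining them. The only difference is in the marginal bound: you apply the symmetric projection estimate \eqref{eq:exact_symmetric_marginal_projection} to the full-law estimate, whereas the paper repeats the triangle inequality at the $k$-particle level through $(\rho^{\kappa_N}(t))^{\otimes k}$, using the marginal bound of Corollary~\ref{cor:yukawa_classical_chaos} and $W_2^2(\mu^{\otimes k},\nu^{\otimes k})=k\,W_2^2(\mu,\nu)$; both give the same bound $C_Tk\bigl(a_{N,d}+\alpha_d(\kappa_N)^2\bigr)$.
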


\begin{proof}
Corollary~\ref{cor:yukawa_classical_chaos} gives the Yukawa mean-field error
$C_Ta_{N,d}$ uniformly in $\kappa_N$, while
Proposition~\ref{prop:continuum_screening_stability} gives the continuum error
$C_T\alpha_d(\kappa_N)^2$.  Tensorization gives the exact identities
\[
 \frac1N W_2^2\bigl((\rho^{\kappa_N}(t))^{\otimes N},
                    (\rho^{\mathrm C}(t))^{\otimes N}\bigr)
 =W_2^2(\rho^{\kappa_N}(t),\rho^{\mathrm C}(t)),
\]
and
\[
 W_2^2\bigl((\rho^{\kappa_N}(t))^{\otimes k},
            (\rho^{\mathrm C}(t))^{\otimes k}\bigr)
 =k\,W_2^2(\rho^{\kappa_N}(t),\rho^{\mathrm C}(t)).
\]
For the full $N$-particle law, apply the triangle inequality through
$(\rho^{\kappa_N}(t))^{\otimes N}$, divide the squared estimate by $N$, and
use $(a+b)^2\le2a^2+2b^2$.  The same argument for the $k$-marginal uses
$(\rho^{\kappa_N}(t))^{\otimes k}$ and the second identity.  The resulting
universal factor is absorbed into $C_T$.
\end{proof}

\medskip
\noindent In dimension $d=3$, Theorem~\ref{thm:simultaneous_yukawa_coulomb}
gives the stronger direct comparison at the particle level for general
symmetric initial laws.  The initial Wasserstein and Coulomb modulated energy
errors remain explicitly contained in $\mathfrak E_N^{\mathrm C,0}$.  In $d=2$
and $d\ge4$, the result for smooth product data proceeds through the continuum
stability estimate.

\section{Discussion}

The principal result is Theorem~\ref{thm:yukawa_commutator}, a first-order
commutator inequality in the natural Yukawa modulated energy with constants
uniform as $\kappa\downarrow0$.  Its proof relies on a renormalization adapted
to the modified Helmholtz operator.  When the Yukawa potential is truncated to
a constant inside a ball, the resulting source consists of a positive surface
charge and a positive volume charge with total mass strictly less than one.
Nevertheless, the exterior Yukawa potential is unchanged.  This exact exterior
representation makes it possible to track the loss of mass explicitly in the
renormalized energy.  The stress-energy identity
with interface terms and the averaging over truncation radii then convert the
surface contributions into bulk field energy.  The resulting estimate is
uniform for $0<\kappa\le\kappa_*$ and in the particle configuration.

The additive error is of order $N^{-2/d}$ for $d\ge3$, matching the Coulomb
and Riesz scale at $s=d-2$, and of order $(1+\log N)/N$ in $d=2$.  Once the
commutator estimate is available, the dynamical conclusions follow from the
modulated energy identity and the normalized quadratic transport estimate: the
former contributes $-2\mathcal D_N$, while the derivative of the latter
contributes at most $+\mathcal D_N$.  The remaining dissipation yields
Wasserstein control of the
full $N$-particle law, control of the expected modulated energy, propagation of
chaos for each fixed marginal, and a time-integrated estimate of the
mean-square discrepancy between empirical and mean-field forces.  Thus the
screening-specific analysis is confined to the commutator estimate, whereas
the subsequent transport argument is more general.

The Coulomb limit $\kappa\downarrow0$ involves a different obstruction.  At
low frequency, the Coulomb and Yukawa quadratic forms are not uniformly
comparable; indeed, the Coulomb energy is not globally controlled by the
Yukawa energy even for fixed $\kappa>0$.  We therefore compare the continuum
solutions using the neutral Coulomb $\dot H^{-1}$ energy together with a
quadratic transport cost, rather than by comparing the two modulated energies
coercively.  This yields simultaneous limits $N\to\infty$ and
$\kappa_N\downarrow0$ for smooth product data without any condition on their
relative rates.  In dimension three, the bounded pointwise Yukawa--Coulomb
force difference also gives the direct comparison at the particle level for
general symmetric laws with finite initial error stated in
Theorem~\ref{thm:simultaneous_yukawa_coulomb}.

Several extensions remain open.  One is whether the truncation argument for
the modified Helmholtz operator extends to broader classes of screened elliptic
Green functions for which truncation changes the source mass.  Another is to
determine the optimal regularity of the reference velocity in the Yukawa
commutator estimate, in analogy with lower-regularity Coulomb theory.  Finally,
a direct Yukawa--Coulomb comparison at the particle level in dimensions
$d\ge4$ would require an argument beyond the pointwise force perturbation used
in $d=3$.

\appendix

\section{Yukawa kernels and the Riesz-type admissible class}
\label{app:yukawa_riesz_admissible}

We verify the large-scale comparison used in
Subsection~\ref{subsec:intro_related_work} by checking whether the Yukawa kernel
belongs to the $(s,\phi)$-admissible class of
\cite[Definition~2.8, equations~(2.31)--(2.33)]{HessChildsRosenzweigSerfaty2025}.

\begin{proposition}[Exclusion from the Riesz-type admissible class]
\label{prop:yukawa_not_riesz_type}
Fix $\kappa>0$.  If $d\ge3$ and $s=d-2$, then $g_{\kappa,d}$ is not an
$(s,\phi)$-admissible potential in the sense of
\cite[Definition~2.8]{HessChildsRosenzweigSerfaty2025}.  If $d=2$, no fixed
additive normalization $g_{\kappa,2}+c$, $c\in\R$, belongs to the corresponding
logarithmic admissible class.
\end{proposition}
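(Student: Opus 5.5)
The plan is to argue by contradiction from the large-scale conditions in \cite[Definition~2.8]{HessChildsRosenzweigSerfaty2025}.  That definition requires, at the Coulomb exponent, two-sided comparability with the Riesz/logarithmic model at large scales.  For $d\ge3$ and $s=d-2$ this includes a lower bound $g(x)\ge c|x|^{-s}$, or an equivalent lower bound on $|\nabla g|$, for $|x|\ge r_0$.  In $d=2$ the corresponding condition is $g(x)=-\log|x|+O(1)$, together with a derivative comparison $|\nabla g(x)|\ge c|x|^{-1}$ on large scales.  I would first extract from (2.31)--(2.33) the single large-scale inequality that is easiest to contradict.  I expect this to be the gradient lower bound $|\nabla g(x)|\gtrsim |x|^{-s-1}$ for $|x|\ge1$, since it is insensitive to additive constants.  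I would also record the precise form in which it appears.

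The core computation uses \eqref{eq:yukawa_force_general_intro} and the large-argument asymptotics $K_\nu(z)\sim\sqrt{\pi/(2z)}\,e^{-z}$.  These give
\[
 |\nabla g_{\kappa,d}(x)|=(2\pi)^{-d/2}\kappa^{d/2}|x|^{1-d/2}K_{d/2}(\kappa|x|)
 \le C_{\kappa,d}|x|^{(1-d)/2}e^{-\kappa|x|}\qquad(\kappa|x|\ge1).
\]
Similarly, $g_{\kappa,d}(x)\le C_{\kappa,d}|x|^{(1-d)/2}e^{-\kappa|x|}$.  Consequently $|x|^{s+1}|\nabla g_{\kappa,d}(x)|\to0$ and $|x|^{s}g_{\kappa,d}(x)\to0$ as $|x|\to\infty$, for every fixed $\kappa>0$.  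This contradicts any lower comparability with $|x|^{-s}$ or $|x|^{-s-1}$ at large scales.  It settles $d\ge3$, where no normalization is involved.

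For $d=2$ and any fixed $c\in\R$, the kernel $g_{\kappa,2}+c$ converges to $c$ at infinity, again by the exponential decay of $K_0$.  Hence $g_{\kappa,2}(x)+c+\frac{1}{2\pi}\log|x|\to+\infty$.  This rules out the requirement $g=-\frac1{2\pi}\log|x|+O(1)$, or whatever normalization of the logarithm the definition fixes.  Independently of $c$, the gradient decays exponentially, whereas the admissible class requires $|\nabla g|\gtrsim|x|^{-1}$.  Either failure suffices, and the gradient argument makes the claim manifestly uniform over all additive constants $c$.

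The main obstacle is bookkeeping rather than analysis.  I have to verify which exact large-scale hypothesis \cite{HessChildsRosenzweigSerfaty2025} imposes, and whether it is a pointwise lower bound, a two-sided bound on $\widehat g$ near $\xi=0$, or a doubling-type condition on $g$ and its derivatives.  If the relevant condition is Fourier-side, I would instead use $\widehat g_{\kappa,d}(\xi)=(|\xi|^2+\kappa^2)^{-1}$.  Its ratio with $|\xi|^{-2}$ tends to $0$ as $\xi\to0$, so $\widehat g_{\kappa,d}$ cannot be bounded below by $c|\xi|^{-2}$ near the origin.  Whichever formulation is chosen, the contradiction reduces to the exponential decay of the Bessel functions, or equivalently to the nonvanishing zero mode $\kappa^2$.
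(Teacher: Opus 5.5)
Your proposal is correct and follows essentially the paper's route. The paper also contradicts the large-scale Riesz comparability imposed by Definition~2.8 using the exponential decay $g_{\kappa,d}(x)\sim C_{d,\kappa}|x|^{-(d-1)/2}e^{-\kappa|x|}$. In $d=2$ it contrasts the $-\log|x|+O(1)$ behavior of admissible kernels (logarithm plus a bounded remainder) with the finite limit of $g_{\kappa,2}+c$ at infinity, exactly as you do.

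The one adjustment concerns which hypothesis you run the $d\ge3$ argument on. The paper uses the two-sided pointwise bound $c_0|x|^{2-d}\le g(x)\le C_0|x|^{2-d}$ for $g$ itself, which follows from the scale weight being comparable to $t^{d-s}$. It does not use a gradient lower bound; the paper never invokes one, and you would first have to locate it in (2.31)--(2.33). So the version to keep is your fallback $|x|^{s}g_{\kappa,d}(x)\to0$, or equivalently the Fourier ratio $|\xi|^2\widehat g_{\kappa,d}(\xi)\to0$.
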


\begin{proof}
Suppose first that $d\ge3$, so $s=d-2>0$.  Definition~2.8, in particular
(2.31)--(2.32), requires the scale weight to be bounded above and below by
constant multiples of $t^{d-s}$.  As noted in the paragraph immediately
following that definition in \cite{HessChildsRosenzweigSerfaty2025}, an
$(s,\phi)$-admissible potential with $s>0$ is consequently pointwise
comparable in physical and Fourier space with the exact Riesz potential.  In
particular, there exist positive constants $c_0$ and $C_0$ such that
\[
 c_0|x|^{2-d}\le g(x)\le C_0|x|^{2-d},
 \qquad x\ne0.
\]
By contrast, the large-argument asymptotics of the modified Bessel function
imply
\[
 g_{\kappa,d}(x)
 =C_{d,\kappa}|x|^{-(d-1)/2}e^{-\kappa|x|}
   \bigl(1+O(|x|^{-1})\bigr),
 \qquad |x|\to\infty.
\]
Hence
\[
 \frac{g_{\kappa,d}(x)}{|x|^{2-d}}\longrightarrow0
 \qquad\text{as }|x|\to\infty,
\]
which contradicts the required two-sided comparison.  Adding a nonzero fixed
constant cannot restore comparability: the shifted kernel tends to that
constant, whereas $|x|^{2-d}\to0$.

In dimension two, Definition~2.8, especially (2.31) and (2.33), together
with the paragraph immediately following the definition in
\cite{HessChildsRosenzweigSerfaty2025}, identifies the admissible potential
with the logarithmic Riesz kernel plus a more regular positive-definite
remainder of the form
\[
 h(x)=\int_0^\infty \varrho(t)\phi_t(x)\,\frac{\dd t}{t},
 \qquad
 \int_0^\infty \varrho(t)t^{-d}\,\frac{\dd t}{t}<\infty.
\]
Since $\phi$ is bounded, this remainder is bounded.  Thus such an admissible
representative has the large-scale behavior $-\log|x|+O(1)$ and tends to
$-\infty$.  On the other hand,
\[
 2\pi g_{\kappa,2}(x)=K_0(\kappa|x|)\longrightarrow0
 \qquad\text{exponentially as }|x|\to\infty,
\]
and adding any fixed constant gives a finite limit at infinity.  This excludes
every fixed additive normalization of the two-dimensional Yukawa kernel.
\end{proof}

Proposition~\ref{prop:yukawa_not_riesz_type} identifies the large-scale
obstruction to placing the Yukawa kernel in the Riesz-type admissible class.
The independent low-frequency comparison of the Coulomb and Yukawa quadratic
energies is established in
Subsection~\ref{sec:coulomb_perturbation_comparison}.

\section{Uniform local classical solutions}
\label{app:local_classical_solutions}

\begin{proof}[Proof of Proposition~\ref{prop:yukawa_classical_solutions}]
Let $R_0>0$ satisfy $\operatorname{supp}\rho_0\subset B_{R_0}$, and fix
$R>R_0+1$.  Throughout the proof, $C$ denotes a constant depending only on
$d,R,\beta,\kappa_*$ and the uniform bound defining the fixed-point set
below.  In particular, all such constants are uniform for
$0\le\kappa\le\kappa_*$.  Proposition~\ref{prop:yukawa_velocity_field}
provides both the H\"older estimates for the velocity field and the
$L^\infty$ stability estimate used in the contraction argument.

We set
$
 M:=2\bigl(1+\|\rho_0\|_{C^{1,\beta}}\bigr)
$
and define
\begin{equation}
\label{eq:yukawa_fixed_point_space}
\begin{aligned}
 \mathcal X_T:=\{f\in C([0,T];C_0(\R^d)):\;&
 f(0)=\rho_0,\ f(t)\in C_c^{1,\beta}(\R^d),\ f\ge0,\\
 &\int f(t)=1,\ \operatorname{supp}f(t)\subset\overline{B_R},\quad
 \sup_{t\le T}\|f(t)\|_{C^{1,\beta}}\le M\}.
\end{aligned}
\end{equation}
Equip $\mathcal X_T$ with
\[
 d_T(f,g):=\sup_{0\le t\le T}\|f(t)-g(t)\|_{L^\infty}.
\]
The space $(\mathcal X_T,d_T)$ is complete.  Indeed, any $d_T$-Cauchy
sequence converges uniformly on $[0,T]\times\R^d$ to some
$f\in C([0,T];C_0)$.  For each fixed $t$, the uniform $C^{1,\beta}$ bound and
the common compact support yield, by Arzel\`a--Ascoli, a subsequence whose
gradients converge uniformly.  Passing to the limit in
$\int f_n\partial_j\varphi=-\int(\partial_jf_n)\varphi$ identifies the
limit with $\partial_jf$, while lower semicontinuity preserves the
$C^{1,\beta}$ bound.  Nonnegativity, unit mass, the initial condition, and the
support constraint also pass to the limit.  Hence $f\in\mathcal X_T$.

For every $0<\beta'<\beta$, interpolation between $C^0$ and
$C^{1,\beta}$ gives
\[
 \|f(t)-f(s)\|_{C^{1,\beta'}}
 \le C\|f(t)-f(s)\|_{C^0}^{\theta}(2M)^{1-\theta}
\]
for some $\theta\in(0,1)$.  Hence $f\in C([0,T];C^{1,\beta'})$.  By
\eqref{eq:yukawa_velocity_field_holder_continuity}, the same time continuity
holds for $u_f^\kappa$ and $\vartheta_f^\kappa$, with uniform
$C^{1,\beta}$ bounds.

For $f\in\mathcal X_T$, let $X_f$ be the flow of $u_f^\kappa$,
\begin{equation*}
 \partial_tX_f(t,a)=u_f^\kappa(t,X_f(t,a)),
 \qquad X_f(0,a)=a.
\end{equation*}
By \eqref{eq:yukawa_velocity_field_strong}, there are constants $U,L$,
depending only on $(d,R,M,\beta,\kappa_*)$, such that for every
$f\in\mathcal X_T$,
\begin{equation}
\label{eq:yukawa_fixed_point_uniform_coefficients}
 \|u_f^\kappa\|_{L^\infty}\le U,
 \qquad
 \|\nabla u_f^\kappa\|_{C^{0,\beta}}\le L,
 \qquad
 \|\vartheta_f^\kappa\|_{C^{1,\beta}}\le C.
\end{equation}
Consequently
\begin{equation*}
 \|D_aX_f(t)\|_{L^\infty}\le e^{Lt},
 \qquad
 [D_aX_f(t)]_{C^{0,\beta}}
 \le C_{L,\beta}\,t e^{C_{L,\beta}t},
\end{equation*}
and analogous bounds hold for the inverse flow $A_f(t)=X_f(t)^{-1}$.
The second estimate follows by subtracting the equations for
$D_aX_f(t,a)$ and $D_aX_f(t,b)$ and applying Gronwall's inequality.

Let $\mathcal T_\kappa f$ be the solution of the linear continuity equation with
velocity $u_f^\kappa$ and initial datum $\rho_0$.  Along the flow,
\begin{equation}
\label{eq:yukawa_fixed_point_density_formula}
 (\mathcal T_\kappa f)(t,X_f(t,a))
 =\rho_0(a)\exp\left(-\int_0^t
 \vartheta_f^\kappa(s,X_f(s,a))\,\dd s\right).
\end{equation}
This formula preserves nonnegativity and unit mass.  The time continuity of
$u_f^\kappa$, $\vartheta_f^\kappa$, and the characteristic flow then implies
from \eqref{eq:yukawa_fixed_point_density_formula} that
$\mathcal T_\kappa f\in C([0,T];C_0(\R^d))$.  If
$T\le (R-R_0)/U$, then
\[
 \operatorname{supp}(\mathcal T_\kappa f)(t)
 \subset\overline{B_{R_0+Ut}}\subset\overline{B_R}.
\]
Differentiating \eqref{eq:yukawa_fixed_point_density_formula} with respect to
$a$ gives
\begin{equation}
\label{eq:yukawa_fixed_point_density_gradient_formula}
\begin{aligned}
 \nabla_a[(\mathcal T_\kappa f)(t,X_f(t,a))]
 =\exp\!\left(-\int_0^t\vartheta_f^\kappa(s,X_f(s,a))\,\dd s\right)\Bigg[
   \nabla\rho_0(a)-\rho_0(a)\int_0^t
   \nabla \vartheta_f^\kappa(s,X_f(s,a))D_aX_f(s,a)\,\dd s\Bigg],
\end{aligned}
\end{equation}
To pass from Lagrangian to Eulerian coordinates, compose the Lagrangian
profile in \eqref{eq:yukawa_fixed_point_density_formula} with the inverse flow
$A_f(t)=X_f(t)^{-1}$.  We use the composition and product estimates
\[
 [G\circ A]_{C^{0,\beta}}
 \le [G]_{C^{0,\beta}}\|DA\|_{L^\infty}^{\beta},
\qquad
 [D(G\circ A)]_{C^{0,\beta}}
 \le C\Bigl([DG]_{C^{0,\beta}}\|DA\|_{L^\infty}^{1+\beta}
 +\|DG\|_{L^\infty}[DA]_{C^{0,\beta}}\Bigr).
\]
Combining these estimates with
\eqref{eq:yukawa_fixed_point_uniform_coefficients}--
\eqref{eq:yukawa_fixed_point_density_gradient_formula} gives
\begin{equation*}
 \sup_{t\le T}\|\mathcal T_\kappa f(t)\|_{C^{1,\beta}}
 \le \|\rho_0\|_{C^{1,\beta}}+C
 T e^{CT}.
\end{equation*}
After decreasing $T$, the right-hand side is at most $M$.  Thus
$\mathcal T_\kappa$ maps $\mathcal X_T$ into itself.

We next prove that $\mathcal T_\kappa$ is a contraction.  Let
$Z_f(s;t,x)$ denote the backward characteristic ending at $x$ at time $t$:
\[
 \partial_sZ_f(s;t,x)=u_f^\kappa(s,Z_f(s;t,x)),
 \qquad Z_f(t;t,x)=x.
\]
From \eqref{eq:yukawa_velocity_field_weak} and the uniform
Lipschitz bound in \eqref{eq:yukawa_fixed_point_uniform_coefficients},
\begin{equation}
\label{eq:yukawa_fixed_point_flow_contraction}
 \sup_{0\le s\le t\le T}\sup_x
 |Z_f(s;t,x)-Z_g(s;t,x)|
 \le CTe^{LT}d_T(f,g).
\end{equation}
Set
\[
 J_f(t,x):=\int_0^t \vartheta_f^\kappa(s,Z_f(s;t,x))\,\dd s.
\]
Then \eqref{eq:yukawa_velocity_field_weak},
\eqref{eq:yukawa_fixed_point_uniform_coefficients}, and
\eqref{eq:yukawa_fixed_point_flow_contraction} imply
\begin{equation}
\label{eq:yukawa_fixed_point_exponent_contraction}
 \|J_f-J_g\|_{L^\infty([0,T]\times\R^d)}
 \le CTe^{CT}d_T(f,g).
\end{equation}
The backward form of \eqref{eq:yukawa_fixed_point_density_formula} is
\[
 (\mathcal T_\kappa f)(t,x)
 =\rho_0(Z_f(0;t,x))e^{-J_f(t,x)}.
\]
Since $\rho_0$ is Lipschitz and $J_f,J_g$ are uniformly bounded,
\eqref{eq:yukawa_fixed_point_flow_contraction}--
\eqref{eq:yukawa_fixed_point_exponent_contraction} give
\begin{equation*}
 d_T(\mathcal T_\kappa f,\mathcal T_\kappa g)
 \le CTe^{CT}d_T(f,g).
\end{equation*}
Choose $T$ smaller, if necessary, so that $CTe^{CT}<1$.  Banach's fixed-point
theorem then yields, for each $0\le\kappa\le\kappa_*$, a unique
$\rho^\kappa\in\mathcal X_T$ satisfying
$\mathcal T_\kappa\rho^\kappa=\rho^\kappa$.  Because all constants above
are uniform in $\kappa$, the same $T$ is valid throughout
$[0,\kappa_*]$.  The contraction argument also gives uniqueness in the stated
class.  Indeed, on any sufficiently short subinterval, two solutions with a
common support bound and common $C^{1,\beta}$ bound belong to the same
fixed-point class of the form \eqref{eq:yukawa_fixed_point_space} and hence
coincide.  Iteration yields uniqueness on the full common interval.

Fix $\kappa$ and write $\rho=\rho^\kappa$.  The interpolation estimate
above gives $\rho\in C([0,T];C^{1,\beta'})$ for every
$0<\beta'<\beta$.  Equation~\eqref{eq:yukawa_velocity_field_holder_continuity}
then gives the corresponding continuity of $u_\rho^\kappa$ and
$\vartheta_\rho^\kappa=\nabla\cdot u_\rho^\kappa$.  Therefore
$\partial_t\rho=-u_\rho^\kappa\cdot\nabla\rho-\rho\vartheta_\rho^\kappa$
belongs to $C([0,T];C^{0,\beta'})$, which proves
\eqref{eq:yukawa_classical_solution_class}.
Taking this common $T$ as $T_*$ establishes the asserted uniform local
existence and uniqueness.  Each solution can be continued as long as its
$C^{1,\beta}$ norm remains finite.

We finally verify the uniform bounds in
\eqref{eq:yukawa_classical_solution_bound}.  Since
$
 (-\Delta+\kappa^2)g_{\kappa,d}=\delta_0,
$
one has
\begin{equation*}
 \nabla\cdot u_\rho^\kappa
 =-\Delta g_{\kappa,d}*\rho
 =\rho-\kappa^2g_{\kappa,d}*\rho.
\end{equation*}
Along a characteristic $X(t,a)$,
\begin{equation}
\label{eq:yukawa_solution_characteristic_density}
 \frac{\dd}{\dd t}\rho(t,X(t,a))
 =-\rho(t,X(t,a))^2
  +\kappa^2\rho(t,X(t,a))
   (g_{\kappa,d}*\rho)(t,X(t,a)).
\end{equation}
For $\kappa>0$,
$
 \kappa^2\int_{\R^d}g_{\kappa,d}(x)\,\dd x=1.
$
Thus, with $M_\rho(t)=\|\rho(t)\|_{L^\infty}$,
$
 \kappa^2\|g_{\kappa,d}*\rho(t)\|_{L^\infty}\le M_\rho(t).
$
The upper Dini derivative at a spatial maximum therefore satisfies
\begin{equation*}
 D^+M_\rho(t)\le -M_\rho(t)^2+M_\rho(t)^2=0.
\end{equation*}
For $\kappa=0$, the screening term in
\eqref{eq:yukawa_solution_characteristic_density} is absent and
$D^+M_\rho(t)\le-M_\rho(t)^2\le0$.
Thus
\begin{equation*}
 \|\rho(t)\|_{L^\infty}\le\|\rho_0\|_{L^\infty}
 \qquad (0\le t<T_*).
\end{equation*}
For every $T<T_*$, the bound
$\rho\in L^\infty(0,T;C^{1,\beta})$ controls the H\"older seminorm of
$\rho$ on $[0,T]$.  Proposition~\ref{prop:yukawa_velocity_field} then yields
a $W^{1,\infty}$ bound for $u_\rho^\kappa$ that is uniform for
$0\le\kappa\le\kappa_*$.

Finally, \eqref{eq:yukawa_fixed_point_uniform_coefficients} gives a support
bound on every compact time interval that is uniform in
$\kappa\in[0,\kappa_*]$.  More precisely, if
$\operatorname{supp}\rho_0\subset B_{R_0}$, then for every $T<T_*$,
\begin{equation*}
 \operatorname{supp}\rho(t)\subset B_{R_0+C_Tt},
 \qquad 0\le t\le T.
\end{equation*}
The support bound immediately yields a uniform second-moment estimate.  Moreover,
since $\rho(t)=X(t,\cdot)_\#\rho_0$,
\[
 W_2\bigl(\rho(t),\rho(s)\bigr)
 \le \|X(t,\cdot)-X(s,\cdot)\|_{L^2(\rho_0)}
 \le C_T|t-s|.
\]
This proves \eqref{eq:yukawa_classical_solution_bound}.
\end{proof}

\section*{Statements and Declarations}
\textbf{Funding.} This work was supported in part by the National Natural
Science Foundation of China under Grant No.~12371180.\par
\textbf{Competing interests.} The authors declare that they have no competing
financial or non-financial interests.\par
\noindent\textbf{Data availability.} No datasets were generated or analyzed in the
course of this study.

\end{document}